\newtheorem{theorem}{Theorem}[section]
\newtheorem{lemma}[theorem]{Lemma}
\newtheorem{proposition}[theorem]{Proposition}
\newtheorem{corollary}[theorem]{Corollary}
\newtheorem*{thm*}{\protect\theoremname}
\theoremstyle{definition}
\newtheorem{definition}[theorem]{Definition}
\newtheorem{remark}[theorem]{Remark}
\newtheorem*{cor*}{\protect\corollaryname}
\newtheorem{observation}[theorem]{Observation}
\long\def\@savemarbox#1#2{\global\setbox#1\vtop{\hsize\marginparwidth 
  \@parboxrestore\tiny\raggedright #2}}
\renewcommand{\d}{{\rm d}}
\newcommand{\Cc}{{\mathcal C}}
\newcommand{\GG}{\mathsf G}
\newcommand{\PSL}{\mathsf{PSL}}
\newcommand{\PGL}{\mathsf{PGL}}
\newcommand{\mfg}{{\mathfrak g}}
\newcommand{\mfa}{{\mathfrak a}}
\newcommand{\mfk}{{\mathfrak k}}
\newcommand{\mfp}{{\mathfrak p}}
\newcommand{\mfu}{{\mathfrak u}}
\newcommand{\abs}[1]{\left|#1\right|}
\newcommand{\Bc}{\mathcal B}
\newcommand{\Sc}{\mathcal S}
\newcommand{\Oc}{\mathcal O}
\newcommand{\Rc}{\mathcal R}
\newcommand{\Zc}{\mathcal Z}
\DeclareMathOperator{\Aut}{\mathrm{Aut}}
\DeclareMathOperator{\id}{\mathrm{id}}
\DeclareMathOperator{\Cb}{\mathbb{C}}
\DeclareMathOperator{\Hb}{\mathbb{H}}
\DeclareMathOperator{\Kb}{\mathbb{K}}
\DeclareMathOperator{\Nb}{\mathbb{N}}
\DeclareMathOperator{\Pb}{\mathbb{P}}
\DeclareMathOperator{\Rb}{\mathbb{R}}
\DeclareMathOperator{\Usf}{\mathsf{U}}
\DeclareMathOperator{\Psf}{\mathsf{P}}
\DeclareMathOperator{\Ksf}{\mathsf{K}}
\DeclareMathOperator{\Fc}{\mathcal{F}}
\DeclareMathOperator{\dist}{dist}
\DeclareMathOperator{\SL}{\mathsf{SL}}
\newcommand{\norm}[1]{\left\|#1\right\|}
\newcommand{\ip}[1]{\left\langle #1\right\rangle}
\DeclareMathOperator{\Ac}{\mathcal{A}}
\newcommand{\wh}[1]{\widehat{#1}}
\providecommand{\corollaryname}{Corollary}
\providecommand{\theoremname}{Theorem}
\begin{document}

\title{Patterson-Sullivan measures for transverse subgroups}
\author[Canary]{Richard Canary}
\address{University of Michigan}
\author[Zhang]{Tengren Zhang}
\address{National University of Singapore}
\author[Zimmer]{Andrew Zimmer}
\address{University of Wisconsin-Madison}
\thanks{Canary was partially supported by grant  DMS-1906441 from the National Science Foundation.
Zhang was partially supported by the NUS-MOE grant R-146-000-270-133 and A-8000458-00-00. Zimmer was partially supported by a Sloan research fellowship and grants DMS-2105580 and DMS-2104381 from the
National Science Foundation.}

\begin{abstract} 

We study Patterson-Sullivan measures for a class of discrete subgroups of higher rank semisimple Lie groups, called transverse groups, whose limit set is well-defined and transverse
in a partial flag variety. This class of groups includes both Anosov and relatively Anosov groups, as well as all discrete subgroups of rank one Lie groups. We prove an analogue of the Hopf-Tsuji-Sullivan dichotomy and then use this dichotomy to prove a variant of Burger's Manhattan curve theorem. We also use the Patterson-Sullivan measures to obtain conditions for when a subgroup has critical exponent strictly less than the original transverse group. These gap results are new even for Anosov groups. 

\end{abstract}

\maketitle

\tableofcontents

\section{Introduction}
If $\Gamma$ is a discrete subgroup of the group  $\mathsf{PO}(d,1)$ of isometries of hyperbolic $d$-space $\mathbb H^d$, Patterson \cite{patterson} and Sullivan  \cite{sullivan-ergodic} constructed a probability measure $\mu$ supported on the limit set  $\Lambda(\Gamma)$ of $\Gamma$ which transforms like the $\delta$-dimensional Hausdorff measure, 
where $\delta$ is the critical exponent of the Poincar\'e series of $\Gamma$. Alternatively, one may view $\delta$ as the exponential growth rates of the number
of orbit points of $\Gamma$ in a ball of radius $T$.  The Hopf-Tsuji-Sullivan dichotomy asserts, in part, that the action of $\Gamma$ on the set $\Lambda(\Gamma)^{(2)}$ of
pairs of distinct points in the limit set is ergodic with respect to the measure $\mu\otimes\mu$ if and only if the Poincar\'e series  of $\Gamma$ diverges at its
critical exponent. Equivalently, it says that  the non-wandering part of the geodesic flow on $\Gamma \backslash T^1\mathbb H^d$ is 
ergodic with respect to its Bowen-Margulis-Sullivan measure if and only if the Poincar\'e series  of $\Gamma$ diverges at its
critical exponent.

In this paper, we study Patterson-Sullivan measures for a class of discrete subgroups of higher rank semisimple Lie groups, called transverse groups. This class of groups includes both Anosov and relatively Anosov groups as well as all discrete subgroups of rank one Lie groups. 
Transverse groups were previously studied by Kapovich, Leeb and Porti \cite{KLP1}, who called them regular, antipodal groups.
Patterson-Sullivan measures for discrete subgroups of higher rank Lie groups were first studied  by Albuquerque \cite{albuquerque} and Quint \cite{quint-ps}. Recently Patterson-Sullivan measures 
for Anosov groups have been extensively studied by Dey-Kapovich \cite{dey-kapovich}, Sambarino \cite{sambarino-dichotomy}, Burger-Landesberg-Lee-Oh \cite{BLLO}, Lee-Oh \cite{lee-oh-invariant,lee-oh-dichotomy}
and others. 

We prove a generalization of the Hopf-Tsuji-Sullivan dichotomy to our setting. 
Using this dichotomy we prove a variant of Burger's Manhattan curve theorem~\cite{burger}. 
We also use  Patterson-Sullivan measures to obtain conditions for when a subgroup has critical exponent strictly less than 
the original transverse group. These gap results are new even for Anosov groups. 

In this introduction, we will restrict our discussion to the setting of transverse subgroups of $\mathsf{PSL}(d,\mathbb K)$, 
where $\mathbb K$ is either the real numbers $\mathbb R$ or the complex numbers $\mathbb C$. In the body of the paper, we will consider transverse subgroups of connected semisimple real Lie groups of non-compact type with finite center.

\bigskip

In this setting Patterson-Sullivan measures are probability measures on partial flag manifolds defined using a natural cocycle (studied by Quint \cite{quint-ps}) for the action of $\mathsf{PSL}(d,\mathbb K)$ on the partial flag manifold, which is an analogue of the Busemann cocycle in rank one. To define this cocycle we need some preliminary definitions. Let 
$$
\mathfrak{a}:=\{{\rm diag}(a_1,\ldots,a_d)\in\mathfrak{sl}(d,\Kb) :\ a_1+\cdots+a_d=0 \}
$$
denote the standard Cartan subspace of $\mathfrak{sl}(d,\Kb)$ and let $\kappa:\mathsf{PSL}(d,\mathbb K)\to\mathfrak{a}$ denote the \emph{Cartan projection} which is given by 
$$
\kappa(g)={\rm diag}(\log\sigma_1(g),\cdots,\log\sigma_d(g))
$$ 
where $\sigma_1(g) \ge \cdots \ge \sigma_d(g)$ are the singular values of some (equivalently, any) lift of $g$ to $\mathsf{SL}(d,\Kb)$. Let $\Delta :=\{ \alpha_1,\dots, \alpha_{d-1}\} \subset \mathfrak{a}^*$ denote the standard system of simple restricted roots, i.e.
$$
\alpha_j( {\rm diag}(a_1,\ldots,a_d)) = a_j-a_{j+1}
$$
for all ${\rm diag}(a_1,\ldots,a_d) \in \mathfrak{a}$. 

When $\theta=\{\alpha_{i_1},\ldots,\alpha_{i_k}\}\subset\Delta$ is symmetric (i.e. $\alpha_k\in\theta$ if and only if $\alpha_{d-k}\in\theta$), we say that a subgroup $\Gamma$ of $\mathsf{PSL}(d,\mathbb K)$ is \emph{$\Psf_\theta$-divergent} if 
$$
+\infty = \lim_{n \rightarrow \infty} \min_{\alpha_k \in \theta} \alpha_k(\kappa(\gamma_n)) = \lim_{n \rightarrow \infty}  \min_{\alpha_k \in \theta} \log\frac{\sigma_k(\gamma_n)}{\sigma_{k+1}(\gamma_n)}
$$
whenever $\{\gamma_n\}$ is a sequence of distinct elements of $\Gamma$. A $\Psf_\theta$-divergent group is discrete and has a well-defined
limit set $\Lambda_\theta (\Gamma)$ in the partial flag variety 
$$
\mathcal F_\theta : = \left\{ (F^{i_1},\dots, F^{i_k}) :\ \mathrm{dim}\left(F^{j}\right)=j \ \text{ for all }\alpha_j\in\theta, \text{ and }F^{i_1}\subset F^{i_2} \subset\cdots\subset F^{i_k}\right\}.
$$ 
A $\Psf_\theta$-divergent subgroup  $\Gamma \subset \mathsf{PSL}(d,\mathbb K)$ is called
\emph{$\Psf_\theta$-transverse} if whenever $F, G \in \Lambda_\theta(\Gamma)$ are distinct, then $F$ and $G$ are transverse (i.e. for all $\alpha_j\in\theta$ the $j$-plane component $F^j$ of $F$ is transverse to the $(d-j)$-plane component $G^{d-j}$ of $G$). We note that in the literature, divergent groups are sometimes called regular and transverse groups are sometimes called antipodal groups (e.g. \cite{KLP1}).

Let 
$$
\mathfrak{a}_\theta:=\{{\rm diag}(a_1,\ldots,a_d) \in \mathfrak{a} :a_j=a_{j+1} \text{ for all } \alpha_j\notin\theta\}
$$
denote the  partial Cartan subspace and let 
$$
\mathfrak{a}_\theta^+:=\{{\rm diag}(a_1,\ldots,a_d) \in \mathfrak{a}_\theta :\ a_1\ge a_2\ge \cdots \ge a_d\}
$$
denote the partial positive Weyl Chamber. For $\alpha \in \Delta$, let $\omega_\alpha \in \mathfrak{a}^*$ denote the fundamental weight associated to $\alpha$. One can check that $\{\omega_{\alpha}|_{\mathfrak{a}_\theta}\}_{\alpha\in\theta}$ is a basis of  $\mathfrak{a}_\theta^*$. Then there is a well-defined partial Cartan projection $\kappa_\theta:\mathsf{PSL}(d,\mathbb K)\to\mathfrak{a}_\theta$ with the defining property that 
$$
\omega_\alpha(\kappa(g))=\omega_\alpha(\kappa_\theta(g))
$$ 
for all $\alpha\in\theta$ and $g\in\mathsf{PSL}(d,\mathbb K)$.

Quint \cite{quint-ps} proved that there exists a cocycle $B_\theta:\mathsf{PSL}(d,\mathbb K)\times\mathcal F_\theta\to\mathfrak{a}_\theta$, called the \emph{partial Iwasawa cocycle}, with the defining property that if $g \in \PSL(d, \Kb)$, $F \in \Fc_\theta$ and $\alpha_j \in \theta$, then 
$$
\omega_{\alpha_j}(B_\theta(g,F))=\log\frac{\norm{\left(\bigwedge^jg\right)(v)}}{\norm{v}}
$$
for any $v\in\bigwedge^jF^j-\{0\}$, where $\bigwedge^j$ is the $j$-th exterior power, and $\norm{\cdot}$ denotes both the standard norm on $\Kb^d$ and the induced norm on $\bigwedge^j\Kb^d$.

Using this cocycle we can define conformal measures and Patterson-Sullivan measures. 

\begin{definition} \label{PSdefintro}
Given $\phi\in\mathfrak{a}_\theta^*$ and a $\Psf_\theta$-divergent group $\Gamma \subset \PSL(d,\Kb)$, a probability measure $\mu$ on $\mathcal F_\theta$ is called a \emph{$\phi$-conformal measure for $\Gamma$ of dimension $\beta$} if  for any $\gamma \in \Gamma$, the measures $\mu, \gamma_*\mu$ are absolutely continuous and 
$$
\frac{d\gamma_*\mu}{d\mu}=e^{-\beta\phi(B_\theta(\gamma^{-1},\cdot))}
$$
almost everywhere. If, in addition, ${\rm supp}(\mu) \subset \Lambda_\theta(\Gamma)$, then we say that $\mu$ is a \emph{$\phi$-Patterson-Sullivan measure}. 
\end{definition} 

In our setting, we do not assume that $\Gamma$ has any irreducibility properties and so there can exist many non-interesting conformal densities, e.g. if $\Gamma$ fixes a flag $F \in \Fc_\theta$, then a Dirac measure centered at $F$ will be a conformal measure of dimension zero. Hence to develop an interesting theory in the setting of (non-irreducible) transverse groups, it is reasonable to restrict to the setting where the measure is supported on the limit set. 

Given a discrete subgroup $\Gamma \subset \PSL(d,\Kb)$ and $\phi \in \mathfrak{a}^*_\theta$, let $\delta^\phi(\Gamma)$ be the
(possibly infinite)  critical exponent of the Poincar\'e series
$$
Q_\Gamma^\phi(s)=\sum_{\gamma\in\Gamma} e^{-s\phi(\kappa_\theta(\gamma))},
$$
that is $ \delta^\phi(\Gamma) \in [0,+\infty]$ is the unique non-negative number where $Q_\Gamma^\phi(s)$ converges when $s > \delta^\phi(\Gamma)$ and diverges when $s < \delta^\phi(\Gamma)$. If $\Gamma\subset\mathsf{PO}(d,1)\subset\mathsf{PSL}(d+1,\mathbb R)$ is  a discrete group, then the traditional Busemann cocycle is $B_{\alpha_1}$,
the traditional Poincar\'e series is simply $Q_\Gamma^{\alpha_1}$ and classical Patterson-Sullivan measures are $\alpha_1$-Patterson-Sullivan measures in our language.

The standard proof, originating in work of Patterson \cite{patterson}, implies that 
if $\Gamma\subset\mathsf{PSL}(d,\mathbb K)$ is $\Psf_\theta$-divergent, $\phi\in \mathfrak{a}^*_\theta$ and $\delta^\phi(\Gamma) < +\infty$, then there exists 
a $\phi$-Patterson-Sullivan measure  for $\Gamma$ of dimension $\delta^\phi(\Gamma)$, see  Proposition \ref{prop: Patterson-Sullivan}. Dey and Kapovich \cite{dey-kapovich} previously established the same
result in the slightly more restrictive setting when $\phi$ is positive on the entire partial Weyl chamber $\mathfrak{a}_\theta^+$. It is straightforward to show that
if $\Gamma$ is $\Psf_\theta$-divergent and $\phi$ is positive on the $\theta$-Benoist limit cone, then $\delta^\phi(\Gamma) < +\infty$, see Proposition \ref{finite critexp}.

\medskip

One immediate consequence of the existence of Patterson-Sullivan measures is a criterion for 
when there is strict inequality between the critical exponent associated to a transverse  group and a subgroup. The study of this ``entropy gap'' was initiated by Brooks \cite{brooks} in the setting of convex cocompact 
Kleinian groups. Coulon, Dal'bo and Sambusetti \cite{CDS} showed that if $\Gamma$ admits a  cocompact, properly discontinuous action on a $\mathrm{CAT}(-1)$-space, then a subgroup of $\Gamma$ has strictly smaller critical exponent if and only if is is co-amenable. The most general current results are due to Coulon, Dougall, Schapira and Tapie  \cite{CDST} who work in the setting of strongly positively recurrent actions on Gromov hyperbolic spaces. Our criterion is obtained using techniques due to  Dal'bo, Otal and Peign\'e  \cite{DOP} .

\begin{theorem}[{see Theorem~\ref{entropy drop}}]
\label{entropy drop intro}
Suppose $\Gamma\subset\mathsf{PSL}(d,\mathbb K)$ is a non-elementary $\Psf_\theta$-transverse subgroup, $\phi\in\mathfrak{a}_\theta^*$ and $\delta^\phi(\Gamma) < +\infty$.
If $G$ is a subgroup of $\Gamma$ such that $Q_G^\phi(\delta^\phi(G))=+\infty$ and $\Lambda_\theta(G)$ is a proper subset of $\Lambda_\theta(\Gamma)$, then
$$\delta^\phi(\Gamma)>\delta^\phi(G).$$
\end{theorem}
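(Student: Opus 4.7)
My plan is to argue by contradiction in the spirit of Dal'bo, Otal, and Peign\'e~\cite{DOP}: assume $\delta := \delta^\phi(\Gamma) = \delta^\phi(G)$ and deduce that $Q_G^\phi(\delta)$ must in fact converge, contradicting the hypothesis.

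Using the existence statement for Patterson--Sullivan measures (Proposition~\ref{prop: Patterson-Sullivan}), fix a $\phi$-Patterson--Sullivan measure $\mu$ for $\Gamma$ of dimension $\delta$. Since $\Gamma$ is non-elementary and $\Psf_\theta$-transverse, the $\Gamma$-action on $\Lambda_\theta(\Gamma)$ is minimal and $\mathrm{supp}(\mu)=\Lambda_\theta(\Gamma)$. Choose $F_0\in\Lambda_\theta(\Gamma)\setminus\Lambda_\theta(G)$ and an open neighborhood $U$ of $F_0$ with $\overline U\cap\Lambda_\theta(G)=\emptyset$; then $\mu(U)>0$ and, by compactness of $\mathcal F_\theta$, the sets $\overline U$ and $\Lambda_\theta(G)$ are at positive distance.

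The core of the argument is to sandwich $\sum_{g\in G}\mu(gU)$ between two estimates. For the lower bound, I would invoke a partial-flag shadow lemma: the repelling flag of every sufficiently large $g\in G$ lies in any prescribed neighborhood of $\Lambda_\theta(G)$, while $\overline U$ stays uniformly away from $\Lambda_\theta(G)$, and the standard transversality computation for the Cartan decomposition then yields a uniform bound $|\phi(B_\theta(g,F))-\phi(\kappa_\theta(g))|\leq C$ for all $g\in G$ and $F\in U$. Combined with the conformality identity
\[
\mu(gU) = \int_U e^{-\delta\phi(B_\theta(g,F))}\,d\mu(F),
\]
this gives $\mu(gU)\geq c\,e^{-\delta\phi(\kappa_\theta(g))}$ for a constant $c=c(U)>0$ and all $g\in G$. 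For the upper bound, the convergence dynamics of the $\Psf_\theta$-divergent group $G$, inherited from $\Gamma$, together with $\overline U\cap\Lambda_\theta(G)=\emptyset$, force $hU\cap U=\emptyset$ for all but finitely many $h\in G$: large translates $hU$ sit inside arbitrarily small neighborhoods of $\Lambda_\theta(G)$ and therefore miss $U$. A standard translation argument then produces a uniform pointwise bound $\sum_{g\in G}\mathbf 1_{gU}\leq N<\infty$, so integrating against $\mu$ gives $\sum_{g\in G}\mu(gU)\leq N\mu(\mathcal F_\theta)=N$.

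Combining the two bounds yields $c\,Q_G^\phi(\delta)\leq N$, hence $Q_G^\phi(\delta)<+\infty$, contradicting the hypothesis $Q_G^\phi(\delta^\phi(G))=+\infty$. The step I expect to be the main obstacle is the uniform cocycle estimate in the lower bound: passing from the set-theoretic disjointness $\overline U\cap\Lambda_\theta(G)=\emptyset$ to uniform transversality of $\overline U$ with the repelling flags of \emph{all} $g\in G$ requires a quantitative analysis of the Cartan decomposition of sequences escaping to infinity in $G$. Once this uniformity is secured, the bounded-overlap step is a direct consequence of convergence dynamics, and the final contradiction is purely formal.
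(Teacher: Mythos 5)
Your proposal is correct and follows essentially the same route as the paper's proof of Theorem~\ref{entropy drop}: take a $\phi$-Patterson--Sullivan measure $\mu$ of dimension $\delta$ for $\Gamma$, choose an open set $W$ with $\overline W\cap\Lambda_\theta(G)=\emptyset$, obtain the lower bound $\mu(gW)\gtrsim e^{-\delta\phi(\kappa_\theta(g))}$ via Lemma~\ref{quintlemma}, obtain bounded overlap of the translates $\{gW\}$ from the convergence dynamics of Proposition~\ref{prop:characterizing convergence in general symmetric case}, and sum to contradict divergence. The technical point you flag as the main obstacle (uniform transversality of $\overline W$ with the repelling flags $U_\theta(g^{-1})$ for all but finitely many $g\in G$) is handled in the paper exactly as you anticipate: since $\Lambda_\theta(G)$ is the accumulation set of $\{U_\theta(g):g\in G\}$ and $\Lambda_\theta(\Gamma)$ is a transverse set, compactness of $\overline W$ together with its disjointness from $\Lambda_\theta(G)$ forces a finite exceptional set $\Sc\subset G$ and an $\epsilon>0$ with $\d_{\Fc_\theta}(F,\Zc_{U_\theta(g)})\ge\epsilon$ for all $F\in W$ and $g\in G\setminus\Sc$.
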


In the setting of Anosov groups, we see that there is always an entropy gap for infinite index, quasiconvex subgroups.

\begin{corollary}[{see Corollary~\ref{entropy drop anosov}}]
\label{anosov-drop}
Suppose $\Gamma\subset\mathsf{PSL}(d,\Kb)$ is a non-elementary $\Psf_\theta$-Anosov subgroup and
$G$  is an infinite index quasiconvex subgroup of $\Gamma$. If $\phi\in\mathfrak{a}_\theta^*$ and $\delta^\phi(\Gamma)<+\infty$, then $\delta^\phi(\Gamma)>\delta^\phi(G).$
\end{corollary}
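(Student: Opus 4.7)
The plan is to deduce the corollary directly from Theorem~\ref{entropy drop intro}; the task reduces to verifying its two non-trivial hypotheses for the subgroup $G$, namely $\Lambda_\theta(G)\subsetneq\Lambda_\theta(\Gamma)$ and $Q_G^\phi(\delta^\phi(G))=+\infty$. Once both are in place, Theorem~\ref{entropy drop intro} applied to the non-elementary $\Psf_\theta$-transverse group $\Gamma$ and the subgroup $G$ immediately yields $\delta^\phi(\Gamma)>\delta^\phi(G)$.

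First I would dispose of the proper-containment statement. A quasiconvex subgroup of a $\Psf_\theta$-Anosov group is itself $\Psf_\theta$-Anosov: $\Gamma$ is word-hyperbolic, $G$ is undistorted in $\Gamma$ and hence word-hyperbolic, and the Anosov boundary map for $\Gamma$ restricts to an Anosov boundary map for $G$. Through this boundary embedding $\Lambda_\theta(G)$ is identified with the image of $\partial_\infty G$ inside $\partial_\infty\Gamma$. Since $\Gamma$ is non-elementary word-hyperbolic and $G$ is a quasiconvex subgroup of infinite index, classical hyperbolic-group theory gives $\partial_\infty G\subsetneq\partial_\infty\Gamma$, hence $\Lambda_\theta(G)\subsetneq\Lambda_\theta(\Gamma)$.

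Next I would verify divergence at the critical exponent. The inclusion $G\subset\Gamma$ gives $\delta^\phi(G)\le\delta^\phi(\Gamma)<+\infty$, so $\delta^\phi(G)$ is finite and a $\phi$-Patterson--Sullivan measure for $G$ of dimension $\delta^\phi(G)$ exists by the Patterson construction recalled in the text. For $\Gamma$ non-elementary, the assumption $\delta^\phi(\Gamma)<+\infty$ forces $\phi$ to be positive on the $\theta$-Benoist limit cone of $\Gamma$ (an element of the limit cone on which $\phi$ were non-positive would contribute infinitely many orbit elements with $e^{-s\phi(\kappa_\theta(\gamma))}\ge 1$ for all $s\ge 0$), and hence on the smaller limit cone of $G$. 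With $G$ being $\Psf_\theta$-Anosov and $\phi$ positive on its limit cone, divergence $Q_G^\phi(\delta^\phi(G))=+\infty$ is a standard feature of Anosov representations, derivable from the Hopf--Tsuji--Sullivan dichotomy proved earlier in the paper via a shadow-lemma / conical-point ergodicity argument, or by invoking the thermodynamic-formalism results of Sambarino.

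The main obstacle is the divergence-type statement for $G$: the proper containment and the finiteness of $\delta^\phi(G)$ are comparatively soft, but divergence at the critical exponent is the non-trivial input using the full Anosov structure of $G$. All the pieces needed appear to be available as previously established facts in the paper (the dichotomy and Patterson construction) or as standard results in the Anosov theory.
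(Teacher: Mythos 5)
Your proof follows essentially the same route as the paper's: establish that $G$ is itself $\Psf_\theta$-Anosov (via quasiconvexity and the CLSS observation), deduce that infinite index gives $\partial_\infty G\subsetneq\partial_\infty\Gamma$ and hence $\Lambda_\theta(G)\subsetneq\Lambda_\theta(\Gamma)$, invoke Sambarino's result (Theorem~\ref{anosov crit exp finite} in the paper) for divergence at the critical exponent, and apply Theorem~\ref{entropy drop}. The only divergence from the paper's argument is a detour: you first argue that $\delta^\phi(\Gamma)<+\infty$ forces $\phi>0$ on $\Bc_\theta(\Gamma)$, transfer positivity to $\Bc_\theta(G)$, and then conclude divergence. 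This detour is unnecessary, since $\delta^\phi(G)\le\delta^\phi(\Gamma)<+\infty$ already holds directly and the ``moreover'' clause of Theorem~\ref{anosov crit exp finite} gives divergence for Anosov $G$ immediately from finiteness of $\delta^\phi(G)$. Moreover, your heuristic justification for the positivity step is not quite rigorous as written: if $\phi$ vanishes (rather than is strictly negative) on a nonzero limit cone direction $H$, the defining sequence satisfies $t_n\phi(\kappa_\theta(\gamma_n))\to 0$ with $t_n\searrow 0$, which does not by itself produce infinitely many $\gamma$ with $\phi(\kappa_\theta(\gamma))$ bounded. The equivalence (1)$\Leftrightarrow$(2) in Theorem~\ref{anosov crit exp finite} is a genuine theorem, not a soft observation; better to just cite it, or better still skip the step entirely as the paper does.
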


\medskip

For Fuchsian and Kleinian groups, there is a stark contrast in the dynamics of the action of the group which depends on whether or not the Poincar\'e series
diverges at its critical exponent. The analysis of this contrast is known as the Hopf-Tsuji-Sullivan dichotomy and has many aspects. We obtain a version of this dichotomy for transverse groups.

To state the dichotomy precisely we need a few more definitions.  A $\Psf_\theta$-transverse subgroup  $\Gamma \subset \mathsf{PSL}(d,\mathbb R)$ acts on its limit set $ \Lambda_\theta(\Gamma)$ as a convergence group (see \cite[Section 5.1]{KLP1} or \cite[Proposition 3.3]{CZZ2}), 
and hence one can define the set of conical limit points $ \Lambda_\theta^{\rm con}(\Gamma) \subset \Lambda_\theta(\Gamma)$. In the case when $\Gamma$ is $\Psf_\theta$-Anosov, $ \Lambda_\theta^{\rm con}(\Gamma) =\Lambda_\theta(\Gamma)$. We also let $ \Lambda_\theta(\Gamma)^{(2)} \subset  \Lambda_\theta(\Gamma)^2$ denote the space of pairs of transverse flags in the limit set.

Let $\iota : \mathfrak{a} \rightarrow \mathfrak{a}$ be the involution given by 
$$
\iota\big({\rm diag}(a_1,a_2\dots,a_d)\big)={\rm diag}(-a_d,-a_{d-1},\dots,-a_1).
$$ 
Then given $\phi \in \mathfrak{a}_\theta^*$, let $\bar{\phi} := \phi \circ \iota\in \mathfrak{a}_\theta^*$. More explicitly, if $\phi = \sum_{\alpha_j \in \theta} b_j \omega_{\alpha_j}$, then $\bar{\phi} = \sum_{\alpha_j \in \theta} b_{d-j} \omega_{\alpha_j}$.

The following theorem is our version of the Hopf-Sullivan-Tsuji dichotomy for transverse groups.

\begin{theorem}[{see Proposition \ref{prop: consequences of shadow lemma}, Proposition \ref{prop: support on conical limit set}, Corollary \ref{cor: ergodicity} and Corollary \ref{cor:uniqueness}}] \label{thm:dichotomy}Suppose $\Gamma\subset\mathsf{PSL}(d,\mathbb K)$ is a non-elementary $\Psf_\theta$-transverse subgroup, $\phi\in \mathfrak{a}^*_\theta$ and \hbox{$\delta:=\delta^\phi(\Gamma)=\delta^{\bar\phi}(\Gamma) < +\infty$.} Let $\mu$ be a $\phi$-Patterson-Sullivan measure of dimension $\beta$ for $\Gamma$ and let $\bar{\mu}$ be a 
$\bar{\phi}$-Patterson-Sullivan measure of dimension $\beta$ for $\Gamma$.
Then $\beta\ge\delta$ and we have the following dichotomy:
\begin{itemize}
\item If 
$Q_\Gamma^\phi(\beta)=+\infty$, then $\beta=\delta$, and  $\mu$ and $\bar{\mu}$ are the unique Patterson-Sullivan measures of dimension $\delta$. Moreover:
\begin{enumerate}
\item $\mu( \Lambda_\theta^{\rm con}(\Gamma)) = \bar{\mu}( \Lambda_\theta^{\rm con}(\Gamma))=1$. In particular, $\mu$ and $\bar{\mu}$ have no atoms. 
\item The action of $\Gamma$ on $(\Lambda_\theta(\Gamma)^{(2)}, \bar\mu \otimes \mu)$ is conservative.
\item The action of $\Gamma$ on $(\Lambda_\theta(\Gamma)^{(2)}, \bar\mu \otimes \mu)$ is ergodic. 
\item The action of $\Gamma$ on $(\Lambda_\theta(\Gamma), \mu)$ is ergodic. 
\end{enumerate} 
\item If $Q_\Gamma^\phi(\beta) < +\infty$, then:
\begin{enumerate}[label=(\Roman*)]
\item $\mu( \Lambda_\theta^{\rm con}(\Gamma))=\bar{\mu}( \Lambda_\theta^{\rm con}(\Gamma)) = 0$.
\item The action of $\Gamma$ on $(\Lambda_\theta(\Gamma)^{(2)}, \bar\mu \otimes \mu)$ is dissipative.
\item The action of $\Gamma$ on $(\Lambda_\theta(\Gamma)^{(2)}, \bar\mu \otimes \mu)$ is non-ergodic.
\end{enumerate}
\end{itemize}
\end{theorem}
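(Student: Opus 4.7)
The proof assembles the conclusions of Propositions \ref{prop: consequences of shadow lemma} and \ref{prop: support on conical limit set} and Corollaries \ref{cor: ergodicity} and \ref{cor:uniqueness}, all of which rest on a global shadow lemma for $\mu$ and $\bar\mu$. I take as established the estimate
$$
\mu(\Oc_R(o,\gamma o)) \asymp e^{-\beta\phi(\kappa_\theta(\gamma))}
$$
(and the analogue for $\bar\mu$) for suitable shadows $\Oc_R \subset \mathcal{F}_\theta$ defined via the partial Iwasawa cocycle $B_\theta$, together with a bounded-multiplicity statement: each $F\in\mathcal{F}_\theta$ lies in at most $N$ shadows $\Oc_R(o,\gamma o)$ with $\|\kappa_\theta(\gamma)\|$ in any unit interval.

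For the bound $\beta\geq\delta$, summing the shadow upper bound over each shell and using $\mu(\mathcal{F}_\theta)=1$ yields a uniform estimate $\sum_{\gamma:\,T\leq\phi(\kappa_\theta(\gamma))<T+1} e^{-\beta\phi(\kappa_\theta(\gamma))}\leq CN$, which bounds the $\phi$-exponential growth rate of $\Gamma$ by $\beta$ and hence forces $\delta\leq\beta$. The convergence case then falls out of the convergent direction of Borel-Cantelli applied to the shadow events $\{F\in\Oc_R(o,\gamma o)\}$: when $Q_\Gamma^\phi(\beta)<+\infty$, the shadow lemma gives $\sum_\gamma \mu(\Oc_R(o,\gamma o))<+\infty$, forcing $\mu$-a.e. flag to lie in only finitely many shadows, which is exactly (I); dissipativity (II) follows from (I) applied jointly to $\mu$ and $\bar\mu$ once one identifies the conservative part of $\Lambda_\theta(\Gamma)^{(2)}$ with the conical pairs, and non-ergodicity (III) is then automatic from (II).

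The divergence case is the substantive direction and follows a Hopf-Tsuji-Sullivan template adapted to the partial flag setting. Statement (1), $\mu(\Lambda_\theta^{\rm con}(\Gamma))=1$, I would prove by a shadow-covering argument: when $Q_\Gamma^\phi(\beta)=+\infty$, bounded multiplicity within shells forces the $\limsup$ of the shadow events to have full $\mu$-measure, and this $\limsup$ coincides with $\Lambda_\theta^{\rm con}(\Gamma)$; atomicity of $\mu$ is excluded because any atom would have to sit on a non-conical fixed point. Conservativity (2) then follows from (1) using recurrence of conical limit points under the convergence-group action on $\Lambda_\theta(\Gamma)^{(2)}$. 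Ergodicity (3) is the main obstacle: in the absence of a canonical geodesic flow, I would run a Hopf ratio argument using $B_\theta$ to produce the Radon-Nikodym cocycle for the $\Gamma$-action on $(\Lambda_\theta(\Gamma)^{(2)}, \bar\mu\otimes\mu)$, and show that any $\Gamma$-invariant bounded function is essentially constant by sliding along the $\mathfrak{a}_\theta$-direction of the cocycle at a conical pair and invoking exact $\phi$-conformality of $\mu$, $\bar\phi$-conformality of $\bar\mu$, and the shadow lemma to control conditional densities on small neighbourhoods of conical flags. Ergodicity on $\Lambda_\theta(\Gamma)$ (4) then follows from (3) by pushing forward along the second-coordinate projection and using that $\bar\mu$ has full conical support, so that $\bar\mu$-almost every first coordinate admits a positive-measure transverse slice of second coordinates.

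For uniqueness and the identification $\beta=\delta$: the hypothesis $Q_\Gamma^\phi(\beta)=+\infty$ combined with $\beta\geq\delta$ and monotonicity of the Poincaré series in $s$ forces $\beta=\delta$. Given a second $\phi$-Patterson-Sullivan measure $\nu$ of dimension $\delta$, both $\mu$ and $\nu$ are $\phi$-conformal of equal dimension, so the Radon-Nikodym derivative $d\nu/d(\mu+\nu)$ is $\Gamma$-invariant on $\Lambda_\theta(\Gamma)$, hence $(\mu+\nu)$-a.e. constant by (4), which forces $\nu=\mu$; the same argument with $\bar\phi$ in place of $\phi$ handles $\bar\mu$.
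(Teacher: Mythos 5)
Your outline matches the paper's architecture at a high level---shell decomposition plus bounded multiplicity for $\beta\ge\delta$, first Borel--Cantelli for the convergent case, an ergodicity-based uniqueness argument---but the two substantive steps in the divergence case are not actually proved, and in both places the missing idea is precisely what the paper builds its machinery to supply.

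The argument for statement~(1) is not valid as stated. Writing $\gamma \mapsto A_\gamma := \Oc_R(o,\gamma o)$, you assert that bounded shell-wise multiplicity together with $\sum_\gamma \mu(A_\gamma)=+\infty$ forces $\mu(\limsup A_\gamma)=1$. Bounded multiplicity gives an upper bound $\sum_{\gamma\in S_k}\mu(A_\gamma)\le N$ in each shell $S_k$, but this does not yield positive measure of the limsup: one can have $\sum\mu(A_\gamma)=+\infty$, shell-multiplicity $1$, and $\limsup A_\gamma$ a single point of measure zero (shells partitioning shrinking intervals is the standard counterexample). The convergence half of Borel--Cantelli requires no correlation input, but the divergence half does. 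The paper proves $\mu(\Lambda_\theta^{\rm con}(\Gamma))=1$ via the Kochen--Stone lemma, i.e.\ a second-moment Borel--Cantelli, and the hard content is exactly the quasi-independence estimate $\sum_{m,n\le N}\mu(A_n\cap A_m)\lesssim\big(\sum_{n\le N}\mu(A_n)\big)^2$. That estimate rests on the coarse additivity $\kappa_\theta(\rho(\eta))\approx\kappa_\theta(\rho(\gamma))+\kappa_\theta(\rho(\gamma^{-1}\eta))$ along geodesics (Proposition~\ref{prop:multiplicative along geodesics}) applied whenever two shadows overlap, together with a careful control of ``boundary'' shells (Lemma~\ref{lem: estimates on annuli like sets}). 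Your sketch has no analogue of this, and you would need one: it is the engine of the divergence case.

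The ergodicity argument~(3) is also a sketch rather than a proof, and it elides the main technical point. You write that you would ``run a Hopf ratio argument\ldots\ in the absence of a canonical geodesic flow'' and ``slide along the $\mathfrak{a}_\theta$-direction of the cocycle.'' The paper's resolution is that there \emph{is} a canonical flow: realize $\Gamma$ as the image of a $\Psf_\theta$-transverse representation $\rho:\Gamma_0\to\GG$ of a projectively visible group $\Gamma_0\subset\Aut(\Omega)$ (Theorem~\ref{thm:transverse image of visible general}), pull back the Patterson--Sullivan pair to build a Bowen--Margulis--Sullivan measure on $\wh{\Usf}(\Gamma_0)\subset\Gamma_0\backslash T^1\Omega$, prove conservativity of the flow from $\mu(\Lambda_\theta^{\rm con}(\Gamma))=1$, and invoke Coud\`ene's Hopf lemma (Theorem~\ref{thm: Coudene}) to get that every flow-invariant function is $W^{ss}$- and $W^{su}$-invariant, then use the product structure of the Hopf parametrization to conclude ergodicity of the flow, and push down to the boundary actions. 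Replacing the flow by an abstract ``Hopf ratio argument'' on the flag variety would have to rebuild all of this; ``sliding along the cocycle'' is not a substitute for the stable/unstable foliation structure that the Hilbert geodesic flow provides.

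Two smaller points: the paper derives conservativity~(2) \emph{from} ergodicity~(3) (dense orbits contradicting a wandering set), not the other way around; and the non-ergodicity~(III) in the convergent case is not ``automatic from (II)''---the paper proves it by a topological argument showing a dense flow line would have to be eventually outside any compact set, contradicting that an open set of $\wh{\Usf}(\Gamma_0)$ cannot sit inside a bounded flow segment.
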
 

Notice that if $\beta=\delta$,
then statements (1), (2) and (3) are all equivalent to $Q^\phi_\Gamma(\delta)=+\infty$, and statements (I), (II) and (III) are all equivalent to $Q^\phi_\Gamma(\delta)<+\infty$.

\medskip

The ``divergent'' case of Theorem \ref{thm:dichotomy} contains several important classes of groups. Sambarino \cite[Cor. 5.7.2]{sambarino-dichotomy} proved that for an Anosov group, the Poincar\'e series diverges whenever the critical exponent is finite (this was previously established by Lee-Oh \cite[Lem. 7.11]{lee-oh-invariant} and Dey-Kapovich \cite[Thm. A]{dey-kapovich} in certain cases). In the sequel to this paper we will prove the same result for relatively Anosov groups. 

As an application of Theorem~\ref{thm:dichotomy}, we show that if $\Gamma$ is $\Psf_\theta$-transverse, then 
the critical exponent is a concave function on the space of linear functionals which diverge at their finite critical exponent.
Moreover, we characterize exactly when it fails to be strictly concave in terms of the associated length functions. More precisely, given $\phi \in \mathfrak{a}_\theta^*$, the \emph{$\phi$-length} of $g \in \mathsf{PSL}(d,\Kb)$ is 
$$
\ell^\phi(g) := \lim_{n \rightarrow \infty} \frac{1}{n} \phi(\kappa_\theta(g^n)). 
$$

\begin{theorem}[{see Theorem~\ref{manhattan curve}}] Suppose $\Gamma\subset\mathsf{PSL}(d,\mathbb K)$ is a non-elementary $\Psf_\theta$-transverse subgroup, $\phi_1, \phi_2 \in \mathfrak{a}^*_\theta$ and $\delta^{\phi_1}(\Gamma)=\delta^{\phi_2}(\Gamma) =1$. 
If $\phi = \lambda \phi_1+(1-\lambda)\phi_2$ where $\lambda \in (0,1)$, then 
$$
\delta^{\phi}(\Gamma) \le 1. 
$$
Moreover, if  $\delta^{\phi}(\Gamma) = 1$ and $Q_\Gamma^\phi$ diverges at its critical exponent, then $\ell^{\phi_1}(\gamma) = \ell^{\phi_2}(\gamma)$ 
for all $\gamma \in \Gamma$.
\end{theorem}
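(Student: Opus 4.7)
The plan for the first assertion is H\"older's inequality applied term-by-term in the Poincar\'e series. Factoring $e^{-s\phi(\kappa_\theta(\gamma))}$ as $e^{-s\lambda\phi_1(\kappa_\theta(\gamma))}\cdot e^{-s(1-\lambda)\phi_2(\kappa_\theta(\gamma))}$ and using H\"older with conjugate exponents $1/\lambda$ and $1/(1-\lambda)$ yields
$$
Q_\Gamma^\phi(s)\le Q_\Gamma^{\phi_1}(s)^{\lambda}\,Q_\Gamma^{\phi_2}(s)^{1-\lambda}.
$$
For $s>1$ the right-hand side is finite because $\delta^{\phi_1}(\Gamma)=\delta^{\phi_2}(\Gamma)=1$, and hence $\delta^\phi(\Gamma)\le 1$.

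For the rigidity statement, assume $\delta^\phi(\Gamma)=1$ and $Q_\Gamma^\phi(1)=+\infty$. By Proposition~\ref{prop: Patterson-Sullivan} one obtains a $\phi$-Patterson-Sullivan measure $\mu$ of dimension $1$ and, for $j=1,2$, a $\phi_j$-Patterson-Sullivan measure $\nu_j$ of dimension $1$ on $\Lambda_\theta(\Gamma)$. Theorem~\ref{thm:dichotomy} tells us that $\mu$ is the unique such $\phi$-PS measure, is supported on the conical limit set, and that the $\Gamma$-action on $(\Lambda_\theta(\Gamma),\mu)$ is ergodic. The plan is then to produce a $\mu$-measurable function $f:\Lambda_\theta(\Gamma)\to\Real$ satisfying
$$
f(\gamma^{-1}\xi)-f(\xi)=(1-\lambda)(\phi_2-\phi_1)\bigl(B_\theta(\gamma^{-1},\xi)\bigr)
$$
for every $\gamma\in\Gamma$ and $\mu$-a.e.\ $\xi$. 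A natural candidate is $f=\log(d\nu_1/d\mu)$, provided one first establishes $\nu_1\ll\mu$. To get this, I would use the shadow lemma to obtain the sharp two-sided bound $\mu(\Oc_\gamma)\asymp e^{-\phi(\kappa_\theta(\gamma))}$ on suitable shadows $\Oc_\gamma$ of $\gamma\cdot o$ (the lower bound uses the divergence of $Q^\phi$ at $1$), together with the upper bound $\nu_1(\Oc_\gamma)\lesssim e^{-\phi_1(\kappa_\theta(\gamma))}$, and combine these with the H\"older comparison of Step~1 to control $d\nu_1/d\mu$ and conclude that $\nu_1\ll\mu$ on the conical limit set.

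Granted such an $f$, I would extract the length identity by iterating the coboundary equation at the attracting fixed flag $\xi^+_\gamma\in\Lambda_\theta(\Gamma)$ of a loxodromic $\gamma\in\Gamma$. Since $\xi^+_\gamma$ is fixed by every $\gamma^{-n}$, the cocycle relation gives $B_\theta(\gamma^{-n},\xi^+_\gamma)=-n\,\lambda_\theta(\gamma)$, where $\lambda_\theta(\gamma)\in\mathfrak{a}_\theta$ denotes the Jordan projection; the telescoped coboundary equation then reads $0=-n(1-\lambda)(\phi_2-\phi_1)(\lambda_\theta(\gamma))$, so $\ell^{\phi_1}(\gamma)=\ell^{\phi_2}(\gamma)$. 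The identity extends to all of $\Gamma$: loxodromic elements of a non-elementary transverse group have fixed flags in $\Lambda_\theta(\Gamma)$ and are handled directly, while elements with vanishing Jordan projection in $\mathfrak{a}_\theta$ give $\ell^{\phi_1}(\gamma)=\ell^{\phi_2}(\gamma)=0$ trivially.

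\textbf{Main obstacle.} The hardest step is producing the absolute continuity $\nu_1\ll\mu$ with a Radon-Nikodym derivative whose logarithm solves the cohomological equation globally in $\gamma$. The shadow lemma gives only two-sided comparisons up to multiplicative constants, so the divergence of $Q^\phi$ at $1$ and the ergodicity from Theorem~\ref{thm:dichotomy} must be leveraged together to upgrade shadow comparisons to genuine absolute continuity. A secondary subtlety is evaluating the $\mu$-a.e.\ function $f$ at the specific point $\xi^+_\gamma$; this either requires a Liv\v sic-type regularity argument for the coboundary, or a Birkhoff-averaging argument along $\mu$-generic orbits limiting to $\xi^+_\gamma$ which bypasses pointwise evaluation entirely.
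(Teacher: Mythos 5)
Your H\"older argument for $\delta^\phi(\Gamma)\le 1$ is exactly the paper's. For the rigidity statement, however, the route you propose has two genuine gaps, one of which is fatal to the plan as written.

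The fatal gap is the intermediate claim $\nu_1\ll\mu$. The shadow lemma gives $\mu(\Oc_\gamma)\asymp e^{-\phi(\kappa_\theta(\gamma))}$ and $\nu_1(\Oc_\gamma)\asymp e^{-\phi_1(\kappa_\theta(\gamma))}$, but the ratio $e^{(\phi-\phi_1)(\kappa_\theta(\gamma))}$ is unbounded in general (it equals $e^{-(1-\lambda)(\phi_1-\phi_2)(\kappa_\theta(\gamma))}$), so these shadow comparisons together with the H\"older inequality on the Poincar\'e series do not control $d\nu_1/d\mu$. Worse, there is a logical obstruction: under the stated hypotheses you have no a priori control over whether $Q_\Gamma^{\phi_1}$ diverges at $1$, and if it converges then Proposition~\ref{prop: consequences of shadow lemma} gives $\nu_1(\Lambda_\theta^{\rm con}(\Gamma))=0$ while Proposition~\ref{prop: support on conical limit set} gives $\mu(\Lambda_\theta^{\rm con}(\Gamma))=1$, so $\nu_1$ and $\mu$ are mutually singular, not absolutely continuous. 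In other words $\nu_1\ll\mu$ is false unless one first establishes divergence of $Q^{\phi_1}$, and your plan provides no independent route to that. The paper circumvents this by working with product measures on $\Lambda_\theta(\Gamma)^{(2)}$: using the shadow lemma on products of shadows $\Rc_r(\alpha,\beta)$ together with the weighted AM--GM inequality one gets
\[
(\nu_{\bar\phi}\otimes\nu_\phi)(\Rc_r(\alpha,\beta))\ \lesssim\ (\nu_{\bar\phi_1}\otimes\nu_{\phi_1}+\nu_{\bar\phi_2}\otimes\nu_{\phi_2})(\Rc_r(\alpha,\beta)),
\]
and a Vitali-type covering argument on shadows then upgrades this to all Borel sets. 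This establishes that $\nu_{\bar\phi}\otimes\nu_\phi$ is non-singular with respect to one of the $\nu_{\bar\phi_j}\otimes\nu_{\phi_j}$, and the conical-support contradiction then forces $Q^{\phi_j}$ to diverge. This AM--GM-on-products step is the crucial idea you are missing.

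The second gap, which you correctly flag, is the need to evaluate the coboundary $f=\log(d\nu_1/d\mu)$ at the attracting fixed flag of a loxodromic element, even though $f$ is only defined $\mu$-a.e. and $\mu$ has no atoms. The paper avoids this pointwise evaluation entirely: non-singularity of the Bowen--Margulis--Sullivan measures, together with ergodicity of the geodesic flow (Theorem~\ref{thm: ergodicity}), forces them to be proportional, and then the shadow lemma applied to the two measures on the same shadow gives the uniform bound
\[
\sup_{\gamma\in\Gamma_0}\abs{\,\phi(\kappa_\theta(\rho(\gamma)))-\phi_1(\kappa_\theta(\rho(\gamma)))\,}<+\infty,
\]
from which the length identity follows by taking $\gamma\mapsto\gamma^n$ and dividing by $n$ (Corollary~\ref{cor:non-singular BM measures}). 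This sidesteps any Liv\v{s}ic-type regularity entirely. So the general spirit of your argument, a measure-comparison leading to a coboundary, is in the right direction, but the specific implementation needs the product-measure/AM--GM trick to get off the ground, and the final length-rigidity step is best extracted from Cartan-projection estimates rather than pointwise coboundary evaluation.
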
 

We will explain in Section \ref{manhattan} why one might regard this as a variant of Burger's Manhattan Curve Theorem. By applying a result of Benoist \cite{benoist-limit-cone}, we can conclude that strict concavity holds whenever $\Gamma$ is Zariski dense.

\begin{corollary}[{see Corollary~\ref{cor:manhattan curve Z dense case}}]
Suppose $\Gamma\subset\mathsf{PSL}(d,\mathbb K)$ is Zariski dense and  $\Psf_\theta$-transverse, $\phi_1, \phi_2 \in \mathfrak{a}^*_\theta$, $\phi_1\ne\phi_2$
and $\delta^{\phi_1}(\Gamma)=\delta^{\phi_2}(\Gamma) =1$.
If $\phi = \lambda \phi_1+(1-\lambda)\phi_2$ where $\lambda \in (0,1)$ and $Q_\Gamma^\phi$ diverges at its critical exponent, then $\delta^{\phi}(\Gamma) <1$.
\end{corollary}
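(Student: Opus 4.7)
The plan is to argue by contradiction, combining the preceding Manhattan-curve theorem with Benoist's theorem on limit cones. The preceding theorem already guarantees $\delta^\phi(\Gamma) \leq 1$, so the only case I need to rule out is $\delta^\phi(\Gamma) = 1$. Assuming this equality, I would apply the ``moreover'' clause of that theorem---using the hypothesis that $Q_\Gamma^\phi$ diverges at its critical exponent---to conclude that $\ell^{\phi_1}(\gamma) = \ell^{\phi_2}(\gamma)$ for every $\gamma \in \Gamma$.

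Next I would reinterpret this length identity as a statement about Jordan projections in $\mathfrak{a}_\theta$. Writing $\lambda_\theta(g) := \lim_{n\to\infty} \frac{1}{n}\kappa_\theta(g^n)$ for the $\theta$-Jordan projection of $g \in \mathsf{PSL}(d,\mathbb K)$ (which exists and equals the image of the standard Jordan projection under $\mathfrak{a}\to\mathfrak{a}_\theta$), the definition of $\ell^\phi$ yields $\ell^\phi(g) = \phi(\lambda_\theta(g))$, so the previous identity reads
$$
(\phi_1 - \phi_2)\bigl(\lambda_\theta(\gamma)\bigr) = 0 \quad \text{for all } \gamma \in \Gamma.
$$

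To finish, I would invoke Benoist's theorem \cite{benoist-limit-cone}: since $\Gamma$ is Zariski dense in $\mathsf{PSL}(d,\mathbb K)$, the closed cone in $\mathfrak{a}$ generated by the Jordan projections of elements of $\Gamma$ has non-empty interior. Its image under the surjective linear projection $\mathfrak{a} \to \mathfrak{a}_\theta$ is the closed cone generated by $\{\lambda_\theta(\gamma) : \gamma \in \Gamma\}$, which therefore also has non-empty interior in $\mathfrak{a}_\theta$. Consequently the set of $\theta$-Jordan projections linearly spans $\mathfrak{a}_\theta$, which forces the linear functional $\phi_1 - \phi_2 \in \mathfrak{a}_\theta^*$ to vanish. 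This contradicts the hypothesis $\phi_1 \neq \phi_2$, so $\delta^\phi(\Gamma) < 1$.

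The main obstacle is conceptual rather than technical: one must verify that Benoist's theorem, which naturally lives on the full Cartan subspace $\mathfrak{a}$, transports cleanly to the partial flag setting $\mathfrak{a}_\theta$. This is immediate from the linearity and surjectivity of the projection $\mathfrak{a} \to \mathfrak{a}_\theta$, which sends cones with non-empty interior to cones with non-empty interior and commutes with Jordan projections; the rest of the argument is then a direct application of the preceding theorem.
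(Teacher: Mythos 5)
Your proposal is correct and takes essentially the same approach as the paper: argue by contradiction, apply the ``moreover'' clause of the Manhattan curve theorem to get equality of the $\phi_1$- and $\phi_2$-length functions, then use Benoist's theorem that the limit cone of a Zariski dense subgroup has non-empty interior to force $\phi_1 = \phi_2$. The only cosmetic difference is that the paper evaluates $\phi_1$ and $\phi_2$ directly on the Benoist limit cone in $\mfa$ (using the identification $\mfa_\theta^* \subset \mfa^*$ and the identity $\phi_j(\kappa_\theta(g)) = \phi_j(\kappa(g))$), whereas you project the limit cone into $\mfa_\theta$ via $p_\theta$ and conclude there; both routes are equivalent, since $p_\theta$ is a surjective linear map, hence open, so it carries a cone with non-empty interior in $\mfa$ to one with non-empty interior in $\mfa_\theta$.
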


\subsection{The geometric framework for the proofs} The key idea in our proofs is to associate to any $\Psf_\theta$-transverse group $\Gamma$ a metric space that $\Gamma$ acts on by isometries, where the boundary action of $\Gamma$ on $\Lambda_\theta(\Gamma)$ embeds into the action of $\Gamma$ on a compactification of that metric space. The metric space we construct has enough hyperbolic-like behavior that some of the classical arguments in hyperbolic geometry can be adapted to work in our setting. This approach to studying transverse groups builds upon on our earlier work in~\cite{CZZ2}. 

The metric spaces we consider in this construction are properly convex domains $\Omega \subset \Pb(\Rb^{d_0})$ endowed with their Hilbert metrics. A discrete subgroup $\Gamma_0 \subset \PSL(d_0,\Rb)$ which preserves a properly convex domain $\Omega \subset \Pb(\Rb^{d_0})$ is called \emph{projectively visible} when the limit set $\Lambda_\Omega(\Gamma_0) \subset \partial \Omega$ is $C^1$-smooth and strictly convex (precise definitions are given in Sections \ref{visibility} and \ref{BMS}). 

The class of projectively visible groups contains the class of Kleinian groups, i.e. discrete subgroups of the isometry group $\mathsf{Isom}(\Hb^d_{\Rb})$ of real hyperbolic $d$-space. This follows from the identification of $\mathsf{PO}(m,1) = \mathsf{Isom}(\Hb^d_{\Rb})$ using the Klein-Beltrami model and the fact that $\mathsf{PO}(m,1)$ preserves the unit ball in an affine chart. 

Given a projectively visible group $\Gamma_0 \subset \PSL(d_0,\Rb)$, a representation  $\rho : \Gamma_0 \rightarrow \PSL(d,\Kb)$ is called \emph{$\Psf_\theta$-transverse} if its image $\Gamma:=\rho(\Gamma_0)$ is a $\Psf_\theta$-transverse subgroup and  there exists a $\rho$-equivariant boundary map $\xi : \Lambda_\Omega(\Gamma_0) \rightarrow \Fc_\theta$ which is a homeomorphism onto $\Lambda_\theta(\Gamma)$ (again, precise definitions are given in Sections \ref{visibility} and \ref{BMS}).

To continue our analogy with hyperbolic geometry, we note that if $\Gamma \subset \mathsf{Isom}(\Hb^d_{\Rb})=\mathsf{PO}(m,1)$ is convex co-compact, then the class of $\Psf_\theta$-transverse representations of $\Gamma$ coincides with the class of $\Psf_\theta$-Anosov representations of $\Gamma$. 

In \cite{CZZ2}, we proved that any $\Psf_\theta$-transverse subgroup of $\PSL(d,\Kb)$ can be realized as the image of a $\Psf_\theta$-transverse representation. In this paper we extend this result to the general semisimple Lie group case, see Theorem \ref{thm:transverse image of visible general}. Using this perspective we will prove a version of the shadow lemma, which is one of the foundational tools in our arguments. 

Shadows in Hilbert geometries can be defined exactly as in hyperbolic geometry: Given a properly convex domain $\Omega\subset\Pb(\Rb^{d_0})$, points $b,p\in\Omega$, and  $r>0$, let $\Oc_r(b,p)$ denote the set of points $x\in\partial\Omega$ for which the projective line segment in $\overline{\Omega}$ with endpoints $b$ and $x$ intersects the open ball of radius $r$ (with respect to the Hilbert metric on $\Omega$) centered at $p$.

\begin{proposition}[{see Proposition~\ref{prop:shadow estimates}}]
Suppose $\theta \subset \Delta$ is symmetric, $\Omega \subset \Pb(\Rb^{d_0})$ is a properly convex domain, $\Gamma_0 \subset \Aut(\Omega)$ is  a non-elementary projectively visible subgroup, 
$\rho : \Gamma_0 \rightarrow \mathsf{PSL}(d,\Kb)$ a 
$\Psf_\theta$-transverse representation with limit map $\xi : \Lambda_\Omega(\Gamma_0) \rightarrow \Fc_\theta$, $\Gamma:=\rho(\Gamma_0)$, $\phi \in \mathfrak{a}^*_\theta$
and $\mu$ is a $\phi$-Patterson-Sullivan measure for $\Gamma$ of dimension $\beta$. For any $b_0 \in \Omega$, there exists $R_0$ such that: if  $r>R_0$, then
there exists $C=C(b_0,r)> 1$ so that 
\begin{align*}
C^{-1} e^{-\beta \phi(\kappa_\theta(\rho(\gamma)))} \le \mu\Big(\xi\left( \Oc_r(b_0,\gamma(b_0)) \cap \Lambda_\Omega(\Gamma_0) \right) \Big)  \le Ce^{-\beta \phi(\kappa_\theta(\rho(\gamma)))}
\end{align*}
for all $\gamma \in \Gamma_0$.
\end{proposition}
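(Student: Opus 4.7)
The proof follows the template of Sullivan's classical shadow lemma, adapted to the higher-rank Hilbert-geometric framework. The underlying idea is to use the conformality of $\mu$ to transfer the measure of the shadow near $\rho(\gamma)(b_0)$ to the measure of the ``opposite'' shadow $\Oc_r(\gamma^{-1}b_0,b_0)$, and then to relate the Iwasawa cocycle on this opposite shadow to the Cartan projection of $\rho(\gamma)$.

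The first step is a change of variables. Since the Hilbert metric on $\Omega$ is $\Gamma_0$-invariant, one has $\gamma^{-1}\cdot\Oc_r(b_0,\gamma b_0)=\Oc_r(\gamma^{-1}b_0,b_0)$; combining the $\rho$-equivariance of $\xi$ with the Radon--Nikodym formula of Definition~\ref{PSdefintro}, applied to $g=\rho(\gamma)^{-1}$, yields
$$
\mu\bigl(\xi(\Oc_r(b_0,\gamma b_0)\cap\Lambda_\Omega(\Gamma_0))\bigr)
=\int_{\xi(\Oc_r(\gamma^{-1}b_0,b_0)\cap\Lambda_\Omega(\Gamma_0))} e^{-\beta\phi(B_\theta(\rho(\gamma),F))}\, d\mu(F).
$$
The problem now splits into (a) a uniform pointwise estimate of $\phi(B_\theta(\rho(\gamma),F))$ for $F$ in the opposite shadow, and (b) uniform two-sided bounds on the $\mu$-mass of that shadow.

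For (a), I would establish that, for every $r>0$, there is a constant $C_1=C_1(b_0,r)>0$ such that
$\bigl|\phi(B_\theta(\rho(\gamma),F))-\phi(\kappa_\theta(\rho(\gamma)))\bigr|\le C_1$
whenever $\gamma\in\Gamma_0$ and $F\in\xi(\Oc_r(\gamma^{-1}b_0,b_0)\cap\Lambda_\Omega(\Gamma_0))$. Heuristically, the projective segment from $\gamma^{-1}b_0$ through $b_0$ exits $\partial\Omega$ near the attracting boundary point of $\gamma$, so $F$ lies near the attracting flag of $\rho(\gamma)$ in $\Fc_\theta$, where $B_\theta(\rho(\gamma),\cdot)$ reproduces $\kappa_\theta(\rho(\gamma))$. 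Making this quantitative is the main technical obstacle: it requires combining the $C^1$-smoothness and strict convexity of $\Lambda_\Omega(\Gamma_0)$ guaranteed by projective visibility, the $\Psf_\theta$-transversality of $\xi$, and a comparison between Hilbert-geometric displacement and singular value gaps---exactly the type of geometric comparison underlying the framework of \cite{CZZ2} and its extension developed earlier in the paper.

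For (b), the upper bound $\mu\bigl(\xi(\Oc_r(\gamma^{-1}b_0,b_0)\cap\Lambda_\Omega)\bigr)\le 1$ is immediate. For the lower bound I would argue by contradiction and compactness: if some sequence $\gamma_n\in\Gamma_0$ had these masses tending to $0$, proper discontinuity of $\Gamma_0$ on $\Omega$ would force $\gamma_n^{-1}(b_0)\to\eta$ for some $\eta\in\Lambda_\Omega(\Gamma_0)$, after passing to a subsequence. For $r$ larger than a threshold $R_0=R_0(b_0,\Omega)$, visibility of $\Lambda_\Omega(\Gamma_0)$ gives $\Oc_r(\gamma_n^{-1}b_0,b_0)\supset\Lambda_\Omega(\Gamma_0)\setminus U$ for $n$ large and any preassigned neighborhood $U$ of $\eta$. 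Non-elementarity of $\Gamma$ together with conformality of $\mu$ prevents $\mu$ from being a Dirac mass at $\xi(\eta)$, so $U$ can be chosen with $\mu(\xi(\Lambda_\Omega(\Gamma_0)\setminus U))>0$, contradicting the hypothesized collapse. Plugging (a) and (b) into the identity from the first step produces the desired comparability constant $C=C(b_0,r)>0$.
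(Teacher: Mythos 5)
Your outline matches the paper's strategy in broad strokes: push the measure through the cocycle to an ``opposite'' shadow, bound $\phi(B_\theta(\rho(\gamma),\cdot))$ near $\phi(\kappa_\theta(\rho(\gamma)))$ there, and get uniform bounds on shadow masses via compactness. (Your identity is the $\gamma \leftrightarrow \gamma^{-1}$ reflection of the paper's, which integrates $e^{-\beta\phi(B_\theta(\rho(\gamma)^{-1},\xi(x)))}$ over $\Oc_r(b_0,\gamma(b_0))$; they are equivalent by the cocycle relation.) Two points, however.

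First, what you call step (a) is the content, not a step you can defer: the paper's Lemma~\ref{lem cocycle on shadows} is exactly this estimate, and its proof is not a routine consequence of visibility and transversality. It combines Proposition~\ref{prop:multiplicative along geodesics} (coarse additivity of $\kappa_\theta\circ\rho$ when one orbit point lies within a bounded distance of the geodesic to another) with the continuity of $\bar B_\theta$ on the compactification $\Gamma\cup\Lambda_\theta(\Gamma)$ (the ``moreover'' part of Lemma~\ref{lem: limit set compactifies}, which rests on Quint's Lemma~\ref{differences}). Listing the ingredients is a correct diagnosis but leaves the actual cocycle estimate unproven.

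Second, your step (b) has a genuine gap. You assert that for $r$ above a threshold $R_0 = R_0(b_0,\Omega)$ and for \emph{any} preassigned neighborhood $U$ of $\eta$, the inclusion $\Oc_r(\gamma_n^{-1}b_0,b_0) \supset \Lambda_\Omega(\Gamma_0)\setminus U$ holds for $n$ large. This fails: as $U$ shrinks towards $\eta$, the quantity $\d_\Omega(b_0, (\eta,y)_\Omega)$ for $y$ just outside $U$ tends to $\infty$ (the geodesics $(\eta,y)_\Omega$ escape every compact set of $\Omega$ as $y\to\eta$), so no fixed $r$ can capture them. The threshold would have to depend on $U$ and on the accumulation point $\eta$, which is circular since $\eta$ comes from the contradiction sequence. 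The paper's Lemma~\ref{shadow lower bound} sidesteps this by negating the statement with radii $r_n = n \to\infty$ and masses $\le 2^{-n}$: the growing radii guarantee $\bigcup_{n\ge N}\Oc_n(z_n,b_0) \supset \Lambda_\Omega(\Gamma_0)\setminus\{z\}$ for each $N$, and the summable masses then contradict the fact that $\nu$ is supported on all of $\Lambda_\Omega(\Gamma_0)$ (by minimality, not merely non-atomicity). Your fixed-$r$ contradiction hypothesis does not yield this.
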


The transverse representations perspective also allow us to construct a dynamical system associated to a transverse group. In particular, given a transverse representation $\rho : \Gamma_0 \rightarrow \PSL(d,\Kb)$ 
of a projectively visible group $\Gamma_0 \subset \Aut(\Omega)$ we can consider the unit tangent bundle $T^1 \Omega$ of $\Omega$ (relative to the Hilbert metric) and the subspace $\Usf(\Gamma_0) \subset T^1 \Omega$ of directions where the associated projective geodesic lines has forward and backward endpoints in $\Lambda_{\Omega}(\Gamma_0)$, the limit set of $\Gamma_0$. The subspace $\Usf(\Gamma_0)$  is invariant under the geodesic flow and, by the projectively visible assumption, homeomorphic to $\Lambda^{(2)}_\Omega(\Gamma_0) \times \Rb$. We then use our Patterson-Sullivan measures to construct a Bowen-Margulis-Sullivan measure on the quotient $\Gamma_0 \backslash \Usf(\Gamma_0)$. 

This dynamical system is critical in our work. For instance to prove that the boundary actions are ergodic in Theorem~\ref{thm:dichotomy}, we use a general version of the Hopf Lemma, due to Coud\`ene~\cite{coudeneHopf}, to show that the geodesic flow is ergodic with respect to the Bowen-Margulis-Sullivan measure.

\subsection*{Historical remarks:} In this section we briefly discuss some important prior works concerning Patterson-Sullivan measures for discrete subgroups in higher rank semisimple Lie groups.
\begin{enumerate}
\item  Both Albuquerque \cite{albuquerque}  and Quint \cite{quint-ps} study Patterson-Sullivan measures in the setting of Zariski dense, discrete subgroups of a semisimple group
with finite center. Quint's measures live on flag varieties, as ours do,  while Albuquerque's lie on the visual boundary of the associated symmetric space. 
Link \cite{link-ps} showed if the ray limit set has positive measure, then the action of the group on the ray limit set is ergodic with respect to the measures constructed by Albuquerque.

\item Dey and Kapovich \cite{dey-kapovich} study Patterson-Sullivan measures in the setting of $\Psf_\theta$-Anosov subgroups. 
They proved that when $\Gamma$ is a $\Psf_\theta$-divergent subgroup and $\phi\in\mathfrak{a}_\theta^*$ is positive on $\mathfrak{a}_\theta^+$,
that there is a $\phi$-Patterson-Sullivan measure. In addition, when $\Gamma$ is $\Psf_\theta$-Anosov, they also prove that the Patterson-Sullivan measure is unique, 
the conical limit set has full measure and the action of $\Gamma$ on $\Lambda_\theta(\Gamma)$ is ergodic. 
Their approach is based heavily on studying the action of $\Gamma$ on the associated symmetric space.

\item Sambarino \cite{sambarino15,sambarino-dichotomy} used the thermodynamical formalism to provide an alternative proof of Dey and Kapovich's results for all $\phi\in\mathfrak{a}_\theta^*$ such
that $\delta^\phi(\Gamma)<\infty$. Further, he shows that the  action of $\Gamma$ 
on $\Lambda_\theta(\Gamma)^2$ is ergodic and characterizes linear functionals with critical exponent as exactly those which are strictly positive on the Benoist limit cone.
The thermodynamical formalism requires the existence of an associated dynamical system with a Markov coding and 
this is currently only known to exist for Anosov subgroups and a few other specific groups.

\item In the case when $\Gamma$ is a $\Psf_\theta$-Anosov group which is isomorphic to the fundamental group of a closed negatively curved manifold one can use the perspective in~\cite{ledrappier} to obtain nicely behaved Patterson-Sullivan measures, for details of this approach see~\cite{sambarino14}. 

\item Lee-Oh \cite{lee-oh-dichotomy} prove that if $\Gamma$ is Zariski dense and Anosov with respect to a minimal parabolic subgroup,
then any $\phi$-conformal measure of dimension $\delta^\phi(\Gamma)$ is supported on the limit set and hence a Patterson-Sullivan measure. They also show that the $\phi$-Patterson-Sullivan measure is unique. They derive their result as a consequence of a Hopf-Tsuji-Sullivan dichotomy for the maximal diagonal actions.

\item Burger-Landesberg-Lee-Oh \cite{BLLO} establish a Hopf-Tsuji-Sullivan dichotomy for the actions of discrete Zariski dense subgroups on directional limit sets 
with respect to a directional Poincar\'e series. This version of the dichotomy is different than the one we consider, for instance in Burger-Landesberg-Lee-Oh's dichotomy Anosov groups 
always fall into the convergent case when the rank of the semisimple Lie group is at least four. Using different techniques, Sambarino~\cite{sambarino-dichotomy} gave an extension of this dichotomy to more general subsets of simple roots.

\item Quint \cite{quint-ps} proves the analogue of our shadow lemma for Zariski dense groups. His proof makes crucial use of Zariski density in place of our transversality
assumption. Our shadow lemma, unlike Quint's, can be applied to transverse subgroups whose Zariski closures are not connected or not semisimple. Albuquerque \cite{albuquerque} and Link \cite{link-ps} also establish shadow lemmas in their setting. Unlike Quint \cite {quint-ps}, we only deal with real Lie groups as opposed to Lie groups over local fields. This reality assumption is needed in order for us to associate a flow space to a transverse subgroup, see Theorem \ref{thm:transverse image of visible general} and Section \ref{sec:geodesic flow definition}.

\item  Bray~\cite{Bray}, Blayac~\cite{blayac}, Zhu~\cite{zhu-ergodicity} and Blayac-Zhu~\cite{BZ} study Patterson-Sullivan measures for discrete subgroups $\Gamma \subset \PGL(d,\Rb)$ which preserve a properly convex domain $\Omega$. In their work, the measures have Radon-Nikodym derivatives which involve the Busemann functions obtained from the Hilbert metric, instead of partial Iwasawa cocycles used in other works (including this one). When such discrete subgroups $\Gamma$ are $\{\alpha_1,\alpha_{d-1}\}$-transverse (for example, when every point in the orbital limit set $\Lambda_\Omega(\Gamma)$ of $\Gamma$ is a smooth and strongly extremal point of $\partial\Omega$), the Patterson-Sullivan measures they consider are the pushforward via the natural projection $p:\Fc_{\{\alpha_1,\alpha_{d-1}\}}\to\Pb(\Rb^d)$ of some $(\omega_{\alpha_1}+\omega_{\alpha_{d-1}})$-Patterson-Sullivan measure for $\Gamma$.

\item Quint \cite{quint-ps} defined $\phi$-Patterson-Sullivan measures as the measures $\mu$ that satisfy the (almost everywhere) equation
$$
\frac{d\gamma_*\mu}{d\mu}=e^{-\phi(B_\theta(\gamma^{-1},\cdot))}
$$
instead of the equation given in Definition \ref{PSdefintro}. Notice that $\mu$ is a $\phi$-Patterson-Sullivan measure for $\Gamma$ of dimension $\beta$ in the sense of Definition \ref{PSdefintro}, if and only if $\mu$ is a $\beta\phi$-Patterson-Sullivan measure for $\Gamma$ in the sense of Quint. Furthermore, if $\psi:=\delta^\phi(\Gamma)\phi$, then $\delta^\psi(\Gamma)=1$. Thus, every $\phi$-Patterson-Sullivan measures for $\Gamma$ of dimension $\delta^\phi(\Gamma)$ in the sense of Definition \ref{PSdefintro} is a $\psi$-Patterson-Sullivan measures for $\Gamma$ in the sense of Quint for some $\psi$ such that $\delta^\psi(\Gamma)=1$.

\end{enumerate} 

\subsection*{Acknowledgements} We thank Hee Oh, Andr\'es Sambarino and Aleksander Skenderi for helpful comments on an earlier version of this paper. We also thank the referee of
the original version for many helpful suggestions which improved the readability of the manuscript and for encouraging us to give a more complete statement of
Theorem \ref{thm:dichotomy}.

\section{Background and notation}\label{sec:background} 

In this section, we recall some required background from the theory of semisimple Lie groups, as well as certain properties of discrete subgroups of semisimple Lie groups.

\subsection{Semisimple Lie groups} 
First, we recall some basic terminology and facts from the theory of semisimple Lie groups. 
\textbf{For the rest of the paper}, let $\GG$ be a connected semisimple real Lie group without compact factors and with finite center, let $\mfg$ denote the Lie algebra of $\GG$, and let $b$ be the Killing form on $\mfg$. 

Fix a \emph{Cartan involution} $\tau$ of $\mathfrak g$, i.e. an involution for which the bilinear pairing $\langle\cdot,\cdot\rangle$ on $\mathfrak g$ given by $\langle X,Y\rangle:=-b(X,\tau(Y))$ is an inner product. Let 
\[\mfg = \mfk \oplus \mfp\] 
denote the associated \emph{Cartan decomposition}, i.e. $\mathfrak k$ and $\mathfrak p$ are respectively the $1$ and $-1$ eigenspaces of $\tau$. Note that the Killing form is negative definite on $\mfk$ and positive definite on $\mathfrak{p}$, so $\mfk$ is a maximal compact Lie subalgebra of $\mathfrak g$. Let $\Ksf\subset\GG$ denote the maximal compact Lie subgroup whose Lie algebra is $\mfk$. 

Next, fix a maximal abelian subspace $\mfa \subset \mfp$, also called a \emph{Cartan subspace}. Then let
$$
\mfg = \mfg_0 \oplus \bigoplus_{\alpha \in \Sigma} \mfg_\alpha
$$
be the \emph{restricted root space decomposition} associated to $\mfa$, i.e. for any $\alpha\in\mfa^*$
\[\mathfrak g_\alpha:=\{X\in\mathfrak g:[H,X]=\alpha(H)X\text{ for all }H\in\mfa\},\] 
and 
\[\Sigma:=\{\alpha\in\mfa^*-\{0\}:\mathfrak g_\alpha\ne 0\}\] 
is the set of \emph{restricted roots}. One can verify that $\tau(\mathfrak g_\alpha)=\mathfrak g_{-\alpha}$, \cite[Chap. VI, Prop. 6.52]{Knapp}, so $\Sigma=-\Sigma$.

Next fix an element $H_0\in\mfa-\bigcup_{\alpha\in\Sigma}\ker\alpha$, and let
\[\Sigma^+:=\{\alpha\in\Sigma:\alpha(H_0)>0\}\quad\text{and}\quad\Sigma^-:=-\Sigma^+.\]
Note that $\Sigma=\Sigma^+\cup\Sigma^-$. Let $\Delta\subset\Sigma^+$ be the associated system of \emph{simple restricted roots}, i.e. $\Delta$ consists of all the elements in $\Sigma^+$ that cannot be written as a non-trivial linear combination of elements in $\Sigma^+$. Since $\Sigma$ is an abstract root system on $\mfa^*$, see \cite[Chap. VI, Cor. 6.53]{Knapp}, it follows that $\Delta$ is a basis of $\mfa^*$ and every $\alpha\in\Sigma^+$ is a non-negative (integral) linear combination of elements in $\Delta$, see \cite[Chap. III, Thm. 10.1]{Humphreys}.

\subsubsection{The Weyl group and the opposition involution} The \emph{Weyl group} of $\mfa$ is 
\[W:=\mathsf{N}_{\mathsf{K}}(\mfa) / \mathsf{Z}_{\mathsf{K}}(\mfa),\] 
where $\mathsf{N}_{\mathsf{K}}(\mfa) \subset \Ksf$ is the normalizer of $\mfa$ in $\Ksf$ and  $\mathsf{Z}_{\mathsf{K}}(\mfa) \subset \Ksf$ is the centralizer of $\mfa$ in $\Ksf$. Then $W$ is a finite group that is generated by the reflections of $\mfa$ (equipped with $\langle\cdot,\cdot\rangle$) about the kernels of the restricted roots in $\Delta$, see \cite[Chap. VI, Thm. 6.57]{Knapp}. As such, $W$ acts transitively on the set of \emph{Weyl chambers}, that is the closure of the components of 
$$
\mfa - \bigcup_{\alpha \in \Sigma} \ker \alpha. 
$$
Of these, we refer to
$$
\mfa^+ := \{ X \in \mfa : \alpha(X)\ge 0 \text{ for all } \alpha \in \Delta\}
$$
as the \emph{positive Weyl chamber}.

In $W$, there exists a unique element $w_0$, called the \emph{longest element}, such that 
\begin{equation*}
w_0(\mfa^+) = -\mfa^+.
\end{equation*}
We can then define an involution $\iota : \mfa \rightarrow \mfa$ by $\iota(H) = -w_0 \cdot H$. This is known as the \emph{opposition involution}, and has the following properties. 

\begin{observation} \label{obs: opposition involution}\
\begin{enumerate}
\item If $k_0\in \mathsf{N}_{\mathsf{K}}(\mfa)$ is a representative of the longest element $w_0 \in W$, then
\begin{equation}
\label{eqn:action of longest word} 
{\rm Ad}(k_0) \mfg_\alpha = \mfg_{-\iota^*(\alpha)}
\end{equation}
for all $\alpha \in \Sigma$.
\item $\iota^*(\Delta) = \Delta$. 
\end{enumerate}
\end{observation}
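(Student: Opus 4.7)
The plan is to prove part (1) by invoking the standard equivariance of restricted root spaces under the Weyl group action, and then to deduce part (2) from the fact that $\iota$ preserves the positive Weyl chamber $\mfa^+$.

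For part (1), I would start from the elementary computation that for any $k \in \mathsf{N}_\Ksf(\mfa)$ representing an element $w \in W$, for any $H \in \mfa$ and any $X \in \mfg_\alpha$,
$$[H,\, \ad(k)X] \;=\; \ad(k)\bigl[\ad(k^{-1})H,\, X\bigr] \;=\; \alpha(w^{-1}\!\cdot H)\,\ad(k) X,$$
so $\ad(k)\mfg_\alpha = \mfg_{w\cdot\alpha}$, where the dual action on $\mfa^*$ is $(w\cdot\alpha)(H):=\alpha(w^{-1}\!\cdot H)$; note this action is independent of the chosen representative, since $\ad(\mathsf{Z}_\Ksf(\mfa))$ preserves each root space. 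Specializing to $w=w_0$, I would then observe that $w_0^2 \in W$ sends $\mfa^+$ to itself and invoke the simple transitivity of $W$ on Weyl chambers to conclude $w_0^2=\mathrm{id}$, so $w_0^{-1}=w_0$. Combined with the defining relation $\iota(H)=-w_0\!\cdot H$, this gives
$$(w_0\cdot\alpha)(H)\;=\;\alpha(w_0 H)\;=\;-\alpha(\iota H)\;=\;-\iota^*(\alpha)(H),$$
and hence $\ad(k_0)\mfg_\alpha=\mfg_{-\iota^*(\alpha)}$, proving (1).

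For part (2), I would first note that $\iota$ preserves $\mfa^+$: from $w_0(\mfa^+)=-\mfa^+$ we get $\iota(\mfa^+) = -w_0(\mfa^+) = \mfa^+$. Dually, $\iota^*$ preserves the cone of linear functionals on $\mfa$ that are non-negative on $\mfa^+$; since part~(1) already shows $\iota^*(\Sigma)=\Sigma$, this forces $\iota^*(\Sigma^+)\subseteq\Sigma^+$. Now the elements of $\Delta$ are characterized inside $\Sigma^+$ as those positive roots that cannot be written as a sum of two elements of $\Sigma^+$, a property that is preserved under any linear bijection stabilizing $\Sigma^+$. Hence $\iota^*(\Delta)\subseteq\Delta$, and because $\iota^*$ is an involution this inclusion is an equality.

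The observation is essentially bookkeeping from the standard theory of restricted root systems, so I do not expect any serious obstacle. The two places where a little care is needed are tracking the sign in the identification $w_0\cdot\alpha=-\iota^*(\alpha)$ (which is precisely what motivates the definition of the opposition involution), and citing the simple transitivity of $W$ on Weyl chambers to upgrade ``$w_0^2$ fixes $\mfa^+$'' to $w_0^2=\mathrm{id}$.
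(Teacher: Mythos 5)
Your proof is correct and supplies the standard argument for what the paper records as an unproved observation. The key steps --- the equivariance $\operatorname{Ad}(k)\mfg_\alpha = \mfg_{w\cdot\alpha}$ (with well-definedness over the choice of representative), the identity $w_0^2=\mathrm{id}$ via simple transitivity of $W$ on Weyl chambers, the sign bookkeeping $w_0\cdot\alpha = -\iota^*\alpha$, and the deduction that $\iota^*$ preserves $\Sigma^+$ and hence the indecomposables $\Delta$ --- are exactly what the authors are implicitly relying on, so there is nothing substantive to contrast.
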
 

\subsubsection{Parabolic subgroups and flag manifolds}

Given a subset $\theta \subset \Delta$, the \emph{parabolic subgroup associated to $\theta$}, denoted by $\Psf_\theta=\Psf_\theta^+ \subset \GG$, is the normalizer of 
 $$
\mfu_\theta=\mfu_\theta^+ :=  \bigoplus_{\alpha \in \Sigma^+_\theta} \mfg_\alpha
$$
where $\Sigma^+_\theta := \Sigma^+ \setminus {\rm Span}( \Delta \setminus \theta)$. The \emph{flag manifold associated to $\theta$} is
\[\Fc_\theta=\Fc_\theta^+:=\mathsf{G}/\mathsf{P}_\theta.\]
Similarly, the \emph{standard parabolic subgroup opposite to $\Psf_\theta$}, denoted by $\Psf_\theta^-$, is the normalizer of 
 $$
\mfu_\theta^- :=  \bigoplus_{\alpha \in \Sigma^+_\theta} \mfg_{-\iota^*(\alpha)},
$$
and the \emph{standard flag manifold opposite to $\Fc_\theta$} is  
$$
\Fc_\theta^- := \GG / \Psf_\theta^-. 
$$
Notice that if $k_0\in \mathsf{N}_{\Ksf}(\mfa)$ is a representative of the longest element $w_0\in W$, then Equation~\eqref{eqn:action of longest word} implies that 
\begin{equation}\label{eqn:opposite conjugate form} 
k_0 \Psf_\theta^\pm k_0^{-1} =k_0^{-1} \Psf_\theta^\pm k_0 = \Psf_{\iota^*(\theta)}^\mp.
\end{equation}

We say that two flags $F_1 \in \Fc_\theta^+$ and $F_2 \in \Fc_\theta^-$ are \emph{transverse} if $(F_1, F_2)$ is contained in the $\GG$-orbit of $(\Psf_\theta^+, \Psf_\theta^-)$ in $\Fc_\theta^+ \times \Fc_\theta^-$. Then for any flag $F\in\Fc_\theta^\pm$, let $\Zc_F \subset \Fc_\theta^\mp$ denote the set of flags that are not transverse to $F$. One can verify that the set of transverse pairs in $\Fc_\theta^+ \times \Fc_\theta^-$ is an open and dense subset, so $\Zc_F$ is a closed subset with empty interior. Furthermore, $\Zc_F=\Zc_{F'}$ if and only if $F=F'$.

\subsubsection{Cartan projection} Let $\kappa : \GG \rightarrow \mfa^+$ denote the \emph{Cartan projection}, that is $\kappa(g) \in \mfa^+$ is the unique element such that 
$$
g = m e^{\kappa(g)} \ell
$$
for some $m, \ell \in \Ksf$ (in general $m$ and $\ell$ are not uniquely determined by $g$). Such a decomposition $g = m e^{\kappa(g)} \ell$ is called a \emph{$\mathsf{KAK}$-decomposition} of $g$, see \cite[Chap. IX, Thm. 1.1]{Helgason}. Since $\iota(-\mfa^+) = \mfa^+$, we have the following observation. 

\begin{observation}\label{obs: opposition involution 2}
$\iota( \kappa(g)) = \kappa(g^{-1})$ for all $g \in \GG$. 
\end{observation}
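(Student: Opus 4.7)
The plan is to extract both sides of the identity from a single $\mathsf{KAK}$-decomposition of $g$, using a representative $k_0 \in \mathsf{N}_{\Ksf}(\mfa)$ of the longest Weyl group element $w_0$ to conjugate the negative Cartan part back into the positive Weyl chamber.

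First, I would fix a $\mathsf{KAK}$-decomposition $g = m \, e^{\kappa(g)} \, \ell$ with $m, \ell \in \Ksf$ and $\kappa(g) \in \mfa^+$, which exists by \cite[Chap. IX, Thm. 1.1]{Helgason}. Inverting gives
\[
g^{-1} = \ell^{-1} \, e^{-\kappa(g)} \, m^{-1}.
\]
The element $-\kappa(g)$ lies in $-\mfa^+$ rather than in $\mfa^+$, so this is not yet a $\mathsf{KAK}$-decomposition. The idea is to insert $k_0^{-1} k_0 = \id$ on each side of $e^{-\kappa(g)}$ and move the conjugation inside the exponential.

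Second, I would write
\[
g^{-1} = (\ell^{-1} k_0^{-1}) \cdot k_0 \, e^{-\kappa(g)} \, k_0^{-1} \cdot (k_0 m^{-1}) = (\ell^{-1} k_0^{-1}) \, e^{\mathrm{Ad}(k_0)(-\kappa(g))} \, (k_0 m^{-1}).
\]
By definition, $\mathrm{Ad}(k_0)$ acts on $\mfa$ as $w_0$, and by definition $\iota(H) = -w_0 \cdot H$, so
\[
\mathrm{Ad}(k_0)\bigl(-\kappa(g)\bigr) = -w_0 \cdot \kappa(g) = \iota(\kappa(g)).
\]
Moreover, since $w_0(\mfa^+) = -\mfa^+$, we have $\iota(\mfa^+) = -w_0(\mfa^+) = \mfa^+$, so $\iota(\kappa(g)) \in \mfa^+$. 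Since $\ell^{-1} k_0^{-1}$ and $k_0 m^{-1}$ belong to $\Ksf$, this exhibits a genuine $\mathsf{KAK}$-decomposition of $g^{-1}$ with Cartan part $\iota(\kappa(g))$.

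Finally, uniqueness of the $\mfa^+$-component in the $\mathsf{KAK}$-decomposition (again \cite[Chap. IX, Thm. 1.1]{Helgason}) forces $\kappa(g^{-1}) = \iota(\kappa(g))$. The only subtlety — which is essentially bookkeeping rather than a real obstacle — is tracking that conjugation by a representative $k_0$ realizes $w_0$ on $\mfa$ regardless of the choice of representative (different choices differ by $\mathsf{Z}_{\Ksf}(\mfa)$, which acts trivially on $\mfa$), so the expression $\mathrm{Ad}(k_0)(-\kappa(g)) = \iota(\kappa(g))$ is unambiguous. This completes the argument.
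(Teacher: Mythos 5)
Your proof is correct and is exactly the argument the paper has in mind: the paper merely remarks ``Since $\iota(-\mfa^+) = \mfa^+$, we have the following observation,'' and later (in Section~\ref{sec:background when theta is symmetric}) explicitly records the same $\mathsf{KAK}$-decomposition $g^{-1} = (\ell^{-1} k_0^{-1}) e^{\kappa(g^{-1})} (k_0 m^{-1})$ that you derive. Your closing note about the representative $k_0$ is harmless but unnecessary here, since you only need one representative to exhibit the decomposition and then invoke uniqueness of the $\mfa^+$-component.
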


In terms of the $\mathsf{KAK}$-decomposition, the actions of $\GG$ on $\Fc_\theta^+$ and $\Fc_\theta^-$ have the following behavior. See Appendix~\ref{appendix: proof of result about contraction} for a proof. 

\begin{proposition}\label{prop:characterizing convergence in general} Suppose $F^\pm \in \Fc_\theta^\pm$, $\{g_n\}$ is a sequence in $\GG$ and $g_n = m_n e^{\kappa(g_n)} \ell_n$ is a $\mathsf{KAK}$-decomposition for each $n \ge 1$. Then the following are equivalent:  
\begin{enumerate}
\item $m_n\Psf_\theta \rightarrow F^+$, $\ell_n^{-1}\Psf_\theta^- \rightarrow F^-$ and $\lim_{n \rightarrow \infty} \alpha(\kappa(g_n)) = \infty$ for every $\alpha \in \theta$,
\item $g_n(F) \to F^+$ for all $F \in \Fc_\theta^+ \setminus \Zc_{F^-}$, and this convergence is uniform on compact subsets of $\Fc_\theta \setminus \Zc_{F^-}$. 
\item $g_n^{-1}(F) \to F^-$ for all $F \in \Fc_\theta^- \setminus \Zc_{F^+}$, and this convergence is uniform on compact subsets of $\Fc_\theta^- \setminus \Zc_{F^+}$. 
\item There are open sets $\mathcal{U}^\pm\subset\Fc_\theta^\pm$ such that $g_n(F) \to F^+$ for all $F \in \mathcal U^+$ and $g_n^{-1}(F) \to F^-$ for all $F \in \mathcal U^-$.
\end{enumerate}
\end{proposition}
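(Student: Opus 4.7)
The plan is to prove $(1) \Rightarrow (2)$, deduce $(1) \Rightarrow (3)$ by applying $(1) \Rightarrow (2)$ to the sequence $\{g_n^{-1}\}$ via Observation~\ref{obs: opposition involution 2} together with the conjugation relation \eqref{eqn:opposite conjugate form} (which converts a $\mathsf{KAK}$-decomposition of $g_n^{-1}$ with Cartan part $\iota(\kappa(g_n))$ into the right normal form), note that $(2) \Rightarrow (4)$ and $(3) \Rightarrow (4)$ are tautological (take $\mathcal U^\pm := \Fc_\theta^\pm \setminus \Zc_{F^\mp}$), and finally establish the main implication $(4) \Rightarrow (1)$. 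The engine driving both non-trivial directions is a contraction lemma describing the dynamics of $e^{H_n}$ on $\Fc_\theta$ for $H_n \in \mfa^+$.

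The contraction lemma asserts: $\alpha(H_n) \to +\infty$ for every $\alpha \in \theta$ if and only if $e^{H_n}(F) \to \Psf_\theta$ uniformly on compact subsets of $\Fc_\theta \setminus \Zc_{\Psf_\theta^-}$. To prove it, parameterize the open cell at the basepoint by the opposite unipotent radical $\exp(\mfu_\theta^-)$, so that for $u = \exp\bigl(\sum_{\alpha \in \Sigma^+_\theta} t_\alpha X_{-\alpha}\bigr)$ a direct computation yields
$$
e^{H_n}\, u\, \Psf_\theta \;=\; \exp\Big(\textstyle\sum_{\alpha \in \Sigma^+_\theta} t_\alpha \, e^{-\alpha(H_n)} X_{-\alpha} \Big)\, \Psf_\theta.
$$
Every $\alpha \in \Sigma^+_\theta$ has a positive integer coefficient on at least one simple root in $\theta$, so $\alpha(H_n) \to +\infty$ for all $\alpha \in \theta$ forces $\alpha(H_n) \to +\infty$ for all $\alpha \in \Sigma^+_\theta$, giving uniform contraction to the basepoint. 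Conversely, if $\alpha_0 \in \theta$ has $\alpha_0(H_n)$ bounded along a subsequence, then $e^{-\alpha_0(H_n)}$ stays bounded away from $0$, and varying only the coefficient $t_{\alpha_0}$ in the formula exhibits two flags in any open subset of the big cell whose images under $e^{H_n}$ have distinct subsequential limits.

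For $(1) \Rightarrow (2)$, write $g_n F = m_n\, e^{\kappa(g_n)}\, \ell_n F$. Because $\ell_n^{-1}\Psf_\theta^- \to F^-$ and transversality is an open condition, whenever $F$ ranges over a compact subset of $\Fc_\theta \setminus \Zc_{F^-}$ the image $\ell_n F$ eventually lies in a fixed compact subset of $\Fc_\theta \setminus \Zc_{\Psf_\theta^-}$. The contraction lemma then gives $e^{\kappa(g_n)}\ell_n F \to \Psf_\theta$ uniformly, and combined with $m_n\Psf_\theta \to F^+$ this yields $g_n F \to F^+$ uniformly on the given compact.

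For $(4) \Rightarrow (1)$, the Cartan projection $\kappa(g_n)$ cannot remain bounded, since otherwise $\{g_n\}$ would subconverge in $\GG$ to some $g_\infty$ with $g_\infty|_{\mathcal U^+}$ constant, which is impossible on an open subset of $\Fc_\theta$ provided $\theta \ne \emptyset$. After passing to a subsequence, arrange $m_n \to m$ and $\ell_n \to \ell$ in $\Ksf$. Next, show $\alpha(\kappa(g_n)) \to +\infty$ for every $\alpha \in \theta$: if some $\alpha_0 \in \theta$ had $\alpha_0(\kappa(g_n))$ bounded along a subsequence, the converse part of the contraction lemma—applied to the nonempty open set $\ell(\mathcal U^+) \cap (\Fc_\theta \setminus \Zc_{\Psf_\theta^-})$, which is nonempty by density of the big cell—would produce $F_1, F_2 \in \mathcal U^+$ with $g_n F_1$ and $g_n F_2$ accumulating at distinct points, contradicting $g_n F_i \to F^+$. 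Once the Cartan projection diverges as required, the forward contraction lemma delivers $g_n F \to m\Psf_\theta$ for $F$ in a dense subset of $\mathcal U^+$, forcing $m\Psf_\theta = F^+$ and hence $m_n\Psf_\theta \to F^+$; applying the same reasoning to $\{g_n^{-1}\}$ produces $\ell_n^{-1}\Psf_\theta^- \to F^-$. The main obstacle is the converse of the contraction lemma used here: one must exhibit two test flags \emph{inside} the arbitrary prescribed open set $\mathcal U^+$ that are separated by the Cartan flow along the slow direction $\alpha_0$, which forces one to use explicit unipotent coordinates on the Schubert big cell together with bookkeeping for the repositioning of $\mathcal U^+$ by $\ell$.
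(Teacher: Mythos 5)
The logical cycle does not close. You assert that $(2) \Rightarrow (4)$ and $(3) \Rightarrow (4)$ are each tautological, but this is false: condition (4) requires \emph{both} that $g_n(F) \to F^+$ on some open $\mathcal U^+ \subset \Fc_\theta^+$ \emph{and} that $g_n^{-1}(F) \to F^-$ on some open $\mathcal U^- \subset \Fc_\theta^-$, whereas (2) only controls $g_n$ and says nothing about $g_n^{-1}$, and (3) only controls $g_n^{-1}$. What is actually obvious is that (2) \emph{and} (3) \emph{together} imply (4), which is how the paper phrases it. With the implications you do prove --- $(1) \Rightarrow (2)$, $(1) \Rightarrow (3)$, and $(4) \Rightarrow (1)$ --- you never obtain any arrow into $(1)$ from $(2)$ or $(3)$, so the best you get is $(1) \Leftrightarrow (4)$ plus two one-way arrows out of $(1)$; the four conditions have not been shown equivalent.

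To repair this you need an independent proof of $(2) \Rightarrow (1)$. Your converse contraction lemma applied to the open set $\Fc_\theta \setminus \Zc_{F^-}$ from (2) does give $m_n \Psf_\theta \to F^+$ and $\alpha(\kappa(g_n)) \to +\infty$ for all $\alpha \in \theta$, but it does \emph{not} identify the limit of $\ell_n^{-1}\Psf_\theta^-$ as $F^-$: a priori (2) would be compatible with $\ell_n^{-1}\Psf_\theta^- \to F'^-$ for some other flag, and nothing in your converse lemma rules this out. The paper closes this (see the final paragraph before the ``$(1) \Leftrightarrow (3)$'' step in the proof of Proposition~\ref{prop:characterizing convergence in general in appendix}): pass to a subsequence with $\ell_n \to \ell$; if $\ell F^- \neq \Psf_\theta^-$, pick $F \in \Zc_{\Psf_\theta^-} \setminus \Zc_{\ell F^-}$; then $\ell_n^{-1}F$ eventually lies in a fixed compact subset of $\Fc_\theta \setminus \Zc_{F^-}$, so (2) forces $g_n \ell_n^{-1} F \to F^+ = m\Psf_\theta$ and hence $e^{\kappa(g_n)}F \to \Psf_\theta$; but each $e^{\kappa(g_n)}$ lies in $\Psf_\theta^-$ and so preserves the closed set $\Zc_{\Psf_\theta^-}$, which contains $F$ yet cannot contain the transverse flag $\Psf_\theta$ --- a contradiction. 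Adding this step (and its mirror for $(3) \Rightarrow (1)$, by the same inverse trick you already use) completes the argument; your contraction lemma and its converse, and the $(4) \Rightarrow (1)$ step, otherwise match the paper's Lemmas~\ref{uniform convergence} and~\ref{weak version}.
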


\subsubsection{Weights and partial Cartan projections}
For any $\alpha\in\Sigma$, let $H_\alpha\in\mfa$ satisfy the defining property
\[\langle H_\alpha,X\rangle=\alpha(X)\]
for all $X\in\mfa$. Then for any non-zero $E\in\mathfrak g_\alpha$, ${\rm Span}_{\Rb}(E,\tau(E),H_\alpha)\subset\mathfrak g$ is a Lie sub-algebra isomorphic to $\mathfrak{sl}(2,\Rb)$, and this isomorphism identifies 
\[H_\alpha':=\frac{2H_\alpha}{\langle H_\alpha,H_\alpha\rangle}\in{\rm Span}_{\Rb}(E,\tau(E),H_\alpha)\quad\text{with}\quad\begin{pmatrix}1&0\\0&-1\end{pmatrix}\in\mathfrak{sl}(2,\Rb),\]
see \cite[Chap. VI, Prop. 6.52]{Knapp}. The element $H_\alpha'$ is called the \emph{coroot} associated to $\alpha$. If $\alpha\in\Delta$, the \emph{fundamental weight} associated to $\alpha$ is then the element $\omega_\alpha \in \mfa^*$ such that 
\[\omega_\alpha(H_\beta')=\left\{\begin{array}{ll}
1&\text{ if }\alpha=\beta,\\
0&\text{ if }\alpha\ne\beta
\end{array}\right.\]
for all $\beta\in\Delta$.

Given a subset $\theta \subset \Delta$, the \emph{partial Cartan subspace associated to $\theta$} is 
$$
\mfa_\theta := \{ H \in \mfa : \alpha(H) = 0 \text{ for all } \alpha \in \Delta \setminus \theta\}.
$$

Since $(\Delta\setminus\theta)\cup\{\omega_\alpha:\alpha\in\theta\}$ is a basis of $\mfa^*$, there is a unique projection 
\[p_\theta : \mfa \rightarrow \mfa_\theta\] 
such that $\omega_\alpha(X)=\omega_\alpha(p_\theta(X))$ for all $\alpha\in\theta$ and $X\in\mfa$. Then the \emph{partial Cartan projection associated to $\theta$} is
\[\kappa_\theta:=p_\theta\circ\kappa:\GG\to\mfa_\theta.\]
One can show that $\{ \omega_\alpha|_{\mfa_\theta} : \alpha \in \theta\}$ is a basis of $\mfa_\theta^*$ and hence we will identify 
$$
\mfa_\theta^*={\rm Span}\{ \omega_\alpha : \alpha \in \theta\}\subset\mfa^*.
$$
Note that $\omega_\alpha(\kappa_\theta(g))=\omega_\alpha(\kappa(g))$ for all $\alpha\in\theta$ and $g\in\GG$. So 
\begin{equation}
\label{eqn:stupid equality between projections} 
\phi(\kappa_\theta(g))=\phi(\kappa(g))
\end{equation} 
for all $\phi \in \mfa_\theta^*$ and $g\in\GG$.

Given $\phi \in \mfa_\theta^*$ we define the \emph{$\phi$-length} of an element $g \in \GG$ as 
$$
\ell^\phi(g) = \lim_{n \rightarrow \infty} \frac{1}{n} \phi(\kappa_\theta(g^n))
$$
(notice that this limit exists by Fekete's Subadditive Lemma). Equivalently, one can define the length using the Jordan projection. 

\subsubsection{The partial Iwasawa cocycle} Let $\Usf := \exp( \mfu_\Delta)$. The \emph{Iwasawa decomposition} states that the map 
$$
(k,a, u) \in \Ksf \times \exp(\mfa) \times \Usf \mapsto kau \in \GG
$$
is a diffeomorphism, see \cite[Chap.\ VI, Prop.\ 6.46]{Knapp}. Using this, Quint \cite {quint-ps} defined the Iwasawa cocycle
\[B : \GG \times \Fc_\Delta \rightarrow \mfa\] 
with the defining property that $gk \in \Ksf \cdot \exp(B(g,F)) \cdot \Usf$ for all $(g,F)\in \GG \times \Fc_\Delta$, where $k\in\Ksf$ is an element such that $F = k \Psf_\Delta$. The map $B$ is known as the \emph{Iwasawa cocycle}.

For any $\theta\subset\Delta$, note that $\Psf_\Delta\subset \Psf_\theta$, so the identity map on $\GG$ induces a surjection \hbox{$\Pi_\theta:\Fc_\Delta\to\Fc_\theta$.} The \emph{partial Iwasawa cocycle} is the map 
\[B_\theta: \GG \times \Fc_\theta \rightarrow \mfa_\theta\] 
defined by $B_\theta(g,F) = p_\theta( B(g,F') )$ for some (all) $F'\in\Pi_\theta^{-1}(F)$.
By~\cite[Lem.\ 6.1 and 6.2]{quint-ps}, this is a well-defined cocycle, that is 
$$
B_\theta(gh, F) = B_\theta(g, hF) + B_\theta(h, F)
$$
for all $g,h \in \GG$ and $F \in \Fc_\theta$. 

We will use two estimates from~\cite{quint-ps}. In the next two lemmas, let $\norm{\cdot}$ denote the norm of the inner product $\langle\cdot,\cdot\rangle$ on $\mfa$.

\begin{lemma}[{Quint~\cite[Lem.\ 6.5]{quint-ps}}]
\label{quintlemma}
For any $\epsilon>0$ and distance $\d_{\Fc_\theta}$ on $\Fc_{\theta}$ induced by a Riemannian metric there exists $C=C(\epsilon, \d_{\Fc_\theta})>0$ such that: if $g \in \GG$, $g=me^H \ell$ is a $\mathsf{KAK}$-decomposition, $F \in \Fc_\theta$ and $\d_{\Fc_\theta}\left( F, \Zc_{\ell^{-1} \Psf_\theta^-}\right) > \epsilon$, then 
$$
\norm{ B_\theta(g,F)- \kappa_\theta(g)}<C.
$$
\end{lemma}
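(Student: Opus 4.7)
The plan is to combine the cocycle property of $B_\theta$ with the $\mathsf{KAK}$-decomposition to reduce the statement to an explicit Iwasawa calculation for an element of the form $e^H$. Writing $g = m e^H \ell$ with $H = \kappa(g)$, the cocycle identity yields
\[
B_\theta(g,F) = B_\theta(m,\, e^H \ell \cdot F) + B_\theta(e^H,\, \ell \cdot F) + B_\theta(\ell, F).
\]
Because $\Ksf$ is compact and $B_\theta$ is continuous, the first and third terms lie in a compact subset of $\mfa_\theta$ and so are bounded by a universal constant depending only on $\GG$. After choosing $\d_{\Fc_\theta}$ to be $\Ksf$-invariant (or paying a bi-Lipschitz constant that can be absorbed into $C$), the hypothesis $\d_{\Fc_\theta}(F, \Zc_{\ell^{-1}\Psf_\theta^-}) > \epsilon$ translates into $\d_{\Fc_\theta}(\ell \cdot F,\, \Zc_{\Psf_\theta^-}) > \epsilon$. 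So it suffices to prove the uniform estimate
\[
\|B_\theta(e^H, F') - p_\theta(H)\| \leq C(\epsilon)
\]
whenever $H \in \mfa^+$ and $F' \in \Fc_\theta$ satisfies $\d_{\Fc_\theta}(F',\, \Zc_{\Psf_\theta^-}) > \epsilon$.

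To set up the Iwasawa calculation, I would lift to the full flag variety. The open Bruhat cell $\Fc_\theta \setminus \Zc_{\Psf_\theta^-}$ is homeomorphic to $\exp(\mfu_\theta^-)$ via $u \mapsto u\Psf_\theta$, and the assumption on $F'$ forces the corresponding $u$ to lie in a compact set $\Omega_\epsilon \subset \exp(\mfu_\theta^-)$ depending only on $\epsilon$. Since $\mfu_\theta^- \subset \mfu_\Delta^-$ (as $\iota^*$ permutes $\Sigma^+$), the lift $\wt{F} := u\Psf_\Delta \in \Fc_\Delta$ satisfies $\Pi_\theta(\wt F) = F'$ and is transverse to $\Psf_\Delta^-$. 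By the definition of $B_\theta$ and linearity of $p_\theta$,
\[
\|B_\theta(e^H, F') - p_\theta(H)\| = \|p_\theta\bigl(B(e^H, \wt F) - H\bigr)\| \leq \|p_\theta\| \cdot \|B(e^H, \wt F) - H\|,
\]
so it is enough to bound $\|B(e^H, \wt F) - H\|$.

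The core step is the following uniform-in-$H$ computation. By the cocycle identity $B(e^H, u\Psf_\Delta) = B(e^H u,\, \Psf_\Delta) - B(u,\, \Psf_\Delta)$, and the second term is bounded by a constant depending only on $\Omega_\epsilon$ (hence on $\epsilon$) because the Iwasawa $\mfa$-part depends continuously on $u$ in a compact set. For the first term, rewrite $e^H u = (e^H u e^{-H})\, e^H = v e^H$; because $\ad(H)$ acts on each $\mfg_{-\alpha}$ by $-\alpha(H)\leq 0$ for $\alpha \in \Sigma^+$ and $H \in \overline{\mfa^+}$, conjugation by $e^H$ contracts $\exp(\mfu_\Delta^-)$, so $v$ lies in a compact set $\Omega_\epsilon'$ independent of $H$. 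Iwasawa decompose $v = k_v \exp(X_v) u_v^+$ with $\|X_v\|$ bounded in terms of $\epsilon$ only; then
\[
v e^H = k_v \exp(X_v)\, u_v^+\, e^H = k_v\, \exp(X_v + H)\,\bigl(e^{-H} u_v^+ e^H\bigr),
\]
and the last factor lies in $\Usf = \exp(\mfu_\Delta)$ because $\ad(-H)$ preserves $\mfu_\Delta$ (and in fact contracts it for $H \in \overline{\mfa^+}$). This is the Iwasawa decomposition of $v e^H$, so $B(e^H u,\, \Psf_\Delta) = X_v + H$, whence $\|B(e^H, \wt F) - H\| = \|X_v - B(u,\,\Psf_\Delta)\|$ is bounded by a constant depending only on $\epsilon$. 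Combining all the estimates yields the lemma.

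The main obstacle is maintaining uniformity in the potentially unbounded $H$: every compactness argument has to survive as $H$ travels to infinity in $\mfa^+$. This is precisely why the closed positive Weyl chamber is the natural habitat for the estimate: conjugation by $e^H$ contracts $\exp(\mfu_\Delta^-)$ and conjugation by $e^{-H}$ contracts $\exp(\mfu_\Delta^+)$, which keeps the auxiliary elements $v$ and $e^{-H} u_v^+ e^H$ in compact sets independent of $H$. Once this contraction is exploited, the remainder of the argument is a routine manipulation of the Iwasawa decomposition and the cocycle identity.
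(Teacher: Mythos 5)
The paper itself gives no proof here; it simply cites Quint's Lemma 6.5. Your argument is correct and reconstructs essentially Quint's original proof: reduce by the cocycle relation and $\Ksf$-invariance to the case $g = e^H$ with $H \in \overline{\mfa^+}$ and $F$ in a fixed compact part of the open Bruhat cell, parametrize the cell by $u \in \exp(\mfu_\theta^-)$, and exploit that $\mathrm{Ad}(e^H)$ contracts $\mfu_\Delta^-$ while $\mathrm{Ad}(e^{-H})$ preserves $\mfu_\Delta$, so that $e^H u = (e^H u e^{-H}) e^H$ has its Iwasawa $\mfa$-part equal to $H$ up to a bounded correction. One small simplification you could have used: since $B(k, F) = 0$ for every $k \in \Ksf$ (because $kF$ already lies in $\Ksf \cdot F$), the first and third terms in your cocycle expansion actually vanish identically rather than being merely bounded, so $B_\theta(g,F) = B_\theta(e^H, \ell F)$ on the nose.
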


\begin{lemma}[{Quint~\cite[Lem.\ 6.6]{quint-ps}}]
\label{differences}
For any $\epsilon>0$ and $g\in\GG$ there exists $C=C(\epsilon, g)>0$ such that: if $h\in\GG$ and $\min_{\alpha \in \theta} \alpha(\kappa(h))>C$, then 
$$
\norm{ \kappa_\theta(gh)-\kappa_\theta(h)-B_\theta(g,U_\theta(h))}<\epsilon.
$$
\end{lemma}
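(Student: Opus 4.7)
The plan is to verify the bound one coordinate at a time along the basis $\{\omega_\alpha|_{\mfa_\theta}\}_{\alpha\in\theta}$ of $\mfa_\theta^*$, and for each $\alpha\in\theta$ to translate the coordinate inequality into a statement about operator norms in an irreducible representation. Concretely, for each $\alpha\in\theta$ I fix an irreducible representation $(\Lambda_\alpha,V_\alpha)$ of a finite cover of $\GG$ with highest weight $\chi_\alpha=n_\alpha\omega_\alpha$, a unit highest-weight vector $v_\alpha\in V_\alpha$, and a $\Ksf$-invariant inner product making distinct weight spaces orthogonal. Using the standard identities $\omega_\alpha(\kappa(g))=\tfrac{1}{n_\alpha}\log\|\Lambda_\alpha(g)\|$ and $\omega_\alpha(B(g,k\Psf_\Delta))=\tfrac{1}{n_\alpha}\log\|\Lambda_\alpha(g)\Lambda_\alpha(k)v_\alpha\|$, together with $\omega_\alpha\circ p_\theta=\omega_\alpha$, the claim reduces to showing
$$\bigl|\log\|\Lambda_\alpha(gh)\|-\log\|\Lambda_\alpha(h)\|-\log\|\Lambda_\alpha(g)e_\alpha\|\bigr|<\epsilon'$$
for every $\alpha\in\theta$, where $e_\alpha:=\Lambda_\alpha(m)v_\alpha$ and $h=me^{\kappa(h)}\ell$ is any $\mathsf{KAK}$-decomposition; under the equivariant map $\Fc_\theta\to\Pb(V_\alpha)$ induced by $\Lambda_\alpha$, the flag $U_\theta(h)=m\Psf_\theta$ maps to the line $[e_\alpha]$.

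The lower bound is immediate: applying $\Lambda_\alpha(gh)$ to the unit vector $\Lambda_\alpha(\ell^{-1})v_\alpha$ yields $e^{\chi_\alpha(\kappa(h))}\Lambda_\alpha(g)e_\alpha$, whence $\|\Lambda_\alpha(gh)\|\geq\|\Lambda_\alpha(h)\|\,\|\Lambda_\alpha(g)e_\alpha\|$.

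For the matching upper bound, the key ingredient is a sharp spectral gap: because $V_\alpha$ is irreducible with highest weight $\chi_\alpha$, one has $F_\beta v_\alpha=0$ for every $\beta\in\Delta\setminus\{\alpha\}$, so every weight $\chi\neq\chi_\alpha$ of $V_\alpha$ satisfies $\chi_\alpha-\chi=m_\alpha\alpha+\sum_{\beta\neq\alpha}m_\beta\beta$ with $m_\alpha\geq 1$, and thus $(\chi_\alpha-\chi)(\kappa(h))\geq\alpha(\kappa(h))$. Given a unit $w\in V_\alpha$, expand $\Lambda_\alpha(\ell)w=\sum_\chi c_\chi v_\chi$ in an orthonormal weight basis and split $\Lambda_\alpha(h)w=c\,e_\alpha+w'$ with $w'\perp e_\alpha$; a direct calculation using $\Ksf$-invariance gives $|c|=|c_{\chi_\alpha}|e^{\chi_\alpha(\kappa(h))}\leq e^{\chi_\alpha(\kappa(h))}$ and
$$\|w'\|^2=\sum_{\chi\neq\chi_\alpha}|c_\chi|^2e^{2\chi(\kappa(h))}\leq e^{2(\chi_\alpha-\alpha)(\kappa(h))}.$$
The triangle inequality then yields
$$\|\Lambda_\alpha(gh)w\|\leq e^{\chi_\alpha(\kappa(h))}\|\Lambda_\alpha(g)e_\alpha\|+\|\Lambda_\alpha(g)\|\,e^{(\chi_\alpha-\alpha)(\kappa(h))}.$$
Since $\|\Lambda_\alpha(g)e_\alpha\|\geq\|\Lambda_\alpha(g^{-1})\|^{-1}$ is bounded below uniformly in $h$, taking the supremum over unit $w$ and dividing by $\|\Lambda_\alpha(h)\|\|\Lambda_\alpha(g)e_\alpha\|$ produces a ratio $\leq 1+C(g)e^{-\alpha(\kappa(h))}$. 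Taking logarithms gives the coordinate-wise estimate once $\alpha(\kappa(h))\geq\min_{\beta\in\theta}\beta(\kappa(h))$ exceeds a constant depending on $\epsilon$ and $g$.

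The main obstacle is the sharp spectral-gap identification above, namely that every non-top weight of $V_\alpha$ is bounded above (on $\kappa(h)$) by $\chi_\alpha-\alpha$. Were there a weight of the form $\chi_\alpha-\beta$ with $\beta\in\Delta\setminus\{\alpha\}$, the argument would collapse because $\beta(\kappa(h))$ could remain small for $\beta\notin\theta$; highest-weight theory rules this out, and this is precisely why the hypothesis phrased in terms of $\min_{\beta\in\theta}\beta(\kappa(h))$ suffices to force uniform decay of the error. Summing the coordinate bounds across $\alpha\in\theta$ against the dual basis of $\mfa_\theta$ converts them into the stated norm bound.
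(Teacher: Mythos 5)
The paper attributes this lemma to Quint (Lemme~6.6 of \emph{Mesures de Patterson-Sullivan en rang sup\'erieur}) and does not reproduce a proof, so there is no in-paper argument to compare against; your proof is correct and follows the same representation-theoretic strategy that Quint himself uses, so in substance these are not genuinely different routes.

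For the record, the key steps of your argument check out. The spectral-gap claim---that every weight $\chi \neq \chi_\alpha$ of $V_\alpha$ has the form $\chi_\alpha - m_\alpha\alpha - \sum_{\beta\neq\alpha} m_\beta\beta$ with $m_\alpha \geq 1$---is correct, and it is precisely the reason one must take $\chi_\alpha$ proportional to a \emph{fundamental} weight rather than an arbitrary dominant weight. Your justification via $F_\beta v_\alpha = 0$ for $\beta\in\Delta\setminus\{\alpha\}$ is on the mark, but to close the implication you should add the one-line observation that any weight vector with $m_\alpha = 0$ lies in $U(\mathfrak n^-_{\Delta\setminus\{\alpha\}})v_\alpha$, and this equals $\Rb v_\alpha$ because the simple lowering operators for $\Delta\setminus\{\alpha\}$ already annihilate $v_\alpha$; hence no weight $\chi\neq\chi_\alpha$ with $m_\alpha = 0$ can occur. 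Two further hypotheses are used silently and are worth stating explicitly: the representation $\Lambda_\alpha$ must be taken $(\Ksf,\mfa)$-compatible (so $\Lambda_\alpha(\Ksf)$ is orthogonal and $\Lambda_\alpha(\exp\mfa)$ is positive-definite and diagonal in a weight basis), which is what guarantees both the orthogonality of distinct weight spaces and the identity $\|\Lambda_\alpha(me^H\ell)\| = e^{\chi_\alpha(H)}$; and the inequality $|c_{\chi_\alpha}| \leq 1$ requires an orthonormal weight basis together with $\|w\| = 1$. With those made explicit, the lower bound $\|\Lambda_\alpha(gh)\| \geq \|\Lambda_\alpha(h)\|\,\|\Lambda_\alpha(g)e_\alpha\|$ and the upper bound with error $O\!\left(\|\Lambda_\alpha(g)\|\|\Lambda_\alpha(g^{-1})\|\, e^{-\alpha(\kappa(h))}\right)$ are both valid, and the passage from coordinate bounds in the basis $\{\omega_\alpha|_{\mfa_\theta}\}_{\alpha\in\theta}$ to a norm bound on $\mfa_\theta$ is immediate.
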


\subsection{When $\theta$ is symmetric}\label{sec:background when theta is symmetric} In this section, as in much of the paper, we will consider the case when $\theta \subset \Delta$ is \emph{symmetric},  that is $\iota^*(\theta)=\theta$. 

As before, let $k_0\in \mathsf{N}_{\mathsf{K}}(\mfa)$ be a representative of the longest element $w_0 \in W$. Then $k_0\Psf_\theta k_0^{-1}=k_0^{-1}\Psf_\theta k_0 =\Psf_\theta^-$, see Equation~\eqref{eqn:opposite conjugate form}. So we can identify $\Fc_\theta $ with $\Fc_\theta^-$ via the map 
$$
g \Psf_\theta^- \mapsto g k_0 \Psf_\theta=gk_0^{-1}\Psf_\theta.
$$
Using this identification, we can speak of two elements in $\Fc_\theta$ being transverse. More explicitly, the flags $g_1 \Psf_\theta$ and $g_2 \Psf_\theta$ in $\Fc_\theta$ are \emph{transverse} if and only if there exists $g \in \GG$ such that $gg_1 \in \Psf_\theta$ and $gg_2 k_0 \in \Psf_\theta^-$. With some abuse of the notation, for a flag $F\in\Fc_\theta$, we now let $\Zc_F \subset \Fc_\theta$ denote the set of flags that are not transverse to $F$. 

Following the notation in~\cite{GGKW}, we define a map 
\[U_\theta: \GG \rightarrow \Fc_\theta\] 
by fixing a $\mathsf{KAK}$-decomposition $g = m_g e^{\kappa(g)} \ell_g$ for each $g \in \GG$ and then letting $U_\theta(g) := m_g\Psf_\theta$. One can show that if $\alpha(\kappa(g)) > 0$ for all $\alpha \in \theta$, then $U_\theta(g)$ is independent of the choice of $\mathsf{KAK}$-decomposition, see \cite[Chap. IX, Thm. 1.1]{Helgason}, and hence $U_\theta$ is continuous on the set 
$$
\left\{ g \in \GG : \alpha(\kappa(g)) > 0 \text{ for all } \alpha \in \theta\right\}.
$$ 
Observation~\ref{obs: opposition involution 2} implies that ${\rm Ad}(k_0)(-\kappa(g)) = \kappa(g^{-1})$ and so 
$$
g^{-1} = \left(\ell_g^{-1} k_0^{-1} \right) e^{\kappa(g^{-1})} \left(k_0 m_g^{-1}\right)
$$
is a $\mathsf{KAK}$-decomposition of $g^{-1}$. So we may assume that $m_{g^{-1}} = \ell_g^{-1} k_0^{-1}$ and $\ell_{g^{-1}} = k_0 m_g^{-1}$ for all $g \in \GG$. Then 
$$
U_\theta(g^{-1}) = \ell_g^{-1} k_0^{-1} \Psf_\theta,
$$
which under our identification $\Fc_\theta^- = \Fc_\theta$ coincides with $\ell_g^{-1}\Psf_\theta^-$. 

Then, in the symmetric case, Proposition~\ref{prop:characterizing convergence in general} can be restated as follows. 

\begin{proposition}[{Proposition~\ref{prop:characterizing convergence in general} in the symmetric case}]\label{prop:characterizing convergence in general symmetric case} Suppose $\theta \subset \Delta$ is symmetric, $F^\pm \in \Fc_\theta$ and $\{g_n\}$ is a sequence in $\GG$. The following are equivalent:  
\begin{enumerate}
\item $U_\theta(g_n) \rightarrow F^+$, $U_\theta(g_n^{-1}) \rightarrow F^-$ and $\lim_{n \rightarrow \infty} \alpha(\kappa(g_n)) = \infty$ for every $\alpha \in \theta$,
\item $g_n(F) \to F^+$ for all $F \in \Fc_\theta \setminus \Zc_{F^-}$, and this convergence is uniform on compact subsets of $\Fc_\theta \setminus \Zc_{F^-}$. 
\item $g_n^{-1}(F) \to F^-$ for all $F \in \Fc_\theta \setminus \Zc_{F^+}$, and this convergence is uniform on compact subsets of $\Fc_\theta \setminus \Zc_{F^+}$. 
\item There are open sets $\mathcal{U}^\pm\subset\Fc_\theta$ such that $g_n(F) \to F^+$ for all $F \in \mathcal U^+$ and $g_n^{-1}(F) \to F^-$ for all $F \in \mathcal U^-$.
\end{enumerate}
\end{proposition}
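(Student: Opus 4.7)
The plan is to deduce this from Proposition~\ref{prop:characterizing convergence in general} by pushing everything through the identification $\Fc_\theta^- \cong \Fc_\theta$ given by $g\Psf_\theta^- \mapsto gk_0\Psf_\theta$. This identification is $\GG$-equivariant (it amounts to right multiplication by $k_0$ on coset representatives, while $\GG$ acts on the left), and by the definition of transversality in the symmetric case it identifies $\Zc_{F^-}\subset\Fc_\theta$ with $\Zc_{\tilde F^-}\subset\Fc_\theta^-$, where $\tilde F^-$ denotes the inverse image of $F^-$ under this identification; similarly for $F^+$.

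For condition~(1), I will invoke the identities recorded in the paragraph preceding the statement: if $g_n = m_n e^{\kappa(g_n)}\ell_n$ is a $\mathsf{KAK}$-decomposition chosen so that $U_\theta(g_n) = m_n\Psf_\theta$, then $g_n^{-1}$ admits the compatible decomposition $g_n^{-1} = (\ell_n^{-1}k_0^{-1})e^{\kappa(g_n^{-1})}(k_0 m_n^{-1})$, so $U_\theta(g_n^{-1}) = \ell_n^{-1}k_0^{-1}\Psf_\theta$, which under our identification coincides with $\ell_n^{-1}\Psf_\theta^- \in \Fc_\theta^-$. Consequently condition~(1) here is equivalent to
\[
m_n\Psf_\theta \to F^+, \qquad \ell_n^{-1}\Psf_\theta^- \to \tilde F^-, \qquad \alpha(\kappa(g_n)) \to \infty \text{ for all } \alpha \in \theta,
\]
which is condition~(1) of Proposition~\ref{prop:characterizing convergence in general} applied to the pair $(F^+, \tilde F^-) \in \Fc_\theta^+ \times \Fc_\theta^-$. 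Conditions~(2), (3) and~(4) of the present statement then translate verbatim to the corresponding conditions of Proposition~\ref{prop:characterizing convergence in general}, using the $\GG$-equivariance of the identification and the matching of the $\Zc$-sets described above. Invoking Proposition~\ref{prop:characterizing convergence in general} closes the argument.

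The main obstacle is purely bookkeeping: one must be careful that the identification respects every piece of structure in sight, in particular the transversality relation defining $\Zc_{F^\pm}$, and that the two descriptions of $U_\theta(g^{-1})$ genuinely coincide in $\Fc_\theta$ (which ultimately rests on $k_0^2 \in \Psf_\theta$, a consequence of $w_0$ being an involution in the Weyl group). No new analytic input is needed beyond what is already packaged in Proposition~\ref{prop:characterizing convergence in general}.
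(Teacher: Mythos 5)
Your proposal is correct and essentially matches what the paper does: the paper simply states the symmetric case as a restatement of Proposition 2.4, with the translation mechanism (the identification $\Fc_\theta^- \cong \Fc_\theta$ via $g\Psf_\theta^- \mapsto gk_0\Psf_\theta = gk_0^{-1}\Psf_\theta$, the compatible choice of $\mathsf{KAK}$-decomposition for $g^{-1}$, and the identity $U_\theta(g^{-1}) = \ell_g^{-1}k_0^{-1}\Psf_\theta$) already laid out in Section 2.2. Your side remark that the well-definedness rests on $k_0^2 \in \Psf_\theta$ (since $w_0^2 = \id$ forces $k_0^2 \in \mathsf{Z}_{\Ksf}(\mfa) \subset \Psf_\Delta \subset \Psf_\theta$) is correct and is exactly the point implicit in the paper's writing $k_0\Psf_\theta = k_0^{-1}\Psf_\theta$.
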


\subsection{Discrete subgroups of semisimple Lie groups} 
Next, we discuss some terminology for discrete subgroups of $\GG$ and their basic properties.

\subsubsection{Critical exponents}
Let $\Gamma\subset\GG$ be any discrete subgroup and let $\theta\subset\Delta$. For any $\phi\in\mfa_\theta^*$, let $Q_\Gamma^\phi(s)$ denote the Poincar\'e series
$$Q_\Gamma^\phi(s)=\sum_{\gamma\in\Gamma} e^{-s\phi(\kappa_\theta(\gamma))}.$$
Let $\delta^\phi(\Gamma)$ be the critical exponent of $Q_\Gamma^\phi(s)$, i.e. 
\[
\delta^\phi(\Gamma)=\inf\{s > 0:Q_\Gamma^\phi(s)<+\infty\}.
\]
Equivalently, 
\[
\delta^\phi(\Gamma)=\limsup_{T\to\infty}\frac{1}{T}\log\#\{\gamma\in\Gamma:\phi(\kappa_\theta(\gamma))<T\}.
\]

The \emph{$\theta$-Benoist limit cone} of $\Gamma$ is the cone
\[
\mathcal B_\theta(\Gamma):=\left\{ H \in \mfa_\theta^+ : \text{ there exists } \{\gamma_n\} \subset \Gamma \text{ and } t_n \searrow 0 \text{ such that } t_n \kappa_\theta(\gamma_n) \rightarrow H\right\}.
\]
Set
\[
\mathcal B_\theta(\Gamma)^+:=\{\phi\in\mfa_\theta^*:\phi > 0 \text{ on } \Bc_\theta(\Gamma)-\{0\}  \}.
\]
We observe that for any $\phi\in\Bc_\theta(\Gamma)^+$, the critical exponent $\delta^\phi(\Gamma)$ is finite.

\begin{proposition}
\label{finite critexp}
Suppose $\Gamma\subset\GG$ is a discrete group and $\theta\subset\Delta$. If $\phi\in\mathcal B_\theta(\Gamma)^+$, then $\delta^\phi(\Gamma)<+\infty$. In particular, if  $\phi$ is positive on $\mfa_\theta^+$, then $\delta^\phi(\Gamma)<+\infty$. 
\end{proposition}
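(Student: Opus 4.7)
The plan is to show that for $\gamma \in \Gamma$ outside a compact set, $\phi(\kappa(\gamma))$ grows at least linearly in $\|\kappa(\gamma)\|$; this, combined with the standard exponential upper bound on the number of orbit points of $\Gamma$ in a Riemannian ball in the symmetric space $\GG/\Ksf$, will force $\delta^\phi(\Gamma)<+\infty$.

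The key estimate is the existence of $c, R > 0$ such that $\phi(\kappa(\gamma)) \geq c\|\kappa(\gamma)\|$ for every $\gamma \in \Gamma$ with $\|\kappa(\gamma)\| \geq R$. By its definition as the set of cone accumulation points of $\{\kappa_\theta(\gamma)\}_{\gamma \in \Gamma}$, $\Bc_\theta(\Gamma)$ is a closed cone in $\mfa_\theta^+$, so its unit sphere is compact; continuity of $\phi$ together with the hypothesis $\phi \in \Bc_\theta(\Gamma)^+$ then yields $c_0 > 0$ with $\phi(H) \geq c_0 \|H\|$ for every $H \in \Bc_\theta(\Gamma)$. If the estimate failed, I could extract $\gamma_n \in \Gamma$ with $\|\kappa(\gamma_n)\| \to \infty$ and $\phi(\kappa(\gamma_n))/\|\kappa(\gamma_n)\| \to 0$. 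Passing to a subsequence, $H_n := \kappa(\gamma_n)/\|\kappa(\gamma_n)\|$ converges to some $H \in \mfa^+$ with $\|H\| = 1$, and $\phi(H) = 0$. Setting $t_n := 1/\|\kappa(\gamma_n)\| \searrow 0$, continuity of $p_\theta$ gives $t_n \kappa_\theta(\gamma_n) = p_\theta(H_n) \to p_\theta(H)$, so that $p_\theta(H) \in \Bc_\theta(\Gamma)$. Since $\phi \in \mfa_\theta^*$, $\phi(H) = \phi(p_\theta(H)) = 0$ forces $p_\theta(H) = 0$. In the setting where $\ker(p_\theta) \cap \mfa^+ = \{0\}$---automatic when $\GG$ is simple and $\theta \neq \emptyset$, since each $\omega_\alpha$ with $\alpha \in \theta$ is strictly positive on $\mfa^+ \setminus \{0\}$---this forces $H = 0$, contradicting $\|H\| = 1$.

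To conclude, for $T > cR$ the set $\{\gamma \in \Gamma : \phi(\kappa_\theta(\gamma)) < T\}$ is contained in $\{\gamma \in \Gamma : \|\kappa(\gamma)\| < T/c\}$. Since $\|\kappa(\gamma)\|$ is bi-Lipschitz equivalent to the Riemannian distance $d_{\GG/\Ksf}(\Ksf, \gamma\Ksf)$, discreteness of $\Gamma$ together with the standard exponential upper bound on volumes of balls in $\GG/\Ksf$ (combined with the uniform separation of the orbit $\Gamma \cdot \Ksf$) yields $\#\{\gamma : \|\kappa(\gamma)\| < S\} \leq A e^{B S}$ for constants $A, B > 0$. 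Hence $\#\{\gamma : \phi(\kappa_\theta(\gamma)) < T\} \leq A e^{B T / c}$, and the limsup formula gives $\delta^\phi(\Gamma) \leq B/c < +\infty$. The ``in particular'' clause is immediate from $\Bc_\theta(\Gamma) \subseteq \mfa_\theta^+$. The main technical point to verify is $\ker(p_\theta) \cap \mfa^+ = \{0\}$, which is exactly what allows one to pass from a bound on $\|\kappa_\theta(\gamma)\|$ to a bound on $\|\kappa(\gamma)\|$; in every application of the proposition this holds for a structural reason.
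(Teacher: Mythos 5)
Your proof follows the same two-step structure as the paper's: establish a uniform affine lower bound for $\phi(\kappa(\gamma))$ in terms of $\|\kappa(\gamma)\|$, then invoke exponential volume growth in the symmetric space $\GG/\Ksf$ to bound the orbit-counting function. The paper phrases the first step slightly differently---it asserts that $\phi>0$ on $\Bc_\Delta(\Gamma)\setminus\{0\}$, takes an infimum over the compact unit sphere there to get a linear lower bound on the cone, and then extends that bound to all of $\Gamma$ by a compactness-contradiction argument---but this is the same idea with different bookkeeping.

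The technical point you flag, however, is not a formality, and the paper's own proof passes over exactly the same issue in the sentence claiming that Equation~\eqref{eqn:stupid equality between projections} ``implies $\phi$ is positive on $\Bc_\Delta(\Gamma)-\{0\}$.'' If some $H\in\Bc_\Delta(\Gamma)\setminus\{0\}$ has $p_\theta(H)=0$, then $\phi(H)=\phi(p_\theta(H))=0$, and that asserted positivity fails. Without a hypothesis excluding this, the statement is in fact false: take $\GG=\GG_1\times\GG_2$, $\theta\subset\Delta_1$, $\Gamma=\Gamma_1\times\Gamma_2$ with $\Gamma_2$ infinite discrete, and $\phi=\sum_{\alpha\in\theta}\omega_\alpha$. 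Then $\phi>0$ on $\mfa_\theta^+\setminus\{0\}$, yet $\phi(\kappa_\theta(\gamma))=0$ for every $\gamma\in\{1\}\times\Gamma_2$, so $Q_\Gamma^\phi(s)\geq\#\Gamma_2=+\infty$ for all $s$ and $\delta^\phi(\Gamma)=+\infty$. Your sufficient condition $\ker(p_\theta)\cap\mfa^+=\{0\}$ is equivalent to requiring $\theta$ to meet every simple factor of $\GG$, which is precisely what the reduction of Proposition~\ref{prop: reduce} arranges before the result is used in earnest. So you have correctly located the real content of the step; the one caveat to your closing remark is that the hypothesis genuinely needs to be imposed (or the reduction invoked) rather than left implicit, since as stated the proposition carries no such restriction and the product example shows it cannot be dispensed with.
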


\begin{proof} Equation~\eqref{eqn:stupid equality between projections}  implies that $\phi$ is positive on $\mathcal B_\Delta(\Gamma)-\{0\}$. Then, since $\mathcal B_\Delta(\Gamma)$ is a cone and $\phi$ is linear, there exists $A>0$ such that 
\[ \phi(H)\ge A\norm{H}\]
for all $H\in \mathcal B_\Delta(\Gamma)$. 

We claim that there exists $B \ge 0$ such that 
\begin{equation}\label{eqn: bounds for phi}
\phi(\kappa(\gamma)) \ge \frac{A}{2} \norm{\kappa(\gamma)} - B
\end{equation}
for all $\gamma \in \Gamma$. Suppose not. Then for each $n \ge 1$ there exists $\gamma_n \in \Gamma$ where 
$$
\phi(\kappa(\gamma_n)) \le \frac{A}{2} \norm{\kappa(\gamma_n)} - n. 
$$
This implies that $\norm{\kappa(\gamma_n)} \rightarrow +\infty$. Passing to a subsequence we can suppose that $\frac{1}{\norm{\kappa(\gamma_n)} } \kappa(\gamma_n) \rightarrow H \in \mathcal B_\Delta(\Gamma)$. Then 
$$
A = A \norm{H} \le \phi(H) = \lim_{n \rightarrow \infty} \phi\left( \frac{1}{\norm{\kappa(\gamma_n)} } \kappa(\gamma_n)  \right) \le \frac{A}{2}. 
$$ 
So we have a contradiction and hence such a $B \ge 0$ exists.

Let $X :=\GG/\Ksf$ and $x_0 :=\Ksf \in X$.  Then endow $X$ with a $\GG$-invariant Riemannian symmetric metric scaled so that 
$$
\d_X(x_0, gx_0) = \norm{\kappa(g)}
$$
for all $g \in \GG$. Since $\phi(\kappa_\theta(g))=\phi(\kappa(g))$ for all $g\in\GG$, the inequality \eqref{eqn: bounds for phi} implies that
$$
\{\gamma\in\Gamma:\phi(\kappa_\theta(\gamma))<T\}\subset\left\{\gamma\in\Gamma:\d_X(x_0, \gamma x_0)<\frac{2T}{A}+\frac{2B}{A}\right\}.
$$
Thus, $\delta^\phi(\Gamma)\le \frac{2}{A}\delta_X(\Gamma)$, where 
\[
\delta_X(\Gamma):=\limsup_{T\to\infty}\frac{\log\#\left\{\gamma\in\Gamma:\d_X(x_0, \gamma x_0) <T\right\}}{T}.
\]

Recall that the volume growth entropy of $X$ is 
$$
h(X) : = \limsup_{T \rightarrow \infty} \frac{\log {\rm Vol}_X( B_T(x_0) )}{T} 
$$
where ${\rm Vol}_X$ is the Riemannian volume on $X$ and $B_T(x_0) \subset X$ is the open ball of radius $T > 0$ centered at $x_0$. Since $X$ has bounded sectional curvature, volume comparison theorems imply that $h(X) <+\infty$.

Fix $r_0 > 0$ and for $T > 0$ let $\Gamma_T : = \left\{\gamma\in\Gamma:\d_X(x_0, \gamma x_0) <T\right\}$. Then
$$
\#\Gamma_T = \frac{1}{{\rm Vol}_X(B_{r_0}(x_0))} \sum_{\gamma \in \Gamma_T} {\rm Vol}_X(B_{r_0}(\gamma x_0)) \le \frac{\#\Gamma_{2r_0} }{{\rm Vol}_X(B_{r_0}(x_0))} {\rm Vol}_X(B_{T+r_0}(x_0)). 
$$
Thus $\delta_X(\Gamma) \le h(X)< +\infty$.
\end{proof}

\subsubsection{$\Psf_\theta$-divergent groups}
\label{defs of groups}
A subgroup $\Gamma\subset\GG$ is \emph{$\Psf_\theta$-divergent} if $\alpha(\kappa(\gamma_n))\to\infty$ for any $\alpha\in\theta$ and any sequence $\{\gamma_n\}$ in $\Gamma$ of pairwise distinct elements. 
Notice that  by Observation \ref{obs: opposition involution 2}, a subgroup $\Gamma\subset\mathsf{G}$ is $\Psf_\theta$-divergent if and only if it is 
$\Psf_{\theta\cup\iota^*(\theta)}$-divergent.

The \emph{$\theta$-limit set} $\Lambda_\theta(\Gamma)$ of $\Gamma$ is the set of accumulation points in $\Fc_\theta$ of $\{U_\theta(\gamma):\gamma\in\Gamma\}$. Using Proposition~\ref{prop:characterizing convergence in general symmetric case}, one can verify that $\Lambda_\theta(\Gamma)$ is a closed, $\Gamma$-invariant subset of $\Fc_\theta$. We will say that $\Gamma$ is {\em non-elementary} if $\Lambda_\theta(\Gamma)$ is infinite. 

We note that in the literature, divergent groups are sometimes called regular groups (e.g. \cite{KLP1}).

\subsubsection{$\Psf_\theta$-transverse groups} In this subsection we assume that $\theta\subset\Delta$ is symmetric, i.e $\iota^*(\theta)=\theta$.  A $\Psf_\theta$-divergent subgroup $\Gamma\subset\GG$ is \emph{$\Psf_\theta$-transverse} if $\Lambda_\theta(\Gamma)$ is a transverse subset of $\Fc_\theta$, i.e. distinct pairs of flags in $\Lambda_\theta(\Gamma)$ are transverse. We note that in the literature, transverse groups are sometimes called antipodal groups (e.g. \cite{KLP1}). One crucial feature of $\Psf_\theta$-transverse groups is that  $\Gamma$ acts on $\Lambda_\theta(\Gamma)$ as a convergence group. 

We recall that the action, by homeomorphisms, of a  group $\Gamma_0$ on a 
compact metric space $X$ is said to be a (discrete)  {\em convergence group action} if whenever
$\{\gamma_n\}$ is a sequence of distinct elements in $\Gamma_0$, then there are points $x,y\in X$ and a subsequence,
still called $\{\gamma_n\}$, so that $\gamma_n(z)$ converges to  $x$ for all $z\in X\setminus\{y\}$ (uniformly on compact subsets of $X\setminus\{y\}$).

\begin{proposition}[{\cite[Section 5.1]{KLP1}, \cite[Proposition 3.3]{CZZ2}}]\label{prop: convergence group}
If $\Gamma$ is $\Psf_\theta$-transverse, then $\Gamma$ acts on $\Lambda_\theta(\Gamma)$ as a convergence group. 
In particular, if $\Gamma$ is non-elementary, then $\Gamma$ acts on $\Lambda_\theta(\Gamma)$ minimally, and $\Lambda_\theta(\Gamma)$ is perfect. 
\end{proposition}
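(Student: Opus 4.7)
The plan is to combine the characterization of convergence in Proposition~\ref{prop:characterizing convergence in general symmetric case} with the transversality assumption, and then to invoke standard convergence-group dynamics to obtain minimality and perfectness.

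First, to verify the convergence-group action, I would take any sequence $\{\gamma_n\}$ of distinct elements of $\Gamma$. Since $\Gamma$ is $\Psf_\theta$-divergent, $\alpha(\kappa(\gamma_n))\to\infty$ for every $\alpha\in\theta$. Compactness of $\Fc_\theta$ lets me pass to a subsequence so that $U_\theta(\gamma_n)\to F^+$ and $U_\theta(\gamma_n^{-1})\to F^-$ for some $F^\pm\in\Fc_\theta$; by definition both $F^\pm$ lie in $\Lambda_\theta(\Gamma)$. This is condition~(1) of Proposition~\ref{prop:characterizing convergence in general symmetric case}, so condition~(2) gives $\gamma_n(F)\to F^+$ for every $F\in\Fc_\theta\setminus\Zc_{F^-}$, uniformly on compact subsets. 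Now transversality enters: any $z\in\Lambda_\theta(\Gamma)\setminus\{F^-\}$ is distinct from $F^-$ in $\Lambda_\theta(\Gamma)$, hence transverse to $F^-$, hence in $\Fc_\theta\setminus\Zc_{F^-}$. Therefore $\gamma_n(z)\to F^+$, and the convergence is uniform on compact subsets of $\Lambda_\theta(\Gamma)\setminus\{F^-\}$ since such subsets are compact in $\Fc_\theta\setminus\Zc_{F^-}$. This verifies the defining property of a convergence group action with attracting point $F^+$ and repelling point $F^-$.

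Next, assuming $\Lambda_\theta(\Gamma)$ is infinite, I invoke the standard dynamical theory of convergence groups (as developed by Gehring--Martin, Tukia, and Bowditch). A non-elementary convergence group contains loxodromic elements, i.e.\ elements with exactly two fixed points in the limit set exhibiting north--south dynamics, and moreover the set of attracting fixed points of loxodromic elements is dense in the limit set. Given any $z\in\Lambda_\theta(\Gamma)$, I can choose a loxodromic $\gamma\in\Gamma$ whose repelling fixed point differs from $z$ (using that infinitely many loxodromics with distinct fixed-point pairs exist); then the convergence-group property yields $\gamma^n(z)\to\gamma^+$. Consequently the orbit closure $\overline{\Gamma\cdot z}$ contains every loxodromic attracting fixed point, and hence equals $\Lambda_\theta(\Gamma)$, proving minimality. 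Perfectness is then immediate: if some $z\in\Lambda_\theta(\Gamma)$ were isolated, minimality would force its $\Gamma$-orbit (which equals all of $\Lambda_\theta(\Gamma)$) to be discrete, but $\Lambda_\theta(\Gamma)$ is an infinite closed subset of the compact space $\Fc_\theta$, a contradiction.

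The proof is essentially a matching exercise: the main conceptual point is that the transversality hypothesis is exactly what is needed to place every relevant boundary point $z\ne F^-$ inside the domain $\Fc_\theta\setminus\Zc_{F^-}$ on which Proposition~\ref{prop:characterizing convergence in general symmetric case} guarantees uniform convergence. The only step requiring care is the appeal to the existence and density of loxodromic fixed points in a non-elementary convergence group; I would cite the standard references rather than reprove them, since this fact is formulated abstractly and depends only on the convergence-group axiom already established.
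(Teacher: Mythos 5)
The paper does not prove this proposition; it is stated with citations to Kapovich--Leeb--Porti and to the authors' own earlier paper \cite{CZZ2}, so there is no in-paper argument to compare against. Your proof is nonetheless correct and is the natural one given the tools assembled in Section~\ref{sec:background}.

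The verification of the convergence-group property is exactly right: pass to a subsequence so that $U_\theta(\gamma_n)\to F^+$ and $U_\theta(\gamma_n^{-1})\to F^-$, note that both limits lie in $\Lambda_\theta(\Gamma)$ by the definition of the limit set, invoke Proposition~\ref{prop:characterizing convergence in general symmetric case}, and then use transversality to see that every $z\in\Lambda_\theta(\Gamma)\setminus\{F^-\}$ lies in $\Fc_\theta\setminus\Zc_{F^-}$. Your observation that compact subsets of $\Lambda_\theta(\Gamma)\setminus\{F^-\}$ are compact subsets of $\Fc_\theta\setminus\Zc_{F^-}$ (because $\Lambda_\theta(\Gamma)$ is closed and transverse) correctly gives the uniform convergence required by the definition in the paper.

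For minimality and perfectness, two small remarks. First, you go via loxodromics and density of their attracting fixed points, which works; but you could shorten this by simply observing that the limit set of the convergence-group action on $\Lambda_\theta(\Gamma)$ is all of $\Lambda_\theta(\Gamma)$ (every point of $\Lambda_\theta(\Gamma)$ is by definition an attracting point of a divergent sequence), and then directly citing the single theorem of Gehring--Martin/Tukia that a non-elementary convergence group acts minimally on its limit set and that the limit set is perfect. Second, your parenthetical ``which equals all of $\Lambda_\theta(\Gamma)$'' for the orbit of an isolated point $z$ does hold, but is not immediate from minimality as usually phrased (which gives density of the orbit, not equality); it requires noting that $\Gamma\cdot z$ is open when $z$ is isolated, so its complement is a closed, $\Gamma$-invariant, proper subset and hence empty. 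Once that is spelled out, the chain ``$\Lambda_\theta(\Gamma)$ consists of isolated points, hence is discrete, hence finite by compactness'' gives the contradiction cleanly.
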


If a group $\Gamma_0$ acts on a metric space $X$ as a convergence group, we say that a point $x\in X$  is a  {\em conical limit point} for the convergence group action if  there exist distinct $a,b\in X$ and a sequence
$\{\gamma_n\}$ in $\Gamma_0$ so that $\gamma_n(x)$ converges to $a$ and $\gamma_n(y)$ converges to  $b$ for all $y\in X\setminus\{x\}$.  

When $\Gamma \subset \GG$ is $\Psf_\theta$-transverse, the set of conical limit points for the action of $\Gamma$ on $\Lambda_\theta(\Gamma)$ is called the {\em $\theta$-conical limit set} and is denoted $\Lambda^{\rm con}_\theta(\Gamma)$.

\subsubsection{Anosov groups}

Anosov groups were introduced by Labourie \cite{labourie-invent} in his work on Hitchin representations and were further developed by Guichard-Wienhard \cite{guichard-wienhard} and others.
They are a natural generalization of the notion of a convex cocompact subgroup of a rank one Lie group into the higher rank setting. There are now many different equivalent definitions, and
we give a definition which is well-adapted to our setting.

Following~\cite{KLP1}, a $\Psf_\theta$-transverse subgroup $\Gamma\subset\GG$ is said to be $\Psf_\theta$-{\em Anosov} if $\Gamma$ is Gromov hyperbolic with Gromov boundary
$\partial\Gamma$ and there exists a  $\Gamma$-equivariant homeomorphism $\xi:\partial\Gamma \to \Lambda_\theta(\Gamma)$.

\subsection{A helpful reduction}\label{sec: a helpful reduction} Since $\GG$ is semisimple, we may decompose its Lie algebra  $\mfg = \bigoplus_{j=1}^m \mfg_j$ into a product of simple Lie algebras. For each $1 \le j \le m$, let $\GG_j \subset \GG$ denote the connected subgroup with Lie algebra $\mfg_j$. The subgroups $\GG_1, \dots, \GG_m$ are called the \emph{simple factors} of $\GG$. One can verify that each simple factor of $\GG$ is a closed, normal subgroup and 
$$
\GG = \GG_1 \cdots \GG_m
$$
is an almost direct product, i.e. any distinct pair of simple factors of $\GG$ commute, and the intersection between $\GG_j$ and $\GG_1\cdots\GG_{j-1}\GG_{j+1}\cdots\GG_m$ is finite for all $j$.

In this section we explain why one can often reduce to the case where $\GG$ has trivial center and the fixed parabolic subgroup contains no simple factors of $\GG$. The main construction needed for this reduction is a well-behaved quotient of $\GG$.

\begin{proposition}\label{prop: reduce}
 For any $\theta \subset \Delta$ symmetric, there is a semisimple Lie group $\GG'$ without compact factors and with trivial center, and a quotient $p:\GG\to\GG'$ with the following properties:
\begin{enumerate}
\item There exists a Cartan decomposition $\mfg'=\mfk'\oplus \mfp'$ of the Lie algebra $\mfg'$ of $\GG'$, a Cartan subspace $\mfa^\prime \subset \mathfrak p^\prime$, and a system of simple restricted roots $\Delta'\subset(\mfa')^*$, so that $(\d p)_{\id}:\mfg\to\mfg'$ sends $\mfk$, $\mfp$ and $\mfa$ to $\mfk'$, $\mfp'$ and $\mfa'$ respectively, and $(\d p)_{\id}^*:(\mfa')^*\to\mfa^*$ identifies $\Delta'$ with a subset of $\Delta$ that contains $\theta$.
\item The parabolic subgroup $\Psf_\theta'\subset\GG'$ corresponding to $\theta\subset\Delta'$ satisfies $p^{-1}(\Psf_\theta')=\Psf_\theta$, and does not contain any simple factors of $\GG'$. Furthermore, if $\Fc_\theta':=\GG'/\Psf_\theta'$, then the map $\xi:\Fc_\theta\to\Fc_\theta'$ given by $\xi:g\Psf_\theta\mapsto p(g)\Psf_\theta'$ is a $p$-equivariant diffeomorphism which preserves transverality.
\item Let $\kappa_\theta:\GG\to\mfa_\theta^+$ and $\kappa_\theta':\GG'\to(\mfa_\theta')^+$ be the partial Cartan projections, and let $B_\theta:\GG\times\Fc_\theta\to\mfa_\theta$ and $B_\theta':\GG'\times\Fc_\theta'\to\mfa_\theta'$ be the partial Iwasawa cocycles. Then $(\d p)_{\id}:\mfg\to\mfg'$ restricts to an isomorphism from $\mfa_\theta$ to $\mfa_\theta'$, and satisfies
\[
(\d p)_{\id}(\kappa_\theta(g)) = \kappa_{\theta}^\prime(p(g)) \quad \text{and}\quad
(\d p)_{\id}(B_\theta(g,F)) = B^\prime_{\theta}(p(g), \xi(F))
\]
for all $g \in \GG$ and $F \in \Fc_\theta$, 
\end{enumerate}
\end{proposition}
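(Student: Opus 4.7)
The plan is to construct $\GG'$ as an explicit quotient of $\GG$. Decompose $\mfg=\mfg_1\oplus\cdots\oplus\mfg_m$ into simple ideals with corresponding connected analytic subgroups $\GG_j$, forming an almost direct product $\GG=\GG_1\cdots\GG_m$. Each $\mfg_j$ is $\tau$-invariant, and the Cartan subspace together with each restricted root space is supported on a single simple factor, so we obtain partitions $\mfa=\bigoplus_j(\mfa\cap\mfg_j)$ and $\Delta=\bigsqcup_j\Delta_j$, where $\Delta_j$ is the set of simple roots supported on $\mfg_j$. Let $J:=\{j:\Delta_j\cap\theta\ne\emptyset\}$. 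For $j\notin J$ every root of $\mfg_j$ lies in $\mathrm{Span}(\Delta\setminus\theta)$, so $\mfg_j\subset\mfp_\theta$ and $\GG_j\subset\Psf_\theta$. Set
\[N:=Z(\GG)\cdot\prod_{j\notin J}\GG_j,\]
a closed normal subgroup of $\GG$ contained in $\Psf_\theta$ (the center sits in every parabolic). Then $\GG':=\GG/N$ is connected semisimple with trivial center, and its simple factors are the images of $\{\GG_j:j\in J\}$, none of which are compact since $\GG$ has none.

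Write $p:\GG\to\GG'$ for the quotient. The differential $(\d p)_\id$ has kernel $\bigoplus_{j\notin J}\mfg_j$ and sends $\mfk,\mfp,\mfa$ onto Cartan data $\mfk',\mfp',\mfa'$ for $\mfg'$; via the transpose, $\Delta'$ is identified with $\bigsqcup_{j\in J}\Delta_j\subset\Delta$, which contains $\theta$. This gives property (1). For property (2), a root-space calculation shows $\mfp_\theta$ splits as $\ker(\d p)_\id$ plus a lift of $\mfp_\theta'$, and combined with the fact that $\Psf_\theta^\pm$ is connected modulo $Z(\GG)$, this gives $p^{-1}(\Psf_\theta')=\Psf_\theta$ (and the analogous identity for the opposite parabolic). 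Hence $\xi:g\Psf_\theta\mapsto p(g)\Psf_\theta'$ is a well-defined $p$-equivariant bijection between homogeneous spaces of the same dimension, so a diffeomorphism; transversality is the $\GG$-orbit (resp.\ $\GG'$-orbit) of a standard pair, so $\xi$ preserves it. No simple factor of $\GG'$ lies in $\Psf_\theta'$: for $j\in J$, pick $\alpha\in\Delta_j\cap\theta$ and observe that $\mfg_{-\alpha}\subset\mfg_j$ is not in $\mfp_\theta$.

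The main technical point is property (3). Coroots for $\alpha\in\Delta_j$ lie in $\mfa\cap\mfg_j$, so for $j\in J$ they sit in the non-quotiented part; equipping $\mfg'$ with the induced Killing form ensures these coroots are preserved by $(\d p)_\id$, and hence $\omega_\alpha^{\GG'}\circ(\d p)_\id=\omega_\alpha$ for all $\alpha\in\theta$. Since any element of $\mfa_\theta$ vanishes on $\Delta_j$ for $j\notin J$ (as $\Delta_j\subset\Delta\setminus\theta$), it already sits in $\bigoplus_{j\in J}(\mfa\cap\mfg_j)$, on which $(\d p)_\id$ is injective; comparing defining equations then shows $(\d p)_\id|_{\mfa_\theta}:\mfa_\theta\to\mfa_\theta'$ is an isomorphism. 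A $\mathsf{KAK}$-decomposition $g=me^{\kappa(g)}\ell$ pushes forward to $p(g)=p(m)e^{(\d p)_\id\kappa(g)}p(\ell)$, which is a $\mathsf{KAK}$-decomposition of $p(g)$ (using $(\d p)_\id\mfa^+\subset(\mfa')^+$), so $\kappa'(p(g))=(\d p)_\id\kappa(g)$. The matching of fundamental weights yields $(\d p)_\id\circ p_\theta=p_\theta'\circ(\d p)_\id$ on $\mfa$, hence $\kappa_\theta'(p(g))=(\d p)_\id\kappa_\theta(g)$. The analogous identity for the partial Iwasawa cocycle follows by pushing forward an Iwasawa decomposition $g=kau$ to one for $p(g)$ in $\GG'$ and applying $p_\theta'$. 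The main obstacle is the careful bookkeeping of normalizations in this step, which is dispensed with by taking the Killing form on $\mfg'$ to be induced from the Killing form on $\mfg$ so that coroots, weights, and the Iwasawa data are all compatibly identified.
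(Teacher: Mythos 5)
Your proposal is correct and follows essentially the same route as the paper: decompose $\mfg$ into simple ideals, identify the factors $\GG_j$ absorbed into $\Psf_\theta$ as exactly those with $\Delta_j\cap\theta=\emptyset$, and quotient by $Z(\GG)$ together with the product of those factors. Your index set $J=\{j:\Delta_j\cap\theta\ne\emptyset\}$ is equivalent to the paper's $\{j:\mfg_j\not\subset\mfp_\theta\}$, and your $N$ coincides with the paper's $\mathsf H$. Two small remarks: you needn't ``equip $\mfg'$ with the induced Killing form'' as a choice — since each $\mfg_j$ is an ideal of both $\mfg$ and $\mfg'$, the intrinsic Killing form of $\mfg'$ restricted to $\mfg_j$ automatically agrees with that of $\mfg$, so coroots and fundamental weights are matched canonically; and the phrase ``$\Psf_\theta^\pm$ is connected modulo $Z(\GG)$'' is not needed and not true in general (e.g.\ the Borel of $\SL(2,\Rb)$ has two components) — the identity $p^{-1}(\Psf_\theta')=\Psf_\theta$ follows directly from $\Psf_\theta$ being the full normalizer of $\mfu_\theta$, $(\d p)_\id(\mfu_\theta)=\mfu_\theta'$, and $\ker p\subset\Psf_\theta$. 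These are cosmetic; the argument is sound and matches the paper's (your parts (2) and (3) spell out what the paper labels ``straightforward'').
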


Once we have such a Lie group $\GG'$ and quotient map $p:\GG\to\GG'$ as in Proposition~\ref{prop: reduce}, then for any $\Psf_\theta$-transverse subgroup $\Gamma \subset \GG$ and any $\phi \in \mfa_{\theta}^*$, we may set $\Gamma':=p(\Gamma)$ and $\phi': = \phi \circ (\d p)_{\id}|_{\mfa_\theta}^{-1}$. By Proposition~\ref{prop: reduce}, it follows that 
\begin{enumerate}
\item[(I)] $p|_\Gamma$ has finite kernel, $\Gamma^\prime$ is $\Psf_{\theta}^\prime$-transverse, $\xi(\Lambda_\theta(\Gamma)) = \Lambda_{\theta}(\Gamma^\prime)$ and $\xi(\Lambda_\theta^{\rm con}(\Gamma)) = \Lambda_{\theta}^{\rm con}(\Gamma^\prime)$.
\item[(II)] $\phi(\kappa_\theta(\gamma)) = \phi^\prime(\kappa_{\theta}^\prime(p(\gamma)))$ for all $\gamma \in \Gamma$. 
\item[(III)] $\phi(B_\theta(\gamma,F)) = \phi'(B^\prime_{\theta}(p(\gamma), \xi(F)))$ for all $\gamma\in\Gamma$ and $F\in\Lambda_\theta(\Gamma)$.
\end{enumerate}
Thus, any result for $\Gamma\subset\GG$ and $\phi\in\mfa_\theta$ that depends only on $\Lambda_\theta(\Gamma)$, $\Lambda_\theta^{\rm con}(\Gamma)$, $\phi\circ\kappa_\theta$ and $\phi\circ B_\theta$ will hold if and only if they also hold for $\Gamma'\subset\GG'$ and $\phi'\in\mfa_\theta'$. In many situations, this allows us to assume without loss of generality that $\GG$ has trivial center and $\Psf_\theta$ does not contain any simple factors of $\GG$.

\begin{proof}[Proof of Proposition~\ref{prop: reduce}]

Let $\mfp_\theta\subset\mfg$ be the Lie subalgebra corresponding to $\Psf_\theta$. If we set 
\[J :=\{ j : \mfg_j\cap\mfp_\theta=0\}\quad\text{and}\quad J^c :=\{ j : \mfg_j\subset\mfp_\theta\},\]
then $J\cup J^c=\{1,\dots,m\}$. 

Let $\mathsf{H}:=Z(\GG) \prod_{j \in J^c} \GG_j \subset\GG$, $\GG':=\GG/\mathsf{H}$ and $p:\GG\to\GG'$ be the quotient map.
Then observe that via the map $(\d p)_{\rm id}$, we may identify:
\begin{equation}\label{eqn: mfg'}\mfg'=\bigoplus_{j\in J}\mfg_j.\end{equation}
In particular, $\GG'$ is semisimple without compact factors, and has trivial center.

First, we prove part (1). Observe that we may decompose
$$
\mathfrak k = \bigoplus_{j=1}^m \mathfrak k_j, \quad \mathfrak p = \bigoplus_{j=1}^m \mathfrak p_j,\quad \mfa = \bigoplus_{j=1}^m \mfa_j, \quad\Sigma = \bigcup_{j=1}^m \Sigma_j,\quad \Delta = \bigcup_{j=1}^m \Delta_j \quad\text{and} \quad  \mfa^+ = \bigoplus_{j=1}^m \mfa_j ^+, 
$$
where $\mathfrak g_j = \mathfrak k_j \oplus \mathfrak p_j$ is a Cartan decomposition of $\mathfrak g_j$, $\mfa_j \subset \mathfrak p_j$ is a Cartan subspace, $\Sigma_j$ is the set of restricted roots for $ \mfa_j$ and $\Delta_j \subset \Sigma_j$ is a system of simple restricted roots and $\mfa^+_j \subset \mfa_j$ is the positive Weyl chamber relative to $\Delta_j$. Hence, if we set
$$
\mathfrak k^\prime := \bigoplus_{j \in J} \mathfrak k_j, \quad \mathfrak p^\prime: = \bigoplus_{j \in J} \mathfrak p_j, \quad \mfa^\prime := \bigoplus_{j \in J} \mfa_j,\quad \Sigma^\prime : = \bigcup_{j \in J} \Sigma_j,\quad\Delta^\prime := \bigcup_{j \in J} \Delta_j \quad\text{and}\quad (\mfa^\prime)^+ = \bigoplus_{j\in J} \mfa_j ^+
$$
then via the identification \eqref{eqn: mfg'}, $\mathfrak g^\prime= \mathfrak k^\prime \oplus \mathfrak p^\prime$ is a Cartan decomposition of $\mathfrak g^\prime$, $\mfa^\prime \subset \mathfrak p^\prime$ is a Cartan subspace, $\Sigma^\prime$ is the set of restricted roots for $ \mfa^\prime$, $\Delta^\prime\subset\Sigma'$ is a system of simple restricted roots and $(\mfa^{\prime})^+$ is the positive Weyl chamber relative to $\Delta^\prime$. Furthermore, from the definition of $\Psf_\theta$, if $\GG_j$ is a simple factor of $\GG$ that lies in $\Psf_\theta$, then $\theta$ does not intersect $\Delta_j$. This proves part (1).

Next, we prove part (2). The fact that $\Psf_\theta=p^{-1}(\Psf'_\theta)$ is a straightforward verification from the definition of $\Psf_\theta$ and $\Psf'_\theta$. This fact, together with \eqref{eqn: mfg'} imply that $\Psf'_\theta$ does not contain any simple factors of $\GG'$. It is clear that $\xi$ is a $p$-equivariant diffeomorphism. To see that $\xi$ preserves transversality, simply note that the proof that $\Psf_\theta=p^{-1}(\Psf'_\theta)$ also verifies that $\Psf_\theta^-=p^{-1}((\Psf'_\theta)^-)$. Thus, part (2) holds. 

Part (3) holds because with our choice of $\mathfrak a'$, $p$ sends the Cartan and Iwasawa decompositions of $\GG$ to the Cartan and Iwasawa decompositions of $\GG'$ respectively. 
\end{proof}

\section{Patterson-Sullivan measures for divergent groups}
Patterson-Sullivan measures were first constructed by Patterson \cite{patterson} for Fuchsian groups. Subsequently they were constructed in many settings
where there is a natural boundary at infinity and some amount of Gromov hyperbolic behavior. Almost all these constructions mimic Patterson's original
constructions with technical modifications appropriate to the setting. 

Given $\theta \subset \Delta$ symmetric, we will now construct Patterson-Sullivan measures for $\Psf_\theta$-divergent subgroups, using the $\theta$-limit set of the group as the natural boundary. More precisely, given $\phi\in\mfa_\theta^*$ and a $\Psf_\theta$-divergent group $\Gamma \subset \GG$, a probability measure $\mu$ on $\mathcal F_\theta$ is called a \emph{$\phi$-conformal measure for $\Gamma$ of dimension $\beta$} if for any $\gamma \in \Gamma$, the measures $\mu$ and $\gamma_*\mu$ are absolutely continuous and 
$$
\frac{d\gamma_*\mu}{d\mu}(F)=e^{-\beta\phi(B_\theta(\gamma^{-1},F))}.
$$
If, in addition, ${\rm supp}(\mu) \subset \Lambda_\theta(\Gamma)$, then $\mu$ is a \emph{$\phi$-Patterson-Sullivan measure.}

\begin{remark} \,
\begin{enumerate}
\item Since the Radon-Nikodym derivative $\frac{d\gamma_*\mu}{d\mu}$ is only defined almost everywhere, the above equation should be understood to hold only almost everywhere. The same abuse of notation will be used throughout the paper.
\item Notice that in the definition of the partial Iwasawa cocycle $B_\theta$, we implicitly made a choice of a Cartan decomposition of $\mathfrak g$ (equivalently, a choice of maximal compact $\Ksf\subset\GG$) and a choice of a maximal abelian subspace $\mathfrak a\subset\mathfrak g$ that is orthogonal (in the Killing form) to the Lie subalgebra $\mathfrak k\subset\mathfrak g$ of $\Ksf$. In this paper, we fix once and for all a choice of $\Ksf$, and we only consider $\phi$-conformal measures with respect to this fixed $\Ksf$. The choice of $\Ksf$ is equivalent to
a choice of basepoint for $\mathbb H^n$ in the classical case.
\end{enumerate}
\end{remark}

Also, recall that $\delta^\phi(\Gamma)$ is the critical exponent of the Poincar\'e series 
$$
Q_\Gamma^\phi(s) = \sum_{\gamma \in \Gamma} e^{-s\phi(\kappa_\theta(\gamma))}. 
$$

\begin{proposition}\label{prop: Patterson-Sullivan}
If $\theta \subset \Delta$ is symmetric, $\Gamma\subset\GG$ is $\Psf_\theta$-divergent, $\phi\in \mfa^*_\theta$ and $\delta^\phi(\Gamma) < +\infty$, then there is a $\phi$-Patterson-Sullivan measure $\mu$ for $\Gamma$ of dimension $\delta^\phi(\Gamma)$.
\end{proposition}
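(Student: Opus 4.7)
The plan is to carry out Patterson's classical construction \cite{patterson} in this setting, with the $U_\theta$ map playing the role of the orbit map and Quint's estimates (Lemma~\ref{differences}) playing the role of the classical Busemann geometry. Set $\delta := \delta^\phi(\Gamma)$. For each $s > \delta$, define the finite measure on $\Fc_\theta$
\[
\nu_s := \sum_{\gamma \in \Gamma} \chi\bigl(\phi(\kappa_\theta(\gamma))\bigr)\, e^{-s\phi(\kappa_\theta(\gamma))}\, \delta_{U_\theta(\gamma)},
\]
where $\chi : \Real \to (0,\infty)$ is chosen as follows: if $Q_\Gamma^\phi(\delta) = +\infty$, take $\chi \equiv 1$; otherwise take a classical Patterson weight function so that $\widetilde Q(s) := \nu_s(\Fc_\theta)$ diverges as $s \searrow \delta$, still has critical exponent $\delta$, and satisfies $\chi(t+c)/\chi(t) \to 1$ as $t \to \infty$ for every fixed $c$. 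Normalize $\mu_s := \nu_s/\widetilde Q(s)$ to a probability measure on $\Fc_\theta$ and, by weak-$*$ compactness, extract $s_n \searrow \delta$ with $\mu_{s_n} \to \mu$.

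To see $\mathrm{supp}(\mu) \subset \Lambda_\theta(\Gamma)$, note that $\widetilde Q(s) \to +\infty$ as $s \searrow \delta$, so the mass that $\mu_s$ places on any fixed finite subset of $\Gamma$ tends to $0$; since $\Gamma$ is $\Psf_\theta$-divergent, the accumulation set of $\{U_\theta(\gamma) : \gamma \in \Gamma\}$ equals $\Lambda_\theta(\Gamma)$, which gives the claim.

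The core computation is conformality. Fix $\eta \in \Gamma$ and reindex $\eta_* \mu_s$ via $\gamma \mapsto \eta^{-1}\gamma$:
\[
\eta_* \mu_s = \frac{1}{\widetilde Q(s)} \sum_{\gamma \in \Gamma} \chi\bigl(\phi(\kappa_\theta(\eta^{-1}\gamma))\bigr)\, e^{-s\phi(\kappa_\theta(\eta^{-1}\gamma))}\, \delta_{\eta\, U_\theta(\eta^{-1}\gamma)}.
\]
Because $\Gamma$ is $\Psf_\theta$-divergent, Lemma~\ref{differences} applied with the roles of $g$ and $h$ taken by $\eta^{-1}$ and $\gamma$ yields
\[
\phi(\kappa_\theta(\eta^{-1}\gamma)) - \phi(\kappa_\theta(\gamma)) - \phi\bigl(B_\theta(\eta^{-1}, U_\theta(\gamma))\bigr) \longrightarrow 0
\]
as $\gamma$ leaves every finite set, while Proposition~\ref{prop:characterizing convergence in general symmetric case} shows that $\eta\, U_\theta(\eta^{-1}\gamma)$ and $U_\theta(\gamma)$ approach the same limit in $\Fc_\theta$, uniformly outside any neighborhood of $\Zc_{U_\theta(\eta^{-1})}$. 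Combined with the slow variation of $\chi$ and continuity of $F \mapsto \phi(B_\theta(\eta^{-1},F))$, these estimates yield for every $f \in C(\Fc_\theta)$ that
\[
\eta_*\mu_s(f) - \mu_s\bigl(f(\cdot)\, e^{-s\phi(B_\theta(\eta^{-1},\,\cdot))}\bigr) \longrightarrow 0 \quad \text{as } s \searrow \delta.
\]
Passing to the limit along $s_n$ gives $\eta_*\mu = e^{-\delta\phi(B_\theta(\eta^{-1},\,\cdot))}\, \mu$, the desired conformality.

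The main obstacle is that the three approximation errors (Lemma~\ref{differences}, the comparison $\eta\, U_\theta(\eta^{-1}\gamma) \leftrightarrow U_\theta(\gamma)$, and the slow variation of $\chi$) each only become small after removing a finite set of $\gamma$ that depends on $\eta$ and on the desired precision; the argument nonetheless closes because the $\mu_s$-mass of any such exceptional finite set vanishes as $s \searrow \delta$, which is precisely the purpose of the Patterson weight $\chi$. A minor additional care is needed near the closed proper subset $\Zc_{U_\theta(\eta^{-1})}$, where $B_\theta(\eta^{-1},\cdot)$ may blow up: one first approximates $f$ by continuous functions vanishing in a neighborhood of this locus before taking the limit, and then passes to the general case by monotone approximation.
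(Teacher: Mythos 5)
Your proposal is correct and is, at bottom, the same argument as the paper's: Patterson's weighted measures, a weak-$*$ limit as $s\searrow\delta^\phi(\Gamma)$, the reindexing $\gamma\mapsto\eta^{-1}\gamma$, and Quint's Lemma~\ref{differences} to relate the asymptotics of $\kappa_\theta$ to the cocycle $B_\theta$. The meaningful difference is one of packaging. You work with measures directly on $\Fc_\theta$, placing Dirac masses at $U_\theta(\gamma)$; after reindexing, the masses sit at $\eta\,U_\theta(\eta^{-1}\gamma)$ rather than at $U_\theta(\gamma)$, and the exponential weights differ from the target by an error that is only asymptotically small, so three separate estimates (uniform slow variation of the Patterson weight, uniform convergence of $\eta U_\theta(\eta^{-1}\gamma)$ to $U_\theta(\gamma)$ outside finite sets, and a uniform form of Lemma~\ref{differences}) have to be closed together against the vanishing mass of finite sets. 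The paper instead first builds the compactification $\Gamma\cup\Lambda_\theta(\Gamma)$ of Lemma~\ref{lem: limit set compactifies}, puts the Dirac masses at the group elements themselves, and extends the cocycle by $\bar B_\theta(\eta,\gamma):=\kappa_\theta(\eta\gamma)-\kappa_\theta(\gamma)$ on $\Gamma$ and $\bar B_\theta(\eta,F):=B_\theta(\eta,F)$ on $\Lambda_\theta(\Gamma)$. With this, the identity $e^{-s\phi(\kappa_\theta(\eta^{-1}\gamma))}=e^{-s\phi(\kappa_\theta(\gamma))}\,e^{-s\phi(\bar B_\theta(\eta^{-1},\gamma))}$ is \emph{exact}, the Patterson weight ratio is absorbed into a single auxiliary function $g_\eta$, and Quint's estimate is invoked exactly once to prove the continuity of $\bar B_\theta$ on the compactification (the ``moreover'' clause of Lemma~\ref{lem: limit set compactifies}). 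That buys an argument with no $\epsilon$-chasing after the compactification lemma; your version closes too, but all the approximation bookkeeping has to be done at once.

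One correction worth making: the claim that $B_\theta(\eta^{-1},\cdot)$ ``may blow up'' near $\Zc_{U_\theta(\eta^{-1})}$ is not right. The partial Iwasawa cocycle $B_\theta$ is smooth (in particular continuous) on all of $\GG\times\Fc_\theta$, so $\phi(B_\theta(\eta^{-1},\cdot))$ is bounded on the compact space $\Fc_\theta$. You may be thinking of Lemma~\ref{quintlemma}, where the \emph{comparison} $B_\theta(g,F)\approx\kappa_\theta(g)$ degenerates as $F$ approaches $\Zc_{\ell^{-1}\Psf_\theta^-}$, but $B_\theta$ itself has no singular locus. Consequently the final detour --- approximating $f$ by functions vanishing near $\Zc_{U_\theta(\eta^{-1})}$ and then appealing to monotone convergence --- is unnecessary; the conformality identity can be passed to the limit for arbitrary $f\in C(\Fc_\theta)$ directly.
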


In the case when $\Gamma$ is a $\Psf_\theta$-Anosov subgroup, Proposition \ref{prop: Patterson-Sullivan} is a consequence of the following theorem of Sambarino \cite{sambarino-dichotomy}, who completely classified the linear functionals which admit Patterson-Sullivan measures 
(see also Lee-Oh \cite{lee-oh-invariant} for the case when $\Gamma$ is Zariski dense and Anosov with respect to a minimal parabolic subgroup and Kapovich-Dey~\cite{dey-kapovich} for the case when $\phi$ is symmetric and positive on $\mfa_\theta^+$).

\begin{theorem}[{Sambarino \cite{sambarino-dichotomy}}]\label{thm:Anosov case of PS}
\label{anosov crit exp finite}
If $\theta \subset \Delta$ is symmetric, $\Gamma\subset\GG$ is $\Psf_\theta$-Anosov and $\phi\in\mfa_\theta^*$, then
the following are equivalent
\begin{enumerate}
\item
$\phi\in \mathcal B_\theta^+(\Gamma)$,
\item
$\delta^\phi(\Gamma)<+\infty$, and
\item
$\Gamma$ admits a $\phi$-Patterson-Sullivan measure of dimension $\delta^\phi(\Gamma)$.
\end{enumerate}
Moreover, if $\delta^\phi(\Gamma)<+\infty$, then $Q_\Gamma^\phi$ diverges at its critical exponent. 
\end{theorem}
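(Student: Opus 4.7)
The plan is to close the cycle (1) $\Rightarrow$ (2) $\Rightarrow$ (3) $\Rightarrow$ (2) $\Rightarrow$ (1) and then argue the divergence moreover separately. Two of these implications are already packaged for us: (1) $\Rightarrow$ (2) is exactly Proposition \ref{finite critexp}, while (2) $\Rightarrow$ (3) is Proposition \ref{prop: Patterson-Sullivan} applied to $\Gamma$, which is $\Psf_\theta$-divergent because it is $\Psf_\theta$-Anosov. The implication (3) $\Rightarrow$ (2) is tautological from the statement of (3), which implicitly requires $\delta^\phi(\Gamma) \in [0,+\infty)$ in order for the exponent on the Radon-Nikodym derivative to make sense.

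The substantive content is therefore (2) $\Rightarrow$ (1), which I will establish by contrapositive. Suppose some $H_0 \in \mathcal{B}_\theta(\Gamma) \setminus \{0\}$ has $\phi(H_0) \leq 0$, and choose a sequence $\gamma_n \in \Gamma$ with $t_n \searrow 0$ and $t_n \kappa_\theta(\gamma_n) \to H_0$. If $\phi(H_0) < 0$, the argument is immediate: $\phi(\kappa_\theta(\gamma_n)) \to -\infty$, so for every $s > 0$ the individual terms $e^{-s \phi(\kappa_\theta(\gamma_n))}$ of $Q_\Gamma^\phi(s)$ tend to $+\infty$, forcing divergence. The delicate case is $\phi(H_0)=0$. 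Here I would exploit the Anosov hypothesis to show that for any open cone $U\subset\mfa_\theta$ around $H_0$ there is a positive directional growth rate
\[
c(U) \,:=\, \limsup_{T \to \infty} \frac{1}{T} \log \#\bigl\{\gamma \in \Gamma : \kappa_\theta(\gamma) \in U,\ \|\kappa_\theta(\gamma)\| \leq T\bigr\} \,>\, 0,
\]
and then pick $U$ so small that $\phi \leq \epsilon\,\|\cdot\|$ on $U$. Bounding
\[
Q_\Gamma^\phi(s) \,\geq\, \sum_{\kappa_\theta(\gamma) \in U} e^{-s\epsilon\,\|\kappa_\theta(\gamma)\|}
\]
yields divergence whenever $s\epsilon < c(U)$, so $\delta^\phi(\Gamma) \geq c(U)/\epsilon \to +\infty$ as $\epsilon \to 0$.

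The main obstacle is installing the thermodynamic formalism that underlies both the directional growth lower bound and the divergence at the critical exponent. The input is Sambarino's Markov coding: an Anosov representation $\rho:\Gamma \to \GG$ carries a subshift of finite type modelling the geodesic flow on the Gromov hyperbolic group $\Gamma$, and the H\"older cocycle $(\gamma,x) \mapsto \phi(B_\theta(\rho(\gamma), \xi(x)))$ on $\partial\Gamma$ reparametrizes this flow. When $\phi \in \mathcal{B}_\theta^+(\Gamma)$, this cocycle has strictly positive translation lengths (equal to $\phi(\lambda(\rho(\gamma)))$ at each hyperbolic $\gamma$), so Bowen-Ruelle theory applies: the topological entropy of the reparametrized flow is $\delta^\phi(\Gamma)$, periodic-orbit counting yields the directional growth rates $c(U)$, and the spectral gap of the Ruelle transfer operator shows that $Q_\Gamma^\phi$ diverges at $s=\delta^\phi(\Gamma)$. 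When $\phi$ is not in $\mathcal{B}_\theta^+(\Gamma)$, the reparametrization degenerates in the direction $H_0$, which is precisely the mechanism forcing $\delta^\phi(\Gamma)=+\infty$ in the contrapositive argument above. The hard step, and the one where the Anosov hypothesis is indispensable, is the construction of the Markov coding; everything else is a fairly standard application of the symbolic machinery.
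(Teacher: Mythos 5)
The paper does not prove this theorem; it simply attributes it to Sambarino \cite{sambarino-dichotomy}, so there is no in-text argument to compare your proposal against. Your cycle $(1)\Rightarrow(2)\Rightarrow(3)\Rightarrow(2)$ and the reductions of the easy arrows to Proposition \ref{finite critexp} and Proposition \ref{prop: Patterson-Sullivan} are fine, and you have correctly located the real content in $(2)\Rightarrow(1)$ together with the divergence assertion --- both of which rest on Sambarino's Markov coding and thermodynamic formalism. As a high-level roadmap this matches the cited approach.

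There is, however, a genuine gap in your treatment of the delicate case $\phi(H_0)=0$. Your bound gives $\delta^\phi(\Gamma)\geq c(U)/\epsilon$ for each cone $U$ around $H_0$, where $\epsilon=\epsilon(U):=\sup\{\,\phi(H):H\in U,\ \|H\|=1\,\}$, and you conclude by sending $\epsilon\to 0$. But sending $\epsilon\to 0$ forces $U$ to shrink toward the ray $\Rb_{>0}H_0$, and $c(U)$ is monotone nonincreasing under shrinking $U$; nothing in your argument rules out $c(U)\to 0$ at a rate comparable to (or faster than) $\epsilon(U)$, in which case $c(U)/\epsilon(U)$ stays bounded and you do not obtain $\delta^\phi(\Gamma)=+\infty$. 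This is not a hypothetical worry: by linearity, $\phi$ can only vanish on $\mathcal B_\theta(\Gamma)\setminus\{0\}$ at a direction $H_0$ lying on the \emph{boundary} of the limit cone (otherwise $\phi\equiv 0$), and boundary directions are precisely where directional growth rates typically degenerate (the growth indicator is concave, and its restriction to the boundary of the cone need not be positive). So the single assertion ``$c(U)>0$'' is insufficient; you need a quantitative lower bound on $c(U)$ outpacing $\epsilon(U)$, and establishing that is exactly the non-elementary part handled by the transfer-operator machinery you invoke only schematically. An alternative route via Jordan projections and cyclic subgroups runs into the same obstruction: it is possible for $\ell^\phi(\gamma)>0$ on every loxodromic $\gamma$ while $\phi$ still vanishes on a boundary direction of $\mathcal B_\theta(\Gamma)$, and then a single cyclic subgroup always has $\phi$-critical exponent $0$, giving no divergence. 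In short, the case $\phi(H_0)=0$ with $H_0\in\partial\mathcal B_\theta(\Gamma)$ is the crux of $(2)\Rightarrow(1)$ and your sketch does not close it.
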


The strategy to prove Proposition \ref{prop: Patterson-Sullivan} is to first observe that one can regard  $\Gamma\cup\Lambda_\theta(\Gamma)$ as a well-behaved 
compactification of $\Gamma$. Using this compactification one can simply repeat Patterson's construction verbatim. 

\begin{lemma}\label{lem: limit set compactifies} 
Suppose $\theta\subset\Delta$ is symmetric. If $\Gamma \subset \GG$ is $\Psf_\theta$-divergent, then the set $\Gamma \cup \Lambda_\theta(\Gamma)$ has a topology that makes it a compactification of $\Gamma$. More precisely:
\begin{enumerate}
\item $\Gamma \cup \Lambda_\theta(\Gamma)$ is a compact metrizable space.
\item If $\Gamma$ has the discrete topology, $\Gamma \hookrightarrow \Gamma \cup \Lambda_\theta(\Gamma)$ is an embedding.
\item If $\Lambda_\theta(\Gamma)$ has the subspace topology from $\Fc_\theta$, then $\Lambda_\theta(\Gamma) \hookrightarrow \Gamma \cup \Lambda_\theta(\Gamma)$ is an embedding.
\item A sequence $\{\gamma_n\}$ in $\Gamma$ converges to $F$ in $\Lambda_\theta(\Gamma)$ if and only if 
\[\min_{\alpha\in\theta}\alpha(\kappa(\gamma_n))\to\infty\quad\text{and}\quad U_\theta(\gamma_n) \rightarrow F.\] 
\item The natural left action of $\Gamma$ on $\Gamma \cup \Lambda_\theta(\Gamma)$ is by homeomorphisms. 
\end{enumerate}
Moreover, for any $\eta \in \Gamma$ the function $\bar{B}_\theta(\eta, \cdot) : \Gamma \cup \Lambda_\theta(\Gamma) \rightarrow \mfa_\theta$ defined by 
$$
\bar{B}_\theta(\eta, x) = \begin{cases}
 \kappa_\theta(\eta x) - \kappa_\theta(x)  & \text{ if } x \in \Gamma, \\
B_\theta(\eta, x) & \text{ if } x \in \Lambda_\theta(\Gamma), 
\end{cases}
$$
 is continuous, where the map 
$B_\theta: \GG \times \Fc_\theta \rightarrow \mfa_\theta$ is the partial Iwasawa cocycle.
\end{lemma}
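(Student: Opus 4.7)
My plan is to define a topology on $X := \Gamma \sqcup \Lambda_\theta(\Gamma)$ explicitly, check the five listed properties (most being bookkeeping), and finally extract continuity of $\bar B_\theta$ from Quint's Lemma~\ref{differences}. I would declare each element of $\Gamma$ isolated and, for every open $U \subset \Fc_\theta$ and every $n \in \Nb$, declare the set
\[
W_{U,n} := (U \cap \Lambda_\theta(\Gamma)) \cup \Bigl\{\gamma \in \Gamma : U_\theta(\gamma) \in U,\ \min_{\alpha\in\theta}\alpha(\kappa(\gamma)) > n\Bigr\}
\]
to be open; a routine check shows these, together with the singletons $\{\gamma\}$, form a basis. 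Property (4) is then immediate, since convergence $\gamma_n \to F$ against this basis is literally the conjunction $U_\theta(\gamma_n) \to F$ and $\min_\alpha \alpha(\kappa(\gamma_n)) \to \infty$, while properties (2) and (3) fall directly out of the definitions.

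For (1), $\Psf_\theta$-divergence forces $\{\gamma \in \Gamma : \min_\alpha \alpha(\kappa(\gamma)) \le R\}$ to be finite for every $R$. Given an open cover of $X$, extract by compactness of $\Lambda_\theta(\Gamma) \subset \Fc_\theta$ a finite subfamily $W_{U_1,n_1},\dots,W_{U_k,n_k}$ covering $\Lambda_\theta(\Gamma)$. Any uncovered $\gamma$ either has $\min_\alpha\alpha(\kappa(\gamma)) \le \max_i n_i$ (finite by the above) or has $U_\theta(\gamma)$ in the closed set $\Fc_\theta\setminus\bigcup_i U_i$, which is disjoint from $\Lambda_\theta(\Gamma)$ and hence contains only finitely many values of $U_\theta(\gamma)$ by definition of the limit set; adding singletons completes a finite subcover. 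Hausdorffness is straightforward, and combining a countable basis of $\Fc_\theta$ with the countability of the discrete group $\Gamma$ produces a countable basis for $X$, so Urysohn's theorem gives metrizability.

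For (5), continuity of left-translation by $\eta \in \Gamma$ is automatic at isolated points, so the only case is a sequence of distinct $\gamma_n \in \Gamma$ with $\gamma_n \to F \in \Lambda_\theta(\Gamma)$. Suppose toward contradiction that $\eta\gamma_n \not\to \eta F$; after passing to a subsequence we may arrange $U_\theta(\gamma_n^{-1}) \to F'$ in $\Fc_\theta$. Proposition~\ref{prop:characterizing convergence in general symmetric case} applied to $\{\gamma_n\}$ gives $\gamma_n(G) \to F$ for $G \in \Fc_\theta \setminus \Zc_{F'}$, hence $(\eta\gamma_n)(G) \to \eta F$ on the same open set, and likewise $(\eta\gamma_n)^{-1}(G) = \gamma_n^{-1}\eta^{-1}(G) \to F'$ for $G$ in the open set $\Fc_\theta \setminus \Zc_{\eta F}$ (using the $\GG$-equivariance $\eta\Zc_F = \Zc_{\eta F}$). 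Condition (4) of Proposition~\ref{prop:characterizing convergence in general symmetric case} applied to $\{\eta\gamma_n\}$ then yields $U_\theta(\eta\gamma_n) \to \eta F$ and $\alpha(\kappa(\eta\gamma_n)) \to \infty$, contradicting the assumption.

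For continuity of $\bar B_\theta(\eta,\cdot)$, the claim is clear at isolated points of $\Gamma$, and along a sequence in $\Lambda_\theta(\Gamma)$ it follows from continuity of $B_\theta$. The one remaining case is a sequence $\gamma_n \in \Gamma$ with $\gamma_n \to F$: given $\epsilon > 0$, Lemma~\ref{differences} with constant $C(\epsilon,\eta)$ gives
\[
\bigl\|\kappa_\theta(\eta\gamma_n) - \kappa_\theta(\gamma_n) - B_\theta(\eta,U_\theta(\gamma_n))\bigr\| < \epsilon
\]
once $\min_\alpha\alpha(\kappa(\gamma_n)) > C$, which holds eventually by (4), and $B_\theta(\eta,U_\theta(\gamma_n)) \to B_\theta(\eta,F)$ by continuity of the Iwasawa cocycle. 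I expect the main obstacle to be step (5): one cannot track $\eta\gamma_n$ through $U_\theta$ alone, and must invoke the bidirectional characterization in Proposition~\ref{prop:characterizing convergence in general symmetric case} after first producing an auxiliary limit $F'$ of $U_\theta(\gamma_n^{-1})$ by a compactness argument on $\Fc_\theta$.
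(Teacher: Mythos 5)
Your proof is correct, and its skeleton is essentially the paper's: define a topology on $\Gamma \cup \Lambda_\theta(\Gamma)$ so that convergence $\gamma_n \to F$ is characterized by $U_\theta(\gamma_n) \to F$ together with $\min_{\alpha\in\theta}\alpha(\kappa(\gamma_n)) \to \infty$, then use Proposition~\ref{prop:characterizing convergence in general symmetric case} for part (5) and Quint's Lemma~\ref{differences} for the final statement. The one genuine difference is how metrizability is obtained: the paper writes down an explicit metric (discrete metric on $\Gamma$ weighted by $m_\theta(\gamma)=\exp(-\min_\alpha\alpha(\kappa(\gamma)))$, combined with a Riemannian distance on $\Fc_\theta$ via $U_\theta$), and then checks (1)--(4) directly against it, whereas you specify the topology by a basis of sets $\{\gamma\}$ and $W_{U,n}$ and then invoke compactness, Hausdorffness, and second countability plus Urysohn's metrization theorem. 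Both routes work; the paper's explicit metric is more concrete, while your basis makes (2), (3), (4) immediate by inspection. Within (5), the paper only needs to verify condition (2) of Proposition~\ref{prop:characterizing convergence in general symmetric case} for $\{\eta\gamma_n\}$ (the forward action with the auxiliary $F^-$ it has already extracted), while you instead verify condition (4) by also tracking $(\eta\gamma_n)^{-1}$ through $\eta\Zc_F = \Zc_{\eta F}$; this is slightly more work but also correct, and both begin with the same compactness step extracting a subsequential limit of $U_\theta(\gamma_n^{-1})$.
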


\begin{proof} We will construct an explicit metric on $\Gamma \cup \Lambda_\theta(\Gamma)$. First let $\d_{\Gamma}$ denote the discrete metric on $\Gamma$, that is  
$$
\d_\Gamma(\gamma_1, \gamma_2) = 
\begin{cases} 
1 & \text{ if } \gamma_1 \neq \gamma_2, \\
0 & \text{ if } \gamma_1 = \gamma_2.
\end{cases}
$$
Second, fix a metric $\d_{\theta}$ on $\Fc_\theta$ which is induced by a Riemannian metric. By scaling we can assume that in the metric $\d_\theta$, the diameter of $\Fc_\theta$ is $1$. Finally, define $m_\theta : \Gamma \rightarrow (0,1]$ by 
$$
m_\theta(\gamma) = \exp\left(- \min_{\alpha \in \theta} \alpha(\kappa(\gamma)) \right).
$$

We now define a metric $\d$ on $\Gamma \cup \Lambda_\theta(\Gamma)$ as follows: 
\begin{itemize}
\item If $\gamma_1, \gamma_2 \in \Gamma$, then 
$$
\d(\gamma_1, \gamma_2)  = \max \{ m_\theta(\gamma_1),m_\theta(\gamma_2)\} \d_\Gamma(\gamma_1, \gamma_2)+ \d_{\theta}(U_\theta(\gamma_1), U_\theta(\gamma_2)).
$$
\item If $\gamma \in \Gamma$ and $F \in \Lambda_\theta(\Gamma)$, then 
$$
\d(\gamma, F)  = m_\theta(\gamma) + \d_{\theta}(U_\theta(\gamma), F).
$$
\item If $F_1,F_2 \in \Lambda_\theta(\Gamma)$, then 
$$
\d(F_1, F_2)  = \d_\theta(F_1, F_2). 
$$
\end{itemize}

It is straightforward to check that $\d$ defines a metric. Also, from the definition of $\d$, it is clear that the restriction of $\d$ to $\Gamma$ and $\Lambda_\theta(\Gamma)$ induce the discrete topology on $\Gamma$ and the usual topology on $\Lambda_\theta(\Gamma)$ respectively, so (2) and (3) holds. To see that (4) holds, note that $\gamma_n\to F$ if and only if $m_\theta(\gamma_n)\to 0$ and $\d_\theta(U_\theta(\gamma_n),F)\rightarrow0$, which is in turn equivalent to requiring $\min_{\alpha\in\theta}\alpha(\kappa(\gamma_n))\to\infty$ and $U_\theta(\gamma_n) \rightarrow F$.

Next we prove the compactness in (1) by taking a sequence $\{x_n\}$ in $\Gamma\cup\Lambda_\theta(\Gamma)$ and showing that it has a convergent subsequence. Observe that $\{x_n\}$ either has 
\begin{enumerate}
\item[(i)] a subsequence that lies in $\Lambda_\theta(\Gamma)$, 
\item[(ii)] a subsequence that lies in a finite subset of $\Gamma$, or 
\item [(iii)] a subsequence that lies in $\Gamma$, but does not lie in any finite subset of $\Gamma$. 
\end{enumerate}
If (i) or (ii) holds, then the compactness of $\Lambda_\theta(\Gamma)$ and the compactness of finite subsets of $\Gamma$ respectively imply $\{x_n\}$ has a convergent subsequence. If (iii) holds, then by taking a further subsequence $\{\gamma_j\}$ of $\{x_n\}$, we may assume that $U_\theta(\gamma_j)\to F$ for some $F\in\Fc_\theta$. Since the $\Psf_\theta$-divergence of $\Gamma$ implies that $\min_{\alpha\in\theta}\alpha(\kappa(\gamma_j))\to\infty$, we may apply (4) to deduce that $\gamma_j\to F$. So $\{x_n\}$ has a convergent subsequence.

Since the left $\Gamma$ action on $\Gamma$ and the $\Gamma$ action on $\Lambda_\theta(\Gamma)$ are both clearly continuous, to prove part (5) it suffices to show: if $\eta \in \Gamma$ and $\{\gamma_n\}$ is a sequence in $\Gamma$ converging to $F^+ \in \Lambda_\theta(\Gamma)$, then $\eta\gamma_n \rightarrow \eta(F^+)$. By compactness, it suffices to consider the case when $\eta\gamma_n \rightarrow F^\prime$ and show that $F^\prime = \eta(F^+)$. Notice that (4) implies that $\min_{\alpha\in\theta}\alpha(\kappa(\gamma_n))\to\infty$ and $U_\theta(\gamma_n)\to F^+$. Then using Proposition~\ref{prop:characterizing convergence in general} and passing to a subsequence we can suppose that there exists $F^- \in \Fc_\theta^-$ such that $\gamma_n(F) \rightarrow F^+$ for all $F \in \Fc_\theta \setminus \mathcal{Z}_{F^-}$ and this convergence is uniform on compact subsets of $\Fc_\theta \setminus \Zc_{F^-}$. Then $\eta\gamma_n(F) \rightarrow \eta(F^+)$ for all $F \in \Fc_\theta \setminus \mathcal{Z}_{F^-}$ and this convergence is uniform on compact subsets of $\Fc_\theta \setminus \Zc_{F^-}$. So Proposition~\ref{prop:characterizing convergence in general} implies that  $\min_{\alpha\in\theta}\alpha(\kappa(\eta\gamma_n))\to\infty$ and $U_\theta(\eta\gamma_n)\to \eta(F^+)$. So part (4) implies that $\eta \gamma_n \rightarrow \eta(F^+)$. So part (5) is true. 

Finally notice that Lemma~\ref{differences} and part (4) of this proposition imply the  ``moreover'' part.
\end{proof}

\begin{proof}[Proof of Proposition \ref{prop: Patterson-Sullivan}] Let $\delta: = \delta^\phi(\Gamma)$. Endow $\Gamma \cup \Lambda_\theta(\Gamma)$ with the topology from Lemma~\ref{lem: limit set compactifies} and for $x \in \Gamma \cup \Lambda_\theta(\Gamma)$ let $\mathcal D_{x}$ denote the Dirac measure centered at $x$. By~\cite[Lem.\ 3.1]{patterson} there exists a continuous non-decreasing function $h:\Rb^+\to\Rb^+$ such that:
\begin{enumerate}
\item The series
\[
\hat{Q}(s):=\sum_{\gamma\in \Gamma}h\left(e^{\phi(\kappa_\theta(\gamma))}\right)e^{-s\phi(\kappa_\theta(\gamma))}
\]
converges for $s > \delta$ and diverges for $s \le \delta$. 
\item For any $\epsilon > 0$ there exists $\lambda_0> 0$ such that: if $s > 1$ and $\lambda > \lambda_0$, then $h(\lambda s) \le s^{\epsilon} h(\lambda)$. 
\end{enumerate} 
(In the case when $Q_\Gamma^\phi$ diverges at its critical exponent, we can choose $h \equiv 1$.) Then for $s > \delta$ consider the probability measure 
$$
\mu_s:=\frac{1}{\hat{Q}(s)}\sum_{\gamma\in\Gamma} h\left(e^{\phi(\kappa_\theta(\gamma))}\right)e^{-s\phi(\kappa_\theta(\gamma))}\mathcal D_{\gamma}
$$
on $\Gamma \cup \Lambda_\theta(\Gamma)$. By compactness, the family of measures $\{ \mu_s\}_{s > \delta}$  admits a subsequential weak limit as $s\searrow\delta$ , i.e. there exists $\{s_n\}\subset (\delta,\infty)$ so that $\lim s_n=\delta$ and
\[
\mu:=\lim \mu_{s_n}
\]
exists. We will prove that $\mu$ is a Patterson-Sullivan measure of dimension $\delta$. 

Notice that if  $A \subset  \Gamma$ is a finite set, then 
$$
\mu(A) = \lim_{n \rightarrow \infty} \frac{1}{\hat{Q}(s_n)}\sum_{\gamma\in A} h\left(e^{\phi(\kappa_\theta(\gamma))}\right)e^{-s_n\phi(\kappa_\theta(\gamma))}= 0 \cdot \sum_{\gamma\in A} h\left(e^{\phi(\kappa_\theta(\gamma))}\right)e^{-\delta\phi(\kappa_\theta(\gamma))}=0.
$$
Hence ${\rm supp}(\mu) \subset \Lambda_\theta(\Gamma)$. 

To verify the remaining property, fix $\eta \in \Gamma$, let 
\[\bar{B}_\theta(\eta^{-1}, \cdot):\Gamma \cup \Lambda_\theta(\Gamma) \rightarrow \Rb\] 
be the continuous function defined in Lemma~\ref{lem: limit set compactifies}, and define the function $g_\eta : \Gamma \cup \Lambda_\theta(\Gamma) \rightarrow \Rb$
by 
$$
g_\eta(z) = \begin{cases} 
\frac{h\left(e^{\phi(\kappa_\theta(z)) + \phi(\bar{B}_\theta(\eta^{-1},z))}\right)}{h\left(e^{\phi(\kappa_\theta(z))}\right)} & \text{ if } z \in \Gamma, \\
1 & \text{ if } z \in \Lambda_\theta(\Gamma).
\end{cases}
$$
Notice that property (2) of $h$ implies that $g_\eta$ is continuous. 

For any continuous function $f : \Gamma \cup \Lambda_\theta(\Gamma) \rightarrow \Rb$ and $s > \delta$, we have 
\begin{align*}
\int f(z) d\eta_* \mu_s(z) &= \frac{1}{\hat{Q}(s)} \sum_{\gamma\in\Gamma} h\left(e^{\phi(\kappa_\theta(\gamma))}\right)e^{-s\phi(\kappa_\theta(\gamma))} f(\eta \gamma)  \\
&= \frac{1}{\hat{Q}(s)} \sum_{\gamma\in\Gamma} h\left(e^{\phi(\kappa_\theta(\eta^{-1}\gamma))}\right)e^{-s\phi(\kappa_\theta(\eta^{-1}\gamma))} f(\gamma)  \\
& =  \frac{1}{\hat{Q}(s)} \sum_{\gamma\in\Gamma} h\left(e^{\phi(\kappa_\theta(\gamma))}\right)e^{-s\phi(\kappa_\theta(\gamma))} e^{-s\phi(\bar{B}_\theta(\eta^{-1}, \gamma))}\frac{h\left(e^{\phi(\kappa_\theta(\gamma))+\phi(\bar{B}_\theta(\eta^{-1}, \gamma))}\right)}{h\left(e^{\phi(\kappa_\theta(\gamma))}\right)}f(\gamma) \\
& = \int f(z)e^{-s\phi(\bar{B}_\theta(\eta^{-1}, z))}g_\eta(z)  d\mu_s(z).
\end{align*} 
Then taking limits and recalling that $\mu$ is supported on $\Lambda_\theta(\Gamma)$, we obtain
$$\frac{d\eta_*\mu}{d\mu}(F)=e^{-\delta\phi(B_\theta(\eta^{-1},F))}.$$
So $\mu$ is a Patterson-Sullivan measure of dimension $\delta$.
\end{proof}

\section{Entropy drop}

It is natural to conjecture, in analogy with results of Coulon-Dal'bo-Sambusetti \cite{CDS}, that if $\Gamma_0$ is a subgroup of a $\Psf_\theta$-Anosov group $\Gamma$, $\phi \in \mfa_\theta^*$ and $\delta^\phi(\Gamma) < +\infty$, then $\delta^\phi(\Gamma)=\delta^\phi(\Gamma_0)$ if and only if $\Gamma_0$ is co-amenable in $\Gamma$. (Glorieux and Tapie \cite{glorieux-tapie} have studied this conjecture when $\Gamma_0$ is normal in $\Gamma$ and Zariski dense.)
We apply an argument of Dal'bo-Otal-Peign\'e \cite{DOP} to obtain a  criterion guaranteeing entropy drop for subgroups of transverse groups. As a consequence, 
we obtain generalization of a result of Brooks \cite{brooks} from the setting of geometrically finite hyperbolic 3-manifolds
into the setting of Anosov groups.

\begin{theorem}
\label{entropy drop}
Suppose $\theta \subset \Delta$ is symmetric, $\Gamma\subset\GG$ is a non-elementary $\Psf_\theta$-transverse subgroup, $\phi\in\mfa_\theta^*$ and $\delta^\phi(\Gamma) < +\infty$. 
If $\Gamma_0$ is a subgroup of $\Gamma$ such that $Q_{\Gamma_0}^\phi$ diverges at its critical exponent and $\Lambda_\theta(\Gamma_0)$ is a proper subset of $\Lambda_\theta(\Gamma)$, then
$$\delta^\phi(\Gamma)>\delta^\phi(\Gamma_0).$$
\end{theorem}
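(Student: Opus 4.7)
The plan is to argue by contradiction, following an adaptation of the Dal'bo--Otal--Peign\'e method~\cite{DOP}. Since $\Gamma_0 \subset \Gamma$, the Poincar\'e series of $\Gamma$ termwise dominates that of $\Gamma_0$, so $\delta^\phi(\Gamma_0) \le \delta^\phi(\Gamma)$; hence one may suppose for contradiction that $\delta := \delta^\phi(\Gamma) = \delta^\phi(\Gamma_0) < +\infty$. By Proposition~\ref{prop: Patterson-Sullivan} applied to $\Gamma_0$, there is a $\phi$-Patterson--Sullivan measure $\mu_0$ for $\Gamma_0$ of dimension $\delta$, supported on $\Lambda_\theta(\Gamma_0)$. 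The divergence of $Q_{\Gamma_0}^\phi$ at $\delta$ activates the two-sided shadow estimate (Proposition~\ref{prop:shadow estimates}), yielding
$$\mu_0\bigl(\xi(\Oc_r(b_0,\gamma_0 b_0))\bigr) \asymp e^{-\delta\phi(\kappa_\theta(\gamma_0))}\qquad (\gamma_0 \in \Gamma_0).$$

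The next step is to extract many disjoint translates from convergence-group dynamics. By Proposition~\ref{prop: convergence group}, $\Gamma$ acts minimally on the perfect set $\Lambda_\theta(\Gamma)$ as a convergence group. Since $\Lambda_\theta(\Gamma_0) \subsetneq \Lambda_\theta(\Gamma)$, one can pick a nonempty open set $V \subset \Lambda_\theta(\Gamma)$ with $\overline{V} \cap \Lambda_\theta(\Gamma_0) = \emptyset$, and then use the north--south dynamics of loxodromic elements (or directly the convergence-group property) to construct an infinite family $\{\sigma_n\}_{n\ge 1} \subset \Gamma$ of distinct representatives of cosets in $\Gamma/\Gamma_0$ such that the translates $\sigma_n \Lambda_\theta(\Gamma_0) \subset V$ are pairwise disjoint.

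I would then decompose $\Gamma = \bigsqcup_{\sigma} \sigma\Gamma_0$ and analyze
$$Q_\Gamma^\phi(s) = \sum_\sigma \sum_{\gamma_0 \in \Gamma_0} e^{-s\phi(\kappa_\theta(\sigma\gamma_0))}.$$
By Quint's asymptotic (Lemma~\ref{differences}), for fixed $\sigma$ and $\gamma_0$ with $\min_{\alpha\in\theta}\alpha(\kappa(\gamma_0))$ large,
$$\phi(\kappa_\theta(\sigma\gamma_0)) = \phi(\kappa_\theta(\gamma_0)) + \phi(B_\theta(\sigma,U_\theta(\gamma_0))) + O_\sigma(1),$$
and combining this with the two-sided shadow estimate together with the fact that $U_\theta(\gamma_0)$ lies in $\xi(\Oc_r(b_0, \gamma_0 b_0))$, each inner sum becomes comparable to $\int_{\Lambda_\theta(\Gamma_0)} e^{-\delta\phi(B_\theta(\sigma,F))}\,d\mu_0(F)$. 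The conformality relation identifies this integral (up to bounded multiplicative factors provided by Lemma~\ref{quintlemma}) with the total mass of the pushed measure $(\sigma)_*\mu_0$ on $\sigma\Lambda_\theta(\Gamma_0)$. Since the translates $\sigma_n\Lambda_\theta(\Gamma_0)$ are pairwise disjoint inside $\Lambda_\theta(\Gamma)$, the measures $(\sigma_n)_*\mu_0$ have disjoint supports and so $\sum_n \int e^{-\delta\phi(B_\theta(\sigma_n,F))}\,d\mu_0(F)$ is summable with a uniform bound coming from the total mass.

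The main obstacle, and the technical heart of the argument, is to parlay this summability into a strict entropy gap. The goal is to combine the divergence of $Q_{\Gamma_0}^\phi$ at $\delta$ (which controls the inner sums precisely) with the summability across the $\sigma_n$ (which produces an exponential gain from the outer sum), in order to produce a lower bound showing that $Q_\Gamma^\phi(s)$ already diverges at some $s$ strictly larger than $\delta$, contradicting $\delta^\phi(\Gamma) = \delta$. Pulling this off requires a delicate pigeonhole argument and a careful selection of coset representatives $\sigma_n$ so that the error terms $O_\sigma(1)$ in Lemma~\ref{differences}, as well as the bounded-comparison errors from Lemma~\ref{quintlemma}, are controlled uniformly in $n$; this uniformity, absent in the naive decomposition, is the crucial technical point inherited from the Dal'bo--Otal--Peign\'e strategy in its higher-rank transverse form.
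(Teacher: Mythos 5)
Your proposal takes a genuinely different route from the paper, and as written it does not close. The paper's proof is considerably shorter: it fixes a $\phi$-Patterson--Sullivan measure $\mu$ for the \emph{ambient} group $\Gamma$ of dimension $\delta:=\delta^\phi(\Gamma)$, an open set $W\subset\Lambda_\theta(\Gamma)$ with $\overline W\cap\Lambda_\theta(\Gamma_0)=\emptyset$, and estimates $\sum_{\gamma\in\Gamma_0}\mu(\gamma W)$ in two ways. From below, using the conformality of $\mu$ under $\gamma\in\Gamma_0\subset\Gamma$ together with Lemma~\ref{quintlemma} (the bound $\phi(B_\theta(\gamma,F))\le\phi(\kappa_\theta(\gamma))+C$ for $F\in W$ away from $\Zc_{U_\theta(\gamma)}$), one gets $\mu(\gamma W)\gtrsim e^{-\delta\phi(\kappa_\theta(\gamma))}$; summing gives a constant times $Q_{\Gamma_0}^\phi(\delta)=+\infty$. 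From above, the convergence-group fact that $N:=\#\{\gamma\in\Gamma_0:\gamma W\cap W\ne\emptyset\}<\infty$ together with $\mu(\Lambda_\theta(\Gamma))=1$ bounds the same sum by $N$. Contradiction.

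Your proposal has two genuine gaps. First, you work with a Patterson--Sullivan measure $\mu_0$ for the \emph{subgroup} $\Gamma_0$, but then apply elements $\sigma\in\Gamma\setminus\Gamma_0$ to it. The conformality relation $d(\gamma_0)_*\mu_0/d\mu_0=e^{-\delta\phi(B_\theta(\gamma_0^{-1},\cdot))}$ holds only for $\gamma_0\in\Gamma_0$; for $\sigma\in\Gamma\setminus\Gamma_0$ the pushforward $(\sigma)_*\mu_0$ is an arbitrary probability measure with no a priori relation to $\mu_0$, so the identification of $\int e^{-\delta\phi(B_\theta(\sigma,F))}\,d\mu_0(F)$ with the mass of $(\sigma)_*\mu_0$ is simply false. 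Second, the direction of your intended contradiction is inverted. Everything you establish (disjointness of the translates $\sigma_n\Lambda_\theta(\Gamma_0)$, summability of $\sum_n\int e^{-\delta\phi(B_\theta(\sigma_n,F))}\,d\mu_0$) produces \emph{upper} bounds, i.e.\ finiteness; but you declare as your goal to show that $Q_\Gamma^\phi(s)$ \emph{diverges} at some $s>\delta$, which requires a lower bound that blows up. The ``delicate pigeonhole argument'' in the final paragraph is exactly where the missing mathematics would have to go, and there is no indication of how the machinery you have set up could be turned around to produce divergence rather than convergence. The essential ingredient you are missing is the switch to a Patterson--Sullivan measure for $\Gamma$, after which the argument is short and does not require the coset decomposition or Lemma~\ref{differences} at all.
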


\begin{proof}
Let $\mu$ be a $\phi$-Patterson-Sullivan measure for $\Gamma$ of dimension $\delta^\phi(\Gamma)$. 

Fix an open subset $W\subset\Lambda_\theta(\Gamma)$ such that $\overline W \cap \Lambda_\theta(\Gamma_0)=\emptyset$. We claim that 
$$
N: = \#\{ \gamma \in \Gamma_0 : \gamma W \cap W \neq \emptyset\}
$$
is finite. Otherwise there would exist an infinite distinct sequence $\{\gamma_n\} \subset \Gamma_0$ with $\gamma_n W \cap W \neq \emptyset$. Then using Proposition~\ref{prop:characterizing convergence in general symmetric case} and passing to a subsequence we can suppose that there exist $F^+, F^- \in \Lambda_\theta(\Gamma_0)$ such that $\gamma_n(F) \rightarrow F^+$ uniformly on compact subsets of $\Fc_\theta \setminus \mathcal{Z}_{F^-}$. Since $\Lambda_\theta(\Gamma)$ is a transverse set, we see that $\overline W$ is a compact subset of $\Fc_\theta \setminus \mathcal{Z}_{F^-}$. Hence $\gamma_n W \cap W = \emptyset$ for $n$ large. So we have a contradiction and hence $N$ is finite. 

Fix a distance $\d_{\Fc_\theta}$ on $\Fc_\theta$ which is induced by a Riemannian metric. 
Since $\Lambda_\theta(\Gamma_0)$ is the set of accumulation points of $\{ U_\theta(\gamma) : \gamma \in \Gamma_0\}$, there is a finite subset $\Sc\subset \Gamma_0$ and $\epsilon>0$ so  that 
$$
\d_{\Fc_\theta}\left(F, \Zc_{U_\theta(\gamma)}\right)\ge\epsilon
$$
for all $F\in W$ and $\gamma\in \Gamma_0\setminus\Sc$. Then Lemma \ref{quintlemma}  implies that there exists $C>0$ such that
\begin{align}\label{eqn: drop1}
\phi(B_\theta (\gamma,F))\le \phi(\kappa_\theta(\gamma))+C
\end{align}
for all $F\in W$ and $\gamma\in \Gamma_0\setminus\Sc$ (recall that $\theta$ is symmetric and so we can identify $\Fc_\theta$ and $\Fc_\theta^-$, see Section~\ref{sec:background when theta is symmetric}).

Since $\Gamma_0\subset\Gamma$, it is immediate that $\delta^\phi(\Gamma)\ge\delta^\phi(\Gamma_0)$. Suppose for contradiction that  $\delta:=\delta^\phi(\Gamma)=\delta^\phi(\Gamma_0)$. Notice that 
$$\mu(\gamma(W))=\gamma^{-1}_*\mu(W)=\int_W  e^{-\delta \phi(B_\theta (\gamma,F))} d\mu(F),$$
so \eqref{eqn: drop1} implies that
$$\mu(\gamma(W))\ge e^{-\delta C}e^{-\delta \phi(\kappa_\theta (\gamma))} \mu(W)$$
for all $\gamma\in \Gamma_0\setminus\Sc$. 
Since $Q_{\Gamma_0}^\phi$ diverges at its critical exponent,
$$1=\mu(\Lambda_\theta(\Gamma))\ge\frac{1}{N}\sum_{\gamma\in \Gamma_0}\mu(\gamma(W))\ge \frac{e^{-\delta C}\mu(W)}{N}\sum_{\gamma\in \Gamma_0\setminus\Sc} e^{-\delta \phi(\kappa_\theta (\gamma))}=+\infty$$
which is a contradiction.
\end{proof}

One immediate consequence of our criterion is an entropy gap result for quasiconvex subgroups of Anosov groups. 
We recall that a subgroup $\Gamma_0$  of a hyperbolic group $\Gamma$ is {\em quasiconvex} if there exists $K>0$ such that any geodesic joining two points in $\Gamma_0$ in the Cayley graph of $\Gamma$ 
(with respect to some finite presentation of $\Gamma$) lies within distance $K$ of the vertices associated to $\Gamma_0$. 

\begin{corollary}
\label{entropy drop anosov}
Suppose $\theta\subset\Delta$ is symmetric, $\Gamma\subset\GG$ is a non-elementary $\Psf_\theta$-Anosov subgroup and
$\Gamma_0$  is an infinite index quasiconvex subgroup of $\Gamma$. If $\phi\in\mfa_\theta^*$ and $\delta^\phi(\Gamma) < +\infty$, then
$$\delta^\phi(\Gamma)>\delta^\phi(\Gamma_0).$$
\end{corollary}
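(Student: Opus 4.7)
The plan is to verify the two hypotheses of Theorem \ref{entropy drop} and then apply it. So we need to check that $Q_{\Gamma_0}^\phi$ diverges at its critical exponent and that $\Lambda_\theta(\Gamma_0)$ is a proper subset of $\Lambda_\theta(\Gamma)$.

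First I would argue that $\Gamma_0$ is itself $\Psf_\theta$-Anosov. By definition, $\Gamma$ is Gromov hyperbolic with a $\Gamma$-equivariant homeomorphism $\xi:\partial\Gamma\to\Lambda_\theta(\Gamma)$. A standard fact from the theory of hyperbolic groups is that a quasiconvex subgroup $\Gamma_0$ of a hyperbolic group $\Gamma$ is itself hyperbolic, and the inclusion induces an embedding $\partial\Gamma_0\hookrightarrow\partial\Gamma$ whose image is the limit set $\Lambda(\Gamma_0)\subset\partial\Gamma$ of $\Gamma_0$ acting on $\partial\Gamma$. Composing with $\xi$, the restriction $\xi|_{\partial\Gamma_0}$ is a $\Gamma_0$-equivariant homeomorphism onto $\xi(\Lambda(\Gamma_0))\subset\Lambda_\theta(\Gamma)$. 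A direct check using Proposition \ref{prop:characterizing convergence in general symmetric case} shows that this image coincides with $\Lambda_\theta(\Gamma_0)$, and $\Gamma_0$ inherits the $\Psf_\theta$-transversality from $\Gamma$. Thus $\Gamma_0$ is $\Psf_\theta$-Anosov.

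Next, since $\Gamma_0\subset\Gamma$ we trivially have $Q_{\Gamma_0}^\phi(s)\le Q_\Gamma^\phi(s)$, so $\delta^\phi(\Gamma_0)\le\delta^\phi(\Gamma)<+\infty$. Applying Sambarino's Theorem \ref{anosov crit exp finite} to $\Gamma_0$, we conclude that $Q_{\Gamma_0}^\phi$ diverges at its critical exponent. This handles the first hypothesis.

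For the second hypothesis, I would appeal to the following standard fact about hyperbolic groups: if $\Gamma_0$ is a quasiconvex subgroup of a non-elementary hyperbolic group $\Gamma$ and $\partial\Gamma_0=\partial\Gamma$, then $\Gamma_0$ has finite index in $\Gamma$. Contrapositively, since $\Gamma_0$ has infinite index in $\Gamma$, the inclusion $\partial\Gamma_0\subsetneq\partial\Gamma$ is strict. Applying the homeomorphism $\xi$ gives $\Lambda_\theta(\Gamma_0)=\xi(\partial\Gamma_0)\subsetneq\xi(\partial\Gamma)=\Lambda_\theta(\Gamma)$, as required. Both hypotheses of Theorem \ref{entropy drop} are now satisfied, and the theorem yields $\delta^\phi(\Gamma)>\delta^\phi(\Gamma_0)$.

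The main obstacle is verifying that quasiconvex subgroups of $\Psf_\theta$-Anosov groups are themselves $\Psf_\theta$-Anosov with limit set equal to the image under $\xi$ of the Gromov-boundary limit set. This is essentially a folklore/known result (and the transversality and divergence conditions pass to subgroups automatically), but it is the step that requires the most care. Once this is in hand, the remainder of the argument is a clean application of Sambarino's divergence theorem together with Theorem \ref{entropy drop}.
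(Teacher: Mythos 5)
Your proposal is correct and follows essentially the same route as the paper: show $\Gamma_0$ is itself $\Psf_\theta$-Anosov, use infinite index plus quasiconvexity to get $\partial\Gamma_0 \subsetneq \partial\Gamma$ (hence $\Lambda_\theta(\Gamma_0) \subsetneq \Lambda_\theta(\Gamma)$), invoke Theorem~\ref{anosov crit exp finite} for divergence of $Q_{\Gamma_0}^\phi$ at its (finite) critical exponent, and conclude via Theorem~\ref{entropy drop}. The only cosmetic difference is that you sketch the argument that quasiconvex subgroups of Anosov groups are Anosov, whereas the paper simply cites Canary--Lee--Sambarino--Stover \cite[Lem.~2.3]{CLSS} for this fact.
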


\begin{proof}
Since $\Gamma\subset\GG$ is a non-elementary $\Psf_\theta$-Anosov subgroup and $\Gamma_0$  is a quasiconvex subgroup of $\Gamma$, 
Canary, Lee, Sambarino and Stover observed  (see \cite[Lem. 2.3]{CLSS}) that $\Gamma_0\subset\GG$ is also a $\Psf_\theta$-Anosov subgroup. Furthermore, since $\Gamma_0\subset\Gamma$ is infinite index we see that $\partial\Gamma_0$ is a proper subset of $\partial\Gamma$, so it follows that 
$\Lambda_\theta(\Gamma_0)$ is a proper subset of $\Lambda_\theta(\Gamma)$. 
Theorem~\ref{thm:Anosov case of PS} implies that  $Q_{\Gamma_0}^\phi$ diverges at its critical exponent, so the corollary now follows from Theorem \ref{entropy drop}. 
\end{proof}

\begin{remark}\
\begin{enumerate}
\item In Corollary \ref{entropy drop anosov} it is not enough to assume that $\Gamma_0$ is infinite index and finitely generated, since the results fails when $\Gamma\subset\mathsf{PO}(3,1)$
uniformizes a closed hyperbolic 3-manifold which fibers over the circle and $\Gamma_0$ is the fiber subgroup. In this case, if we set $\theta:=\{\alpha_1,\alpha_3\}$, then $\Gamma\subset\PGL(4,\Rb)$ is $\Psf_\theta$-Anosov 
and $\Gamma_0\subset\Gamma$ is an infinite index, finitely generated subgroup. However,  in this case, $\delta^{\alpha_1}(\Gamma)=\delta^{\alpha_1}(\Gamma_0)$, see \cite[Cor. 4.2]{canary-laplacian}.
\item Theorem \ref{entropy drop} also gives a new proof of \cite[Prop. 11.5]{CZZ2}.
\end{enumerate}
\end{remark}

\section{Projectively visible groups and their geodesic flows}
\label{visibility}

In this mostly expository section, we recall the definition of projectively visible groups from~\cite{IZ} and state some of their basic properties. Projectively visible groups are a class of transverse groups and we will see in the next section that every transverse group can be identified with a projectively visible group in a useful manner

\subsection{Properly convex domains} We briefly recall some properties of properly convex domains, the Hilbert metric, and the automorphism group of a properly convex domain. For a more detailed discussion we refer the reader to the survey article of Marquis \cite{marquis-around}.

Suppose $\Omega \subset \Pb(\Rb^d)$ is a properly convex domain, that is an open set which is convex and bounded in some affine chart of $\Pb(\Rb^{d})$. Then a \emph{supporting hyperplane} to $\Omega$ at a point $x\in\partial\Omega$ is a projective hyperplane $H\subset\Pb(\Rb^{d})$ (i.e. the projectivization of a codimension one linear subspace) that contains $x$ but does not intersect $\Omega$. By convexity, every boundary point of $\partial \Omega$ is contained in at least one supporting hyperplane and a boundary point which is contained in a unique supporting hyperplane is called a \emph{$C^1$-smooth point} of $\partial \Omega$. In the case when $x$ is a $C^1$-smooth point of $\partial \Omega$, we let $T_x \partial \Omega$ denote the unique supporting hyperplane at $x$. 

For any pair of points $x,y\in\overline{\Omega}$, let $[x,y]_\Omega$ denote the closed projective line segment in $\overline{\Omega}$ with $x$ and $y$ as its endpoints. 
Similarly, $(x,y)_\Omega:=[x,y]_{\Omega}-\{x,y\}$, $[x,y)_\Omega:=[x,y]_{\Omega}-\{y\}$ and $(x,y]_\Omega:=[x,y]_{\Omega}-\{x\}$.

A properly convex domain $\Omega$  admits a natural Finsler metric $\d_\Omega$, called the \emph{Hilbert metric}. Given a pair of points $p,q\in\Omega$, let $x,y\in\partial\Omega$ 
be the points such that that $x,p,q,y$ lie along $[x,y]_\Omega$ in that order. Then 
\[\d_\Omega(p,q):=\log\frac{|x-q||y-p|}{|x-p||y-q|},\]
where $\abs{\cdot}$ denotes some (any) norm on some (any) affine chart containing $x,p,q,y$. Observe that all projective line segments in $\Omega$ are geodesics of the Hilbert metric.

Although the Hilbert metric is rarely ${\rm CAT}(0)$, the distance function has the following well known and useful convexity  property, for a proof see for instance~\cite[Prop. 5.3]{IZ}.

\begin{proposition}
\label{hausdorff-convexity}
Suppose $\Omega\subset\Pb(\Rb^d)$ is a properly convex domain, $x \in \overline{\Omega}$ and $q_1,q_2\in\Omega$.
If $p\in [q_1,x)_\Omega$, then 
$$
\d_\Omega(p,[q_2,x)_\Omega)\le \d_\Omega(q_1,q_2).
$$
\end{proposition}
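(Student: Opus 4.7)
The plan is to exhibit an explicit candidate $p'\in[q_2,x)_\Omega$ via a projective self-contraction of $\Omega$ centered at $x$, and then bound $\d_\Omega(p,p')$ by combining the projective invariance of the Hilbert metric with its monotonicity under domain inclusion.

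First fix an affine chart of $\Pb(\Rb^d)$ containing $\overline{\Omega}$ and write $p=(1-\lambda)q_1+\lambda x$ for the unique $\lambda\in[0,1)$. I then take the candidate
\[
p':=(1-\lambda)q_2+\lambda x\;\in\;[q_2,x)_\Omega,
\]
so it suffices to prove $\d_\Omega(p,p')\le\d_\Omega(q_1,q_2)$. The key construction is the affine homothety centered at $x$ with ratio $1-\lambda$, namely
\[
T(y):=(1-\lambda)y+\lambda x.
\]
In homogeneous coordinates this is induced by the linear map with matrix $\begin{pmatrix}(1-\lambda)I & \lambda x\\ 0 & 1\end{pmatrix}$, whose determinant $(1-\lambda)^{d-1}$ is nonzero, so $T$ is a projective transformation satisfying $T(q_1)=p$ and $T(q_2)=p'$. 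For any $y\in\Omega$, the point $T(y)$ is a proper convex combination of an interior point $y\in\Omega$ and a point $x\in\overline\Omega$; by the standard convex-accessibility fact that such combinations stay inside the open convex set, $T(y)\in\Omega$. Hence $T(\Omega)\subseteq\Omega$.

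To conclude, I would chain two classical facts about the Hilbert metric. First, the restriction $T\colon\Omega\to T(\Omega)$ is a projective isomorphism between properly convex domains, so projective invariance yields $\d_{T(\Omega)}(p,p')=\d_\Omega(q_1,q_2)$. Second, since $T(\Omega)\subseteq\Omega$ are both properly convex and $p,p'\in T(\Omega)$, the chord of the line $\langle p,p'\rangle$ inside $T(\Omega)$ is contained in the corresponding chord inside $\Omega$; the monotonicity of the defining cross-ratio then yields $\d_\Omega(p,p')\le\d_{T(\Omega)}(p,p')$. Combining these two inequalities gives the desired bound. I expect the only (minor) technical step to be the verification that $T(\Omega)\subseteq\Omega$ rather than merely $T(\Omega)\subseteq\overline\Omega$ --- a point which matters because $x$ is allowed to lie on $\partial\Omega$ --- but this follows from the standard fact that for an open convex set, the half-open segment from an interior point to a boundary point lies in the interior.
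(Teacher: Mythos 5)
Your proof is correct and uses the standard argument for this kind of ``convexity'' estimate of the Hilbert metric (the paper simply cites \cite{IZ} for the proof, which is the same homothety-and-monotonicity scheme). The affine homothety $T$ centered at $x$ with ratio $1-\lambda$ is a projective map with $T(\Omega)\subseteq\Omega$, $T(q_1)=p$, $T(q_2)=p'\in[q_2,x)_\Omega$; projective invariance gives $\d_{T(\Omega)}(p,p')=\d_\Omega(q_1,q_2)$, and domain-monotonicity of the cross-ratio gives $\d_\Omega(p,p')\le\d_{T(\Omega)}(p,p')$, which together yield the claim. You also correctly flag and dispatch the one delicate point, namely that $T$ maps the open set $\Omega$ into $\Omega$ (not merely $\overline{\Omega}$), using that for $y\in\Omega$ and $x\in\overline{\Omega}$ the half-open segment $[y,x)$ lies in $\Omega$.
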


Given a properly convex domain $\Omega\subset\Pb(\Rb^{d})$, we denote by $\Aut(\Omega)\subset\PGL(d,\Rb)$ the subgroup that leaves $\Omega$ invariant. The group $\Aut(\Omega)$ preserves the Hilbert metric and acts properly on $\Omega$. 

The \emph{full orbital limit set} of a discrete infinite subgroup $\Gamma\subset \Aut(\Omega)$ is 
\[
\Lambda_\Omega(\Gamma):=\left\{x\in\partial\Omega:x=\lim_{n\to\infty}\gamma_n(p)\text{ for some }p\in\Omega\text{ and some }\{\gamma_n\}\subset\Gamma\right\}.
\]
We also let $\Lambda_\Omega^{\rm con}(\Gamma) \subset \Lambda_\Omega(\Gamma)$ denote the set of limit points $x \in  \Lambda_\Omega(\Gamma)$ where there exist $b_0\in\Omega$, a sequence $\{\gamma_n\}$ in $\Gamma$ and some $r>0$ such that $\gamma_n(b_0)\to x$ and $\d_\Omega(\gamma_n(b_0),[b_0,x)_\Omega)<r$ for all $n$.

\subsection{Properties of projectively visible groups}
If $\Omega$ is a properly convex domain, we say that a discrete subgroup $\Gamma\subset\Aut(\Omega)$ is \emph{projectively visible} if 
\begin{enumerate}
\item $(x,y)_\Omega \subset \Omega$ for any two points $x,y \in \Lambda_\Omega(\Gamma)$ and 
\item every point in $\Lambda_\Omega(\Gamma)$ is a $C^1$-smooth point of $\partial\Omega$.
\end{enumerate}

The following proposition collects elementary properties of  projectively visible groups and shows, in particular, that they are examples of transverse subgroups. 

\begin{proposition} \label{prop: limit set projectively visible}
If $\Omega\subset\Pb(\Rb^{d})$ is a properly convex domain and $\Gamma\subset\Aut(\Omega)$ is a projectively visible subgroup, then the following hold:
\begin{enumerate}
\item If $b_0\in\Omega$, then
$\Lambda_\Omega(\Gamma)=\overline{\Gamma(b_0)}\cap\partial\Omega$.
\item
If $\theta=\{\alpha_1,\alpha_{d-1}\}$, then $\Gamma\subset\PGL(d,\Rb)$ is a $\Psf_\theta$-transverse subgroup, with 
\[
\Lambda_\theta(\Gamma)=\left\{(x,T_x\partial\Omega):x\in\Lambda_\Omega(\Gamma)\right\}.
\]
In particular, $\Gamma$ acts as a convergence group on $\Lambda_\Omega(\Gamma)$.
\item If $\{\gamma_n\}$ is a sequence in $\Gamma$  and there exists $b_0\in\Omega$ such that $\gamma_n(b_0)\to x\in\Lambda_\Omega(\Gamma)$ and $\gamma_n\to T\in\Pb(\mathrm{End}(\Rb^d))$, 
then $T$ is the projectivization of a rank $1$ linear map whose image is $x$. Furthermore, if $\gamma_n^{-1}(b_0)\to y$, then $\ker(T)=T_y\partial\Omega$.
\item
$x\in\Lambda_\Omega(\Gamma)$ is a conical limit point (in the convergence group sense) if and only if $x \in \Lambda_\Omega^{\rm con}(\Gamma)$.

\end{enumerate}
\end{proposition}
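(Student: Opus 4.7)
The plan is to establish the four claims in the order (1), (3), (2), (4), with each later part using the previous ones. For part (1), the inclusion $\overline{\Gamma(b_0)}\cap\partial\Omega\subseteq\Lambda_\Omega(\Gamma)$ is immediate from the definition. For the reverse, take $x=\lim \gamma_n(p)$ for some $p\in\Omega$, and extract a subsequence so that $\gamma_n(b_0)\to y\in\overline\Omega$. Properness of the $\Gamma$-action on $\Omega$ forces $y\in\partial\Omega$, hence $y\in\Lambda_\Omega(\Gamma)$. If $x\ne y$, then projective visibility yields $(x,y)_\Omega\subset\Omega$, and a cross-ratio estimate shows that any pair of sequences in $\Omega$ converging respectively to $x$ and $y$ must have Hilbert distance tending to infinity; this contradicts the constancy $\d_\Omega(\gamma_n(p),\gamma_n(b_0))=\d_\Omega(p,b_0)$, so $y=x$.

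For part (3), given a distinct sequence $\{\gamma_n\}$ with $\gamma_n(b_0)\to x$ and $\gamma_n\to T$ in $\Pb(\mathrm{End}(\Rb^d))$, part (1) applied with $b_0$ replaced by any $z\in\Omega$ shows $\gamma_n(z)\to x$, hence $T(z)=x$ for every $z\in\Omega\setminus\Pb(\ker T)$. Since $T$ is constant on a nonempty open subset of $\Pb(\Rb^d)$, it must have rank $1$ with image $x$. The hyperplane $\Pb(\ker T)$ is then disjoint from $\Omega$, and contains $y=\lim \gamma_n^{-1}(b_0)$ (since $\gamma_n(\gamma_n^{-1}(b_0))=b_0\not\to x$); projective visibility makes $y\in\Lambda_\Omega(\Gamma)$ a $C^1$-smooth point of $\partial\Omega$, whose unique supporting hyperplane is $T_y\partial\Omega$, giving $\Pb(\ker T)=T_y\partial\Omega$. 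Part (2) then follows: the rank-$1$ degenerations of $\gamma_n$ and $\gamma_n^{-1}$ translate, via the $\mathsf{KAK}$ decomposition, to $\alpha_1(\kappa(\gamma_n))\to\infty$ and $\alpha_{d-1}(\kappa(\gamma_n))\to\infty$ along any distinct sequence, and identify $\lim U_\theta(\gamma_n)$ with $(x,T_x\partial\Omega)$; transversality of distinct limit flags is automatic, because $x_1\in T_{x_2}\partial\Omega$ would place the segment $[x_1,x_2]$ on the supporting hyperplane $T_{x_2}\partial\Omega$, contradicting $(x_1,x_2)_\Omega\subset\Omega$. The convergence-group action then follows from Proposition~\ref{prop: convergence group}.

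For part (4), both directions use parts (1)--(3). Given witnessing data $(b_0,\{\gamma_n\},r)$ for $x\in\Lambda_\Omega^{\rm con}(\Gamma)$, pick $p_n\in[b_0,x)_\Omega$ with $\d_\Omega(\gamma_n(b_0),p_n)<r$; then $\gamma_n^{-1}(p_n)$ stays in the compact set $\overline{B_\Omega(b_0,r)}$, and after subsequences $\gamma_n^{-1}(b_0)\to y$, $\gamma_n^{-1}(p_n)\to q\in\Omega$, and $\gamma_n^{-1}(x)\to x'\in\partial\Omega$, with the projective segments $\gamma_n^{-1}[b_0,x]_\Omega$ limiting to the segment from $y$ through $q$ to $x'$, forcing $x'\ne y$. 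Part (3) applied to $\gamma_n^{-1}$ (with kernel $T_x\partial\Omega$) together with transversality from (2) then gives $\gamma_n^{-1}(z)\to y$ for every $z\in\Lambda_\Omega(\Gamma)\setminus\{x\}$, witnessing $x$ as a convergence-group conical limit point. Conversely, given data $\{\gamma_n\}$ and distinct $a,b\in\Lambda_\Omega(\Gamma)$ with $\gamma_n(x)\to a$ and $\gamma_n(z)\to b$ for $z\ne x$, part (3) identifies $b$ as the image of $\lim \gamma_n$ and $x$ as the image of $\lim \gamma_n^{-1}$; visibility provides $q\in(a,b)_\Omega\subset\Omega$, projective Hausdorff convergence of the segments $\gamma_n[b_0,x]_\Omega\to[b,a]_\Omega$ provides $p_n\in[b_0,x)_\Omega$ with $\gamma_n(p_n)\to q$, and the isometry property of $\gamma_n$ gives $\d_\Omega(\gamma_n^{-1}(b_0),p_n)$ bounded.

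The main obstacle will be the bookkeeping in part (4), specifically tracking the limits across the $\gamma_n\leftrightarrow\gamma_n^{-1}$ duality and justifying the extraction of $p_n$ from the projective Hausdorff convergence $\gamma_n[b_0,x]_\Omega\to[b,a]_\Omega$ in the converse direction. The pivotal Hilbert-geometric input used throughout is the fact that distinct boundary points $x\ne y$ with $(x,y)_\Omega\subset\Omega$ cannot be approached by sequences at bounded Hilbert distance, which relies crucially on the $C^1$-smoothness of limit points provided by projective visibility.
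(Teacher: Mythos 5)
Your approach matches the paper's where the paper gives arguments: part (1) is the same cross-ratio argument. The paper handles parts (2), (4) and the key geometric input to (3) by citing \cite{CZZ2} and \cite{IZ}; you instead sketch direct proofs, and your sketches of (2) and (4) are essentially right (the $\mathsf{KAK}$ step identifying the $(d-1)$-plane in $\lim U_\theta(\gamma_n)$ with $T_x\partial\Omega$ deserves more detail, but the idea is sound).

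The genuine gap is in part (3). After concluding that $T$ has rank one with image $x$, you write that ``the hyperplane $\Pb(\ker T)$ is then disjoint from $\Omega$.'' This does not follow from anything you have established. You have shown $\gamma_n(z)\to x$ for every $z\in\Omega$, hence $T(z)=x$ on $\Omega\setminus\ker T$; but neither this nor the rank-one structure prevents $\ker T$ from cutting through $\Omega$. This is exactly the content the paper imports from \cite[Prop.\ 5.6]{IZ}, and it requires a new idea: lift the $\gamma_n$ to $\hat g_n\in\mathsf{GL}(d,\Rb)$ with $\norm{\hat g_n}=1$ and $\hat g_n(\Cc)=\Cc$, where $\Cc\subset\Rb^d$ is the open convex cone over $\Omega$, and suppose $\hat g_n\to\hat T$. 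If $\hat p\in\Cc\cap\ker\hat T$, choose $\hat w\in\Cc$ with $\hat T\hat w\neq 0$ and $\epsilon>0$ small enough that $\hat p-\epsilon\hat w\in\Cc$; then $\hat g_n(\hat p-\epsilon\hat w)\to-\epsilon\hat T\hat w$, so both $\hat T\hat w$ and $-\hat T\hat w$ lie in $\overline{\Cc}$, contradicting proper convexity of $\Cc$. Without this (or a citation), the identification $\ker T=T_y\partial\Omega$ is unjustified, and your uses of it in the sketches of (2) and (4) inherit the gap.
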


\begin{proof}
(1): By definition $\overline{\Gamma(b_0)}\cap\partial\Omega \subset \Lambda_\Omega(\Gamma)$. To show the other inclusion, fix $x \in  \Lambda_\Omega(\Gamma)$. Then there is a sequence $\{\gamma_n\}$ in $\Gamma$ and $b_0'\in\Omega$ such that $ \gamma_n(b_0') \rightarrow x$. Passing to a subsequence we can suppose that $ \gamma_n(b_0) \rightarrow x'$. Since 
$$
\lim_{n \rightarrow \infty} \d_\Omega( \gamma_n(b_0'), \gamma_n(b_0) ) = \d_\Omega( b_0', b_0 ),
$$
the definition of the Hilbert metric implies that $[x,x']_\Omega \subset \partial \Omega$. Then, since $\Gamma$ is visible, we must have $x = x^\prime \in \overline{\Gamma(b_0)}\cap\partial\Omega$.

(2): This  was established as \cite[Prop. 3.5]{CZZ2}.

(3): First, $\ker(T) \cap \Omega = \emptyset$ by \cite[Prop. 5.6]{IZ}. Next, note that $T(\Omega)\subset\Lambda_\Omega(\Gamma)$; indeed, if $b\in\Omega$, then $b \notin \ker(T)$ and hence
$$
T(b)=\lim_{n \rightarrow \infty} \gamma_n(b)\in\Lambda_\Omega(\Gamma).
$$ 
Thus, if $b\in\Omega$, then 
\[[T(b),x]_\Omega=[T(b),T(b_0)]_\Omega=T([b,b_0]_\Omega)\subset\Lambda_\Omega(\Gamma),\]
so $T(b)=x$ because $T(b),x\in\Lambda_\Omega(\Gamma)$ and $\Gamma\subset\Aut(\Omega)$ is projectively visible. 
Since $\Omega\subset\Pb(\Rb^{d})$ is open and $T(\Omega)=\{x\}$, it follows that $T$ is the projectivization of a rank $1$ map whose image is $x$. 
By \cite[Prop. 5.6]{IZ}, if $\gamma_n^{-1}(b_0)\to y$, then $y$ lies in the kernel of $T$. Since $\ker(T) \cap \Omega = \emptyset$ and $y$ is a $C^1$-smooth point, we have $\ker(T)=T_y\partial\Omega$.  

(4): This was established as \cite[Lem.~3.6]{CZZ2}.
\end{proof}

\subsection{The geodesic flow}\label{sec:geodesic flow definition} 
Following earlier work of Benoist \cite{benoist-divisible1}, Bray~\cite{Bray} and Blayac \cite{blayac,blayac2}, we now develop the theory of the geodesic flow of a projectively visible group.

First given a properly convex domain $\Omega \subset \Pb(\Rb^d)$,  let $T^1 \Omega \subset T \Omega$ denote the unit tangent bundle with respect to the infinitesimal Hilbert metric. Given $v \in T^1 \Omega$, let $\gamma_v: \Rb \rightarrow \Omega$ denote the unique geodesic line with $\gamma_v^\prime(0)=v$ and whose image is a projective line segment. Also, let
$$
v^\pm : = \lim_{t \rightarrow \pm \infty} \gamma_v(t) \in \partial \Omega. 
$$
The subspace $T^1 \Omega$ has a natural flow, called the \emph{geodesic flow}, which is defined by $\varphi_t(v) = \gamma_v^\prime(t)$. Using this flow, we may define a metric $\d_{T^1 \Omega}$ on $T^1\Omega$ by
\[
\d_{T^1 \Omega}(v,w):=\max_{t\in[0,1]}\d_{\Omega}\big(\pi(\varphi_t(v)),\pi(\varphi_t(w))\big)
\]
where $\pi:T^1\Omega\to\Omega$ takes a vector to its basepoint. It is well-known (see \cite[Lem. 3.4]{benoist-divisible1} for a proof) that two geodesic rays that  end at the same $C^1$-smooth point in the boundary are asymptotic.

\begin{lemma}
\label{asymptotic at smooth point}
Suppose $\Omega\subset\Pb(\Rb^d)$ is a properly convex domain, $v,w \in T^1 \Omega$ and $v^+=w^+$. If $v^+=w^+$ is a $C^1$-smooth point of $\partial\Omega$, then
there exists $T \in \Rb$ such that 
$$
\lim_{t \rightarrow \infty} \d_{T^1 \Omega}\big(\varphi_{t+T}(v),\varphi_t(w)\big)=0. 
$$
\end{lemma}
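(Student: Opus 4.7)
Set $x := v^+ = w^+$. The main idea is to work in the affine chart of $\Pb(\Rb^d)$ obtained by removing the hyperplane $T_x\partial\Omega$; in this chart $\Omega$ becomes a convex subset of $\Rb^{d-1}$, and $x$ corresponds to a direction $\mathbf{e}$ at infinity. Since $\gamma_v$ and $\gamma_w$ both terminate at $x$, their underlying projective line segments become Euclidean half-lines parallel to $\mathbf{e}$, emanating from the (finite) backward endpoints $y_v := v^-$ and $y_w := w^-$. A direct cross-ratio computation, using that $|x_\infty - q|/|x_\infty - p| \to 1$ as $x_\infty$ is sent to infinity, gives the Hilbert arc-length parametrizations
\[
\gamma_v(t) = y_v + a_v e^{t}\mathbf{e}, \qquad \gamma_w(t) = y_w + a_w e^{t}\mathbf{e}
\]
for some $a_v, a_w > 0$ determined by the basepoints $\pi(v), \pi(w)$. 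I would then set $T := \log(a_w/a_v)$, so that $\gamma_v(t+T) - \gamma_w(t) = y_v - y_w$ is independent of $t$.

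With this choice of $T$, parametrize the affine line through $\gamma_v(t+T)$ and $\gamma_w(t)$ by $L_t(r) := \gamma_w(t) + r(y_v - y_w)$, so that $L_t(0) = \gamma_w(t)$ and $L_t(1) = \gamma_v(t+T)$, and let $r_{\min}(t) < 0 < 1 < r_{\max}(t)$ be the parameters where $L_t$ meets $\partial\Omega$. A short cross-ratio computation along $L_t$ yields
\[
\d_\Omega\big(\gamma_v(t+T), \gamma_w(t)\big) = \log\!\left(\Big(1 + \tfrac{1}{|r_{\min}(t)|}\Big)\Big(1 + \tfrac{1}{r_{\max}(t) - 1}\Big)\right),
\]
so the problem reduces to showing $r_{\max}(t), |r_{\min}(t)| \to \infty$ as $t \to \infty$.

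The main obstacle --- and the only step using the $C^1$-smoothness of $x$ --- is establishing this divergence. Decompose the affine chart as $\mathbf{e}^\perp \oplus \Rb\,\mathbf{e}$ and set $\Omega_s := \{\mathbf{z} \in \mathbf{e}^\perp : \mathbf{z} + s\mathbf{e} \in \Omega\}$. Because $[p, x)_\Omega \subset \Omega$ for every $p \in \Omega$, the direction $\mathbf{e}$ lies in the recession cone of $\Omega$, so the family $\{\Omega_s\}$ is nondecreasing in $s$. The key claim will be that $\bigcup_s \Omega_s = \mathbf{e}^\perp$: otherwise, this open convex union would sit in a proper open affine half-space of $\mathbf{e}^\perp$, and pulling back gives an affine hyperplane $H \subset \Rb^{d-1}$ containing the $\mathbf{e}$-direction and disjoint from $\Omega$, whose projective closure is then a supporting hyperplane of $\Omega$ at $x$ distinct from $T_x\partial\Omega$, contradicting $C^1$-smoothness. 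Granting this, for any $R > 0$ the compact set $\{(1-r)y_w + r y_v : |r| \le R\}$ projected onto $\mathbf{e}^\perp$ eventually lies in $\Omega_{a_w e^t}$, forcing $|r_{\min}(t)|, r_{\max}(t) > R$ for all sufficiently large $t$. Finally, replacing $t$ by $t+s$ for $s \in [0,1]$ and taking the supremum in the definition of $\d_{T^1\Omega}$ upgrades the basepoint convergence $\d_\Omega(\gamma_v(t+T), \gamma_w(t)) \to 0$ to the tangent-vector convergence stated in the lemma.
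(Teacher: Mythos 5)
Your proof is correct. The paper does not actually prove this lemma; it cites Benoist \cite[Lem.\ 3.4]{benoist-divisible1}, and your argument --- passing to the affine chart that removes the (unique, by $C^1$-smoothness) supporting hyperplane $T_{v^+}\partial\Omega$, deriving the parametrizations $\gamma_v(t)=y_v+a_v e^t\mathbf{e}$ from the cross-ratio degeneration as the forward endpoint is sent to infinity, choosing $T=\log(a_w/a_v)$ so that $\gamma_v(t+T)-\gamma_w(t)=y_v-y_w$ is constant, and then using the nondecreasing recession slices $\Omega_s$ to force $|r_{\min}(t)|,r_{\max}(t)\to\infty$ --- is the standard one, in line with the cited source. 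Both the recession-cone observation and the derivation of a second supporting hyperplane at $v^+$ from a hypothetical proper limit $\bigcup_s\Omega_s\subsetneq\mathbf{e}^\perp$ (extend the separating functional to vanish on $\mathbf{e}$, so the resulting affine hyperplane contains the $\mathbf{e}$-direction and hence passes through $v^+$ at infinity) are sound, and the threshold you obtain is monotone in $t$, which handles the supremum over $s\in[0,1]$ in $\d_{T^1\Omega}$. One trivial case worth dispatching explicitly in a full write-up: if $v^-=w^-$ then $y_v-y_w=0$ and the auxiliary line $L_t$ collapses to a point, but in that case $\gamma_v(t+T)=\gamma_w(t)$ exactly and the distance is zero.
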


Next, given a projectively visible subgroup $\Gamma\subset\Aut(\Omega)$, let $\Usf(\Gamma)\subset T^1\Omega$ denote the space of all unit tangent vectors $v$ where $v^+, v^- \in \Lambda_\Omega(\Gamma)$.  Note that $\Usf(\Gamma)$ is  $\varphi_t$-invariant and $\Gamma$-invariant, further 
the $\Gamma$-action on $\Usf(\Gamma)$ is properly discontinuous, and the $\varphi_t$-action on $\Usf(\Gamma)$ commutes with the $\Gamma$-action. As such, $\varphi_t$ descends to a flow, 
still denoted $\varphi_t$, on the quotient
\[
\widehat{\Usf}(\Gamma):=\Gamma \backslash \Usf(\Gamma).
\]
Since the Hilbert metric on $\Omega$ is a length metric, we can define a metric $\d_{\Gamma \backslash \Omega}$ on $\Gamma \backslash \Omega$ by 
$$
\d_{\Gamma \backslash \Omega}(a,b) = \inf\{ \d_\Omega(\tilde{a}, \tilde{b}) : p(\tilde{a}) = a \text{ and } p(\tilde{b})=b\}
$$
where $p : \Omega \rightarrow \Gamma \backslash \Omega$ is the natural projection. Then we may define a metric on $\Gamma \backslash T^1\Omega$ by
\[
\d_{\Gamma \backslash T^1\Omega}(v,w):=\max_{t\in[0,1]}\d_{\Gamma \backslash \Omega}\big(\pi(\varphi_t(v)),\pi(\varphi_t(w))\big)
\]
where $\pi:\Gamma \backslash T^1\Omega\to\Gamma \backslash \Omega$ takes a vector to its basepoint. Notice that if $p : T^1\Omega \rightarrow \Gamma \backslash T^1\Omega$ is the natural projection, then 
\begin{equation}
\label{eqn:the projection between T1's is Lipschitz} 
\d_{\Gamma \backslash T^1\Omega}(p(v),p(w)) \le \d_{T^1\Omega}(v,w)
\end{equation}
for all $v,w \in T^1\Omega$. 

Let $\Lambda_\Omega(\Gamma)^{(2)}$ denote the set of distinct pairs in $\Lambda_\Omega(\Gamma)^2$. Since $\Gamma$ is a projectively visible group, $\Usf(\Gamma)$ is homeomorphic to $\Lambda_\Omega(\Gamma)^{(2)} \times \Rb$. Using horofunctions, this homeomorphism can be made explicit. Bray \cite[Lem.\ 3.2]{Bray} showed that if $y$ is  a $C^1$-smooth point of $\partial\Omega$, there is a well-defined  \emph{horofunction at $y$}
\[
h_y:\Omega\times\Omega\to\Rb
\]
given by 
$$
h_y(a,b):=\lim_{x\to y}\d_\Omega(x,a)-\d_\Omega(x,b),
$$
where the limit is taken over all sequences of points $x$ in $\Omega$ that converge to $y$. 
Since $\Gamma\subset\Aut(\Omega)$ is projectively visible, every point in $\Lambda_\Omega(\Gamma)$ is a $C^1$-smooth point of $\partial\Omega$, so $h_y$ is well-defined for all $y\in\Lambda_\Omega(\Gamma)$. 

For every $b_0\in\Omega$, the \emph{Hopf parameterization} of $\Usf(\Gamma)$ determined by $b_0$ is the identification
$$\Usf(\Gamma)\cong\Lambda_\Omega(\Gamma)^{(2)}\times\mathbb R,$$
where $v \in \Usf(\Gamma)$ is identified with $(v^-, v^+, h_{v^+}(b_0,\pi(v)))$. In this parameterization, the flow $\varphi_t$ on $\Usf(\Gamma)$ is given by 
\[\varphi_t(x,y,s)=(x,y,s+t),\]
and the $\Gamma$ action on $\Usf(\Gamma)$ is given by
\[
\gamma(x,y,s)=(\gamma(x),\gamma(y),s+h_{y}(\gamma^{-1}(b_0),b_0)).
\]

\section{Transverse representations and Bowen-Margulis-Sullivan measures}
\label{BMS}

By results from \cite{CZZ2} and Appendix~\ref{app:proving transverse repn theorems}, we deduce that any $\Psf_\theta$-transverse subgroup $\Gamma \subset \GG$ is the image of a well-behaved representation of a projectively visible subgroup $\Gamma_0 \subset \Aut(\Omega)$. Then, given $\phi\in\mfa_\theta^*$ with $\delta^\phi(\Gamma) < +\infty$, we produce a geodesic flow-invariant measure $m_\phi$ on the unit tangent bundle of $\Omega$, which we call the Bowen-Margulis-Sullivan measure. Later, we will use this measure in our proof of the ergodicity properties of the Patterson-Sullivan measure.

\subsection{Transverse representations} If $\theta \subset \Delta$ is symmetric, $\Omega\subset\Pb(\Rb^{d})$ is a properly convex domain and $\Gamma_0\subset\Aut(\Omega)$ is a projectively visible subgroup, a representation $\rho:\Gamma_0\to\GG$ is said to be
\emph{$\Psf_\theta$-transverse} if there exists a continuous $\rho$-equivariant embedding
\[
\xi:\Lambda_\Omega(\Gamma_0)\to\Fc_\theta
\]
with the following properties:
\begin{enumerate}
\item $\xi(\Lambda_\Omega(\Gamma_0))$ is a transverse subset of $\Fc_\theta$,
\item if $\{\gamma_n\}$ is a sequence in $\Gamma_0$ so that $\gamma_n(b_0)\to x\in\Lambda_\Omega(\Gamma_0)$ and $\gamma_n^{-1}(b_0)\to y\in\Lambda_\Omega(\Gamma_0)$ 
for some (any) $b_0\in\Omega$, then $\rho(\gamma_n)(F)\to\xi(x)$ for all $F\in\Fc_\theta \setminus \Zc_{\xi(y)}$.
\end{enumerate}
We refer to $\xi$ as the \emph{limit map} of $\rho$.

The following observation is a consequence of Proposition~\ref{prop:characterizing convergence in general}. 

\begin{observation}\label{obs: basic properties of transverse repns}
 If $\rho:\Gamma_0\to\GG$ is a $\Psf_\theta$-transverse representation, then $\Gamma:=\rho(\Gamma_0)$ is a $\Psf_\theta$-transverse subgroup and the limit map $\xi$ induces a homeomorphism $\Lambda_\Omega(\Gamma_0) \rightarrow \Lambda_\theta(\Gamma)$. Moreover, 
 \begin{enumerate}
 \item  $\xi(\Lambda_{\Omega}^{\rm con}(\Gamma_0))=\Lambda_{\theta}^{\rm con}(\Gamma)$. 
 \item  If $\{\gamma_n\}$ is a sequence in $\Gamma_0$ so that $\gamma_n(b_0)\to x\in\Lambda_\Omega(\Gamma_0)$ for some $b_0 \in \Omega$, then $U_\theta(\rho(\gamma_n)) \rightarrow \xi(x)$ and $\alpha(\kappa(\rho(\gamma_n))) \rightarrow \infty$ for all $\alpha \in \theta$.  
 \end{enumerate} 
 \end{observation}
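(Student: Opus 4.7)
The plan is to extract everything from a single convergence lemma: given a sequence $\{\gamma_n\}\subset\Gamma_0$ with $\gamma_n(b_0)\to x\in\Lambda_\Omega(\Gamma_0)$, one passes to a subsequence so that $\gamma_n^{-1}(b_0)\to y$. Since $\Gamma_0$ is discrete and acts properly on $\Omega$, no subsequential limit of $\gamma_n^{-1}(b_0)$ lies in $\Omega$, and by Proposition~\ref{prop: limit set projectively visible}(1) any accumulation point lies in $\Lambda_\Omega(\Gamma_0)$. The defining property (2) of a $\Psf_\theta$-transverse representation, applied both to $\{\gamma_n\}$ and to $\{\gamma_n^{-1}\}$, then gives open sets $\mathcal U^\pm\subset\Fc_\theta\setminus(\Zc_{\xi(y)}\cup\Zc_{\xi(x)})$ on which $\rho(\gamma_n)$ and $\rho(\gamma_n)^{-1}$ converge to $\xi(x)$ and $\xi(y)$ respectively; here I use transversality of $\xi(\Lambda_\Omega(\Gamma_0))$ to see that these open sets are nonempty. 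Condition (4) of Proposition~\ref{prop:characterizing convergence in general symmetric case} then yields condition (1): $U_\theta(\rho(\gamma_n))\to\xi(x)$, $U_\theta(\rho(\gamma_n)^{-1})\to\xi(y)$, and $\alpha(\kappa(\rho(\gamma_n)))\to\infty$ for every $\alpha\in\theta$. Passing back from subsequences by the usual uniqueness-of-limit argument, this establishes part (2) of the observation.

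To deduce that $\Gamma=\rho(\Gamma_0)$ is $\Psf_\theta$-divergent, take any sequence $\{\eta_n\}$ of distinct elements of $\Gamma$, pick distinct lifts $\gamma_n\in\Gamma_0$, and note that after passing to a subsequence $\gamma_n(b_0)\to x\in\Lambda_\Omega(\Gamma_0)$ as above; the previous paragraph then gives $\alpha(\kappa(\eta_n))\to\infty$. The same argument simultaneously shows $\Lambda_\theta(\Gamma)\subseteq\xi(\Lambda_\Omega(\Gamma_0))$, and the reverse inclusion is immediate from the previous paragraph (any $x\in\Lambda_\Omega(\Gamma_0)$ is realized by some $\gamma_n(b_0)\to x$, so $\xi(x)=\lim U_\theta(\rho(\gamma_n))\in\Lambda_\theta(\Gamma)$). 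Since $\xi$ is a continuous injective map to the transverse set $\xi(\Lambda_\Omega(\Gamma_0))=\Lambda_\theta(\Gamma)$, the group $\Gamma$ is $\Psf_\theta$-transverse and, as a continuous bijection between a compact space and a Hausdorff space, $\xi$ restricts to a homeomorphism $\Lambda_\Omega(\Gamma_0)\to\Lambda_\theta(\Gamma)$.

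For the conical limit set assertion (1), I would use Proposition~\ref{prop: limit set projectively visible}(4) on the $\Gamma_0$-side and Proposition~\ref{prop: convergence group} on the $\Gamma$-side to reduce to the convergence-group definition of conical limit points on both sides. Let $x\in\Lambda_\Omega^{\rm con}(\Gamma_0)$ and pick a witnessing sequence $\{\gamma_n\}\subset\Gamma_0$ with $\gamma_n(x)\to a$ and $\gamma_n(z)\to b$ for every $z\ne x$, where $a\ne b$. After passing to a subsequence so that $\gamma_n(b_0)\to p$ and $\gamma_n^{-1}(b_0)\to q$, the first paragraph applied in both directions provides convergence-group dynamics for $\rho(\gamma_n)$ on $\Fc_\theta$ with attractor $\xi(p)$ and repeller $\xi(q)$; using the convergence-group dynamics on $\Lambda_\Omega(\Gamma_0)$ one identifies $p=b$ and $q=x$, and together with $\xi(a)=\lim\rho(\gamma_n)(\xi(x))$ and injectivity of $\xi$ (so $\xi(a)\ne\xi(b)$), this exhibits $\xi(x)$ as a conical limit point of $\Gamma$. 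The reverse inclusion is symmetric, using the homeomorphism $\xi^{-1}$ and the $\rho$-equivariance.

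The only real subtlety is bookkeeping in the third paragraph---correctly matching the convergence-group attractor/repeller pair on the $\Omega$-side with the subsequential limits of $\gamma_n(b_0), \gamma_n^{-1}(b_0)$, and then using transversality of $\xi(\Lambda_\Omega(\Gamma_0))$ to guarantee that condition (2) of the definition of a transverse representation applies to $\xi(z)$ for all $z\ne x$. Everything else is a routine application of Proposition~\ref{prop:characterizing convergence in general symmetric case} to the convergence hypothesis built into the definition of a transverse representation.
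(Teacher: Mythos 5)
Your proposal is correct and follows essentially the same route as the paper's proof: prove part (2) first by passing to a subsequence with $\gamma_n^{-1}(b_0)\to y$, applying the defining convergence property of a transverse representation to both $\{\gamma_n\}$ and $\{\gamma_n^{-1}\}$, and invoking Proposition~\ref{prop:characterizing convergence in general symmetric case}; then deduce divergence, the homeomorphism $\xi$, and transversality; and finally reduce the conical-limit-set assertion to the convergence-group notion via Proposition~\ref{prop: limit set projectively visible}(4). The only differences are cosmetic: you spell out the attractor/repeller bookkeeping for part (1) that the paper leaves implicit in its one-line citation, and your invocation of transversality to see the open sets $\mathcal{U}^\pm$ are nonempty is unnecessary (this already follows because each $\Zc_F$ is closed with empty interior), though harmless.
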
 
 
 \begin{proof} We begin by proving (2). Fix a sequence $\{\gamma_n\}$ in $\Gamma_0$ so that $\gamma_n(b_0)\to x\in\Lambda_\Omega(\Gamma_0)$ for some $b_0 \in \Omega$. By compactness it suffices to consider the case where $F^+ : = \lim_{n \rightarrow \infty} U_\theta(\rho(\gamma_n))$ and 
 $$
 L:= \lim_{n \rightarrow \infty} \min_{\alpha \in \theta} \alpha(\kappa(\rho(\gamma_n)))  \in \Rb_{\ge 0} \cup \{+ \infty\}
 $$
 both exist, then show that $\xi(x) = F^+$ and $L=+\infty$. Passing to a subsequence we can suppose that $\gamma_n^{-1}(b_0) \rightarrow y$. Then by definition $\rho(\gamma_n)(F)\to\xi(x)$ for all $F\in\Fc_\theta \setminus \Zc_{\xi(y)}$ and $\rho(\gamma_n^{-1})(F)\to\xi(y)$ for all $F\in\Fc_\theta \setminus \Zc_{\xi(x)}$. Since $\Fc_\theta \setminus \Zc_{\xi(y)}$ and $\Fc_\theta \setminus \Zc_{\xi(x)}$ are both open, Proposition~\ref{prop:characterizing convergence in general symmetric case} implies that $\xi(x) = F^+$ and $L=+\infty$. Thus (2) is true. 
 
Then  $\Gamma:=\rho(\Gamma_0)$ is a $\Psf_\theta$-divergent subgroup and $\xi$ induces a homeomorphism $\Lambda_\Omega(\Gamma_0) \rightarrow \Lambda_\theta(\Gamma)$. Further, by definition, $\Lambda_\theta(\Gamma)=\xi(\Lambda_\Omega(\Gamma_0))$ is a transverse subset and hence $\Gamma$ is $\Psf_\theta$-transverse. Finally, Proposition~\ref{prop: limit set projectively visible}(4) implies that $\xi(\Lambda_{\Omega}^{\rm con}(\Gamma_0))=\Lambda_{\theta}^{\rm con}(\Gamma_0)$. 
 \end{proof} 

The next two results were established in~\cite{CZZ2} in the special case when $\GG = \PSL(d,\Rb)$. In Appendix~\ref{app:proving transverse repn theorems} we explain how to reduce the general case to this special case. 

The first result states that under mild conditions on $\GG$ and $\theta$, see Section~\ref{sec: a helpful reduction}, every transverse group is the image of a transverse representation. 

\begin{theorem}\label{thm:transverse image of visible general}
Suppose $Z(\GG)$ is trivial, $\theta \subset \Delta$ is symmetric and $\Psf_\theta$ contains no simple factors of $\GG$. If $\Gamma \subset \GG$ is $\Psf_\theta$-transverse, then there exist $d \in \Nb$, a properly convex domain $\Omega \subset \Pb(\Rb^{d})$, a projectively visible subgroup $\Gamma_0 \subset \mathrm{Aut}(\Omega)$ and a faithful $\Psf_\theta$-transverse representation
$\rho: \Gamma_0 \rightarrow \GG$ with limit map $\xi:\Lambda_{\Omega}(\Gamma_0)\to\mathcal F_\theta$ such that $\rho(\Gamma_0)=\Gamma$ and $\xi(\Lambda_{\Omega}(\Gamma_0)) = \Lambda_{\theta}(\Gamma)$. 
\end{theorem}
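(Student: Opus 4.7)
The plan is to reduce to the special case $\GG = \PSL(d,\Rb)$ from~\cite{CZZ2} by embedding $\GG$ into a projective linear group via a representation that carries the partial flag variety $\Fc_\theta$ into the flag variety $\Fc^{\PSL}_{\{\alpha_1,\alpha_{d-1}\}}$ while respecting all of the transversality structure.

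The first and most important step is to construct a faithful representation $\Pi : \GG \rightarrow \PSL(d,\Rb)$ with the following ``$\Psf_\theta$-Pl\"ucker'' property: the preimage $\Pi^{-1}(\Psf_{\{\alpha_1,\alpha_{d-1}\}})$ equals $\Psf_\theta$, so that $\Pi$ descends to a closed equivariant embedding $\iota_\theta : \Fc_\theta \hookrightarrow \Fc^{\PSL}_{\{\alpha_1,\alpha_{d-1}\}}$ which sends transverse pairs to transverse pairs (and, if $F \in \Fc_\theta$ is not transverse to $F'$, then $\iota_\theta(F) \in \Zc_{\iota_\theta(F')}$ in the $\PSL$ sense). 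The existence of such a $\Pi$ is standard: for each $\alpha \in \theta$ take an irreducible representation of $\GG$ whose highest weight is a positive multiple of $\omega_\alpha$, arrange symmetry via $\iota^*(\theta)=\theta$, and form an appropriate direct sum (possibly after scaling and twisting). Faithfulness of $\Pi$ follows because $Z(\GG)$ is trivial and because the hypothesis that $\Psf_\theta$ contains no simple factors forces $\theta$ to meet the simple root system of every simple factor, so no simple factor is killed.

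Once $\Pi$ is fixed, I would show that $\Pi(\Gamma) \subset \PSL(d,\Rb)$ is $\Psf_{\{\alpha_1,\alpha_{d-1}\}}$-transverse and that $\iota_\theta(\Lambda_\theta(\Gamma)) = \Lambda^{\PSL}_{\{\alpha_1,\alpha_{d-1}\}}(\Pi(\Gamma))$. The identification of limit sets comes from Proposition~\ref{prop:characterizing convergence in general symmetric case} together with the fact that $\Pi$ is proper modulo its (finite) kernel and conjugates $U_\theta$ to $U_{\{\alpha_1,\alpha_{d-1}\}} \circ \Pi$ up to a bounded error; transversality of $\Lambda^{\PSL}_{\{\alpha_1,\alpha_{d-1}\}}(\Pi(\Gamma))$ then follows from the corresponding property for $\Gamma$ via $\iota_\theta$. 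At this point I apply the $\PSL(d,\Rb)$ version of the theorem, proved in~\cite{CZZ2}, to produce a properly convex $\Omega \subset \Pb(\Rb^d)$, a projectively visible subgroup $\Gamma_0 \subset \Aut(\Omega)$, and a faithful $\Psf_{\{\alpha_1,\alpha_{d-1}\}}$-transverse representation $\wt{\rho} : \Gamma_0 \to \PSL(d,\Rb)$ with image $\Pi(\Gamma)$ and limit map $\wt{\xi} : \Lambda_\Omega(\Gamma_0) \to \Fc^{\PSL}_{\{\alpha_1,\alpha_{d-1}\}}$ taking image $\iota_\theta(\Lambda_\theta(\Gamma))$.

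Next, using that $\Pi$ is injective, define $\rho := \Pi^{-1} \circ \wt{\rho} : \Gamma_0 \to \GG$ and $\xi := \iota_\theta^{-1} \circ \wt{\xi} : \Lambda_\Omega(\Gamma_0) \to \Fc_\theta$. Then $\rho$ is a faithful homomorphism with image $\Gamma$ and $\xi$ is a continuous $\rho$-equivariant embedding with image $\Lambda_\theta(\Gamma)$. To finish I must check the two defining properties of a $\Psf_\theta$-transverse representation: transversality of $\xi(\Lambda_\Omega(\Gamma_0))$ in $\Fc_\theta$ is immediate since $\Gamma$ is $\Psf_\theta$-transverse; and the convergence condition $\rho(\gamma_n)(F) \to \xi(x)$ on $\Fc_\theta \setminus \Zc_{\xi(y)}$ is deduced from the analogous convergence for $\wt{\rho}$ by pushing forward along $\iota_\theta$ (whose image meets the complement of $\Zc_{\wt{\xi}(y)}$ in a neighborhood of $\iota_\theta(F)$), and then invoking Proposition~\ref{prop:characterizing convergence in general symmetric case} in $\GG$ to upgrade pointwise convergence on an open set of $\Fc_\theta$ to convergence off $\Zc_{\xi(y)}$.

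The main obstacle is the construction and verification of the Pl\"ucker-type embedding in the first step: one must choose $\Pi$ carefully so that (i) $\iota_\theta$ is well-defined and injective on the whole of $\Fc_\theta$, (ii) transversality is preserved in \emph{both} directions, and (iii) the identifications of limit sets and of the dynamics of the $U_\theta$ and $\kappa$ projections under $\Pi$ are compatible. Once these are in place, the remaining steps are formal transfers of the $\PSL(d,\Rb)$ result through the embedding $\Pi$.
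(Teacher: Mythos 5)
Your overall strategy matches the paper's proof exactly: build a linear representation $\Pi : \GG \to \PSL(d,\Rb)$ together with a $\Pi$-equivariant transversality-preserving embedding $\iota_\theta : \Fc_\theta \hookrightarrow \Fc_{\{\alpha_1,\alpha_{d-1}\}}(\Rb^d)$, show that $\Pi(\Gamma)$ is $\Psf_{\{\alpha_1,\alpha_{d-1}\}}$-transverse with $\Lambda_{\{\alpha_1,\alpha_{d-1}\}}(\Pi(\Gamma)) = \iota_\theta(\Lambda_\theta(\Gamma))$, apply the $\PSL(d,\Rb)$ case from~\cite[Thm.~4.2]{CZZ2} to get $\Omega$, $\Gamma_0$, $\rho_0$ and $\xi_0$, then use faithfulness of $\Pi$ to define $\rho = \Pi^{-1}\circ\rho_0$ and $\xi = \iota_\theta^{-1}\circ\xi_0$. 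Your argument for faithfulness (kernel lies in $\Psf_\theta$, hence is discrete since $\Psf_\theta$ has no simple factors, hence central, hence trivial) is correct and essentially the same as the paper's. This is precisely Proposition~\ref{prop:reduction to the linear case} plus the short argument that follows it in Appendix~\ref{app:proving transverse repn theorems}.

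One genuine flaw: the construction you propose for $\Pi$ — ``for each $\alpha \in \theta$ take an irreducible representation of $\GG$ whose highest weight is a positive multiple of $\omega_\alpha$ \ldots\ and form an appropriate direct sum'' — does not produce the required embedding. If $V = \bigoplus_{\alpha\in\theta}V_\alpha$ with each $V_\alpha$ of highest weight a multiple of $\omega_\alpha$, a flag $F\in\Fc_\theta$ naturally determines a line $\ell_\alpha(F)\subset V_\alpha$ in each summand, but $\bigoplus_\alpha\ell_\alpha(F)$ is a $|\theta|$-dimensional subspace of $V$, not a line, so there is no natural map $\Fc_\theta \to \Pb(V)$, and the stabilizer of any choice of single line in that subspace is strictly smaller than $\Psf_\theta$. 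What works (and is what the paper does, via~\cite[Lem.~3.2, Prop.~3.3]{GGKW}) is to take a \emph{single irreducible} representation with highest weight in $\sum_{\alpha\in\theta}\Nb\,\omega_\alpha$; the stabilizer of the highest weight line is then exactly $\Psf_\theta$, and the Segre/Cartan-component picture of the \emph{tensor} product of the $V_\alpha$'s is the standard way to produce it. Since you flagged this construction as ``the main obstacle,'' the fix is localized: replace ``direct sum'' by an irreducible representation of regular-in-$\theta$ highest weight (equivalently, the Cartan component of the tensor product of the $V_\alpha$). The rest of your argument then goes through, and is the same as the paper's.
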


It will be useful throughout the paper, to understand how the Cartan projection behaves under multiplication of group elements. The next lemma assures that when two elements translate a basepoint $b_0\in\Omega$ in roughly the same direction, then
the Cartan projection is coarsely additive.

\begin{proposition}\label{prop:multiplicative along geodesics} 
Suppose $\theta \subset \Delta$ is symmetric, $\Omega \subset \Pb(\Rb^{d})$ is a properly convex domain, $\Gamma_0 \subset \Aut(\Omega)$ is  a projectively visible subgroup and
$\rho : \Gamma_0 \rightarrow \GG$ a $\Psf_\theta$-transverse representation. For any $b_0\in\Omega$ and $r > 0$, there exist $C>0$ such that  if $\gamma, \eta \in \Gamma_0$ and 
\begin{align*}
\d_{\Omega}\left(\gamma(b_0),[b_0, \eta(b_0)]_\Omega \right) \le r, 
\end{align*}
then 
\begin{align*}
\norm{\kappa_\theta(\rho(\eta))-\kappa_\theta(\rho(\gamma))-\kappa_\theta(\rho(\gamma^{-1}\eta))} \le C.
\end{align*}
\end{proposition}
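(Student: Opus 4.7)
We argue by contradiction. Suppose there exist $b_0\in\Omega$, $r>0$, and sequences $\{\gamma_n\},\{\eta_n\}\subset\Gamma_0$ with $\d_\Omega(\gamma_n(b_0),[b_0,\eta_n(b_0)]_\Omega)\le r$ for all $n$, yet
\[
\norm{\kappa_\theta(\rho(\eta_n))-\kappa_\theta(\rho(\gamma_n))-\kappa_\theta(\rho(\gamma_n^{-1}\eta_n))}\to+\infty.
\]
The proper discontinuity of $\Gamma_0\curvearrowright\Omega$ dispatches the degenerate cases: if any of $\eta_n(b_0)$, $\gamma_n(b_0)$, or $\gamma_n^{-1}\eta_n(b_0)$ stays in a compact subset of $\Omega$, then one of $\eta_n$, $\gamma_n$, or $\gamma_n^{-1}\eta_n$ takes only finitely many values, and the uniform bound $\norm{\kappa_\theta(g_1g_2)-\kappa_\theta(g_1)}\le C(g_2)$ (coming from singular value interlacing) reduces the target to a bounded quantity, contradiction. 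Passing to subsequences, we may therefore assume all three Hilbert distances from $b_0$ diverge, and that $\eta_n(b_0)\to y$, $\eta_n^{-1}(b_0)\to y^-$, $\gamma_n^{-1}(b_0)\to x^-$, and $\gamma_n^{-1}\eta_n(b_0)\to z$ in $\Lambda_\Omega(\Gamma_0)$.

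Two geometric facts identify the relationships between the limits. First, because $\gamma_n(b_0)$ stays within distance $r$ of $[b_0,\eta_n(b_0)]_\Omega$ and escapes to $\partial\Omega$ along a segment whose far end tends to $y$, projective visibility forces $\gamma_n(b_0)\to y$ as well. Second, translating the hypothesis by $\gamma_n^{-1}$ shows $b_0$ lies within distance $r$ of $[\gamma_n^{-1}(b_0),\gamma_n^{-1}\eta_n(b_0)]_\Omega$; since this segment has length tending to $+\infty$, any subsequential limit of interior witnesses $p_n$ near $b_0$ is an interior point of the projective chord from $x^-$ to $z$, forcing $x^-\ne z$. A symmetric analysis with $\eta_n^{-1}$ gives $\eta_n^{-1}\gamma_n(b_0)\to y^-$. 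Combining these with Observation~\ref{obs: basic properties of transverse repns}(2) yields
\[
U_\theta(\rho(\gamma_n^{-1}))\to\xi(x^-),\quad U_\theta(\rho(\eta_n^{-1}))\to\xi(y^-),\quad U_\theta(\rho(\gamma_n^{-1}\eta_n))\to\xi(z),\quad U_\theta(\rho(\eta_n^{-1}\gamma_n))\to\xi(y^-),
\]
with every root in $\theta$ on the associated Cartan projections tending to $+\infty$.

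Set $g_n:=\rho(\gamma_n)$, $h_n:=\rho(\eta_n)$, and choose any flag $F\in\Fc_\theta$ transverse to $\xi(y^-)$. The cocycle identity for $B_\theta$ rewrites the target as
\[
B_\theta(h_n,F)-B_\theta(g_n,g_n^{-1}h_n(F))-B_\theta(g_n^{-1}h_n,F)=0,
\]
so it suffices to approximate each of $\kappa_\theta(h_n)\approx B_\theta(h_n,F)$, $\kappa_\theta(g_n)\approx B_\theta(g_n,g_n^{-1}h_n(F))$, and $\kappa_\theta(g_n^{-1}h_n)\approx B_\theta(g_n^{-1}h_n,F)$ up to a uniform error via Lemma~\ref{quintlemma}. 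For the first and third terms, $U_\theta(h_n^{-1})$ and $U_\theta(h_n^{-1}g_n)$ both limit to $\xi(y^-)$, which is transverse to $F$. For the middle term, Proposition~\ref{prop:characterizing convergence in general symmetric case} applied to $\{g_n^{-1}h_n\}$ forces $g_n^{-1}h_n(F)\to\xi(z)$, and $\xi(z)$ is transverse to $\xi(x^-)=\lim U_\theta(g_n^{-1})$ because $z\ne x^-$ and $\rho(\Gamma_0)$ is $\Psf_\theta$-transverse. Since the set of transverse pairs is open in $\Fc_\theta\times\Fc_\theta$, a single $\epsilon>0$ and a threshold $N$ suffice to make all three of Quint's distance hypotheses exceed $\epsilon$ for $n\ge N$; Lemma~\ref{quintlemma} then bounds the target by a uniform constant, contradicting our standing assumption. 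The main obstacle is ensuring this uniform transversality gap in the middle term, which is precisely where projective visibility is essential — it is what gives $z\ne x^-$ and hence separates $g_n^{-1}h_n(F)$ from $\Zc_{U_\theta(g_n^{-1})}$ uniformly in $n$.
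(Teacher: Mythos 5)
Your proof is correct, and it takes a genuinely different route from the paper's. The paper delays the proof to Appendix B, where it constructs (via results of Gu\'eritaud--Guichard--Kassel--Wienhard, Proposition~\ref{prop:reduction to the linear case}) suitably chosen irreducible linear representations $\Phi_j:\GG\to\PSL(d_j,\Rb)$ corresponding to the highest weights $\sum_{\alpha\in\theta}\omega_\alpha$ and $\omega_\beta+\sum_{\alpha\in\theta}\omega_\alpha$, then applies a $\PSL(d,\Kb)$-specific version of the estimate imported from~\cite{CZZ2} (stated here as Lemma~\ref{lem:multiplicative along geodesics in linear case 1}), and finally assembles the bounds weight-by-weight. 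Your proof instead runs entirely inside the given group $\GG$: you argue by contradiction, identify the four relevant boundary limits using convergence-group dynamics and Proposition~\ref{prop: limit set projectively visible}, invoke Proposition~\ref{prop:characterizing convergence in general symmetric case} and Observation~\ref{obs: basic properties of transverse repns} to control $U_\theta$ of the relevant products, and then exploit the exact cocycle identity $B_\theta(h_n,F)=B_\theta(g_n,g_n^{-1}h_nF)+B_\theta(g_n^{-1}h_n,F)$ together with three applications of Quint's Lemma~\ref{quintlemma}. The net effect is a proof that uses only tools already developed in Section~2 and Appendix~A, avoids the linearization machinery and the external citation, and is arguably more transparent; what it gives up is the explicit, per-weight constants one could in principle extract from the singular-value computations in~\cite{CZZ2}.

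Two small points worth tightening in your write-up. First, in the degenerate-case reduction you should note that if $\eta_n(b_0)$ stays bounded then so does $\gamma_n(b_0)$ (the hypothesis $\d_\Omega(\gamma_n(b_0),[b_0,\eta_n(b_0)]_\Omega)\le r$ then bounds $\gamma_n(b_0)$), and that in a general semisimple $\GG$ the inequality $\norm{\kappa(gh)-\kappa(g)}\le\norm{\kappa(h)}$ is the relevant general fact rather than ``singular value interlacing.'' Second, your attribution of $x^-\ne z$ to projective visibility is not quite right: that separation follows already from proper convexity (a witness point at bounded Hilbert distance from $b_0$ accumulates in $\Omega$, whereas if $x^-=z$ the chords collapse to a single boundary point). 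Projective visibility enters in a different place, namely in upgrading ``bounded Hilbert distance'' to ``same boundary limit'' (e.g.\ in the step $\gamma_n(b_0)\to y$), and the transversality of $\xi(z)$ and $\xi(x^-)$ in $\Fc_\theta$ comes from the transversality of $\Lambda_\theta(\rho(\Gamma_0))$, not from visibility of $\Omega$.
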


\subsection{The Bowen-Margulis-Sullivan measure} Suppose $\theta\subset\Delta$ is symmetric, $\Omega \subset \Pb(\Rb^{d})$ is a properly convex domain, $\Gamma_0 \subset \Aut(\Omega)$ is  a non-elementary projectively visible subgroup and $\rho : \Gamma_0 \rightarrow \GG$ is a $\Psf_\theta$-transverse representation with limit map $\xi:\Lambda_\Omega(\Gamma_0)\to\Fc_\theta$. Let $\Gamma:=\rho(\Gamma_0)$. 

As in Section~\ref{sec:background}, let $\iota : \mathfrak{a} \rightarrow \mathfrak{a}$ denote the opposite involution.  Then fix $\phi\in\mathfrak{a}_\theta^*$ with $\delta:=\delta^\phi(\Gamma) < +\infty$ and let 
\[
\bar\phi:=\phi\circ\iota\in\mathfrak{a}_\theta^*.
\] 
Notice that $\bar\phi(\kappa_\theta (g))=\phi(\kappa_\theta(g^{-1}))$ for all $g\in\GG$, and so $\delta^{\bar{\phi}}(\Gamma) =\delta^\phi(\Gamma)< +\infty$. Finally, suppose $\mu$ is a $\phi$-Patterson-Sullivan measure for $\Gamma$ and $\bar{\mu}$ is a $\bar{\phi}$-Patterson-Sullivan measure for $\Gamma$, both with dimension $\beta$. 

The goal of this section is to construct, using $\rho$, $\mu$ and $\bar{\mu}$, a measure $m$ on $\wh{\Usf}(\Gamma_0)$ that is $\varphi_t$-invariant. We will call this measure the \emph{Bowen-Margulis measure associated to $\rho$, $\mu$ and $\bar{\mu}$}.

Let $\Fc_\theta^{(2)}$ denote the space of pairs of transverse flags in $\Fc_\theta$. Then there exists a continuous function \[[\cdot,\cdot]_\theta:\mathcal F_\theta^{(2)}\to\mathfrak{a}_\theta,\] called the \emph{Gromov product} such that 
\begin{align}\label{eqn: Gromov product}
[g(F),g(G)]_\theta-[F,G]_\theta=-\iota \circ B_\theta(g,F)-B_\theta(g,G)
\end{align}
for all $g\in\GG$ and $(F,G)\in\Fc_\theta^{(2)}$, see \cite[Lem.\ 4.12]{sambarino15}.

 Identify $\Usf(\Gamma_0) = \Lambda_{\Omega}(\Gamma_0)^{(2)} \times \Rb$ via the Hopf parametrization based at a point $b_0 \in \Omega$. Then define a measure $\tilde{m}$ on $\Usf(\Gamma_0)$ by
$$ 
\d \tilde{m}(x,y,s)=e^{-\beta\phi([\xi(x),\xi(y)]_\theta)} \d\bar{\mu}(\xi(x))\otimes \d\mu(\xi(y))\otimes \d t(s)
$$
where $\d t$ is the Lebesgue measure on $\Rb$. This measure is clearly $\varphi_t$-invariant. Furthermore, Equation~\eqref{eqn: Gromov product} and the quasi-invariance property of $\mu$ and $\bar{\mu}$, imply that $\tilde{m}$ is $\Gamma_0$-invariant. Therefore, $\tilde{m}$ descends to a measure $m$ on $\widehat\Usf(\Gamma_0)$ that is $\varphi_t$-invariant.

\section{A shadow lemma for transverse representations} 

Sullivan's shadow lemma, originally proven in the setting of convex cocompact Kleinian groups \cite{sullivan-density}, is a central
tool in the analysis of Patterson-Sullivan measures in many settings. It gives  estimates from above and below on the measure of a shadow in the sphere at infinity of a ball about an orbit point from a light
based at the basepoint.

In the setting of properly convex domains, shadows can be defined as follows: If $\Omega$ is a properly convex domain, $b,p \in \Omega$ and $r > 0$, one defines the shadow
$$
\Oc_r(b,p) := \{ x \in \partial \Omega :  \d_\Omega(p, [b, x)_\Omega) < r\}.
$$

Our version of Sullivan's shadow then has the following form.

\begin{proposition}\label{prop:shadow estimates}
Suppose $\theta \subset \Delta$ is symmetric, $\Omega \subset \Pb(\Rb^{d})$ is a properly convex domain, $\Gamma_0 \subset \Aut(\Omega)$ is  a non-elementary projectively visible subgroup, 
$\rho : \Gamma_0 \rightarrow \GG$ a 
$\Psf_\theta$-transverse representation with limit map $\xi : \Lambda_\Omega(\Gamma_0) \rightarrow \Fc_\theta$, $\Gamma:=\rho(\Gamma_0)$, $\phi \in \mfa^*_\theta$
and $\mu$ is a $\phi$-Patterson-Sullivan measure for $\Gamma$ of dimension $\beta$. For any $b_0 \in \Omega$, there exists $R_0$ such that: if  $r>R_0$, then
there exists $C=C(b_0,r)> 1$ so that 
\begin{align*}
C^{-1} e^{-\beta \phi(\kappa_\theta(\rho(\gamma)))} \le \mu\Big(\xi\big( \Oc_r(b_0,\gamma(b_0)) \cap \Lambda_\Omega(\Gamma_0) \big) \Big)  \le Ce^{-\beta \phi(\kappa_\theta(\rho(\gamma)))}
\end{align*}
for all $\gamma \in \Gamma_0$.
\end{proposition}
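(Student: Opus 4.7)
The plan is to use the conformal equation for $\mu$ to rewrite $\mu(S_\gamma)$ as an integral of the Iwasawa cocycle over a ``dual'' shadow, apply Quint's Lemma~\ref{quintlemma} to replace the cocycle by the Cartan projection, and finally bound the measure of the dual shadow using minimality and projective visibility. Set
\[
S_\gamma:=\xi\bigl(\Oc_r(b_0,\gamma(b_0))\cap\Lambda_\Omega(\Gamma_0)\bigr),\qquad T_\gamma:=\xi\bigl(\Oc_r(\gamma^{-1}(b_0),b_0)\cap\Lambda_\Omega(\Gamma_0)\bigr).
\]
Since the Hilbert metric is $\Gamma_0$-invariant, $\rho$-equivariance of $\xi$ yields $\rho(\gamma)(T_\gamma)=S_\gamma$, and the conformal identity gives
\[
\mu(S_\gamma)=\int_{T_\gamma}e^{-\beta\phi(B_\theta(\rho(\gamma),F))}\,d\mu(F).
\]

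The key analytic step is a uniform transversality estimate: there exists $\epsilon>0$ such that $\d_{\Fc_\theta}\bigl(F,\Zc_{U_\theta(\rho(\gamma)^{-1})}\bigr)\ge\epsilon$ for every $\gamma\in\Gamma_0$ outside a finite set and every $F\in T_\gamma$. Arguing by contradiction, suppose $\{\gamma_n\}\subset\Gamma_0$ diverges and $F_n=\xi(x_n)\in T_{\gamma_n}$ has $\d_{\Fc_\theta}(F_n,\Zc_{U_\theta(\rho(\gamma_n)^{-1})})\to0$; pass to subsequences with $\gamma_n^{-1}(b_0)\to y\in\Lambda_\Omega(\Gamma_0)$ and $x_n\to x^*\in\overline\Omega$. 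By Observation~\ref{obs: basic properties of transverse repns}(2), $U_\theta(\rho(\gamma_n)^{-1})\to\xi(y)$. If $x^*=y$, then both endpoints of the projective segments $[\gamma_n^{-1}(b_0),x_n]_\Omega$ converge to $y$, so the segments have Euclidean length tending to zero and converge in Hausdorff distance to $\{y\}\subset\partial\Omega$; since the closed Hilbert ball of radius $r$ around $b_0$ is compact in $\Omega$ and therefore at positive Euclidean distance from $\partial\Omega$, these segments are eventually disjoint from that ball, contradicting $\d_\Omega(b_0,[\gamma_n^{-1}(b_0),x_n)_\Omega)<r$. Hence $x^*\ne y$, and transversality of $\Lambda_\theta(\Gamma)$ yields $\xi(x^*)\notin\Zc_{\xi(y)}$, contradicting the assumed collapse. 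Quint's Lemma~\ref{quintlemma} then produces $C_1>0$ with $|\phi(B_\theta(\rho(\gamma),F))-\phi(\kappa_\theta(\rho(\gamma)))|\le C_1$ for all such $\gamma$ and $F\in T_\gamma$; the finitely many exceptional $\gamma$ contribute a bounded correction.

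For the lower bound on $\mu(T_\gamma)$, first note that $\mathrm{supp}(\mu)=\Lambda_\theta(\Gamma)$: this support is closed, non-empty, and $\Gamma$-invariant (by the conformal property), and $\Gamma$ acts minimally on $\Lambda_\theta(\Gamma)$ by Proposition~\ref{prop: convergence group}. Since $\Lambda_\Omega(\Gamma_0)$ has more than one point by non-elementariness, for each $y\in\Lambda_\Omega(\Gamma_0)$ we may choose an open neighborhood $U_y\subset\partial\Omega$ of $y$ small enough that $K_y:=\Lambda_\Omega(\Gamma_0)\setminus U_y$ has non-empty interior in $\Lambda_\Omega(\Gamma_0)$, whence $\mu(\xi(K_y))>0$ by full support. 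Projective visibility gives $[y,x)_\Omega\subset\Omega$ for every $x\in K_y$, so the continuous function $(p,x)\mapsto\d_\Omega(b_0,[p,x)_\Omega)$ is bounded on $\{y\}\times K_y$; continuity and compactness then produce $r_y>0$ and a neighborhood $N_y\subset\overline\Omega$ of $y$ with $\Oc_r(p,b_0)\supset K_y$ for every $p\in N_y\cap\Omega$ and $r>r_y$. Covering the compact set $\Lambda_\Omega(\Gamma_0)$ by $U_{y_1},\dots,U_{y_N}$ and taking $R_0:=\max_i r_{y_i}$, whenever $r>R_0$ and $\gamma^{-1}(b_0)\in\bigcup_iN_{y_i}$ we obtain $\mu(T_\gamma)\ge\min_i\mu(\xi(K_{y_i}))>0$. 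Properness of the $\Gamma_0$-action implies only finitely many $\gamma\in\Gamma_0$ have $\gamma^{-1}(b_0)\notin\bigcup_iN_{y_i}$, and these are absorbed into the constant. Combining $\mu(T_\gamma)\in[\epsilon,1]$ with the cocycle bound in the change-of-variables formula yields the two-sided shadow estimate.

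The main obstacle is the uniform transversality estimate, which simultaneously uses projective visibility (to rule out the degenerate limit $x^*=y$), the transverse property of $\Lambda_\theta(\Gamma)$, and the asymptotic identification from Observation~\ref{obs: basic properties of transverse repns}(2).
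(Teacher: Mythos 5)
Your proof is correct, but it takes a genuinely different route to the key cocycle estimate than the paper does. The paper's Lemma~\ref{lem cocycle on shadows} approximates $\xi(x)$ by orbit points $U_\theta(\rho(\eta_n))$ with $\eta_n(b_0)\to x$, then combines the coarse additivity of the Cartan projection (Proposition~\ref{prop:multiplicative along geodesics}, which is proven by reduction to $\PSL(d,\Rb)$ via an irreducible representation) with the continuity of the extended cocycle $\bar B_\theta$ from Lemma~\ref{lem: limit set compactifies}, which rests on Quint's Lemma~\ref{differences}. You instead establish a uniform transversality estimate directly — $\d_{\Fc_\theta}(F,\Zc_{U_\theta(\rho(\gamma)^{-1})})\ge\epsilon$ for $F\in T_\gamma$, using projective visibility to rule out the degenerate limit and the transversality of $\Lambda_\theta(\Gamma)$ together with openness of the transverse-pair condition for the non-degenerate case — and then invoke Quint's Lemma~\ref{quintlemma} directly. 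Your approach buys a shorter, more self-contained argument that avoids the detour through linear representations, at the cost of re-deriving a transversality lemma that the paper has packaged once into Proposition~\ref{prop:multiplicative along geodesics} for repeated use (e.g.\ in the Borel--Cantelli estimates of Section 9). Your shadow lower bound is also phrased as a constructive covering rather than the paper's short contradiction argument, but this is a superficial difference. Two small presentation points: your covering should be by the sets $N_{y_i}$ (the neighborhoods in $\overline\Omega$), not the $U_{y_i}$, and the finitely many exceptional $\gamma$ in the lower bound should be handled by enlarging $R_0$ so that the corresponding shadows $\Oc_r(\gamma^{-1}(b_0),b_0)$ become all of $\partial\Omega$; ``absorbed into the constant'' only works after that observation, since otherwise one would need to know $\mu(T_\gamma)>0$ for those finitely many $\gamma$.
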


\begin{proof}
For notational convenience, we let $\nu$ be the measure on $\partial\Omega$ defined by 
$$
\nu(A) = \mu\big(\xi(A \cap \Lambda_\Omega(\Gamma_0))\big).
$$

By Proposition \ref{prop: convergence group}, the action of
$\Gamma$ on $\Lambda_\theta(\Gamma)$ is minimal, so
the support of $\mu$ is $\Lambda_\theta(\Gamma)$. Also, since $\Lambda_\theta(\Gamma)=\xi(\Lambda_\Omega(\Gamma_0))$, it follows that $\Lambda_\Omega(\Gamma_0)$ is the support of $\nu$. This observation, together with a compactness argument, yields a lower bound on the measure of (large enough) shadows of $b_0$ based at any point in $\Gamma(b_0)$.

\begin{lemma} 
\label{shadow lower bound}
For any $b_0\in\Omega$, there exist $\epsilon_0, R_0 > 0$ such that 
$$
\nu(\Oc_{R_0}(z,b_0)) \ge \epsilon_0
$$
for all $z \in \Gamma_0(b_0)$. 
\end{lemma} 

\begin{proof} Suppose not. Then for every $n \ge 1$ there exists $z_n \in \Gamma_0(b_0)$ such that 
$$
\nu(\Oc_{n}(z_n,b_0)) \le 2^{-n}.
$$
Passing to a subsequence we can suppose that $z_n \rightarrow z \in \Gamma_0(b_0) \cup \Lambda_\Omega(\Gamma_0)$. If $z \in  \Gamma_0(b_0)$, then 
$$
\bigcup_{n =N}^\infty \Oc_{n}(z_n,b_0) = \partial \Omega
$$
for every $N \ge 1$. On the other hand, if $z \in\Lambda_\Omega(\Gamma_0)$, then by assumption, $(z,y)_\Omega\subset\Omega$ for every $y\in\Lambda_\Omega(\Gamma_0)\setminus\{z\}$. This implies that $\d_\Omega(b_0,(z,y)_\Omega)<+\infty$, so
$$
\bigcup_{n = N}^\infty \Oc_{n}(z_n,b_0) \supset \Lambda_\Omega(\Gamma_0)-\{z\}
$$
for every $N \ge 1$. Thus, in either case 
$$
\nu( \Lambda_\Omega(\Gamma_0)-\{z\}) \le \lim_{N \rightarrow \infty} \sum_{n \ge N} \nu(\Oc_{n}(z_n,b_0))  = 0.
$$
Since $ \Lambda_\Omega(\Gamma_0)-\{z\}$ is open in $ \Lambda_\Omega(\Gamma_0)$, which is the support of $\nu$, this is impossible.
\end{proof} 

Next we  use Proposition~\ref{prop:multiplicative along geodesics}  to show that if $x\in\Lambda_\Omega(\Gamma_0)$ lies in the shadow $\Oc_r(b_0, \gamma(b_0))$ for some $\gamma\in\Gamma_0$, 
then $B_\theta(\rho(\gamma)^{-1},\xi(x))$ can be approximated by $\kappa_\theta(\rho(\gamma))$.

\begin{lemma}\label{lem cocycle on shadows} 
For any $r > 0$, there exists $C_1>0$ such that 
$$
\abs{\phi\big(B_\theta(\rho(\gamma)^{-1},\xi(x))+\kappa_\theta(\rho(\gamma))\big)} \le C_1
$$
for all $\gamma \in \Gamma_0$ and $x \in \Oc_r(b_0, \gamma(b_0)) \cap \Lambda_\Omega(\Gamma_0)$.
\end{lemma}

\begin{proof} 
Since $x \in \Lambda_\Omega(\Gamma_0)$, by Proposition~\ref{prop: limit set projectively visible}(1), there exists a sequence $\{\eta_n\}$ in $\Gamma_0$ such that
\hbox{$\eta_n(b_0) \rightarrow x$.}  Since $x\in\mathcal O_r(b_0,\gamma(b_0))$, we have
$$
\d_\Omega(\gamma(b_0), [b_0, x)_\Omega) < r
$$
and hence 
\begin{align*}
\d_\Omega\left( \gamma(b_0), [b_0, \eta_n(b_0)]_\Omega \right)  < r
\end{align*}
for sufficiently large $n$. So, by Proposition~\ref{prop:multiplicative along geodesics}, there exists $C_1>0$ which depends on $r$ and $\phi$, so that
\begin{align*}
\abs{\phi\big(\kappa_\theta(\rho(\gamma))+\kappa_\theta(\rho(\gamma^{-1}\eta_n))-\kappa_\theta(\rho(\eta_n))\big)} \le C_1
\end{align*}
for sufficiently large $n$. Further, Observation~\ref{obs: basic properties of transverse repns} implies that  $U_\theta(\rho(\eta_n)) \rightarrow \xi(x)$. So, by the ``moreover'' part of Lemma~\ref{lem: limit set compactifies}, 
\begin{align*}
& \abs{\phi\big(B_\theta(\rho(\gamma)^{-1},\xi(x))+\kappa_\theta(\rho(\gamma))\big)} = \lim_{n \rightarrow \infty} \abs{\phi\big(B_\theta(\rho(\gamma)^{-1},U_\theta(\rho(\eta_n)))+\kappa_\theta(\rho(\gamma))\big)} \\
&\quad \quad = \lim_{n \rightarrow \infty} \abs{\phi\big(\kappa_\theta(\rho(\gamma^{-1}\eta_n))-\kappa_\theta(\rho(\eta_n))+\kappa_\theta(\rho(\gamma))\big)} \le C_1. \qedhere
\end{align*}

\end{proof}

Now we can complete the proof of Proposition~\ref{prop:shadow estimates}. 

Let $\epsilon_0,R_0>0$ be the constants given by Lemma \ref{shadow lower bound} (which depend on $b_0$).
For any $r \ge R_0$ and $\gamma\in\Gamma_0$, 
\begin{align*}
\nu\Big(\Oc_r(\gamma^{-1}(b_0),b_0)\Big)&=\gamma_*\nu\Big(\Oc_r(b_0,\gamma(b_0))\Big)=\int_{\Oc_r(b_0,\gamma(b_0))}e^{-\beta\phi(B_\theta(\rho(\gamma)^{-1},\xi(x)))}d\nu(x).
\end{align*}
So Lemma \ref{lem cocycle on shadows} implies that there is some $C_1>0$ (which depends on $r$) such that
\[
e^{\beta\phi(\kappa_\theta(\rho(\gamma))))-\beta C_1} \le \frac{\nu\Big(\Oc_r(\gamma^{-1}(b_0),b_0)\Big)}{\nu\Big(\Oc_r(b_0,\gamma(b_0))\Big) } \le e^{\beta\phi(\kappa_\theta(\rho(\gamma))))+\beta C_1}.
\]
Since $r\ge R_0$, Lemma \ref{shadow lower bound} implies that $\epsilon_0\le {\nu\Big(\Oc_r(\gamma^{-1}(b_0),b_0)\Big)}\le 1$, so
\[\epsilon_0e^{-\beta C_1} e^{-\beta \phi(\kappa_\theta(\rho(\gamma)))}\le \nu\Big(\Oc_r(b_0,\gamma(b_0))\Big) \le e^{\beta C_1} e^{-\beta \phi(\kappa_\theta(\rho(\gamma)))}.
\]
Hence the lemma holds with $C := e^{\beta C_1} \epsilon_0^{-1}$. 
\end{proof} 

\section{Consequences of the shadow lemma}

In this section, we collect several standard consequences of the shadow lemma. Most importantly, we see that 
conical limit points cannot be atoms for any Patterson-Sullivan measure and that if the $\phi$-Poincar\'e series converges in the dimension
of the measure, then the conical limit set has measure zero. Later, we will see that if the $\phi$-Poincar\'e series diverges at its critical exponent,
then the conical limit set has full measure in the $\phi$-Patterson-Sullivan measure associated to the critical exponent.

\begin{proposition}\label{prop: consequences of shadow lemma} 
Suppose $\theta \subset \Delta$ is symmetric, $\Gamma\subset\GG$ is a non-elementary $\Psf_\theta$-transverse subgroup, $\phi\in\mfa_\theta^*$ and $\mu$ is a $\phi$-Patterson-Sullivan measure with dimension $\beta$. 
\begin{enumerate}
\item $\beta\ge\delta^\phi(\Gamma)$.
\item If $y\in\Lambda_{\theta}^{\rm con}(\Gamma)$, then $\mu(\{y\})=0$.
\item If $Q^\phi_\Gamma(\beta)<+\infty$, then $\mu(\Lambda_{\theta}^{\rm con}(\Gamma))=0$.
\item If $\{\Gamma_n\}$ is a sequence of increasing subgroups with $\Gamma = \cup \Gamma_n$, then 
$$
\lim_{n \rightarrow \infty} \delta^\phi(\Gamma_n) = \delta^\phi(\Gamma).
$$ 
\end{enumerate}
\end{proposition}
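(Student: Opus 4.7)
The plan is to reduce to the projectively-visible setting and then extract all four assertions from the shadow lemma. By Theorem~\ref{thm:transverse image of visible general} (after the reduction of Section~\ref{sec: a helpful reduction} to the case where $\GG$ has trivial center and $\Psf_\theta$ contains no simple factor of $\GG$), I may write $\Gamma=\rho(\Gamma_0)$ for some projectively visible $\Gamma_0\subset\Aut(\Omega)$ and $\Psf_\theta$-transverse representation $\rho$ with limit map $\xi$. Fix $b_0\in\Omega$ and $r>R_0$, and set $S(\gamma):=\xi(\Oc_r(b_0,\gamma(b_0))\cap\Lambda_\Omega(\Gamma_0))$ for $\gamma\in\Gamma_0$. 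Proposition~\ref{prop:shadow estimates} then gives $C^{-1}e^{-\beta\phi(\kappa_\theta(\rho(\gamma)))}\le\mu(S(\gamma))\le Ce^{-\beta\phi(\kappa_\theta(\rho(\gamma)))}$, and Lemma~\ref{lem cocycle on shadows} gives the cocycle estimate $|\phi(B_\theta(\rho(\gamma)^{-1},y))+\phi(\kappa_\theta(\rho(\gamma)))|\le C_1$ uniformly for $y\in S(\gamma)$; everything below is driven by these two bounds.

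I will prove (1) first by orbit counting. Partition $\Gamma_0$ into level sets $E_n:=\{\gamma\in\Gamma_0:\phi(\kappa_\theta(\rho(\gamma)))\in[n,n+1)\}$. The key step, and what I expect to be the principal obstacle, is a uniform shadow-multiplicity bound: there exists $K=K(b_0,r)$ such that for every $n\in\Nb$ and every $y\in\Lambda_\theta(\Gamma)$, at most $K$ elements $\gamma\in E_n$ satisfy $y\in S(\gamma)$. Any such $\gamma$ has $\gamma(b_0)$ in the Hilbert tube of radius $r$ about the ray $[b_0,\xi^{-1}(y))_\Omega$, so proper discontinuity of the $\Gamma_0$-action on $\Omega$ bounds the number of orbit points in the tube whose Hilbert-distance from $b_0$ lies in any given unit interval; converting from Hilbert-distance levels to $\phi\circ\kappa_\theta\circ\rho$-levels is where the quasi-additivity of Proposition~\ref{prop:multiplicative along geodesics} enters. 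Granted the multiplicity bound, summing the shadow-lemma lower bound over $E_n$ gives $\#E_n\cdot C^{-1}e^{-\beta(n+1)}\le K\mu(\Lambda_\theta(\Gamma))=K$, hence $\#E_n\le CKe^{\beta(n+1)}$, so $Q_\Gamma^\phi(s)<+\infty$ for every $s>\beta$, which is the inequality $\delta^\phi(\Gamma)\le\beta$.

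Turning to (2) and (3): from (1) we know $\delta^\phi(\Gamma)\le\beta<+\infty$, and a short direct argument with $\Psf_\theta$-divergence (if $\phi$ were non-positive on some non-zero element of the Benoist limit cone, a sequence $\gamma_n$ with $\kappa_\theta(\rho(\gamma_n))/\Vert\kappa_\theta(\rho(\gamma_n))\Vert$ approaching that direction forces $Q_\Gamma^\phi(s)=+\infty$ for every $s>0$) then forces $\phi$ to be strictly positive on the $\theta$-Benoist limit cone. For (2), take $y=\xi(x)$ with $x\in\Lambda_\Omega^{\rm con}(\Gamma_0)$ (using Observation~\ref{obs: basic properties of transverse repns}) and pick a witnessing sequence $\{\gamma_n\}\subset\Gamma_0$ with $\gamma_n(b_0)\to x$ and $\d_\Omega(\gamma_n(b_0),[b_0,x)_\Omega)<r$, so $y\in S(\gamma_n)$ for every $n$. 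The conformality relation $\mu(\{\gamma_n^{-1}y\})=e^{-\beta\phi(B_\theta(\rho(\gamma_n)^{-1},y))}\mu(\{y\})$ combined with the cocycle bound yields $\mu(\{\gamma_n^{-1}y\})\ge e^{-\beta C_1}e^{\beta\phi(\kappa_\theta(\rho(\gamma_n)))}\mu(\{y\})$; since $\phi(\kappa_\theta(\rho(\gamma_n)))\to+\infty$ by Benoist-positivity and $\Psf_\theta$-divergence, the bound $\mu(\{\gamma_n^{-1}y\})\le 1$ forces $\mu(\{y\})=0$. For (3), the shadow-lemma upper bound gives $\sum_\gamma\mu(S(\gamma))\le CQ_\Gamma^\phi(\beta)<+\infty$, so by the first Borel--Cantelli lemma $\mu$-almost every point lies in only finitely many $S(\gamma)$; every conical limit point lies in infinitely many (by the same sequence as in (2)), hence $\mu(\Lambda_\theta^{\rm con}(\Gamma))=0$.

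Finally, (4) will follow by combining (1) with the existence of Patterson--Sullivan measures. The inequality $\delta^\phi(\Gamma_n)\le\delta^\phi(\Gamma)$ is immediate from $\Gamma_n\subset\Gamma$, so $\delta_\infty:=\lim_n\delta^\phi(\Gamma_n)\le\delta^\phi(\Gamma)$. For $n$ large enough, $\Gamma_n$ is non-elementary and $\Psf_\theta$-divergent with $\delta^\phi(\Gamma_n)<+\infty$, so Proposition~\ref{prop: Patterson-Sullivan} yields a $\phi$-Patterson--Sullivan measure $\mu_n$ for $\Gamma_n$ of dimension $\delta^\phi(\Gamma_n)$. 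Passing to a weak subsequential limit $\mu_\infty$ on the compact space $\Fc_\theta$ and taking limits in the conformality relation for each fixed $\gamma\in\Gamma$ (which eventually lies in all $\Gamma_n$) produces a $\phi$-Patterson--Sullivan measure for $\Gamma$ of dimension $\delta_\infty$; its support lies in the closed set $\Lambda_\theta(\Gamma)$ since each $\Lambda_\theta(\Gamma_n)\subset\Lambda_\theta(\Gamma)$. Applying (1) to $\mu_\infty$ then gives $\delta_\infty\ge\delta^\phi(\Gamma)$, closing the argument.
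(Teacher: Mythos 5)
Your overall architecture — reduce to a transverse representation of a projectively visible group and then run shadow-lemma, orbit-counting, Borel--Cantelli, and weak-$*$ limit arguments — coincides with the paper's, but two steps fail as written.

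In part (2), the blow-up $\mu(\{\gamma_n^{-1}y\})\ge e^{-\beta C_1}e^{\beta\phi(\kappa_\theta(\rho(\gamma_n)))}\mu(\{y\})$ is vacuous when $\beta=0$, and nothing in your argument excludes this; the paper first observes $\beta>0$ because otherwise $\mu$ would be a $\Gamma$-invariant probability measure on $\Lambda_\theta(\Gamma)$, impossible for a non-elementary convergence group action. Separately, your route to $\phi(\kappa_\theta(\rho(\gamma_n)))\to+\infty$ via positivity on the Benoist limit cone is both circuitous and not actually established: if $\phi(H_0)=0$ for some nonzero $H_0\in\mathcal B_\theta(\Gamma)$, then along the defining sequence one has $\phi(\kappa_\theta(\gamma_n))=t_n^{-1}\phi(t_n\kappa_\theta(\gamma_n))=t_n^{-1}\cdot o(1)$, which need not diverge, so the scaling argument does not force $Q^\phi_\Gamma\equiv+\infty$; the implication $\delta^\phi(\Gamma)<\infty\Rightarrow\phi\in\mathcal B^+_\theta(\Gamma)$ is not proved here (and is asserted in the paper only in the Anosov case, as Sambarino's theorem). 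The correct and simpler route is the one the paper takes: part (1) gives $\delta^\phi(\Gamma)\le\beta<+\infty$, which immediately forces every sublevel set $\{\gamma:\phi(\kappa_\theta(\rho(\gamma)))\le T\}$ to be finite (else $Q^\phi_\Gamma(s)=+\infty$ for all $s>0$), and hence $\phi(\kappa_\theta(\rho(\gamma_n)))\to+\infty$ along any sequence of distinct elements, in particular along the conical witnessing sequence. Note that this same sublevel-set finiteness is what closes your multiplicity bound in part (1): quasi-additivity only bounds $\phi(\kappa_\theta(\rho(\gamma_1^{-1}\gamma_2)))$, and finiteness of the sublevel set is what converts that into a Hilbert-distance bound on $\gamma_1^{-1}\gamma_2(b_0)$. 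Finally, in part (3) the shadows $S(\gamma)$ are built with a single fixed radius $r$, so ``every conical limit point lies in infinitely many $S(\gamma)$'' is only true for conical points whose witness radius is at most $r$; you must run Borel--Cantelli for each $r\in\mathbb N$ and then use that a countable union of $\mu$-null sets is $\mu$-null, as the paper does with the exhausting family $\Lambda_{\Omega,b_0,n}(\Gamma_0)$. (Part (4) should also dispose explicitly of the trivial case $\lim_n\delta^\phi(\Gamma_n)=+\infty$ before forming the limit measure.)
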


The rest of the section is devoted to the proof of the proposition. Fix a non-elementary, $\Psf_\theta$-transverse group $\Gamma\subset\GG$, $\phi\in\mfa_\theta^*$ and a $\phi$-Patterson-Sullivan measure $\mu$ with dimension $\beta$. 

Using the discussion in Section~\ref{sec: a helpful reduction} we may assume that $\GG$ has trivial center and that $\Psf_\theta$ does not contain any simple factors of $\GG$. By Theorem~\ref{thm:transverse image of visible general}, there is a properly convex domain \hbox{$\Omega\subset\Pb(\Rb^{d})$,} a projectively visible subgroup $\Gamma_0\subset\Aut(\Omega)$ 
and a faithful $\Psf_\theta$-transverse representation $\rho:\Gamma_0\to\GG$ with limit map $\xi:\Lambda_\Omega(\Gamma_0)\to\Fc_\theta$ so that $\rho(\Gamma_0)=\Gamma$ 
and $\xi(\Lambda_\Omega(\Gamma_0))=\Lambda_\theta(\Gamma)$. Further, $\xi(\Lambda_{\Omega}^{\rm con}(\Gamma_0))=\Lambda_{\theta}^{\rm con}(\Gamma)$, see Observation \ref{obs: basic properties of transverse repns}. Define a probability measure $\nu$ on $\partial \Omega$ by 
\begin{equation*}
\nu(A) := \mu\big( \xi(A \cap \Lambda_\Omega(\Gamma_0))\big).
\end{equation*}

Fix $b_0 \in \Omega$. By the Shadow Lemma (Proposition~\ref{prop:shadow estimates}) there is some $R_0 > 0$ such that for every $r \ge R_0$  there exists a constant $C_1=C_1(r) \ge  1$ where
\begin{align}\label{eqn: shadow lemma 8}
C_1^{-1} e^{- \beta\phi(\kappa_\theta(\rho(\gamma)))} \le \nu\big( \Oc_r(b_0,\gamma(b_0)) \big)  \le C_1e^{- \beta\phi(\kappa_\theta(\rho(\gamma)))}
\end{align}
for all $\gamma \in \Gamma$.

\begin{proof}[Proof of part (1)] We will make use of a subdivision of the group into sets of the form
$$
\Ac_n := \{\gamma \in \Gamma_0 :   n < \phi(\kappa_\theta(\rho(\gamma))) \le n+1 \}.
$$
We observe that if elements in a single $\mathcal A_n$ have overlapping shadows, then they are nearby.

\begin{lemma}\label{lem: claim}
For any $r>0$, there exists $C_2=C_2(r)>0$ such that:
if $\gamma_1, \gamma_2 \in \Ac_n$ and $\Oc_r(b_0, \gamma_1(b_0)) \cap \Oc_r(b_0, \gamma_2(b_0)) \neq \emptyset$, then 
$$
\d_\Omega(\gamma_1(b_0), \gamma_2(b_0)) \le C_2.
$$ 
 \end{lemma}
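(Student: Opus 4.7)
The plan is to combine the Hausdorff convexity of the Hilbert metric (Proposition~\ref{hausdorff-convexity}) with the coarse additivity of the Cartan projection along geodesics (Proposition~\ref{prop:multiplicative along geodesics}) to extract a bound on $\phi(\kappa_\theta(\rho(\gamma_1^{-1}\gamma_2)))$, and then to convert this $\phi$-bound into the desired Hilbert-distance bound using the Shadow Lemma (Proposition~\ref{prop:shadow estimates}) together with the $C^1$-smoothness of the limit set.

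Concretely, I would fix $x\in\Oc_r(b_0,\gamma_1(b_0))\cap\Oc_r(b_0,\gamma_2(b_0))$ and choose $p_i\in[b_0,x)_\Omega$ with $\d_\Omega(\gamma_i(b_0),p_i)<r$. After relabeling so that $p_1\in[b_0,p_2]_\Omega$, Proposition~\ref{hausdorff-convexity} applied with endpoint $b_0\in\overline\Omega$, $q_1=p_2$, $q_2=\gamma_2(b_0)$ and $p=p_1\in[p_2,b_0)_\Omega$ yields $\d_\Omega(p_1,[b_0,\gamma_2(b_0)]_\Omega)\le r$, hence $\d_\Omega(\gamma_1(b_0),[b_0,\gamma_2(b_0)]_\Omega)\le 2r$. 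Proposition~\ref{prop:multiplicative along geodesics} then furnishes $C=C(b_0,2r)>0$ with
\[\bigl\|\kappa_\theta(\rho(\gamma_2))-\kappa_\theta(\rho(\gamma_1))-\kappa_\theta(\rho(\gamma_1^{-1}\gamma_2))\bigr\|\le C,\]
so evaluating $\phi$ and using $\gamma_1,\gamma_2\in\Ac_n$ gives
\[\bigl|\phi(\kappa_\theta(\rho(\gamma_1^{-1}\gamma_2)))\bigr|\le C\|\phi\|+1=:C''.\]

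To promote this $\phi$-bound to a Hilbert-distance bound I would argue by contradiction. If no uniform $C_2$ existed, there would be a sequence of pairs $(\gamma_1^{(k)},\gamma_2^{(k)})$ as in the hypothesis with $\d_\Omega(b_0,\eta_k(b_0))\to\infty$, where $\eta_k:=(\gamma_1^{(k)})^{-1}\gamma_2^{(k)}$. After subsequencing $\eta_k(b_0)\to y\in\Lambda_\Omega(\Gamma_0)$, and by projective visibility $y$ is a $C^1$-smooth point of $\partial\Omega$. The Shadow Lemma applied to $\eta_k$ with the above $\phi$-bound gives $\nu(\Oc_r(b_0,\eta_k(b_0)))\ge C_1^{-1}e^{-\beta C''}>0$ uniformly in $k$. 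Invoking Lemma~\ref{asymptotic at smooth point}, one argues that these shadows concentrate at $\{y\}$, so $\nu(\{y\})\ge C_1^{-1}e^{-\beta C''}>0$. The $\phi$-conformality of $\mu$ together with Quint's Lemma~\ref{quintlemma} (comparing $\phi\circ B_\theta(\eta_k,\cdot)$ with $\phi\circ\kappa_\theta(\rho(\eta_k))$ at flags transverse to the limit of $U_\theta(\rho(\eta_k^{-1}))$) would then propagate this atom along the orbit $\{\eta_k^{-1}(\xi(y))\}$ into infinitely many distinct atoms of uniformly bounded-below mass, exceeding $\mu(\Lambda_\theta(\Gamma))=1$ and giving a contradiction. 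Distinctness of infinitely many orbit points uses finiteness of the $\Gamma$-stabilizer of $\xi(y)$, a consequence of the convergence-group action (Proposition~\ref{prop: convergence group}).

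The hard part will be executing this final contradiction cleanly. One must verify that (i) shadows at a $C^1$-smooth limit point really do shrink to a single point in the Hilbert geometry, and (ii) the propagated atoms carry uniform positive mass. Both require careful tracking of the $\mathsf{KAK}$-decompositions of $\rho(\eta_k)$ along with a subsequencing argument ensuring $\xi(y)$ remains transverse to the limit of $U_\theta(\rho(\eta_k^{-1}))$, so that Quint's estimate applies with a uniform constant.
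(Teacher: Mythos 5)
Your first half — using Proposition~\ref{hausdorff-convexity} to derive $\d_\Omega(\gamma_1(b_0),[b_0,\gamma_2(b_0)]_\Omega)\le 2r$, then Proposition~\ref{prop:multiplicative along geodesics} together with $\gamma_1,\gamma_2\in\Ac_n$ to bound $\phi(\kappa_\theta(\rho(\gamma_1^{-1}\gamma_2)))$ — is precisely the paper's argument. After that you and the paper part ways decisively. The paper does not argue by contradiction, produce atoms, or invoke the Shadow Lemma at all here: it simply sets
\[
C_2:=\max\{\d_\Omega(b_0,\gamma(b_0)):\gamma\in\Gamma_0,\ \phi(\kappa_\theta(\rho(\gamma)))\le C+1\},
\]
observes $\gamma_1^{-1}\gamma_2$ belongs to that finite set, and is done. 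The entire content of the lemma, once the $\phi$-bound on $\gamma_1^{-1}\gamma_2$ is in hand, is the finiteness of $\{\gamma\in\Gamma_0:\phi(\kappa_\theta(\rho(\gamma)))\le C+1\}$, which the paper treats as evident. Your contradiction scheme is trying to prove exactly that finiteness, but via a far heavier route.

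Beyond being much more machinery than the paper uses, your second half as sketched has genuine gaps. First, the fact that shadows $\Oc_r(b_0,\eta_k(b_0))$ shrink to the limit point is Lemma~\ref{lem: shadows get smaller}, not Lemma~\ref{asymptotic at smooth point}; the latter is about asymptoticity of two geodesic rays sharing a smooth endpoint and does not by itself control shadow diameters. Second, the atom-propagation step relies on Quint's Lemma~\ref{quintlemma}, which only compares $\phi\circ B_\theta(\rho(\eta_k)^{-1},\cdot)$ with $\phi\circ\kappa_\theta(\rho(\eta_k)^{-1})$ on flags uniformly transverse to $U_\theta(\rho(\eta_k))$; after subsequencing, $U_\theta(\rho(\eta_k))\to\xi(y)$ and $U_\theta(\rho(\eta_k^{-1}))\to\xi(y')$, and if $y'=y$ the transversality you need at $\xi(y)$ fails outright, so the estimate does not propagate. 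Third, the claim that the $\Gamma$-stabilizer of $\xi(y)$ is finite is not a consequence of Proposition~\ref{prop: convergence group}: convergence-group actions readily admit points with infinite (cyclic or parabolic) stabilizers, so distinctness of the pushed-forward atoms needs a different justification. None of these subtleties arise in the paper's proof, which closes the argument with the one-line definition of $C_2$ above.
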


\begin{proof}
Fix  $ x \in \Oc_r(b_0, \gamma_1(b_0)) \cap \Oc_r(b_0, \gamma_2(b_0)) \neq \emptyset$. 
Then for $j=1,2$, there exists $p_j \in [b_0,x)$ such that $\d_\Omega(p_j, \gamma_j(b_0)) < r$. After possibly relabelling we may assume that $p_1 \in [b_0, p_2]$. By Proposition \ref{hausdorff-convexity},
$$
\d_\Omega(\gamma_1(b_0), [b_0, \gamma_2(b_0)])\le \d_\Omega(\gamma_1(b_0), p_1)+\d_\Omega(p_1, [b_0, \gamma_2(b_0)]) \le r +  \d_\Omega(p_2, \gamma_2(b_0)) \le 2r. 
$$
Then by Proposition~\ref{prop:multiplicative along geodesics} there exists a constant $C > 0$ (which depends on $r$) such that 
\begin{align*}
\abs{\phi\Big(\kappa_\theta(\rho(\gamma_1))+\kappa_\theta(\rho(\gamma_1^{-1}\gamma_2))-\kappa_\theta(\rho(\gamma_2))\Big)} \le C.
\end{align*}
Since $\gamma_1, \gamma_2 \in \Ac_n$, it follows that 
$$
\phi(\kappa_\theta(\rho(\gamma_1^{-1}\gamma_2))) \le C+1.
$$
Thus, if we choose 
$$C_2:=\max\{ \d_\Omega(b_0,\gamma(b_0)) : \gamma\in\Gamma_0 \text{ and } \phi(\kappa_\theta(\rho(\gamma))) \le C+1\},$$ 
then
\[
\d_\Omega(\gamma_1(b_0), \gamma_2(b_0)) = \d_\Omega(b_0,\gamma_1^{-1}\gamma_2(b_0)) \le C_2.\qedhere
\]
\end{proof}

Fix $r \ge R_0$, and let $C_2 > 0$ be the constant given by Lemma \ref{lem: claim} for $r$. For each $n$, let $\Ac_n'\subset\Ac_n$ be a maximal collection of elements such that 
\[
\d_\Omega(\gamma_1(b_0),\gamma_2(b_0))>C_2
\] 
for all distinct $\gamma_1,\gamma_2\in\Ac_n'$. Observe that if 
$$
N := \#\{ \gamma \in \Gamma_0 : \d_\Omega(\gamma(b_0), b_0) \le C_2\},
$$
then $\#\Ac_n^\prime \ge \frac{1}{N} \#\Ac_n$. 

By Lemma \ref{lem: claim}, 
$$
\Oc_r(b_0, \gamma_1(b_0)) \cap \Oc_r(b_0, \gamma_2(b_0)) = \emptyset
$$
for all $\gamma_1, \gamma_2 \in \Ac_n'$. Thus, by \eqref{eqn: shadow lemma 8},
\begin{align*}
1=\nu\big(\Lambda_\Omega(\Gamma_0)\big)  &\ge \sum_{\gamma \in \Ac_n^\prime} \nu\big(\Oc_r(b_0, \gamma(b_0))\big) \ge  \frac{1}{C_1} \sum_{\gamma \in \Ac_n^\prime} e^{-\beta \phi(\kappa_\theta(\rho(\gamma)))}   \ge \frac{1}{C_1} \#\Ac_n^\prime e^{-\beta(n+1)}.
\end{align*}
This implies that $\# \Ac_n\le N\#\Ac_n^\prime \le C_1N e^{\beta(n+1)}$. Then
\[
\delta^{\phi}(\Gamma) = \limsup_{n \rightarrow \infty} \frac{1}{n} \log \# \Ac_n \le \beta. \qedhere
\]
\end{proof} 

\begin{proof}[Proof of part (2)]
We first observe that $\beta$ is positive. If this were not the case, then part (1) implies that $\beta = 0$, or equivalently, that $\mu$ is a $\Gamma$-invariant measure on $\Lambda_\theta(\Gamma)$. However, this is impossible because $\Gamma$ acts as a non-elementary convergence group on $\Lambda_\theta(\Gamma)$. 

Let $y\in \Lambda_{\theta}^{\rm con}(\Gamma)$. By Observation \ref{obs: basic properties of transverse repns}(1), $x:=\xi^{-1}(y) \in\Lambda_{\Omega}^{\rm con}(\Gamma_0)$. Then, by definition, there is some $r>0$ and a sequence $\{\gamma_n\}$ in $\Gamma_0$ such that $\gamma_n(b_0) \rightarrow x$ and $\d_\Omega( \gamma_n(b_0), [b_0, x) )<r$ for all $n$. We may assume that $r \ge R_0$. 

By part (1), $\delta^\phi(\Gamma)\le\beta < +\infty$, so $Q_\Gamma^\phi(s)$ converges for $s$ sufficiently large. This implies that 
$$
\lim_{n \rightarrow \infty} \phi(\kappa_\theta(\rho(\gamma_n)))=+\infty.
$$
Since $x\in\mathcal O_{r}(b_0,\gamma_n(b_0))$ for all $n$ and $\beta > 0$, it follows from \eqref{eqn: shadow lemma 8} that
\[
\mu(\{y\})\le\liminf_{n\to\infty}\nu\big(\Oc_r(b_0, \gamma_n(b_0))\big)\le C_1\liminf_{n\to\infty} e^{-\beta\phi(\kappa_\theta(\rho(\gamma_n)))}=0. \qedhere
\]
\end{proof} 

\begin{proof}[Proof of part (3)] For $r > 0$ let $\Lambda_{\Omega,b_0,r}(\Gamma_0) \subset \Lambda_\Omega(\Gamma_0)$ denote the set of  limit points $x$  where there is a sequence $\{\gamma_n\}$ in $\Gamma_0$ such that $\gamma_n(b_0) \rightarrow x$ and $\d_\Omega( \gamma_n(b_0), [b_0, x) )<r$ for all $n$. Notice that $\Lambda_{\Omega}^{\rm con}(\Gamma_0)=\bigcup_{n\in\mathbb N}\Lambda_{\Omega,b_0,n}(\Gamma_0)$. Therefore, it suffices to show that $\mu(\xi(\Lambda_{\Omega,b_0,r}(\Gamma_0)))=0$ for all $r \ge R_0$.

Fix $r\ge R_0$, fix an enumeration $\Gamma = \{ \gamma_1,\gamma_2,\dots\}$ and let $F_n : = \{ \gamma_1, \dots, \gamma_n\}$. Then for any $n$, 
$$
\Lambda_{\Omega,b_0,r}(\Gamma_0) \subset \bigcup_{\gamma\in\Gamma -F_n} \mathcal O_r(b_0,\gamma(b_0)),
$$
so by \eqref{eqn: shadow lemma 8},
$$
\nu\big(\Lambda_{\Omega,b_0,r}(\Gamma_0)\big)\le \sum_{\gamma\in\Gamma -F_n}\nu \big(\mathcal O_r(b_0,\gamma(b_0))\big)\le C_1\sum_{\gamma\in\Gamma-F_n} e^{-\beta\phi(\kappa_\theta(\rho(\gamma)))}.
$$
However, since $Q_\Gamma^\phi(\beta)<+\infty$,
$$
\lim_{n \rightarrow \infty} \sum_{\gamma\in\Gamma-F_n} e^{-\beta\phi(\kappa_\theta(\rho(\gamma)))} = 0.
$$
Therefore, $\nu\big(\Lambda_{\Omega,b_0,r}(\Gamma_0)\big)=0$ for $r \ge R_0$.
\end{proof}

\begin{proof}[Proof of part (4)] Since $\{\Gamma_n\}$ is a sequence of increasing subgroups, $\delta^{\phi}(\Gamma_1) \le \delta^{\phi}(\Gamma_2) \le \dots$
and hence $\delta : = \lim_{n \rightarrow \infty} \delta^\phi(\Gamma_n) \in \Rb \cup \{+\infty\}$ exists. Further, $\delta \le \delta^{\phi}(\Gamma)$. If $\delta = +\infty$, then 
$$
\delta^{\phi}(\Gamma)=+ \infty  = \lim_{n \rightarrow \infty} \delta^\phi(\Gamma_n).
$$
If $\delta < +\infty$, then for each $n$ there exists a $\phi$-Patterson-Sullivan measure $\mu_n$ for $\Gamma_n$ with dimension $\delta^{\phi}(\Gamma_n)$. If $\mu$ is a weak-$*$ limit point of $\{\mu_n\}$, then $\mu$ is a $\phi$-Patterson-Sullivan measure for $\Gamma$ with dimension $\delta$. Hence by part (1) we have $\delta \ge \delta^\phi(\Gamma)$.  
\end{proof}

\section{The conical limit set has full measure in the divergent case}

In this section we show that the Patterson-Sullivan measure is supported on the conical limit set  in case when the associated Poincar\'e series diverges at its critical exponent. The proof is similar to Roblin's~\cite{roblin} argument  for the analogous result in ${\rm CAT}(-1)$ spaces -- in that we use a variant of the Borel-Cantelli Lemma. However, we use a different variant of the lemma and apply it to a different collection of sets. This seems to simplify the argument and this approach was developed during discussions between the authors and Pierre-Louis Blayac.

\begin{proposition} \label{prop: support on conical limit set}
Suppose $\theta \subset \Delta$ is symmetric, $\Gamma\subset\GG$ is a non-elementary $\Psf_\theta$-transverse subgroup, $\phi \in \mfa_\theta^*$, $\delta^\phi(\Gamma) < +\infty$ and $\mu$ is a $\phi$-Patterson-Sullivan measure for $\Gamma$ with dimension $\delta:=\delta^\phi(\Gamma)$. If $Q_\Gamma^\phi(\delta)=+\infty$, then  $\mu(\Lambda_{\theta}^{ \rm con}(\Gamma))=1$. In particular, $\mu$ has no atoms.
\end{proposition}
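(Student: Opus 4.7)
The plan is to proceed in three stages, analogously to Roblin's approach for $\mathrm{CAT}(-1)$ spaces: (i) reduce to the projectively visible setting, (ii) use a Borel-Cantelli-type argument to obtain positive measure on the conical limit set, and (iii) boost positive measure to full measure via a restriction trick. Using Section~\ref{sec: a helpful reduction} one may assume $\GG$ has trivial center and $\Psf_\theta$ contains no simple factors of $\GG$. Then Theorem~\ref{thm:transverse image of visible general} realizes $\Gamma$ as $\rho(\Gamma_0)$ for a faithful $\Psf_\theta$-transverse representation $\rho:\Gamma_0\to\GG$ of a projectively visible group $\Gamma_0\subset\Aut(\Omega)$ with limit map $\xi:\Lambda_\Omega(\Gamma_0)\to\Lambda_\theta(\Gamma)$. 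Pushing $\mu$ back via $\xi^{-1}$ to a measure $\nu$ on $\Lambda_\Omega(\Gamma_0)$, and invoking $\xi(\Lambda_\Omega^{\rm con}(\Gamma_0))=\Lambda_\theta^{\rm con}(\Gamma)$ from Observation~\ref{obs: basic properties of transverse repns}, the problem reduces to showing $\nu(\Lambda_\Omega^{\rm con}(\Gamma_0))=1$.

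For the central step, fix $b_0\in\Omega$ and $r>R_0$ as in Proposition~\ref{prop:shadow estimates}. The properness of the $\Gamma_0$-action on $\Omega$ gives that $x\in\Lambda_\Omega^{\rm con}(\Gamma_0)$ if and only if $x\in\mathcal{O}_r(b_0,\gamma b_0)$ for infinitely many $\gamma\in\Gamma_0$. I would partition $\Gamma_0$ into the shells $\Sigma_n:=\{\gamma\in\Gamma_0:n\le\phi(\kappa_\theta(\rho(\gamma)))<n+1\}$ and form $B_n:=\bigcup_{\gamma\in\Sigma_n}\mathcal{O}_r(b_0,\gamma b_0)\cap\Lambda_\Omega(\Gamma_0)$, so that $\limsup_n B_n\subset\Lambda_\Omega^{\rm con}(\Gamma_0)$. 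The maximal-separation trick from the proof of Proposition~\ref{prop: consequences of shadow lemma}(1), combined with Proposition~\ref{prop:shadow estimates}, gives $\nu(B_n)\asymp\sum_{\gamma\in\Sigma_n}e^{-\delta\phi(\kappa_\theta(\rho(\gamma)))}$, hence $\sum_n\nu(B_n)\asymp Q_\Gamma^\phi(\delta)=+\infty$. For pairwise intersections, Proposition~\ref{prop:multiplicative along geodesics} shows that whenever $\gamma\in\Sigma_m$ and $\eta\in\Sigma_n$ with $m\le n$ have meeting shadows, $\gamma b_0$ lies within Hilbert distance $2r$ of $[b_0,\eta b_0]$; properness of the action bounds the number of such $\gamma$ for each $\eta$ by a constant $C$, yielding $\nu(B_m\cap B_n)\le C\,\nu(B_n)$. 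A Chung-Erd\H{o}s / Paley-Zygmund variant of the Borel-Cantelli lemma, applied (if necessary, along a subsequence of shells on which $\nu(B_n)$ stays bounded below) to keep the Kochen-Stone ratio positive, then yields $\nu(\limsup_n B_n)>0$.

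To conclude, suppose $\mu(\Lambda_\theta^{\rm con}(\Gamma))<1$ and let $A:=\Lambda_\theta(\Gamma)\setminus\Lambda_\theta^{\rm con}(\Gamma)$, a $\Gamma$-invariant set with $\mu(A)>0$. The normalized restriction $\mu_A:=\mu|_A/\mu(A)$ is again a $\phi$-Patterson-Sullivan measure of dimension $\delta$, since the conformality identity descends to $\mu_A$; its support is a $\Gamma$-invariant closed subset of $\Lambda_\theta(\Gamma)$ and hence equals $\Lambda_\theta(\Gamma)$ by minimality of the convergence action (Proposition~\ref{prop: convergence group}), so Proposition~\ref{prop:shadow estimates} and the previous step apply to $\mu_A$ and produce $\mu_A(\Lambda_\theta^{\rm con}(\Gamma))>0$, contradicting $\mathrm{supp}(\mu_A)\subset A$. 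Hence $\mu(\Lambda_\theta^{\rm con}(\Gamma))=1$, and the no-atoms conclusion follows immediately because any atom of $\mu$ would lie in $\Lambda_\theta^{\rm con}(\Gamma)$, which carries no atoms by Proposition~\ref{prop: consequences of shadow lemma}(2). The main obstacle is the Borel-Cantelli step: the one-sided overlap bound $\nu(B_m\cap B_n)\le C\nu(B_n)$ is weaker than quasi-independence and the Kochen-Stone ratio can degenerate without lower bounds on $\nu(B_n)$, which forces either a subsequence argument or a symmetric refinement of the overlap estimate.
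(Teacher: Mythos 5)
Your overall architecture matches the paper's: reduce via Theorem~\ref{thm:transverse image of visible general} to a transverse representation, apply a shadow lemma plus a Borel--Cantelli-type lemma to get positive measure on the conical set, and boost to full measure by restricting to the $\Gamma$-invariant complement. The divergence, and where the proposal breaks, is the intermediate decomposition and the pairwise-intersection estimate.

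The paper indexes the Borel--Cantelli sets by individual group elements: it enumerates $\Gamma_0=\{\gamma_1,\gamma_2,\dots\}$ with $T_n=\d_\Omega(b_0,\gamma_n b_0)$ non-decreasing, sets $A_n=\Oc_r(b_0,\gamma_n b_0)$, and proves the Kochen--Stone ratio bound via the \emph{quasi-independence} estimate $\nu(A_n\cap A_m)\lesssim e^{-\delta\phi(\kappa_\theta(\rho(\gamma_n)))}e^{-\delta\phi(\kappa_\theta(\rho(\gamma_n^{-1}\gamma_m)))}$, together with the ``annulus'' control (Lemma~\ref{lem: estimates on annuli like sets}) which says that extending the summation from index $N$ to $N'$ (the cutoff $T_n\le T_N+2r$) costs only a multiplicative constant. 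Your shell decomposition $B_n=\bigcup_{\gamma\in\Sigma_n}\Oc_r(b_0,\gamma b_0)$ with $\Sigma_n$ defined by $\phi(\kappa_\theta(\rho(\gamma)))\in[n,n+1)$ produces only the one-sided bound $\nu(B_m\cap B_n)\le C\nu(B_n)$, and there are two problems with this. First, that bound is trivially true with $C=1$ (since $B_m\cap B_n\subset B_n$), but it only gives a Kochen--Stone ratio of order $N/\sum_{n\le N}\nu(B_n)$, which is unbounded whenever $\nu(B_n)\to 0$ ``too fast'' (e.g.\ $\nu(B_n)\sim 1/n$); the proposed fix of passing to a subsequence with $\nu(B_n)\ge\epsilon$ simply may not exist, so the gap is not resolved. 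Second, the stated justification — that properness bounds the number of $\gamma\in\Sigma_m$ meeting a given $\eta\in\Sigma_n$ — is unsound in higher rank: the constraint $\phi(\kappa_\theta(\rho(\gamma)))\in[m,m+1)$ does not localize $\gamma b_0$ to a bounded-diameter portion of $[b_0,\eta b_0]$, because $\phi\circ\kappa_\theta\circ\rho$ need not be comparable to $\d_\Omega(b_0,\cdot\,b_0)$; Proposition~\ref{prop:multiplicative along geodesics} only pins $\gamma^{-1}\eta$ to a $\phi(\kappa_\theta)$-shell, which can contain roughly $e^{\delta(n-m)}$ elements. The paper sidesteps both issues precisely by ordering by $\d_\Omega$-distance rather than by $\phi(\kappa_\theta)$ and by keeping individual elements so that the relation $\gamma_n^{-1}\gamma_m=\gamma_k$ with controlled index $k\le N'$ feeds directly into the annulus lemma. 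To salvage your shell approach you would need a genuine product-type bound $\nu(B_m\cap B_n)\lesssim\nu(B_m)\nu(B_n)$ (or a suitably weighted analogue), not the trivial $\min$-type bound, and that requires exactly the multiplicativity argument the paper runs at the level of individual elements.

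Your reduction step and final boosting-to-full-measure argument are correct and match the paper's, as does the derivation of ``no atoms'' from Proposition~\ref{prop: consequences of shadow lemma}(2).
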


We will use the following variant of the Borel-Cantelli Lemma, sometimes called the Kochen-Stone Lemma. 

\begin{lemma}[{Kochen-Stone Lemma~\cite{KochenStone}}]\label{lem: KS BC lemma}
Let $(X,\mu)$ be a finite measure space. If $\{ A_n\}$ is a sequence of measurable sets where $\sum_{n =1}^\infty \mu(A_n) = +\infty$ and 
$$
\liminf_{N \rightarrow\infty} \frac{ \sum_{1 \le m,n \le N} \mu(A_n \cap A_m)}{\left(\sum_{n=1}^N \mu(A_n)\right)^2} < +\infty,
$$
then  the set $\{ x \in X: x \text{ is in infinitely many of } A_1,A_2,\dots\}$ has positive $\mu$ measure. 
\end{lemma}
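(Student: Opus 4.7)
The plan is to use a second-moment / Cauchy-Schwarz argument to first produce a uniform lower bound on $\mu\big(\bigcup_{n \geq N_0} A_n\big)$ for every $N_0$, and then pass to the intersection using the finite measure hypothesis. Concretely, I would set $S_N := \sum_{n=1}^N \mathbf{1}_{A_n}$, so that
$$
\int_X S_N\,d\mu = \sum_{n=1}^N \mu(A_n) \quad\text{and}\quad \int_X S_N^2\,d\mu = \sum_{1 \le m,n \le N} \mu(A_m \cap A_n).
$$
Since $S_N$ vanishes on the complement of $\bigcup_{n=1}^N A_n$, applying the Cauchy-Schwarz inequality to the pointwise product $S_N \cdot \mathbf{1}_{\{S_N > 0\}}$ yields the Paley-Zygmund type bound
$$
\mu\left(\bigcup_{n=1}^N A_n\right) \ge \frac{\left(\sum_{n=1}^N \mu(A_n)\right)^2}{\sum_{1 \le m,n \le N} \mu(A_m \cap A_n)}.
$$
Since $\bigcup_{n=1}^N A_n$ is increasing in $N$, letting $N \to \infty$ along a subsequence realizing the $\liminf$ in the hypothesis gives $\mu\big(\bigcup_{n \geq 1} A_n\big) \ge 1/C$, where $C$ denotes the value of the $\liminf$ from the hypothesis.

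Next, I would upgrade from $\bigcup_n A_n$ to $\limsup_n A_n$ by applying the same inequality to the tail sequence $\{A_n\}_{n \geq N_0}$ for each fixed $N_0 \ge 1$. The divergence assumption ensures that $\sum_{n \geq N_0} \mu(A_n) = +\infty$ and that
$$
\frac{\sum_{n=N_0}^M \mu(A_n)}{\sum_{n=1}^M \mu(A_n)} \longrightarrow 1 \quad\text{as } M \to \infty,
$$
while the double sum of intersections over pairs $(m,n)$ with $N_0 \le m,n \le M$ is dominated by the corresponding sum over $1 \le m,n \le M$. Hence the $\liminf$ computed for the tail is bounded above by $C$, and the Cauchy-Schwarz estimate of the first step gives $\mu\big(\bigcup_{n \geq N_0} A_n\big) \ge 1/C$ uniformly in $N_0$.

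Finally, because $(X,\mu)$ has finite measure, the decreasing family $B_{N_0} := \bigcup_{n \geq N_0} A_n$ satisfies
$$
\mu\left(\{x : x \in A_n \text{ for infinitely many } n\}\right) = \mu\left(\bigcap_{N_0 \ge 1} B_{N_0}\right) = \lim_{N_0 \to \infty} \mu(B_{N_0}) \ge \frac{1}{C} > 0.
$$
The main obstacle is the passage from $\bigcup_n A_n$ to $\limsup_n A_n$, where one must verify that the ratio hypothesis is inherited by every tail; this is exactly where divergence of $\sum_n \mu(A_n)$ is crucial, as it renders the quadratic denominator insensitive to discarding any finite initial segment of terms. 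Finite total measure is needed only at the very last step to interchange measure with a decreasing intersection.
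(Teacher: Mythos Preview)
Your proof is correct and is the standard second-moment (Paley--Zygmund) argument for the Kochen--Stone lemma. The paper does not supply its own proof of this statement; it simply cites the original reference~\cite{KochenStone} and uses the lemma as a black box in the proof of Proposition~\ref{prop: support on conical limit set}. So there is nothing to compare against, but your argument stands on its own: the Cauchy--Schwarz bound on $\mu\{S_N>0\}$, the tail stability via divergence of $\sum_n\mu(A_n)$, and the continuity-from-above step (where finiteness of $\mu$ enters) are all handled correctly.
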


For the rest of the section fix $\Gamma$, $\phi$ and $\mu$ as in the statement of Proposition~\ref{prop: support on conical limit set}. 
Using the discussion in Section~\ref{sec: a helpful reduction} we may assume that $\GG$ has trivial center and that $\Psf_\theta$ does not contain any simple factors of $\GG$. Then by Theorem~\ref{thm:transverse image of visible general}, there is a properly convex domain $\Omega\subset\Pb(\Rb^{d})$, a projectively visible subgroup $\Gamma_0\subset\Aut(\Omega)$
and a faithful $\Psf_\theta$-transverse representation $\rho:\Gamma_0\to\GG$ with limit map $\xi:\Lambda_\Omega(\Gamma_0)\to\Fc_\theta$ so that $\rho(\Gamma_0)=\Gamma$ 
and $\xi(\Lambda_\Omega(\Gamma_0))=\Lambda_\theta(\Gamma)$. Define a measure $\nu$ on $\partial \Omega$ by 
\begin{equation*}
\nu(A) = \mu\big( \xi(A \cap \Lambda_\Omega(\Gamma_0))\big).
\end{equation*}

Fix $b_0 \in \Omega$. Then using  Proposition~\ref{prop:shadow estimates} we may fix $C,r > 0$ such that 
\begin{equation}
\label{eqn:shadow lemma in proof using BC lemma} 
\frac{1}{C} e^{-\delta \phi(\kappa_\theta(\rho(\gamma)))} \le \nu\left(\Oc_r(b_0, \gamma(b_0))  \right) \le C e^{-\delta \phi(\kappa_\theta(\rho(\gamma)))}
\end{equation} 
for all $\gamma \in \Gamma_0$. Fix an enumeration $\Gamma_0 = \{ \gamma_1,\gamma_2,\dots\}$ and let $T_n : = \d_\Omega(b_0, \gamma_n(b_0))$. By reordering we may assume that 
$$
T_1 \le T_2 \le T_3 \le \cdots. 
$$
Then let $A_n : = \Oc_r(b_0, \gamma_n(b_0))$. We will verify that the sets $\{A_n\}$ satisfy the hypotheses of Lemma~\ref{lem: KS BC lemma}.

The first hypothesis in Lemma~\ref{lem: KS BC lemma}  is easy to check. Directly from Equation~\eqref{eqn:shadow lemma in proof using BC lemma} we obtain 
$$
\sum_{n=1}^\infty \nu(A_n) \ge \frac{1}{C} \sum_{n=1}^\infty e^{-\delta \phi(\kappa_\theta(\rho(\gamma_n)))}=\frac{1}{C}Q_{\Gamma}^\phi(\delta)=+\infty. 
$$

Verifying the second hypothesis in Lemma~\ref{lem: KS BC lemma}  is slightly more involved. We require the following technical result, which informally says that the ``boundaries'' of sums of the form $\sum_{n=1}^N e^{-\delta \phi(\kappa_\theta(\rho(\gamma_n)))}$ are controlled by their ``interiors.''

For $N \in \Nb$, set
$$
N^\prime: = \max\{ n \in \Nb : T_n \le T_N + 2r\}.
$$ 

\begin{lemma}\label{lem: estimates on annuli like sets} 
There exists $C_1>1$ such that: if $N \ge 1$, then 
$$
\sum_{n=1}^{N^\prime} e^{-\delta \phi(\kappa_\theta(\rho(\gamma_n)))}\le C_1 \sum_{n=1}^N e^{-\delta \phi(\kappa_\theta(\rho(\gamma_n)))}.
$$
\end{lemma}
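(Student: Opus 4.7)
The plan is to bound the annular increment $S_{N'}-S_N=\sum_{n=N+1}^{N'}e^{-\delta\phi(\kappa_\theta(\rho(\gamma_n)))}$ by a constant $C_0$ independent of $N$, and then to observe that $S_N:=\sum_{n=1}^{N}e^{-\delta\phi(\kappa_\theta(\rho(\gamma_n)))}$ is bounded below by a positive constant $c_0$. This gives $S_{N'}\le S_N+C_0\le(1+C_0/c_0)S_N$, so $C_1:=1+C_0/c_0$ works. The lower bound $S_N\ge c_0$ holds because $T_1=0$ forces $\gamma_1$ to lie in the stabilizer of $b_0$ in $\Gamma_0$, which is finite by properness, so $\phi(\kappa_\theta(\rho(\gamma_1)))$ takes only finitely many values.

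I would first establish the following shadow monotonicity: if $q\in\Omega$ and $p\in[b_0,q]_\Omega\setminus\{b_0\}$, then $\Oc_r(b_0,q)\subset\Oc_r(b_0,p)$. Given $y\in\Oc_r(b_0,q)$, pick $z\in[b_0,y)_\Omega$ with $\d_\Omega(z,q)<r$, and apply Proposition~\ref{hausdorff-convexity} with $q_1:=q$, $q_2:=z$ and $x:=b_0$. Since $p\in[q,b_0)_\Omega$, this produces a point $z'\in[z,b_0)_\Omega\subset[b_0,y)_\Omega$ with $\d_\Omega(p,z')<r$, showing $y\in\Oc_r(b_0,p)$.

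For each $n\in(N,N']$ with $T_n>2r$, I would set $p_n$ to be the point on $[b_0,\gamma_n(b_0)]_\Omega$ at distance $T_n-2r$ from $b_0$; then $\d_\Omega(b_0,p_n)\le T_N$, $\d_\Omega(p_n,\gamma_n(b_0))=2r$, and shadow monotonicity gives $\Oc_r(b_0,\gamma_n(b_0))\subset\Oc_r(b_0,p_n)$. The key observation is that the collection $\{\Oc_r(b_0,p_n)\}_n$ has multiplicity uniformly bounded by some $K$. Indeed, if $y\in\Oc_r(b_0,p_n)\cap\Oc_r(b_0,p_{n'})$, pick $z_n,z_{n'}\in[b_0,y)_\Omega$ within $r$ of $p_n,p_{n'}$ respectively; one checks that $\d_\Omega(b_0,z_n),\d_\Omega(b_0,z_{n'})\in[\max(0,T_N-3r),T_N+r]$, so (as they lie on the common ray $[b_0,y)_\Omega$) $\d_\Omega(z_n,z_{n'})\le 4r$, and the triangle inequality then gives $\d_\Omega(\gamma_n(b_0),\gamma_{n'}(b_0))\le 10r$; properness of the $\Gamma_0$-action yields the claimed bound $K$.

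Combining the Shadow Lemma~\eqref{eqn:shadow lemma in proof using BC lemma}, shadow monotonicity, bounded multiplicity, and $\nu(\partial\Omega)=1$ gives $\sum_{N<n\le N',\,T_n>2r}e^{-\delta\phi(\kappa_\theta(\rho(\gamma_n)))}\le CK$ for some constant $C$. The remaining terms (only present when $T_N<2r$) correspond to $\gamma_n$ in the fixed finite set $\{\gamma\in\Gamma_0:\d_\Omega(b_0,\gamma(b_0))\le 2r\}$ and contribute a uniform constant $M$, so we may take $C_0:=CK+M$. The main obstacle is the shadow monotonicity step: since Hilbert geometry is not $\mathrm{CAT}(0)$, one must apply Proposition~\ref{hausdorff-convexity} in the unusual configuration with projective segments pointing \emph{towards} $b_0$ (using $p\in[q,b_0)_\Omega$ rather than the more familiar $[b_0,q)_\Omega$).
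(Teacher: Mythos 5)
Your proof is correct, and the overall skeleton matches the paper's: bound the annular increment $S_{N'}-S_N$ by a constant, then use that $S_N$ is bounded below by a positive constant. You diverge in how the annular increment is controlled, and there your argument is more involved than necessary. The paper observes directly that the shadows $\Oc_r(b_0,\gamma_n(b_0))$ with $n\in(N,N']$ have bounded multiplicity: for such $n$ one has $T_n\in[T_N,T_N+2r]$, so if $y$ lies in two of them, the two witness points on $[b_0,y)_\Omega$ are at distances in $[T_N-r,T_N+3r]$ from $b_0$, hence within $4r$ of each other, giving $\d_\Omega(\gamma_n(b_0),\gamma_m(b_0))\le 6r$ and a multiplicity bound $M=\#\{\gamma:\d_\Omega(b_0,\gamma(b_0))\le 6r\}$; Equation~\eqref{eqn:shadow lemma in proof using BC lemma} then finishes in one line. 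You instead retract each $\gamma_n(b_0)$ to an auxiliary point $p_n$ at distance $T_n-2r$ from $b_0$, prove a shadow monotonicity lemma via Proposition~\ref{hausdorff-convexity}, and run bounded multiplicity on the enlarged shadows $\Oc_r(b_0,p_n)$. This is all correct, but the retraction and the monotonicity lemma buy nothing here: the orbit points $\gamma_n(b_0)$ for $n\in(N,N']$ already lie in an annulus of width $2r$, which is the only fact the multiplicity argument uses. The ``main obstacle'' you flag --- the orientation of segments when invoking Proposition~\ref{hausdorff-convexity} --- is therefore a self-imposed detour rather than an intrinsic difficulty of the lemma. (Also, for the lower bound on $S_N$, the enumeration of $\Gamma_0$ is fixed once, so $S_N\ge e^{-\delta\phi(\kappa_\theta(\rho(\gamma_1)))}$ with $\gamma_1$ a fixed element; the reasoning about the stabilizer of $b_0$ is superfluous.)
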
 

\begin{proof} Note that if $T_n, T_m \in [T_N, T_N+2r]$ and  $\Oc_r(b_0, \gamma_n(b_0)) \cap \Oc_r(b_0, \gamma_m(b_0)) \neq \emptyset$, then 
$$
\d_\Omega( b_0, \gamma_n^{-1}\gamma_m(b_0))=\d_\Omega( \gamma_n(b_0), \gamma_m(b_0)) \le 6r.
$$
Thus, if we set
$$
M : =  \# \{ \gamma \in \Gamma_0 : \d_\Omega(b_0, \gamma(b_0)) \le 6r\},
$$
then every point in $\partial\Omega$ lies in at most $M$ different sets of the form $\Oc_r(b_0, \gamma_n(b_0))$ such that $T_n\in  [T_N, T_N+2r]$. This implies that
$$
\sum_{n=N+1}^{N^\prime} \nu\left(A_n \right)=\sum_{n=N+1}^{N^\prime} \nu\left( \Oc_r(b_0, \gamma_n(b_0))  \right) \le M \nu(\partial \Omega) = M. 
$$
Then by Equation~\eqref{eqn:shadow lemma in proof using BC lemma},
\begin{align*}
\sum_{n=1}^{N^\prime}& e^{-\delta \phi(\kappa_\theta(\rho(\gamma_n)))}=\sum_{n=1}^{N} e^{-\delta \phi(\kappa_\theta(\rho(\gamma_n)))}+\sum_{n=N+1}^{N^\prime} e^{-\delta \phi(\kappa_\theta(\rho(\gamma_n)))} \le \sum_{n=1}^{N} e^{-\delta \phi(\kappa_\theta(\rho(\gamma_n)))}+CM \\
&\le  \left(1+\frac{C M }{e^{-\delta \phi(\kappa_\theta(\rho(\gamma_1)))}} \right) \sum_{n=1}^N e^{-\delta \phi(\kappa_\theta(\rho(\gamma_n)))}
\end{align*}
for all $N \ge 1$. The lemma now holds with $C_1:=1+\frac{C M }{e^{-\delta \phi(\kappa_\theta(\rho(\gamma_1)))}}$.
\end{proof} 

The next lemma verifies that the sequence $\{A_n\}$ satisfy the second hypothesis of Lemma~\ref{lem: KS BC lemma}.

\begin{lemma}\label{lem:the 2nd main estimate in the divergence proof} There exists $C_2 > 0$ such that: if $N \ge 1$, then 
\begin{align*}
\sum_{1 \le n,m \le N} & \nu(A_n \cap A_m) \le C_2 \left(\sum_{n=1}^N \nu(A_n)\right)^2.
\end{align*}

\end{lemma}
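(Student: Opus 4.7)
The plan is to exploit the coarse additivity of $\phi\circ\kappa_\theta\circ\rho$ along nearly-aligned group elements, together with Lemma~\ref{lem: estimates on annuli like sets}, to factorize $\nu(A_n\cap A_m)$ in a form that, after double summation, yields $(\sum_n\nu(A_n))^2$. Throughout, let $S_N:=\sum_{k=1}^{N}e^{-\delta\phi(\kappa_\theta(\rho(\gamma_k)))}$, so that~\eqref{eqn:shadow lemma in proof using BC lemma} gives $C^{-1}S_N\le\sum_{n=1}^N\nu(A_n)\le CS_N$.

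First I would establish the geometric input: if $1\le n\le m\le N$ and $A_n\cap A_m\ne\emptyset$, then $\d_\Omega(b_0,\gamma_n^{-1}\gamma_m(b_0))\le T_N-T_n+4r$. Following the argument used in the proof of Lemma~\ref{lem: claim}, pick $x\in A_n\cap A_m$ and points $p_j\in[b_0,x)$ with $\d_\Omega(p_j,\gamma_j(b_0))<r$ for $j\in\{n,m\}$, and distinguish the two possible orderings of $p_n$ and $p_m$ along $[b_0,x)$. When $p_n\in[b_0,p_m]$, Proposition~\ref{hausdorff-convexity} gives $\d_\Omega(\gamma_n(b_0),[b_0,\gamma_m(b_0)]_\Omega)\le 2r$; when $p_m\in[b_0,p_n]$, the assumption $T_n\le T_m$ forces $\d_\Omega(p_n,p_m)\le 2r$ and hence $\d_\Omega(\gamma_n(b_0),\gamma_m(b_0))\le 4r$ directly. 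In either case $\d_\Omega(\gamma_n(b_0),[b_0,\gamma_m(b_0)]_\Omega)\le 4r$, and the triangle inequality yields the claimed Hilbert bound. Proposition~\ref{prop:multiplicative along geodesics} then produces a constant $C'=C'(r,\phi)$ with
$$\abs{\phi(\kappa_\theta(\rho(\gamma_m)))-\phi(\kappa_\theta(\rho(\gamma_n)))-\phi(\kappa_\theta(\rho(\gamma_n^{-1}\gamma_m)))}\le C'.$$

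Combining this additivity with the shadow lemma upper bound on $\nu(A_m)$ gives
$$\nu(A_n\cap A_m)\le\nu(A_m)\le C\,e^{\delta C'}\,e^{-\delta\phi(\kappa_\theta(\rho(\gamma_n)))}\,e^{-\delta\phi(\kappa_\theta(\rho(\gamma_n^{-1}\gamma_m)))}.$$
Summing over $m\in\{n,\dots,N\}$ with $A_n\cap A_m\ne\emptyset$, and using that $m\mapsto\gamma_n^{-1}\gamma_m$ is injective into $\{\eta\in\Gamma_0:\d_\Omega(b_0,\eta(b_0))\le T_N-T_n+4r\}$, bounds the inner sum by $\sum_{k:T_k\le T_N-T_n+4r}e^{-\delta\phi(\kappa_\theta(\rho(\gamma_k)))}$. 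Summing over $n=1,\dots,N$ converts the joint condition into $T_n+T_k\le T_N+4r$, forcing $n,k\in\{1,\dots,N''\}$ where $N'':=\max\{j:T_j\le T_N+4r\}$. Hence the sum over pairs with $n\le m$ is bounded by a constant multiple of $\bigl(\sum_{k=1}^{N''}e^{-\delta\phi(\kappa_\theta(\rho(\gamma_k)))}\bigr)^2$, which by Lemma~\ref{lem: estimates on annuli like sets} (iterated once, since $4r$ rather than $2r$ appears) is bounded by a constant multiple of $S_N^2$, and hence by a constant multiple of $(\sum_n\nu(A_n))^2$.

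Doubling gives the bound for pairs with $m<n$, and the diagonal contribution $\sum_n\nu(A_n)$ is absorbed into the quadratic bound using $\sum_n\nu(A_n)\ge\nu(A_1)>0$. I expect the main obstacle to be the geometric case analysis above, since the two orderings of $p_n$ and $p_m$ along $[b_0,x)$ produce different estimates and both must be shown to yield the additivity with a uniform constant; the remainder of the argument is summation and bookkeeping.
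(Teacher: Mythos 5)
Your proposal follows essentially the same route as the paper: bound $\nu(A_n\cap A_m)$ by $\nu(A_m)$, use coarse additivity from Proposition~\ref{prop:multiplicative along geodesics} to write this as $e^{-\delta\phi(\kappa_\theta(\rho(\gamma_n)))}e^{-\delta\phi(\kappa_\theta(\rho(\gamma_n^{-1}\gamma_m)))}$ up to a uniform constant, re-index the inner sum via $k$ with $\gamma_k=\gamma_n^{-1}\gamma_m$, and control the resulting ``overshoot'' past index $N$ with Lemma~\ref{lem: estimates on annuli like sets}. In fact your handling of the geometric case analysis is more careful than the paper's: the paper simply asserts $\d_\Omega(\gamma_n(b_0),[b_0,\gamma_m(b_0)]_\Omega)\le 2r$ ``(see the proof of Lemma~\ref{lem: claim}),'' but that proof only gives $2r$ after relabelling, which one cannot do here since $n\le m$ is fixed; your two-case analysis correctly produces the bound $4r$ and the refinement $\d_\Omega(b_0,\gamma_n^{-1}\gamma_m(b_0))\le T_N-T_n+4r$.

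The one genuine gap is your appeal to ``Lemma~\ref{lem: estimates on annuli like sets} iterated once'' to handle $N'':=\max\{j:T_j\le T_N+4r\}$. Iterating the lemma bounds $\sum_{n\le (N')'}e^{-\delta\phi(\kappa_\theta(\rho(\gamma_n)))}$, where $(N')':=\max\{j:T_j\le T_{N'}+2r\}$; but since $T_{N'}\le T_N+2r$ one only gets $(N')'\le N''$, which is the \emph{wrong} direction (you need $N''\le(N')'$, and strict inequality $(N')'<N''$ is possible when there is a gap $T_{N'+1}>T_{N'}+2r$ with $T_{N'}<T_N+2r$). The fix is easy but should be stated: re-run the proof of Lemma~\ref{lem: estimates on annuli like sets} directly with the band $[T_N,T_N+4r]$ — the same argument gives $\d_\Omega(\gamma_n(b_0),\gamma_m(b_0))\le 8r$ whenever $T_n,T_m$ lie in this band and the shadows overlap, so the same multiplicity argument goes through with $M$ replaced by $\#\{\gamma:\d_\Omega(b_0,\gamma(b_0))\le 8r\}$. (The paper avoids this only because of the unjustified $2r$ claim, so this issue is latent there as well.) With that substitution, the rest of your summation and bookkeeping is correct.
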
 

\begin{proof}  Let 
$$
\Delta_N:=\{(m,n):1 \le n \le m \le N \text{ and } A_m\cap A_n\neq\emptyset\}.
$$
One can show (see the proof of Lemma \ref{lem: claim}) that if $(m,n)\in\Delta_N$, then 
\begin{align*}
\d_\Omega( \gamma_n(b_0), [b_0, \gamma_m(b_0)]_\Omega) \le 2r.
\end{align*}
Then Proposition~\ref{prop:multiplicative along geodesics} implies
\begin{equation}
\label{eqn: almost multiplicative on the triangle set}
\sup_{(m,n)\in\Delta_N} \norm{\kappa_\theta(\rho(\gamma_n))+\kappa_\theta(\rho(\gamma_n^{-1}\gamma_m))-\kappa_\theta(\rho(\gamma_m))} < +\infty,
\end{equation} 
and so by Equation~\eqref{eqn:shadow lemma in proof using BC lemma}, there exists a constant $C^\prime > 0$ such that
\begin{align*}
\nu(A_n \cap A_m) & \le \nu(\Oc_r(b_0, \gamma_m(b_0))) \le C^\prime e^{-\delta \phi(\kappa_\theta(\rho(\gamma_n)))}e^{-\delta \phi(\kappa_\theta(\rho(\gamma_n^{-1}\gamma_m)))}
\end{align*}
for all $(m,n)\in\Delta_N$. Also,
\begin{align*}
\d_\Omega(b_0, \gamma_n^{-1}\gamma_m(b_0)) & = \d_\Omega(\gamma_n(b_0), \gamma_m(b_0)) \le \d_\Omega(\gamma_n(b_0),[b_0, \gamma_m(b_0)]_\Omega ) + \d_\Omega(b_0, \gamma_m(b_0))\\
& \le 2r+T_m \le 2r + T_N
\end{align*}
for all $(m,n)\in\Delta_N$. In particular, if $(m,n)\in\Delta_N$, then $\gamma_n^{-1}\gamma_m = \gamma_k$ for some $k \le N'$. 

These observations, Lemma~\ref{lem: estimates on annuli like sets} and Equation~\eqref{eqn:shadow lemma in proof using BC lemma} imply that if $N\ge 1$, then
\begin{align*}
\sum_{1 \le n,m \le N}  & \nu(A_n \cap A_m)  \le 2 \sum_{(m,n) \in \Delta_N}\nu(A_n \cap A_m)   \le 2C^\prime\sum_{(m,n) \in \Delta_N} e^{-\delta \phi(\kappa_\theta(\rho(\gamma_n)))}e^{-\delta \phi(\kappa_\theta(\rho(\gamma_n^{-1}\gamma_m)))}\\
&  \le 2C^\prime \sum_{k=1}^{N^\prime} \sum_{n=1}^N e^{-\delta \phi(\kappa_\theta(\rho(\gamma_n)))}e^{-\delta \phi(\kappa_\theta(\rho(\gamma_k)))} \le 2C^\prime C_1\left(\sum_{n=1}^N e^{-\delta \phi(\kappa_\theta(\rho(\gamma_n)))}\right)^2 \\
& \le 2C^\prime C_1C^2 \left(\sum_{n=1}^N \nu(A_n)\right)^2. \qedhere
\end{align*}
\end{proof} 

We may now apply Lemma \ref{lem: KS BC lemma} to the finite measure space $\left(\partial\Omega, \nu\right)$ and the sequence $\{A_n\}$ to finish the proof of Proposition \ref{prop: support on conical limit set}.

\begin{proof}[Proof of Proposition \ref{prop: support on conical limit set}]
We first show that $\mu(\Lambda_{\theta}^{\rm con}(\Gamma))>0$. By Lemma \ref{lem: KS BC lemma}, if we set 
$$
Y:=\{ x \in \partial\Omega : x \text{ is in infinitely many of } A_1,A_2,\dots\},
$$
then $\nu(Y) > 0$. Notice that if $x\in  Y$,  then there is a sequence $\{\gamma_n\}$ in $\Gamma_0$ such that $\gamma_n(b_0) \rightarrow x$ and 
$$
 \d_\Omega( \gamma_n(b_0), [b_0, x) ) < r 
$$
for all $n \ge 1$. Thus $Y \subset \Lambda_\Omega^{\rm con}(\Gamma_0)$. By Observation~\ref{obs: basic properties of transverse repns}(1), $\xi(Y)\subset\Lambda_{\theta}^{\rm con}(\Gamma)$, so 
\[\mu(\Lambda_{\theta}^{\rm con}(\Gamma))\ge\mu(\xi(Y))=\nu(Y)>0.\]

Now suppose for contradiction that $\mu(\Lambda_{\theta}^{\rm con}(\Gamma))<1$. If we set $S:=\Lambda_\theta(\Gamma)-\Lambda_{\theta}^{\rm con}(\Gamma)$, then $\mu(S)>0$, so we may define a probability measure $\mu_S$ on $\Lambda_\theta(\Gamma)$ by
\[
\mu_S(A):=\frac{1}{\mu(S)}\mu(A\cap S).
\]
By definition, $\mu_S(\Lambda_\theta(\Gamma))=0$.
On the other hand, since $S$ is $\Gamma$-invariant, $\mu_S$ is a $\phi$-Patterson-Sullivan measure for $\Gamma$ of dimension $\delta$, so the above argument implies that  $\mu_S(\Lambda_{\theta}^{\rm con}(\Gamma))>0$, which is  a contradiction. Therefore, $\mu(\Lambda_{\theta}^{\rm con}(\Gamma))=1$.

By Proposition~\ref{prop: consequences of shadow lemma}, $\mu$ has no atoms in $\Lambda_{\theta}^{\rm con}(\Gamma)$. Since $\mu(\Lambda_{\theta}^{\rm con}(\Gamma))=1$, we conclude that $\mu$ has no atoms.
\end{proof}

\section{Non-ergodicity of the flow in the convergent case}

In this section, we prove that the geodesic flow of a transverse representation is dissipative and non-ergodic if its image is in the convergent case of our Hopf-Sullivan-Tsuji dichotomy.

\begin{proposition}\label{prop: dissipative}
Let $\Omega\subset\Pb(\Rb^{d})$ be a properly convex domain, let $\Gamma_0\subset\Aut(\Omega)$ be a non-elementary projectively visible subgroup and let $\rho:\Gamma_0\to\GG$ be a $\Psf_\theta$-transverse representation for some symmetric $\theta\subset\Delta$. Suppose $\phi\in\mfa_\theta^*$ satisfies $\delta:=\delta^\phi(\rho(\Gamma_0))<+\infty$. Let $\mu$ and $\bar{\mu}$ respectively be $\phi$ and $\bar{\phi}$-Patterson-Sullivan measures for $\rho(\Gamma_0)$ of dimension $\beta$, and let $m$ be the Bowen-Margulis measure on $\wh\Usf(\Gamma_0)$ associated to $\rho$, $\mu$ and $\bar\mu$.
If $Q^\phi_\Gamma(\delta)<+\infty$, then 
\begin{enumerate}
\item the $\Gamma_0\times\Rb$-action on $\left(\Usf(\Gamma_0),\widetilde{m}\right)$ is dissipative,
\item the action of the geodesic flow on $\left(\wh\Usf(\Gamma_0),m\right)$ is dissipative, and
\item the action of the geodesic flow on $\left(\wh\Usf(\Gamma_0),m\right)$ is non-ergodic. 
\end{enumerate}
\end{proposition}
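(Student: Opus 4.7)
My plan is to extract dissipativity from the vanishing of $\mu$ and $\bar\mu$ on the conical limit set, then deduce non-ergodicity as a formal consequence. Set $\Gamma := \rho(\Gamma_0)$. The substitution $\gamma \mapsto \gamma^{-1}$ together with Observation~\ref{obs: opposition involution 2} shows $Q_\Gamma^{\bar\phi}(s) = Q_\Gamma^\phi(s)$ for every $s$, so $\delta^{\bar\phi}(\Gamma) = \delta$ and $Q_\Gamma^{\bar\phi}(\delta) < +\infty$. Proposition~\ref{prop: consequences of shadow lemma}(1) gives $\beta \ge \delta$, so both $Q_\Gamma^\phi(\beta)$ and $Q_\Gamma^{\bar\phi}(\beta)$ are finite, and Proposition~\ref{prop: consequences of shadow lemma}(3) applied to each yields $\mu(\Lambda_\theta^{\rm con}(\Gamma)) = \bar\mu(\Lambda_\theta^{\rm con}(\Gamma)) = 0$. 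Pulling back via $\xi$ and using Observation~\ref{obs: basic properties of transverse repns}(1), the measures $\nu := (\xi^{-1})_*\mu$ and $\bar\nu := (\xi^{-1})_*\bar\mu$ on $\Lambda_\Omega(\Gamma_0)$ both vanish on $\Lambda_\Omega^{\rm con}(\Gamma_0)$.

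The dynamical key is that non-conical endpoints produce transient quotient orbits: if $v \in \Usf(\Gamma_0)$ and $v^+ \notin \Lambda_\Omega^{\rm con}(\Gamma_0)$, then the image of $\{\varphi_t(v) : t \ge 0\}$ in $\wh\Usf(\Gamma_0)$ eventually leaves every compact set, with an analogous statement for $v^-$ in negative time. I would prove this by contradiction: if some $t_n \to +\infty$ and $\gamma_n \in \Gamma_0$ satisfy $\gamma_n^{-1}(\varphi_{t_n}(v))$ staying in a compact subset of $\Usf(\Gamma_0)$, then projecting to basepoints gives $\d_\Omega(\gamma_n(b_0), \gamma_v(t_n))$ bounded for some $b_0 \in \Omega$. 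Since $v^+$ is a $C^1$-smooth point of $\partial\Omega$, Lemma~\ref{asymptotic at smooth point} implies the forward ray $\gamma_v([0,\infty))$ is asymptotic to $[b_0, v^+)_\Omega$, and then Proposition~\ref{hausdorff-convexity} yields $\d_\Omega(\gamma_n(b_0), [b_0, v^+)_\Omega) \le r$ for some $r$ independent of $n$, while $\gamma_n(b_0) \to v^+$; this places $v^+$ in $\Lambda_\Omega^{\rm con}(\Gamma_0)$, a contradiction.

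Applying Fubini to the Hopf parameterization $\Usf(\Gamma_0) \cong \Lambda_\Omega(\Gamma_0)^{(2)} \times \Rb$ and the explicit formula for $\tilde m$, the set of $v$ with both $v^+$ and $v^-$ non-conical has full $\tilde m$-measure. The dynamical statement then places $\tilde m$-almost every $v$ in the dissipative part of the flow on $(\wh\Usf(\Gamma_0), m)$, giving (2); (1) follows since dissipativity of the quotient $\Rb$-action is equivalent to dissipativity of the $\Gamma_0 \times \Rb$-action on $(\Usf(\Gamma_0), \tilde m)$ (lift a wandering set in $\wh\Usf(\Gamma_0)$ via a Borel fundamental domain for $\Gamma_0$). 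For (3), dissipativity of $m$ combined with its $\sigma$-finiteness yields a wandering set $W$ with $0 < m(W) < +\infty$; splitting $W = W_1 \sqcup W_2$ with $m(W_j) > 0$ and forming the flow-saturations $A_j := \bigcup_{t \in \Rb} \varphi_t(W_j)$ produces disjoint (modulo null sets) $\varphi_t$-invariant sets of positive $m$-measure, contradicting ergodicity.

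The main obstacle is the dynamical lemma in the second paragraph: although the intuition is classical in rank one, translating it here requires careful use of Lemma~\ref{asymptotic at smooth point} and Proposition~\ref{hausdorff-convexity} to convert recurrence of a quotient geodesic orbit into the precise conical approximation condition defining $\Lambda_\Omega^{\rm con}(\Gamma_0)$, rather than relying on any $\mathrm{CAT}(-1)$-style contraction.
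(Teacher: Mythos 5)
Your handling of parts (1) and (2) is essentially correct, and the organization — proving (2) directly from the escape lemma and then deducing (1) by lifting a wandering cover through a Borel fundamental domain — is a legitimate reordering of the paper's argument, which instead proves (1) by contradiction (a positive-measure set of conservative points would force $\mu(\xi(\Lambda_\Omega^{\rm con}))>0$) and then derives (2). Your escape lemma is the contrapositive of the same observation the paper uses, and the Fubini step correctly transfers the vanishing of $\nu$ and $\bar\nu$ on the conical limit set to a full-$\tilde m$-measure set of vectors with non-conical endpoints. A small simplification: for the escape lemma you do not need Lemma~\ref{asymptotic at smooth point} at all, since Proposition~\ref{hausdorff-convexity} applied with $q_1=\pi(v)$, $q_2=b_0$, $x=v^+$ directly gives $\d_\Omega(\gamma_v(t_n),[b_0,v^+)_\Omega)\le \d_\Omega(\pi(v),b_0)$.

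Part (3), however, has a genuine gap: the flow-saturations $A_j=\bigcup_{t\in\Rb}\varphi_t(W_j)$ of disjoint positive-measure pieces of a wandering set are generally \emph{not} disjoint modulo null sets, because a single orbit may pass through both $W_1$ and $W_2$ (wandering controls total time spent in $W$, not which pieces the orbit visits). Indeed, dissipativity alone does not imply non-ergodicity: translation on $(\Rb,\text{Leb})$ is dissipative (any bounded interval is wandering) and ergodic, and in that example both saturations $A_1,A_2$ equal all of $\Rb$. The paper's proof of (3) uses a topological input that your argument omits: after picking a compact set $K$ with non-empty interior and using part (2) to bound the time an orbit spends near $K$, it invokes ergodicity to get a dense orbit and concludes that the interior of $K$ would have to lie in a bounded arc $\varphi_{[-T,T]}(v_0)$ of a single orbit, which is impossible since no open subset of $\wh\Usf(\Gamma_0)$ embeds in a line segment. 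This local structure of the flow space — coming from $\Usf(\Gamma_0)\cong\Lambda_\Omega(\Gamma_0)^{(2)}\times\Rb$ with $\Lambda_\Omega(\Gamma_0)$ perfect — is precisely the ingredient that rules out the pathological one-dimensional situation, and your saturation argument needs to be replaced by something that exploits it.
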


Before proving Proposition \ref{prop: dissipative}, we briefly discuss the notions of dissipative and conservative dynamical systems. Suppose that $X$ is a standard Borel space, $H$ is a locally compact, second countable, unimodular group that acts measurably on $X$, ${\rm d}h$ is a Haar measure on $H$, and $m$ a $H$-quasi-invariant, $\sigma$-finite measure on $X$. If $A\subset X$ has positive $m$-measure, we say that $A\subset X$ is \emph{wandering} if for $m$-almost every $x\in A$, $\int_{h\in H}\mathbf{1}_A(h(x)) {\rm d}h<+\infty$. Then  let $\mathcal D\subset\Omega$ be the union of all wandering sets, and let $\mathcal C:=\Omega-\mathcal D$. We say that $H$-action on $(X,m)$ is \emph{conservative} (resp. \emph{dissipative}) if $m(\mathcal D)=0$ (resp. $m(\mathcal C)=0$).

Given a $m$-integrable, positive function $f:X\to (0,\infty)$, we may decompose $X$ into
\[\mathcal C_f:=\left\{x\in X:\int_H f(h(x)){\rm d}h=+\infty\right\}\quad\text{ and }\quad\mathcal D_f:=\left\{x\in X:\int_H f(h( x)){\rm d}h<+\infty\right\}.\]
In the case when the measure $m$ is $H$-invariant, it is known (see for instance~\cite[Fact 2.27]{blayac}) that $\Cc_f=\Cc$ and $\mathcal D_f=\mathcal D$ up to measure zero sets. 

\begin{proof}[Proof of Proposition \ref{prop: dissipative}] 
Proof of (1). Suppose for contradiction that the $\Gamma_0\times\Rb$-action on $\left(\Usf(\Gamma_0),\widetilde{m}\right)$ is not dissipative. Then there is a $\widetilde{m}$-integrable, positive function $f:\Usf(\Gamma_0)\to (0,\infty)$ and a compact set 
\[K\subset\mathcal C_f:=\left\{v\in \Usf(\Gamma_0):\sum_{\gamma\in\Gamma_0}\int_{\Rb} f(\gamma\cdot \varphi_t(v)){\rm d}t=+\infty\right\}\] 
such that $\widetilde{m}(K)>0$. For any $R>0$, let 
\[K_R:=\left\{v\in K:\sum_{\gamma\in\Gamma_0}\int_{\Rb}1_K(\gamma\cdot \varphi_t(v)){\rm d}t\le R\right\}.\]
Since $K_R\subset\mathcal C_f$, the integral
\[\int_{\Usf(\Gamma_0)}\sum_{\gamma\in\Gamma_0}\int_{\Rb}f(\gamma\cdot\varphi_t(v))1_{K_R}(v)\d t\,\d\widetilde{m}(v)=\int_{K_R}\sum_{\gamma\in\Gamma_0}\int_{\Rb}f(\gamma\cdot\varphi_t(v))\d t\,\d\widetilde{m}(v).\]
is infinite if $\widetilde{m}(K_R)>0$. On the other hand, since $\widetilde{m}$ is $\Gamma_0\times\Rb$-invariant, 
\begin{align*}
\int_{\Usf(\Gamma_0)}\sum_{\gamma\in\Gamma_0}\int_{\Rb}f(\gamma\cdot\varphi_t(v))1_{K_R}(v)\d t\,\d\widetilde{m}(v)&=\int_{\Usf(\Gamma_0)}f(v)\sum_{\gamma\in\Gamma_0}\int_{\Rb}1_{K_R}(\gamma\cdot \varphi_t(v))\d t\,\d\widetilde{m}(v)\\
&\le R\int_{\Usf(\Gamma_0)}f(v)\d\widetilde{m}(v)<+\infty.
\end{align*}
It follows that $\widetilde{m}(K_R)=0$ for all $R>0$, or equivalently, that 
\[\sum_{\gamma\in\Gamma_0}\int_{\Rb}1_K(\gamma\cdot \varphi_t(v)){\rm d}t=+\infty\] 
for $\widetilde{m}$-almost every $v\in K$. This in turn implies that for almost every $v\in K$, there are diverging sequences $\{t_n\}$ in $\Rb$ and $\{\gamma_n\}$ in $\Gamma_0$ such that $\gamma_n\varphi_{t_n}(v)\in K$, and so at least one of the forward endpoint $v^+$ or backward endpoint $v^-$ of $v$ is in $\Lambda_\Omega^{\rm con}(\Gamma_0)$. Thus, 
$$
\mu(\xi(\Lambda_\Omega^{\rm con}(\Gamma_0)))>0,
$$
since $\widetilde{m}(K)>0$. However, by Proposition \ref{prop: consequences of shadow lemma}, $\mu(\xi(\Lambda_\Omega^{\rm con}(\Gamma_0)))=0$, which is a contradiction.

Proof of (2). Let $f:\Usf(\Gamma_0)\to(0,\infty)$ be a $\widetilde{m}$-integrable, positive function. By part (1), we may define an $m$-integrable, positive function $F:\wh{\Usf}(\Gamma_0)\to(0,\infty)$ by $F([v]):=\sum_{\gamma\in\Gamma_0}f(\gamma\cdot v)$. Furthermore, for $m$-almost every $[v]\in\wh{\Usf}(\Gamma_0)$, 
\[\int_{\Rb}F(\varphi_t([v]))\d t=\sum_{\gamma\in\Gamma_0}\int_{\Rb}f(\gamma\cdot \varphi_t(v))\d t<+\infty.\]

Proof of (3). Pick a compact set $K\subset\wh{\Usf}(\Gamma_0)$ with non-empty interor. Let $f:\widehat\Usf(\Gamma_0)\to(0,\infty)$ be a $m$-integrable, positive function that takes the value $1$ on the compact set $\varphi_{[0,1]}(K)$. Part (2) implies that for $m$-almost every $v\in\wh{\Usf}(\Gamma_0)$, we have
\[\int_{\Rb}f(\varphi_t(v)){\rm d}t<+\infty,\] 
so there is some $T_v>0$ such that $\varphi_t(v)\notin K$ for all $t\notin [-T_v,T_v]$. 

Suppose for contradiction that the action of the geodesic flow on $\left(\wh\Usf(\Gamma_0),m\right)$ is ergodic. Then for $m$-almost every $v\in \wh\Usf(\Gamma_0)$, 
the flow line of $v$ is dense in $\wh\Usf(\Gamma_0)$. Thus, there is some $v_0\in\wh\Usf(\Gamma_0)$ and some $T:=T_{v_0}>0$ such that $\wh\Usf(\Gamma_0)= \overline{\varphi_{\Rb}(v_0)}$ and 
$$
\varphi_{(-\infty,-T)}(v_0)\cup\varphi_{(T,\infty)}(v_0)\subset\wh{\Usf}(\Gamma_0)-K.
$$
It follows that the interior
$K^0$ of $K$ lies in $\varphi_{[-T,T]}(v_0)$. However, it is easy to see that no open set in $\wh{\Usf}(\Gamma_0)$ is homeomorphic to a subset of the interior of a line segment.
\end{proof}

\section{Ergodicity of the flow in the divergent case}

In this section, we prove that the geodesic flow of a transverse representation is conservative and ergodic if its image is in the divergent case of our Hopf-Sullivan-Tsuji dichotomy.

\begin{theorem}\label{thm: ergodicity}
Let $\Omega\subset\Pb(\Rb^{d})$ be a properly convex domain, let $\Gamma_0\subset\Aut(\Omega)$ be a non-elementary projectively visible subgroup and let $\rho:\Gamma_0\to\GG$ be a $\Psf_\theta$-transverse representation for some symmetric $\theta\subset\Delta$. Let $\phi\in\mfa_\theta^*$, let $\mu$ and $\bar{\mu}$ respectively be $\phi$ and $\bar{\phi}$-Patterson-Sullivan measures for $\rho(\Gamma_0)$ of dimension $\delta:=\delta^\phi(\rho(\Gamma_0))$, let $m$ be the Bowen-Margulis measure on $\wh\Usf(\Gamma_0)$ associated to $\rho$, $\mu$ and $\bar\mu$, and let $\widetilde{m}$ be the lift of $m$ to $\Usf(\Gamma_0)$. If $Q_{\rho(\Gamma_0)}^\phi(\delta)=+\infty$, then 
\begin{enumerate}
\item  the action of the geodesic flow on $\left(\wh{\Usf}(\Gamma_0),m\right)$ is conservative, and
\item the action of the geodesic flow on $\left(\wh\Usf(\Gamma_0),m\right)$ is ergodic.
\end{enumerate} 
\end{theorem}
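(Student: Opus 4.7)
The plan is to first establish conservativity and then deduce ergodicity via Coud\`ene's version of the Hopf lemma. Throughout we work in the Hopf parametrization $\Usf(\Gamma_0) \cong \Lambda_\Omega(\Gamma_0)^{(2)} \times \Rb$ from Section \ref{sec:geodesic flow definition}, under which $\tilde m$ is equivalent, after the identification $\xi$, to $\bar\mu \otimes \mu \otimes dt$. The divergence assumption combined with Proposition \ref{prop: support on conical limit set}, applied to both $\phi$ and $\bar\phi$, forces $\mu$ and $\bar\mu$ to be concentrated on $\xi(\Lambda_\Omega^{\rm con}(\Gamma_0))$. Consequently, for $\tilde m$-almost every $v \in \Usf(\Gamma_0)$, both endpoints $v^+, v^-$ lie in $\Lambda_\Omega^{\rm con}(\Gamma_0)$.

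For conservativity, fix $b_0 \in \Omega$. For $\tilde m$-a.e.\ $v$, conicality of $v^+$ produces $r>0$ and a sequence $\{\gamma_n\} \subset \Gamma_0$ with $\gamma_n(b_0) \to v^+$ and $\d_\Omega(\gamma_n(b_0), [b_0, v^+)_\Omega) < r$. Choose $t_n$ so that $\pi(\varphi_{t_n}(v))$ is the point of $[\pi(v), v^+)_\Omega$ nearest $\gamma_n(b_0)$; Proposition \ref{hausdorff-convexity} and the triangle inequality give $\d_\Omega(\pi(\varphi_{t_n}(v)), \gamma_n(b_0)) < r + \d_\Omega(\pi(v), b_0)$. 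Hence $\{\gamma_n^{-1}\varphi_{t_n}(v)\}$ is a bounded family in $\Usf(\Gamma_0)$, so its projection $\{\varphi_{t_n}([v])\}$ returns infinitely often to a fixed compact subset of $\widehat\Usf(\Gamma_0)$. Since each return contributes a uniformly bounded-below amount of time to $\int_{\Rb} f(\varphi_t([v]))\,dt$ for any positive continuous $f$ supported near this set, the flow is conservative.

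For ergodicity, let $f: \widehat\Usf(\Gamma_0) \to \Rb$ be continuous with compact support. The Hopf ratio ergodic theorem, applicable since $m$ is $\sigma$-finite and the flow is conservative, yields forward and backward time averages $f^+$ and $f^-$ defined and equal $m$-a.e. Lemma \ref{asymptotic at smooth point} applies at every point of $\Lambda_\Omega(\Gamma_0)$ thanks to projective visibility; together with (\ref{eqn:the projection between T1's is Lipschitz}) and uniform continuity of $f$, it forces $\tilde f^+$ to be constant on the strong stable leaves $\{v^+ = y\}$ and $\tilde f^-$ to be constant on the strong unstable leaves $\{v^- = x\}$. In the Hopf coordinates we may therefore write $\tilde f^+(x,y,s) = g(y)$ and $\tilde f^-(x,y,s) = h(x)$ $\tilde m$-a.e.\ for measurable $g,h$. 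Since $\tilde f^+ = \tilde f^-$ $\tilde m$-a.e.\ and $\tilde m$ is equivalent to $\bar\mu \otimes \mu \otimes dt$, Fubini forces $g$ and $h$ to be a.e.\ constant. Thus $f^+$ is $m$-a.e.\ constant for every such $f$, and by Coud\`ene's criterion the flow is ergodic.

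The main obstacle is the Hopf step identifying $\tilde f^+$ as a function of $v^+$ alone: two vectors sharing a common forward endpoint but with distinct backward endpoints lie on unrelated orbits, and our setting lacks any uniform nonpositive curvature that would provide automatic contraction along strong stable leaves. The mechanism that resolves this is Lemma \ref{asymptotic at smooth point}, which uses the $C^1$-smoothness of boundary points guaranteed by projective visibility to show that forward-asymptotic projective geodesic rays become genuinely asymptotic after a bounded time shift; uniform continuity of the compactly supported $f$ then translates this asymptoticity into equality of Birkhoff averages. Once this geometric input is in hand, the product structure of $\tilde m$ in Hopf coordinates allows the Fubini conclusion to proceed as in the classical CAT$(-1)$ case.
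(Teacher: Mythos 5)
Your conservativity argument follows the paper's almost verbatim: use Proposition~\ref{prop: support on conical limit set} to get conicality of the forward endpoint almost everywhere, convert that into infinitely many uniformly bounded-length excursions into the projection of a fixed compact piece of $\Usf(\Gamma_0)$, and integrate. The one imprecision is the phrase ``positive continuous $f$ supported near this set'': for the criterion identifying the conservative part via $\int f\circ\varphi_t\,dt$ (which is what both you and the paper are implicitly invoking) one needs a \emph{globally} positive, $m$-integrable $f$, not a compactly supported one; the point is then simply that such an $f$ is bounded below on the compact set you return to. The paper makes this explicit.

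Your ergodicity argument, however, is a genuinely different route. The paper invokes Coud\`ene's theorem (stated in the paper as Theorem~\ref{thm: Coudene}) as a black box: for a conservative flow-invariant measure on a metric space covered by countably many open sets of finite measure, \emph{every} flow-invariant measurable function is automatically $W^{ss}$- and $W^{su}$-invariant. It then runs a short Fubini-type argument using the product structure of $\tilde m$ in Hopf coordinates and Lemma~\ref{asymptotic at smooth point} to identify strong stable and unstable leaves with the fibers $\{v^+ = y\}$ and $\{v^- = x\}$. You instead carry out the classical Hopf argument by hand: Hopf ratio ergodic theorem to get $f^\pm$, Lemma~\ref{asymptotic at smooth point} and uniform continuity to show $\tilde f^+$ descends to a function of $v^+$ alone, and Fubini. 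This is morally the same route that Roblin takes in $\mathrm{CAT}(-1)$. But two points are glossed over. First, the Hopf ratio ergodic theorem requires a choice of positive $m$-integrable normalizing function, which you never fix. Second, and more seriously, the set where $f^\pm$ is defined is a flow-invariant full-measure set that is not a priori saturated under the strong stable and unstable foliations, so the conclusion ``$\tilde f^+ = g(v^+)$ a.e.'' requires a Fubini argument against the product measure and a null-set bookkeeping step; this is exactly the delicate part of the classical Hopf argument that Coud\`ene's theorem was designed to encapsulate. Your closing sentence cites ``Coud\`ene's criterion'' but you never actually apply Theorem~\ref{thm: Coudene}; you are re-deriving a weaker version of it by hand. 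The argument can be made rigorous, but as written it does not have the same level of care as the paper's, and you would be better served either filling in the Fubini/null-set bookkeeping explicitly, or replacing your Birkhoff-average step with a direct application of Coud\`ene's theorem (which gives $W^{ss}$/$W^{su}$-invariance of every flow-invariant function without mentioning Birkhoff averages at all) and then running the explicit three-point traversal $(x,y,t) \to (x,v_0^+,s) \to (v_0^-,v_0^+,r)$ as the paper does.
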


Before starting the proof of Theorem \ref{thm: ergodicity}, we recall a result of Coud\`ene. Suppose $\{ \varphi_t\}$ is a continuous flow on a metric space $X$ which preserves a Borel measure $m$. The \emph{strong stable manifold} of $x \in X$ is 
$$
W^{ss}(x) := \left\{ y \in X : \lim_{t \rightarrow \infty} \d(\varphi_t(x), \varphi_t(y)) = 0 \right\}
$$
and the \emph{strong unstable manifold} of $x \in X$ is 
$$
W^{su}(x) := \left\{ y \in X : \lim_{t \rightarrow -\infty} \d(\varphi_t(x), \varphi_t(y)) = 0 \right\}. 
$$
A measurable function $f : X \rightarrow \Rb$ is \emph{$W^{ss}$-invariant} if there exists a full measure set $X^\prime \subset X$ where $f(x) = f(y)$ whenever $x,y \in X^\prime$ and $y \in W^{ss}(x)$. Similarly, a measurable function $f : X \rightarrow \Rb$ is \emph{$W^{su}$-invariant} if there exists a full measure set $X^\prime \subset X$ where $f(x) = f(y)$ whenever $x,y \in X^\prime$ and $y \in W^{su}(x)$.

\begin{theorem}[{Coud\`ene~\cite{coudeneHopf}}]\label{thm: Coudene}
Let $X$ be a metric space, $\{\varphi_t\}$ a continuous flow on $X$ and $m$ a $\{\varphi_t\}$-invariant Borel measure on $X$ such that $(X,m,\{\varphi_t\})$ is conservative. 
Suppose  that there is a full measure subset of $X$ that is covered by a countable family of open sets with finite m-measure. Then every flow-invariant, $m$-measurable function on $X$ is $W^{ss}$-invariant and $W^{su}$-invariant. 
\end{theorem}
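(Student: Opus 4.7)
The plan is to run the classical Hopf argument for ergodicity in the conservative, $\sigma$-finite setting. The key external tool is Hopf's ratio ergodic theorem, which asserts that for a conservative measure-preserving flow and functions $g,h\in L^1(X,m)$ with $h\geq 0$, the ratio $\int_0^T g(\varphi_t x)\,dt / \int_0^T h(\varphi_t x)\,dt$ converges $m$-a.e.\ to a flow-invariant function on $\{\tilde h>0\}$, and the same limit is obtained by letting $T\to -\infty$. The three main ingredients are then (a) building a good reference function $h$, (b) executing the Hopf transfer for continuous compactly supported $g$, and (c) a density argument.

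\emph{Step 1: Constructing the reference function.} Using the countable cover $\{U_n\}$ by open sets of finite $m$-measure together with Urysohn's lemma in the metric space $X$, I would build a strictly positive, uniformly continuous function $h\in L^1(X,m)$ of the form $h=\sum_n c_n h_n$, where each $h_n\in C_c(X)$ is a bump function supported in (a slight enlargement of) $U_n$, and the weights $c_n>0$ are chosen so that $h$ is integrable and strictly positive everywhere. By conservativity, $\int_0^T h(\varphi_t x)\,dt\to +\infty$ for $m$-a.e.\ $x$, so Hopf's ratio theorem produces, for each $g\in L^1$, a flow-invariant function $\tilde g$ equal to $\lim_{T\to \pm\infty} \int_0^T g(\varphi_t\cdot)\,dt / \int_0^T h(\varphi_t\cdot)\,dt$ almost everywhere.

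\emph{Step 2: Hopf transfer along stable and unstable leaves.} For $g\in C_c(X)$, both $g$ and $h$ are uniformly continuous. If $y\in W^{ss}(x)$, then $d(\varphi_t x,\varphi_t y)\to 0$ as $t\to +\infty$, so $|g(\varphi_t y)-g(\varphi_t x)|\to 0$ and $|h(\varphi_t y)-h(\varphi_t x)|\to 0$. Comparing the ratios at $x$ and $y$, a standard (though subtle) estimate shows the forward limits of the ratios coincide, i.e.\ $\tilde g(x)=\tilde g(y)$ for $m$-a.e.\ such pair. The symmetric argument using backward time averages and the definition of $W^{su}$ yields $\tilde g(x)=\tilde g(y)$ for a.e.\ pair with $y\in W^{su}(x)$.

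\emph{Step 3: Conclusion via density.} The countable cover hypothesis implies $C_c(X)$ is dense in $L^1(X,m)$, so the family $\{\tilde g:g\in C_c(X)\}$ generates the flow-invariant $\sigma$-algebra modulo null sets. Given any flow-invariant measurable $f$, one may first reduce to the bounded case (e.g.\ replacing $f$ by $\arctan\circ f$), at which point $f$ is measurable with respect to this $\sigma$-algebra; since each generator is $W^{ss}$- and $W^{su}$-invariant, so is $f$.

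\emph{Main obstacle.} The delicate part is the Hopf transfer in Step~2 under mere conservativity: in the probability case one has $\int_0^T h(\varphi_t x)\,dt\sim T\int h\,dm$, which makes the estimate $\int_0^T|g(\varphi_t y)-g(\varphi_t x)|\,dt=o(T)$ immediately decisive. In the $\sigma$-finite conservative case the denominator can grow sublinearly, so one has to compare numerator and denominator simultaneously and promote pointwise asymptotic convergence $d(\varphi_t x,\varphi_t y)\to 0$ to uniform convergence on large-measure subsets via Egorov's theorem, exploiting the finite-measure open pieces $U_n$ of the cover. The countable cover hypothesis is precisely what allows this approximation and what makes $C_c(X)$ dense in $L^1$, so it plays a double role in the argument.
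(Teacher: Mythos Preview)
The paper does not prove this theorem; it is quoted from Coud\`ene~\cite{coudeneHopf} and used as a black box in the proof of Theorem~\ref{thm: ergodicity}. So there is no argument in the paper to compare against. That said, your outline is indeed the Hopf argument, which is precisely Coud\`ene's approach, and you have correctly identified the main obstacle: in the $\sigma$-finite conservative case the denominator $\int_0^T h(\varphi_t x)\,dt$ may grow sublinearly, so the naive bound $\int_0^T|g(\varphi_t y)-g(\varphi_t x)|\,dt=o(T)$ is not enough.

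Your proposed resolution via Egorov's theorem, however, does not work as stated. Egorov concerns almost-uniform convergence of a \emph{sequence of measurable functions} on a finite-measure space, whereas what you need here is a comparison of orbit integrals for a \emph{fixed pair} $(x,y)$ with $y\in W^{ss}(x)$; there is no natural sequence of functions on $X$ to which Egorov applies, and even uniform convergence of $t\mapsto d(\varphi_t x,\varphi_t y)$ over a large set of pairs would not directly control the ratio for a given pair. The actual mechanism in Coud\`ene is a direct pointwise estimate: for $g,h\in C_c(X)$ one shows
\[
\left|\int_0^T g(\varphi_t y)\,dt-\int_0^T g(\varphi_t x)\,dt\right|\le C_\varepsilon+\varepsilon\int_0^T h'(\varphi_t x)\,dt
\]
for an auxiliary $h'\in C_c(X)$ with $h'\equiv 1$ on a neighborhood of $\mathrm{supp}\,g$, and then bounds $\int_0^T h'(\varphi_t x)\,dt\big/\int_0^T h(\varphi_t x)\,dt$ by a \emph{second} application of the ratio ergodic theorem at the point $x$. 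This bootstrapping with an auxiliary reference function is what replaces your Egorov step. Two minor points: your $h=\sum c_n h_n$ will only be positive on the full-measure set covered by the $U_n$, not literally everywhere, and its uniform continuity requires the series to converge uniformly; both are easily arranged and do not affect the argument.
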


\begin{proof}[Proof of Theorem \ref{thm: ergodicity}] 
Proof of (1). Fix a $m$-integrable, positive, continuous function $f:\wh\Usf(\Gamma_0)\to (0,\infty)$. Then let 
\[\widetilde{f}:=f\circ p:\Usf(\Gamma_0)\to\Rb,\] 
where $p:\Usf(\Gamma_0)\to\wh \Usf(\Gamma_0) $ is the quotient map. To show that the action of the geodesic flow on $\left(\wh{\Usf}(\Gamma_0),m\right)$ is conservative, it suffices to show that 
\[\int_{\Rb} \widetilde f(\varphi_t(v))\d t=\int_{\Rb} f(\varphi_t(p(v)))\d t\]
is infinite for $\tilde m$-almost every $v \in \Usf(\Gamma_0)$. 

By Proposition \ref{prop: support on conical limit set} and Observation~\ref{obs: basic properties of transverse repns}(1), the set 
$$
\Rc: = \left\{ v \in  \Usf(\Gamma_0) : v^+ \in \Lambda_\Omega^{\rm con}(\Gamma_0)\right\}
$$
has full $\widetilde{m}$-measure. 

Fix $v \in \Rc$. Then there is some $r>0$ and a sequence $\{\gamma_n\}$ in $\Gamma_0$ such that $\gamma_n(\pi(v))\to v^+$ and 
$$
\d_\Omega\big(\gamma_n(\pi(v)),[\pi(v),v^+)_\Omega\big)<r
$$
for all $n$, where $\pi:T^1\Omega\to\Omega$ is the projection map. In particular, there exists a compact subset $K \subset \Usf(\Gamma_0)$, which depends on $r > 0$, such that 
$$
\{ t \in \Rb : \varphi_t(v) \in \Gamma_0 \cdot K\}
$$
has infinite Lebesgue measure. Since $\tilde f$ is $\Gamma_0$-invariant and continuous, 
$$
\inf_{w \in \Gamma_0 \cdot K} f(w) = \min_{w \in K} f(w) > 0. 
$$
Hence 
\[\int_{\Rb} \widetilde f(\varphi_t(v))\d t=+\infty.\]
Since $v \in \Rc$ was arbitrary and $\Rc$ has full $\widetilde{m}$-measure,
\[\int_{\Rb} \widetilde f(\varphi_t(v))\d t=+\infty\]
for $\tilde m$-almost every $v \in \Usf(\Gamma_0)$. 

Proof of (2). 
Notice that Equation~\eqref{eqn:the projection between T1's is Lipschitz} implies that
$$
p(W^{ss}(v)) \subset W^{ss}(p(v)) \quad \text{and} \quad p(W^{su}(v)) \subset W^{su}(p(v))
$$
for all $v \in \Usf(\Gamma_0)$, so the lift of a $W^{ss}$-invariant (respectively $W^{su}$-invariant) function on $\widehat{\Usf}(\Gamma_0)$ is a $W^{ss}$-invariant (respectively $W^{su}$-invariant) function on $\Usf(\Gamma_0)$. By definition, $\wh\Usf(\Gamma_0)$ is  covered by a countable family of open sets of with finite $m$-measure, so by Theorem \ref{thm: Coudene},  it suffices to show that if $f:\Usf(\Gamma_0)\to\mathbb R$ is a $\tilde{m}$-measurable, $\Gamma$-invariant, 
$\{\varphi_t\}$-invariant, $W^{ss}$-invariant and $W^{su}$-invariant function, then $f$ is constant on a set of full $\tilde{m}$-measure. 

Since $f$ is $W^{ss}$-invariant and $W^{su}$-invariant, by definition there exists a full $\tilde{m}$-measure set $Y_0 \subset \Usf(\Gamma_0)$ such that $f(v) = f(w)$ whenever $v,w \in Y_0$ and  $v \in W^{ss}(w) \cup W^{su}(w)$. Since $f$ is $\{\varphi_t\}$-invariant, we can assume that $Y_0$ is also $\{\varphi_t\}$-invariant. Let $\nu$ and $\bar\nu$ be measures on $\partial \Omega$ given by 
\begin{equation*}
\nu(A) = \mu\left( \xi(A \cap \Lambda_\Omega(\Gamma_0))\right)\quad \text{and}\quad\bar\nu(A) = \bar\mu\left( \xi(A \cap \Lambda_\Omega(\Gamma_0))\right),
\end{equation*}
where $\xi$ is the limit map of $\rho$. By the definition of $\tilde m$, we see that $Y_0 = Y_0^\prime \times \Rb$ for some set $Y_0^\prime \subset \Lambda_\Omega(\Gamma_0)^{(2)}$ of full $\bar{\nu} \otimes \nu$-measure. Set
\begin{align*}
X^+& : = \{ y \in \Lambda_\Omega(\Gamma_0) : (x,y) \in Y_0^\prime \text{ for $\bar{\nu}$-almost every $x\in\Lambda_\Omega(\Gamma_0)$} \},
\end{align*}
and note that $\nu(X^+) = 1$ by Fubini's theorem. Hence, if we fix $(v_0^-, v_0^+) \in (\Lambda_\Omega(\Gamma_0) \times X^+) \cap Y_0^\prime$, then the set
$$
Y^\prime :=\left\{ (x,y)\in Y_0' : (x,v_0^+) \in Y_0^\prime \right\}
$$
has full $\bar{\nu} \otimes \nu$-measure, so $Y:=Y'\times\Rb\subset \Usf(\Gamma_0)$ has full $\tilde{m}$-measure.

Let $(x,y,t)\in Y$. By Lemma \ref{asymptotic at smooth point}, there is some $s\in\Rb$ such that $(x,y,t)\in W^{su}(x,v_0^+,s)$, and there is some $r\in\Rb$ such that $(x,v_0^+,s)\in W^{ss}(v_0^-,v_0^+,r)$. By definition, $(x,y,t)$, $(x,v_0^+,s)$, and $(v_0^-,v_0^+,r)$ lie in $Y_0$, so 
\[f(x,y,t)=f(x,v_0^+,s)=f(v_0^-,v_0^+,r)=f(v_0^-,v_0^+,0).\]
This proves that $f$ is constant on $Y$.\end{proof}

\section{Consequences of ergodicity}\label{sec:consequences of ergodicity} 

In this section we record some consequences of Theorem \ref{thm: ergodicity}.
The first two corollaries complete the proof of our Hopf-Sullivan-Tsuji dichotomy. We also show, in the divergent case, that there is some $R$ so that the uniformly $R$-conical limit
set has full measure for the unique Patterson-Sullivan measure of critical dimension and establish a rigidity result for pairs of transverse representations with mutually
non-singular BMS measures.

\begin{corollary}\label{cor: ergodicity}
Suppose $\Gamma\subset\GG$ is a non-elementary $\Psf_\theta$-transverse subgroup for some symmetric $\theta\subset\Delta$, $\phi\in\mfa_\theta^*$ and $\delta:=\delta^\phi(\Gamma)<+\infty$. Let $\mu$ and $\bar{\mu}$ respectively be $\phi$ and $\bar{\phi}$-Patterson-Sullivan measures for $\Gamma$ of dimension $\beta$. 
\begin{enumerate}
\item If $Q_\Gamma^\phi(\delta)=+\infty$ and $\beta=\delta$, then the $\Gamma$-action on $(\Lambda_\theta(\Gamma)^{(2)},\bar{\mu}\otimes\mu)$ is conservative, and the $\Gamma$ actions on
 $(\Lambda_\theta(\Gamma)^{(2)},\bar{\mu}\otimes\mu)$ and on $(\Lambda_\theta(\Gamma),\mu)$ are ergodic.
 \item If $Q_\Gamma^\phi(\delta)<+\infty$, then the $\Gamma$ action on 
 $(\Lambda_\theta(\Gamma)^{(2)},\bar{\mu}\otimes\mu)$ is dissipative and non-ergodic.
 \end{enumerate}
\end{corollary}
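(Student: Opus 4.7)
The plan is to lift the analysis to the geodesic flow on the space $\widehat{\Usf}(\Gamma_0)$ associated to a transverse representation realizing $\Gamma$, apply the ergodicity and dissipativity theorems already established in that setting (Theorem~\ref{thm: ergodicity} and Proposition~\ref{prop: dissipative}), and then project the conclusions back down to the boundary via the Hopf parameterization. After the reduction in Section~\ref{sec: a helpful reduction} we may assume $\GG$ has trivial center and $\Psf_\theta$ contains no simple factors of $\GG$. Theorem~\ref{thm:transverse image of visible general} produces a properly convex domain $\Omega\subset\Pb(\Rb^{d})$, a projectively visible $\Gamma_0\subset\Aut(\Omega)$, and a faithful $\Psf_\theta$-transverse representation $\rho:\Gamma_0\to\GG$ with limit map $\xi:\Lambda_\Omega(\Gamma_0)\to\Lambda_\theta(\Gamma)$ and $\rho(\Gamma_0)=\Gamma$. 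Let $\nu,\bar\nu$ denote the measures on $\partial\Omega$ obtained by pulling $\mu,\bar\mu$ back through $\xi$, and let $\tilde m$ be the measure on $\Usf(\Gamma_0)$ from Section~\ref{BMS}, which descends to the BMS measure $m$ on $\widehat{\Usf}(\Gamma_0)$. In the Hopf parameterization $\Usf(\Gamma_0)\cong\Lambda_\Omega(\Gamma_0)^{(2)}\times\Rb$, the measure $\tilde m$ has the form $e^{-\beta\phi([\xi(x),\xi(y)]_\theta)}\,d\bar\nu(x)\otimes d\nu(y)\otimes dt$, the flow acts by translation in $t$, and each $\gamma\in\Gamma_0$ acts by $\gamma(x,y,t)=(\gamma x,\gamma y,t+c_\gamma(y))$ for a cocycle $c_\gamma$ depending only on $y$.

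For part (1), under the hypotheses $Q_\Gamma^\phi(\delta)=+\infty$ and $\beta=\delta$, Theorem~\ref{thm: ergodicity} gives that the flow $(\widehat{\Usf}(\Gamma_0),m,\varphi_t)$ is conservative and ergodic, and standard lifts of these properties yield conservativity and ergodicity for the $\Gamma_0\times\Rb$-action on $(\Usf(\Gamma_0),\tilde m)$. Because the cocycle $c_\gamma$ depends only on $y$ and the flow translates $t$, every $(\Gamma_0\times\Rb)$-invariant Borel set in the Hopf coordinates is, modulo null sets, of the form $A\times\Rb$ for a $\Gamma_0$-invariant $A\subset\Lambda_\Omega(\Gamma_0)^{(2)}$, and wandering sets correspond in the analogous way; the positive density $e^{-\beta\phi([\xi(x),\xi(y)]_\theta)}$ changes no null sets. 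Hence conservativity and ergodicity descend to the $\Gamma_0$-action on $(\Lambda_\Omega(\Gamma_0)^{(2)},\bar\nu\otimes\nu)$, and transporting by $\xi$ yields the same for $\Gamma$ on $(\Lambda_\theta(\Gamma)^{(2)},\bar\mu\otimes\mu)$. Finally, ergodicity on the single factor $(\Lambda_\theta(\Gamma),\mu)$ is automatic: for any $\Gamma$-invariant $A\subset\Lambda_\theta(\Gamma)$, the set $\Lambda_\theta(\Gamma)\times A$ is $\Gamma$-invariant in the product and so has $\bar\mu\otimes\mu$-measure in $\{0,1\}$; this measure equals $\mu(A)$ because $\bar\mu$ is a probability measure and $\mu$ is atomless (Proposition~\ref{prop: support on conical limit set}), so the diagonal contributes nothing.

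For part (2), under $Q_\Gamma^\phi(\delta)<+\infty$, Proposition~\ref{prop: dissipative}(1) directly gives dissipativity of the $\Gamma_0\times\Rb$-action on $(\Usf(\Gamma_0),\tilde m)$. The same decomposition argument as above descends this to dissipativity of the $\Gamma_0$-action on $(\Lambda_\Omega(\Gamma_0)^{(2)},\bar\nu\otimes\nu)$, and hence via $\xi$ to the $\Gamma$-action on $(\Lambda_\theta(\Gamma)^{(2)},\bar\mu\otimes\mu)$. Non-ergodicity then follows because the dissipative decomposition provides a measurable wandering fundamental domain $W$ of positive measure; since $\bar\mu\otimes\mu$ has no atoms (Proposition~\ref{prop: consequences of shadow lemma}), $W$ contains a Borel subset of strictly intermediate measure, and its $\Gamma$-saturation is a non-trivial invariant Borel set. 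The main obstacle is purely bookkeeping: one must verify carefully that the correspondences between the flow on $\widehat{\Usf}(\Gamma_0)$, the $\Gamma_0\times\Rb$-action on $\Usf(\Gamma_0)$, the $\Gamma_0$-action on $\Lambda_\Omega(\Gamma_0)^{(2)}$, and finally the $\Gamma$-action on $\Lambda_\theta(\Gamma)^{(2)}$ really preserve conservativity, dissipativity, and ergodicity when passing through the twisted Hopf cocycle and the limit map $\xi$; no new dynamical input beyond the results of the preceding sections is needed.
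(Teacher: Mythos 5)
Your argument for part (1) is sound but deviates from the paper in an interesting way for the conservativity claim. You lift conservativity of the quotient flow up to conservativity of the $\Gamma_0\times\Rb$-action on $\Usf(\Gamma_0)$ and then project down to $\Lambda_\theta(\Gamma)^{(2)}$; this works because a wandering set $A$ of positive measure in $\Lambda_\theta(\Gamma)^{(2)}$ produces the wandering set $A\times[0,1]$ in $\Usf(\Gamma_0)$ (the $t$-integral of $1_{A\times[0,1]}$ along a $(\gamma,s)$-orbit contributes exactly $1_{A}(\gamma\cdot)$), and the existence of a positive-measure dissipative part in $\Usf(\Gamma_0)$ contradicts conservativity of the quotient flow via the argument in Proposition~\ref{prop: dissipative}(2) run in reverse. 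The paper instead establishes ergodicity of the boundary action first and then deduces conservativity by a topological argument: using minimality of the $\Gamma$-action on $\Lambda_\theta(\Gamma)$ and the $\Gamma$-invariance of $e^{-\delta\phi([\cdot,\cdot]_\theta)}\bar\mu\otimes\mu$, ergodicity forces almost every orbit to be dense, which is incompatible with a set $\mathcal D_f$ of positive measure on which a positive continuous integrable $f$ has finite orbit-sums. Your route is more parallel to the dissipativity transfer in Proposition~\ref{prop: dissipative}, while the paper's is more self-contained once ergodicity is in hand; both are legitimate.

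There is a genuine gap in your proof of the non-ergodicity claim in part (2). You assert that $\bar\mu\otimes\mu$ has no atoms, citing Proposition~\ref{prop: consequences of shadow lemma}, and then use this to split a wandering fundamental domain into a set of intermediate measure. But Proposition~\ref{prop: consequences of shadow lemma}(2) only rules out atoms at \emph{conical} limit points, and in the convergent case, part (3) of that same proposition says $\mu(\Lambda_\theta^{\rm con}(\Gamma))=0$, so the entire mass of $\mu$ sits on the non-conical part of the limit set, where atoms have not been excluded. Your argument cannot rule out the scenario where $\bar\mu\otimes\mu$ is concentrated on a single $\Gamma$-orbit in $\Lambda_\theta(\Gamma)^{(2)}$; in that case the action is trivially ergodic and the wandering fundamental domain is a single atom with no subset of strictly intermediate measure. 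What actually excludes this scenario in the paper is not a measure-theoretic atomlessness fact but a topological one: Proposition~\ref{prop: dissipative}(3) shows the quotient flow on $\wh{\Usf}(\Gamma_0)$ is non-ergodic by using that $m$ has full support (which follows from minimality of the boundary action and perfectness of $\Lambda_\theta(\Gamma)$), so ergodicity would force a dense flow line, while dissipativity forces every flow line to escape compacta; the two are incompatible since no open subset of $\wh{\Usf}(\Gamma_0)$ is contained in a compact arc. The non-ergodic flow-invariant set obtained this way is then lifted and projected down to $\Lambda_\theta(\Gamma)^{(2)}$. Your proof needs to invoke Proposition~\ref{prop: dissipative}(3) in place of the atomlessness claim.
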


\begin{proof} 
Using the discussion in Section~\ref{sec: a helpful reduction} we may assume that $\GG$ has trivial center and that $\Psf_\theta$ does not contain any simple factors of $\GG$. Then by Theorem \ref{thm:transverse image of visible general}, there is a properly convex domain $\Omega \subset \Pb(\Rb^{d})$, a projectively visible subgroup $\Gamma_0 \subset \Aut(\Omega)$
and a $\Psf_\theta$-transverse representation $\rho : \Gamma_0 \rightarrow \GG$ such that $\rho(\Gamma_0)=\Gamma$. Let $\xi:\Lambda_\Omega(\Gamma_0)\to\Lambda_\theta(\Gamma)$ 
be the $\rho$-equivariant boundary map and let $m$ be the Bowen-Margulis measure on $\wh\Usf(\Gamma_0)$ associated to $\rho$, $\mu$ and $\bar{\mu}$. 

Proof of (1). Theorem \ref{thm: ergodicity} part (3) implies that the geodesic flow on $(\widehat \Usf(\Gamma_0),m)$ is ergodic. Any $\Gamma$-invariant subset of either  $(\Lambda_\theta(\Gamma)^{(2)},\bar{\mu}\otimes\mu)$ or $(\Lambda_\theta(\Gamma),\mu)$ that has positive but not full
measure, gives rise to a flow-invariant subset of $(\widehat \Usf(\Gamma_0),m)$ that has positive but not full measure. Therefore,
the  actions of $\Gamma$ on  $(\Lambda_\theta(\Gamma)^{(2)},\bar{\mu}\otimes\mu)$ and $(\Lambda_\theta(\Gamma),\mu)$ are both ergodic.

Next, suppose for the purpose of contradiction that the action of $\Gamma$ on $(\Lambda_\theta(\Gamma)^{(2)},\bar{\mu}\otimes\mu)$ is not conservative. Since the measure $e^{-\delta\phi([\cdot,\cdot]_\theta)}\bar{\mu}\otimes\mu$ on $\Lambda_\theta(\Gamma)^{(2)}$ is $\Gamma$-invariant, there is a positive, continuous function $f:\Lambda_\theta(\Gamma)^{(2)}\to (0,\infty)$ such that 
\[\mathcal D_f:=\left\{(x,y)\in\Lambda_\theta(\Gamma)^{(2)}:\sum_{\gamma\in\Gamma}f(\gamma\cdot (x,y))<+\infty\right\}\]
has positive $\bar{\mu}\otimes\mu$-measure. Since $\Gamma$ acts minimally on $\Lambda_\theta(\Gamma)$, each open set in $\Lambda_\theta(\Gamma)^{(2)}$ has positive $\bar{\mu}\otimes\mu$-measure. This, together with the fact that the action of $\Gamma$ on $(\Lambda_\theta(\Gamma)^{(2)},\bar{\mu}\otimes\mu)$ is ergodic, implies that almost every orbit is dense. Thus, there exists $(x_0, y_0) \in \mathcal D_f$ with $\Lambda_\theta(\Gamma)^{(2)} = \overline{\Gamma \cdot (x_0,y_0)}$, but this is a contradiction since $f$ is positive and 
$$
\sum_{\gamma\in\Gamma}f(\gamma\cdot (x_0,y_0)) < +\infty. 
$$

Proof of (2). 
Let $f:\Usf(\Gamma_0)\to (0,\infty)$ be a $\widetilde{m}$-integrable, positive function. By Proposition~\ref{prop: dissipative} part (1), we may define the $\bar\mu\otimes\mu$-integrable, positive function
\[F:\Lambda_\theta(\Gamma)^{(2)}\to\Rb\quad\text{by}\quad F(\xi(v^-),\xi(v^+)):=\int_{\Rb}f(\varphi_t (v)).\] 
Furthermore, for $\bar\mu\otimes\mu$-almost every $(\xi(v^-),\xi(v^+))\in\Lambda_\theta(\Gamma)^{(2)}$, we have
\[\sum_{\rho(\gamma)\in\Gamma}F(\rho(\gamma)\xi(v^-),\rho(\gamma)\xi(v^+))=\sum_{\gamma\in\Gamma_0}\int_{\Rb}f(\gamma(\varphi_t(v)))\d t<+\infty.\]
It follows that the $\Gamma$ action on $(\Lambda_\theta(\Gamma)^{(2)},\bar{\mu}\otimes\mu)$ is dissipative.

Proposition \ref{prop: dissipative} part (3) implies that there is some subset of $\Usf(\Gamma_0)$ that is invariant under the $\Gamma_0\times\Rb$-action, with positive but not full $\widetilde{m}$-measure. This defines a subset of $\Lambda_\theta(\Gamma)^{(2)}$ that is invariant under the $\Gamma$-action, with positive but not full $\bar\mu\otimes\mu$-measure. Thus, the $\Gamma$ action on $(\Lambda_\theta(\Gamma)^{(2)},\bar{\mu}\otimes\mu)$ is non-ergodic.
\end{proof}

It follows, 
from a standard argument (see for instance~\cite[pg.\ 181]{sullivan-density}), that the Patterson-Sullivan measure  in the critical dimension is unique
 in the divergent case.

\begin{corollary}\label{cor:uniqueness} Suppose $\Gamma\subset\GG$ is a non-elementary $\Psf_\theta$-transverse subgroup for some symmetric $\theta \subset \Delta$, $\phi\in\mfa_\theta^*$ and $\delta:=\delta^\phi(\Gamma)<+\infty$. If $Q_\Gamma^\phi(\delta)=+\infty$, then there is a unique $\phi$-Patterson-Sullivan measure $\mu_\phi$ for $\Gamma$ of dimension $\delta$. 
 \end{corollary}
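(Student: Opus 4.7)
The plan is to adapt the classical Sullivan argument: given two $\phi$-Patterson-Sullivan measures of dimension $\delta$, form their average, show that the Radon-Nikodym derivative of each with respect to the average is $\Gamma$-invariant, and then invoke the ergodicity already established in Corollary~\ref{cor: ergodicity} to conclude the derivative is constant.

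Let $\mu_1,\mu_2$ be two $\phi$-Patterson-Sullivan measures for $\Gamma$ of dimension $\delta$, and set $\mu:=\tfrac12(\mu_1+\mu_2)$. Since the conformality relation defining Patterson-Sullivan measures is linear in the measure, $\mu$ is itself a $\phi$-Patterson-Sullivan measure of dimension $\delta$. Each $\mu_i$ is absolutely continuous with respect to $\mu$; let $f_i:=d\mu_i/d\mu$, so $\int f_i\,d\mu=\mu_i(\Fc_\theta)=1$. I would first verify $\Gamma$-invariance of $f_i$ by comparing the two conformality equations. Indeed, the definition of pushforward gives $\gamma_*\mu_i=(f_i\circ\gamma^{-1})\,\gamma_*\mu$, while conformality gives $\gamma_*\mu_i=e^{-\delta\phi(B_\theta(\gamma^{-1},\cdot))}\mu_i$ and $\gamma_*\mu=e^{-\delta\phi(B_\theta(\gamma^{-1},\cdot))}\mu$. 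Combining these forces
\[
(f_i\circ\gamma^{-1})\,e^{-\delta\phi(B_\theta(\gamma^{-1},\cdot))}\mu=e^{-\delta\phi(B_\theta(\gamma^{-1},\cdot))}f_i\mu,
\]
so $f_i\circ\gamma^{-1}=f_i$ almost everywhere for every $\gamma\in\Gamma$.

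To apply Corollary~\ref{cor: ergodicity}(1) to $(\Lambda_\theta(\Gamma),\mu)$ I need a $\bar\phi$-Patterson-Sullivan measure $\bar\mu$ for $\Gamma$ of dimension $\delta$. Since $\theta$ is symmetric, $\iota$ preserves $\mfa_\theta$, and Observation~\ref{obs: opposition involution 2} gives $\bar\phi(\kappa_\theta(\gamma))=\phi(\kappa_\theta(\gamma^{-1}))$; re-indexing $\gamma\mapsto\gamma^{-1}$ in the Poincar\'e series then yields $Q_\Gamma^{\bar\phi}(s)=Q_\Gamma^\phi(s)$ for every $s$. Hence $\delta^{\bar\phi}(\Gamma)=\delta$ and $Q_\Gamma^{\bar\phi}(\delta)=+\infty$, so Proposition~\ref{prop: Patterson-Sullivan} produces the desired $\bar\mu$. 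With $\mu$ and $\bar\mu$ in hand and $\beta=\delta$, Corollary~\ref{cor: ergodicity}(1) asserts that the $\Gamma$-action on $(\Lambda_\theta(\Gamma),\mu)$ is ergodic.

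Since $f_i$ is a $\Gamma$-invariant measurable function on $(\Lambda_\theta(\Gamma),\mu)$ and $\mu$ is supported on $\Lambda_\theta(\Gamma)$, ergodicity implies $f_i$ is constant $\mu$-almost everywhere. The normalization $\int f_i\,d\mu=1$ then forces $f_i\equiv 1$, so $\mu_i=\mu$ for $i=1,2$ and consequently $\mu_1=\mu_2$. There is no real obstacle beyond the book-keeping to guarantee existence of the companion measure $\bar\mu$ needed to quote the ergodicity statement; once the symmetry $Q_\Gamma^{\bar\phi}=Q_\Gamma^\phi$ is observed, everything else is routine.
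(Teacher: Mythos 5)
Your proposal is correct and is precisely the standard averaging-and-ergodicity argument that the paper cites (rather than writes out) from Sullivan. The only steps worth double-checking — linearity of the conformality relation under averaging, the $\Gamma$-invariance of the Radon–Nikodym derivative, and the availability of a companion $\bar\phi$-measure of dimension $\delta$ via $Q_\Gamma^{\bar\phi}=Q_\Gamma^\phi$ — are all handled correctly.
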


For the next two results let $\Omega\subset\Pb(\Rb^{d})$ be a properly convex domain, let $\Gamma_0 \subset \Aut(\Omega)$ be a projectively visible subgroup and let $b_0 \in \Omega$. For any $R > 0$, we denote by $\Lambda_{\Omega, b_0,R}^{\rm con}(\Gamma_0)$ the set of points $x \in \Lambda_\Omega^{\rm con}(\Gamma_0)$ for which there exists a sequence
$\{\gamma_n\}$ in $\Gamma_0$ such that $\gamma_n(b_0) \rightarrow x$ and 
$$
\d_\Omega(\gamma_n(b_0), [b_0, x)_\Omega) < R
$$
for all $n$. The next corollary proves that if the image of a transverse representation is in the divergent case, then there is an $R> 0$ such that the set of $R$-conical limit points have full measure. 

\begin{corollary}\label{cor:uniformly conical set has full measure} Suppose $\rho: \Gamma_0 \rightarrow \GG$ is a $\mathsf{P}_{\theta}$-transverse representation for some symmetric $\theta \subset \Delta$, 
$\phi \in \mfa_{\theta}^*$, $\delta:=\delta^{\phi}(\rho(\Gamma_0)) <+\infty$ and $\mu$ is the $\phi$-Patterson-Sullivan measure  for $\rho(\Gamma_0)$ of dimension $\delta$. If $Q_{\rho(\Gamma_0)}^\phi(\delta)=+\infty$, then for any $b_0 \in \Omega$, there exists $R > 0$ such that 
$$
\mu\left( \xi(\Lambda_{\Omega, b_0,R}^{\rm con}(\Gamma_0) ) \right) = 1.
$$
\end{corollary}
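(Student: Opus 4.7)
The plan is to combine Proposition~\ref{prop: support on conical limit set} with the ergodicity from Corollary~\ref{cor: ergodicity} and a fine asymptoticity argument that exploits the $C^1$-smoothness guaranteed by projective visibility. Write $S_R := \Lambda_{\Omega, b_0, R}^{\rm con}(\Gamma_0)$ and $\nu(A) := \mu(\xi(A \cap \Lambda_\Omega(\Gamma_0)))$. Since $\Lambda_\Omega^{\rm con}(\Gamma_0) = \bigcup_{R > 0} S_R$ is an increasing union and has full $\nu$-measure by Proposition~\ref{prop: support on conical limit set}, monotone convergence produces some $R_0 > 0$ with $\nu(S_{R_0}) > 0$.

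Next, let $\bar\mu$ be a $\bar\phi$-Patterson-Sullivan measure of dimension $\delta$; this exists by Observation~\ref{obs: opposition involution 2} and Proposition~\ref{prop: Patterson-Sullivan}, since $\delta^{\bar\phi}(\rho(\Gamma_0)) = \delta^\phi(\rho(\Gamma_0)) = \delta < +\infty$. By Corollary~\ref{cor: ergodicity} and the $\rho$-equivariance of $\xi$, the action of $\Gamma_0$ on $(\Lambda_\Omega(\Gamma_0), \nu)$ is ergodic. Consequently the $\Gamma_0$-invariant set $Z := \Gamma_0 \cdot S_{R_0}$ has full $\nu$-measure, since it contains $S_{R_0}$ and hence has positive measure.

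The heart of the argument is to show that $Z \subset S_{R_0 + 1}$, from which the corollary follows with $R = R_0 + 1$. Given $y \in Z$, pick $\eta \in \Gamma_0$ with $\eta y \in S_{R_0}$, so there is a sequence $\{\gamma_n\} \subset \Gamma_0$ with $\gamma_n(b_0) \to \eta y$ and $\d_\Omega(\gamma_n(b_0), [b_0, \eta y)_\Omega) < R_0$. Translating by $\eta^{-1}$ yields $\eta^{-1}\gamma_n(b_0) \to y$ and $\d_\Omega(\eta^{-1}\gamma_n(b_0), [\eta^{-1}b_0, y)_\Omega) < R_0$. Because $y \in \Lambda_\Omega(\Gamma_0)$ is a $C^1$-smooth point of $\partial\Omega$ by the projective visibility hypothesis, Lemma~\ref{asymptotic at smooth point}, applied to the unit tangent vectors at $\eta^{-1}b_0$ and $b_0$ pointing toward $y$, implies that the rays $[\eta^{-1}b_0, y)_\Omega$ and $[b_0, y)_\Omega$ are asymptotic in $T^1\Omega$. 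Thus any point on $[\eta^{-1}b_0, y)_\Omega$ sufficiently far toward $y$ is within $o(1)$ Hilbert distance of $[b_0, y)_\Omega$; combined with the $R_0$-bound this gives $\d_\Omega(\eta^{-1}\gamma_n(b_0), [b_0, y)_\Omega) < R_0 + 1$ for all sufficiently large $n$, witnessing $y \in S_{R_0+1}$.

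The main subtlety is precisely the final asymptoticity step. Proposition~\ref{hausdorff-convexity} alone would only bound $\d_\Omega(\eta^{-1}\gamma_n(b_0), [b_0, y)_\Omega)$ by $R_0 + \d_\Omega(b_0, \eta^{-1}b_0)$, and this latter quantity depends on the choice of $\eta = \eta_y$, so a naive estimate cannot give a uniform $R$ independent of $y$. The essential improvement from a Hausdorff-type bound to genuine asymptotic collapse of the two rays near $y$ is supplied by Lemma~\ref{asymptotic at smooth point}, which is available exactly because projective visibility forces every limit point to be $C^1$-smooth.
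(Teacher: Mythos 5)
Your argument is correct and is essentially the same as the paper's: both extract $R_0$ with $\nu(\Lambda_{\Omega,b_0,R_0}^{\rm con}(\Gamma_0))>0$ via Proposition~\ref{prop: support on conical limit set} and monotone convergence, both promote this to full measure of the $\Gamma_0$-saturation via ergodicity from Corollary~\ref{cor: ergodicity}, and both use Lemma~\ref{asymptotic at smooth point} at a $C^1$-smooth endpoint to convert the changed basepoint $\eta^{-1}b_0$ back to $b_0$ at the cost of one extra unit, giving containment in $\Lambda_{\Omega,b_0,R_0+1}^{\rm con}(\Gamma_0)$. The only cosmetic difference is that the paper defines the $\Gamma_0$-invariant set $L$ intrinsically (basepoints ranging over $\Gamma_0(b_0)$) whereas you define it as the orbit $Z=\Gamma_0\cdot S_{R_0}$, but these are the same set.
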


\begin{proof}
The following argument is standard, see for instance~\cite[pg.\ 190]{sullivan-density}. Define a measure $\nu$ on $\partial \Omega$ by 
$$
\nu(A) = \mu\big( \xi(A \cap \Lambda_\Omega(\Gamma_0))\big).
$$
Since $Q_{\rho(\Gamma_0)}^\phi(\delta)=+\infty$, by Proposition \ref{prop: support on conical limit set},
$$
1 = \nu\left( \Lambda_{\Omega}^{\rm con}(\Gamma_0) \right) = \lim_{n \rightarrow \infty}  \nu\left( \Lambda_{\Omega, b_0, n}^{\rm con}(\Gamma_0)\right).
$$
Hence there exists $R_0 > 0$ such that $\nu\left( \Lambda_{\Omega, b_0, R_0}^{\rm con}(\Gamma_0)\right) > 0$. 

Let $L $ be the set of points $x \in \Lambda_\Omega(\Gamma_0)$ for which there exist $b \in \Gamma_0(b_0)$ and a sequence
$\{\gamma_n\}$ in $\Gamma_0$ such that $\gamma_n(b) \rightarrow x$ and 
$$
\d_\Omega(\gamma_n(b), [b, x)_\Omega) \le R_0
$$
for all $n$. Observe that $L$ is $\Gamma_0$-invariant, and $\nu(L)>0$ because $\Lambda_{\Omega, b_0, R_0}^{\rm con}(\Gamma_0)\subset L$. Hence by Corollary \ref{cor: ergodicity}, $\nu(L) = 1$. 

It now suffices to show that $L \subset \Lambda_{\Omega, b_0, R_0+1}^{\rm con}(\Gamma_0)$. Fix $x \in L$. Then there exist $b \in \Gamma_0(b_0)$, a sequence
$\{\gamma_n\}$ in $\Gamma_0$, and a sequence $\{b_n\}$ in $[b,x)_\Omega$ where $\gamma_n(b) \rightarrow x$ and 
$$
\d_\Omega(\gamma_n(b),b_n) \le R_0
$$
for all $n$. By Lemma \ref{asymptotic at smooth point}, there exists a sequence $\{b_n^\prime\}$ in $[b_0, x)_\Omega$ such that 
$$
\lim_{n \rightarrow \infty} \d_\Omega(b_n, b_n^\prime) =0. 
$$
Since $b \in \Gamma_0(b_0)$, we can write $b = \gamma(b_0)$ for some $\gamma \in \Gamma_0$. Then 
$$
\d_\Omega(\gamma_n\gamma(b_0), [b_0, x)_\Omega) \le R_0+1
$$
for all $n$ sufficiently large. So $x \in \Lambda_{\Omega, b_0, R_0+1}^{\rm con}(\Gamma_0)$.
\end{proof}

Finally, we prove the following rigidity result for length functions which have non-singular Bowen-Margulis-Sullivan measures.

\begin{corollary}\label{cor:non-singular BM measures} For $j=1,2$, suppose $\rho_j: \Gamma_0 \rightarrow \GG_j$ is a $\mathsf{P}_{\theta_j}$-transverse representation for some symmetric $\theta_j \subset \Delta_j$, $\phi_j \in \mfa_{\theta_j}^*$ and $\delta_j:=\delta^{\phi_j}(\rho_j(\Gamma_0)) <+\infty$. For $\psi\in\{\phi_j,\bar\phi_j\}$, let $\mu_{\psi}$ be the $\psi$-Patterson-Sullivan measure for $\rho_j(\Gamma_0)$ of dimension $\delta_j$ and let $m_j$ denote the Bowen-Margulis-Sullivan measure associated to 
$\rho_j$, $\mu_{\phi_j}$ and $\mu_{\bar\phi_j}$. If $Q_{\rho_j(\Gamma_0)}^{\phi_j}(\delta_j)=+\infty$ for $j=1,2$ and $m_1$ is non-singular with respect to $m_2$, then:
\begin{enumerate}[itemsep=0.3cm]
\item  $m_1=cm_2$ for some $c > 0$.
\item\label{magnitude rigidity}  $\sup_{\gamma \in \Gamma_0} \abs{ \delta_1\phi_1(\kappa_{\theta_1}(\rho_1(\gamma))) - \delta_2\phi_2(\kappa_{\theta_2}(\rho_2(\gamma)))} < +\infty$.
\item\label{length rigidity} $\delta_1\ell^{\phi_1}(\rho_1(\gamma)) = \delta_2\ell^{\phi_2}(\rho_2(\gamma))$ for all $\gamma \in \Gamma_0$. 
\end{enumerate} 
If, in addition, $\GG_j$ is simple, $Z(\GG_j)$ is trivial and $\rho_j$ has Zariski-dense image for $j=1,2$, then there is an isomorphism $\Psi:\GG_1\to\GG_2$ such that $\rho_2=\Psi\circ\rho_1$.
\end{corollary}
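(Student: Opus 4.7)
The plan is to establish the four assertions in order, with the main work concentrated in Parts~(1) and~(2).

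For Part~(1), I will appeal to the fact that two non-mutually-singular ergodic flow-invariant measures must be proportional. By Theorem~\ref{thm: ergodicity} both $m_1$ and $m_2$ are conservative and ergodic for the geodesic flow on $\wh{\Usf}(\Gamma_0)$. Lebesgue-decomposing $m_1 = m_1^{ac} + m_1^s$ with respect to $m_2$, both pieces are flow-invariant by uniqueness of the decomposition. Non-singularity of $m_1$ with respect to $m_2$ forces $m_1^{ac} \ne 0$, so ergodicity of $m_2$ gives $m_1^{ac} = c\, m_2$ for some $c > 0$. If the singular part $m_1^s$ were also nonzero, then ergodicity of $m_1$ applied to both pieces (which are absolutely continuous with respect to $m_1$) would force each proportional to $m_1$; combining with $m_1^{ac} = c\, m_2$ would give $m_1^s \ll m_2$, contradicting $m_1^s \perp m_2$. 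Hence $m_1 = c\, m_2$.

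For Part~(2), I first lift to $\tilde m_1 = c\, \tilde m_2$ on $\Usf(\Gamma_0)$ by uniqueness of $\Gamma_0$-invariant lifts, then rewrite this in the Hopf parametrization as
\begin{equation*}
e^{-\delta_1 \phi_1([\xi_1(x), \xi_1(y)]_{\theta_1})}\, d\bar\nu_1(x)\, d\nu_1(y) \;=\; c\, e^{-\delta_2 \phi_2([\xi_2(x), \xi_2(y)]_{\theta_2})}\, d\bar\nu_2(x)\, d\nu_2(y)
\end{equation*}
on $\Lambda_\Omega(\Gamma_0)^{(2)}$, where $\nu_j, \bar\nu_j$ denote the $\xi_j$-pullbacks of $\mu_{\phi_j}, \mu_{\bar\phi_j}$. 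Using non-elementarity of $\Gamma_0$ I will fix two hyperbolic elements $\eta_1, \eta_2 \in \Gamma_0$ with distinct repelling fixed points and set $T_i^- := \Oc_r(b_0, \eta_i^{-N} b_0) \cap \Lambda_\Omega(\Gamma_0)$ for $N$ large, so that $T_1^-$ and $T_2^-$ lie in disjoint neighborhoods of the corresponding fixed points in $\overline{\Omega}$. For every $\gamma \in \Gamma_0$ with $\d_\Omega(b_0, \gamma b_0)$ sufficiently large, the shadow $S_\gamma := \Oc_r(b_0, \gamma b_0) \cap \Lambda_\Omega(\Gamma_0)$ can meet at most one of $T_1^-, T_2^-$; picking the disjoint $T_i^-$, the product $T_i^- \times S_\gamma$ lies in a compact subset of $\Lambda_\Omega(\Gamma_0)^{(2)}$ whose $\xi_j$-image sits in a compact subset of the transverse locus $\Fc_{\theta_j}^{(2)}$ for both $j = 1, 2$, so the Gromov product factors are uniformly bounded. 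Evaluating the Hopf identity on $T_i^- \times S_\gamma$ and applying the Shadow Lemma (Proposition~\ref{prop:shadow estimates}) to both $\nu_j$ and $\bar\nu_j$ reduces the identity to
\begin{equation*}
e^{-\delta_1 \phi_1(\kappa_{\theta_1}(\rho_1(\gamma)))} \;\asymp\; e^{-\delta_2 \phi_2(\kappa_{\theta_2}(\rho_2(\gamma)))}
\end{equation*}
with multiplicative constants uniform in $\gamma$ (only the two fixed shadows $T_1^-, T_2^-$ enter). Taking logarithms yields Part~(2).

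Part~(3) will follow immediately by applying Part~(2) to $\gamma^n$, dividing by $n$, and passing to the limit. For the final Zariski-dense statement, the resulting equality $\delta_1 \ell^{\phi_1} \circ \rho_1 = \delta_2 \ell^{\phi_2} \circ \rho_2$, combined with Zariski-density of the images in the simple centerfree Lie groups $\GG_j$, will allow me to invoke a length-spectrum rigidity theorem in the spirit of Benoist's work on Jordan projections of Zariski-dense subgroups, producing the Lie group isomorphism $\Psi: \GG_1 \to \GG_2$ with $\rho_2 = \Psi \circ \rho_1$.

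The main obstacle I expect is the test-set argument in Part~(2): verifying that the pairs $\xi_j(T_i^-) \times \xi_j(S_\gamma)$ lie in a compact subset of the transverse locus of $\Fc_{\theta_j}^{(2)}$ uniformly in $\gamma$, simultaneously for $j = 1, 2$. This requires combining the projective visibility of $\Gamma_0$ with the continuity and injectivity of each $\xi_j$ and a compactness argument on $\Lambda_\Omega(\Gamma_0)$.
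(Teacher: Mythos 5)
Your proposal is correct and follows essentially the same route as the paper, so I will just flag the places where the two diverge or where your sketch needs tightening. For Part (1) the paper compresses the argument to a single sentence (``ergodicity $+$ non-singularity $\Rightarrow$ proportional''); your Lebesgue-decomposition argument is the standard way to make that precise and is fine. For Part (2) the paper does not use shadows of hyperbolic elements as reference sets; it fixes two distinct points $x_1,x_2\in\Lambda_\Omega(\Gamma_0)$, sets $\epsilon=\frac{1}{6}\d_{\Pb}(x_1,x_2)$, and works with the fixed balls $B_i=\{y:\d_{\Pb}(y,x_i)\le\epsilon\}$, using Lemma~\ref{lem: shadows get smaller} to ensure that for all but finitely many $\gamma$ the shadow $\Oc_r(b_0,\gamma(b_0))$ has diameter $\le\epsilon$ and hence lies at distance $\ge\epsilon$ from at least one $B_i$; this is the uniform transversality you need. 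Your version with $T_1^-,T_2^-$ works but, as you suspected, ``picking the disjoint $T_i^-$'' is not enough: disjointness alone does not give a positive lower bound on $\d(T_i^-,S_\gamma)$, so you need exactly the same diameter/triangle-inequality step to conclude that $T_i^-\times S_\gamma$ stays in a fixed compact subset of $\Lambda_\Omega(\Gamma_0)^{(2)}$. Also note that you do not actually need the Shadow Lemma for the $T_i^-$ factor: since $T_i^-$ is a fixed open set of positive $\bar\nu_j$-measure, it contributes only a fixed multiplicative constant. Finally, for the Zariski-dense conclusion the paper carries out the Dal'bo--Kim product argument explicitly: Part (3) implies the Benoist limit cone of $\rho_1\times\rho_2$ lies in a hyperplane in the Cartan subspace of $\GG_1\times\GG_2$, so by Benoist the Zariski closure $Z$ is a proper subgroup, and then simplicity and triviality of centers force the projections $\pi_j|_Z$ to be isomorphisms, giving $\Psi=\pi_2|_Z\circ(\pi_1|_Z)^{-1}$. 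Your sketch points at the right circle of ideas but would need this argument filled in.
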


The proof of Corollary~\ref{cor:non-singular BM measures} requires the following lemma. 

\begin{lemma}\label{lem: shadows get smaller} 
Suppose $\Omega\subset\Pb(\Rb^{d})$ is a properly convex domain and $\Gamma\subset\Aut(\Omega)$ is a projectively visible subgroup. Let $\d_{\Pb}$ be a distance on $\Pb(\Rb^{d})$ induced by a Riemannian metric. If $r > 0$, $b_0 \in \Omega$ and $\{\gamma_n\}$ is a sequence of distinct elements in $\Gamma$, then 
$$
\lim_{n \rightarrow \infty}  \mathrm{diam} \left(\Oc_r(b_0, \gamma_n(b_0)) \right) =0,
$$
where the diameter is computed using $\d_{\Pb}$.
\end{lemma}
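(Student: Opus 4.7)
The plan is to argue by contradiction. Suppose the diameters fail to tend to zero: after passing to a subsequence, there exist $\epsilon > 0$ and pairs of points $y_n, z_n \in \Oc_r(b_0, \gamma_n(b_0))$ with $\d_\Pb(y_n, z_n) \ge \epsilon$. Since every discrete subgroup of $\Aut(\Omega)$ acts properly discontinuously on $\Omega$ and the $\gamma_n$ are distinct, the orbit $\{\gamma_n(b_0)\}$ has no accumulation point in $\Omega$; so by extracting a further subsequence we may assume $\gamma_n(b_0) \to x_\infty \in \partial\Omega$, $y_n \to y_\infty \in \partial\Omega$ and $z_n \to z_\infty \in \partial\Omega$. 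The goal is to prove $y_\infty = z_\infty = x_\infty$, which will contradict $\d_\Pb(y_n,z_n) \ge \epsilon$.

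Unfolding the definition of the shadow, choose $p_n \in [b_0, y_n)_\Omega$ with $\d_\Omega(p_n, \gamma_n(b_0)) < r$, and similarly choose $q_n \in [b_0, z_n)_\Omega$ with $\d_\Omega(q_n, \gamma_n(b_0)) < r$. The key intermediate step is to show $p_n \to x_\infty$ (and analogously $q_n \to x_\infty$) in the projective topology. Up to a subsequence, $p_n \to p_\infty \in \overline{\Omega}$. If $p_\infty$ were in $\Omega$, then $\d_\Omega(p_n, p_\infty) \to 0$ while $\d_\Omega(\gamma_n(b_0), p_\infty) \to \infty$ (the Hilbert distance from a fixed interior point to a sequence approaching $\partial\Omega$ blows up), contradicting the bound $\d_\Omega(p_n, \gamma_n(b_0)) < r$. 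Hence $p_\infty \in \partial\Omega$. If further $p_\infty \ne x_\infty$, a cross-ratio computation (two sequences in $\Omega$ whose projective limits are distinct points of $\partial\Omega$ have Hilbert distance tending to $\infty$) again contradicts $\d_\Omega(p_n, \gamma_n(b_0)) < r$. So $p_\infty = x_\infty$, and likewise $q_\infty = x_\infty$.

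To finish, fix an affine chart containing $\overline{\Omega}$ and parameterize $p_n = (1-t_n) b_0 + t_n y_n$ with $t_n \in [0,1]$. After a further extraction $t_n \to t_\infty$, so $x_\infty = \lim p_n = (1-t_\infty) b_0 + t_\infty y_\infty$ lies on the closed projective segment $[b_0, y_\infty]$. Since $\Omega$ is open and convex and $b_0 \in \Omega$, the half-open segment $[b_0, y_\infty)$ is contained in $\Omega$; the only point of $[b_0,y_\infty]$ in $\partial\Omega$ is therefore $y_\infty$, forcing $x_\infty = y_\infty$. The same argument gives $x_\infty = z_\infty$, producing the contradiction. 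The only mildly subtle points are the two Hilbert-metric facts invoked above (interior balls stay away from the boundary in the projective topology, and Hilbert distance between sequences converging to distinct boundary points diverges); both follow directly from the cross-ratio definition of $\d_\Omega$, and neither actually requires the projective visibility of $\Gamma$, which enters only through the automatic fact that $x_\infty \in \Lambda_\Omega(\Gamma)$.
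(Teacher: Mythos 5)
Your argument contains a genuine gap, concentrated in the second ``Hilbert-metric fact'' you invoke. You claim that two sequences in $\Omega$ whose projective limits are distinct points of $\partial\Omega$ necessarily have Hilbert distance tending to $\infty$, and you assert this follows purely from the cross-ratio definition and requires no visibility. This is false in general. For a counterexample take $\Omega = (-1,1)^2 \subset \Rb^2$, $p_n = (1-\tfrac1n,0)$ and $q_n = (1-\tfrac1n,\tfrac12)$: these converge projectively to the \emph{distinct} boundary points $(1,0)$ and $(1,\tfrac12)$, yet the projective line through $p_n$ and $q_n$ meets $\partial\Omega$ at $(1-\tfrac1n,\pm1)$ for every $n$, so $\d_\Omega(p_n,q_n) = \log 3$ is constant. (Both limit points are even $C^1$-smooth points of $\partial\Omega$, so smoothness alone does not repair the claim.) Thus the step ``$p_\infty \ne x_\infty$ gives a contradiction'' does not go through by a cross-ratio argument alone, and this is exactly where the visibility hypothesis must do real work. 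Your first Hilbert-metric fact (that an interior point of $\Omega$ has infinite Hilbert distance from any sequence approaching $\partial\Omega$) is correct, as is your closing affine-chart argument converting $p_n \to x_\infty$ into $y_n \to x_\infty$.

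The paper handles the problematic step differently. After passing to a subsequence so that $\gamma_{n_j} \to T$ in $\Pb(\mathrm{End}(\Rb^d))$, it invokes Proposition~\ref{prop: limit set projectively visible}(3) --- a consequence of projective visibility --- to conclude that $T$ is the projectivization of a rank-one endomorphism with image $\{x_\infty\}$ and $\ker T \cap \Omega = \emptyset$. It then writes $p_n = \gamma_{n}\bigl(\gamma_{n}^{-1}(p_n)\bigr)$, notes that $\gamma_{n}^{-1}(p_n)$ lies in the compact ball $\overline{B}_\Omega(b_0,r) \subset \Omega$ on which $\gamma_{n} \to T$ uniformly, and deduces $p_n \to x_\infty$ directly. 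This ``rank-one limit'' mechanism is the correct substitute for your cross-ratio claim, and it is precisely where the visibility hypothesis enters in an essential (not merely cosmetic) way. To repair your proof you would need to replace the false cross-ratio assertion with this argument, or with some other argument that genuinely exploits the fact that $x_\infty \in \Lambda_\Omega(\Gamma)$ together with the structure of $\Gamma$ --- the conclusion is simply not a property of the Hilbert metric of an arbitrary properly convex domain.
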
 

\begin{proof} Fix a subsequence $\{\gamma_{n_j}\}$ such that 
$$
\limsup_{n \rightarrow \infty}  \mathrm{diam} \left(\Oc_r(b_0, \gamma_n(b_0)) \right) = \lim_{j \rightarrow \infty}  \mathrm{diam} \left(\Oc_r(b_0, \gamma_{n_j}(b_0)) \right).
$$
Passing to a further subsequence we can suppose that $\gamma_{n_j}(b_0) \rightarrow x \in \Lambda_\Omega(\Gamma)$ and $\gamma_{n_j} \rightarrow T \in \Pb({\rm End}(\Rb^d))$. To show that $\mathrm{diam} \left(\Oc_r(b_0, \gamma_{n_j}(b_0)) \right)$ converges to 0, it suffices to fix a sequence $\{y_j\}$ where $y_j \in \Oc_r(b_0, \gamma_{n_j}(b_0))$ for all $j \ge 1$ and show that $y_j \rightarrow x$. By definition, for each $j \ge 1$ there exists $y_j^\prime \in [b_0, y_j)$ such that $\d_\Omega(y_j^\prime, \gamma_{n_j}(b_0)) < r$. Then the sequence $\{ \gamma_{n_j}^{-1}(y_j^\prime)\}$ is relatively compact in $\Omega$. So by Proposition~\ref{prop: limit set projectively visible}(3)
$$
x = T\left( \lim_{j \rightarrow \infty} \gamma_{n_j}^{-1}(y_j^\prime) \right) = \lim_{j \rightarrow \infty} \gamma_{n_j}\gamma_{n_j}^{-1}(y_j^\prime) = \lim_{j \rightarrow \infty} y_j^\prime. 
$$
Since $y_j^\prime \in [b_0, y_j)$ for all $j \ge 1$, this implies that $y_j \rightarrow x$. 
\end{proof}

\begin{proof}[Proof of Corollary~\ref{cor:non-singular BM measures}] 
By the ergodicity of the flow $\{\varphi_t\}$ (see Theorem \ref{thm: ergodicity}) and the assumption that $m_1$ is non-singular with respect to $m_2$, there exists $c>0$ such that $m_{1}=c m_{2}$.

Note that for $j=1,2$ and $\gamma \in \Gamma_0$, 
$$ 
\ell^{\phi_j}(\rho_j(\gamma)) = \lim_{n \rightarrow \infty} \frac{1}{n} \phi_j(\kappa_{\theta_j}(\rho_j(\gamma^n))).
$$
Thus, to prove part \eqref{length rigidity}, it suffices to prove part \eqref{magnitude rigidity}.

For all $\psi \in \left\{ \phi_1,\phi_2,\bar{\phi}_1,\bar{\phi}_2\right\}$, let $\nu_\psi$ be the measure on $\partial \Omega$ given by 
$$
\nu_\psi(A) = \mu_\psi\big( \xi(A \cap \Lambda_\Omega(\Gamma_0))\big).
$$ 
Fix $r > 0$ sufficiently large so that the Shadow lemma (Proposition \ref{prop:shadow estimates}) holds for the probability measures $\nu_{\phi_1}$ and $\nu_{\phi_2}$. Then there is some $C>0$ such that
\begin{align}\label{eqn: shadow lemma application 2}
\frac{1}{C}\frac{e^{\delta_2\phi_2(\kappa_{\theta_2}(\rho_2(\gamma)))}}{e^{\delta_1\phi_1(\kappa_{\theta_1}(\rho_1(\gamma)))}}\le\frac{\nu_{\phi_1}(\Oc_r(b_0, \gamma(b_0)))}{\nu_{\phi_2}(\Oc_r(b_0, \gamma(b_0)))}\le C\frac{e^{\delta_2\phi_2(\kappa_{\theta_2}(\rho_2(\gamma)))}}{e^{\delta_1\phi_1(\kappa_{\theta_1}(\rho_1(\gamma)))}}
\end{align}
for all $\gamma\in\Gamma_0$.

Fix a distance $\d_{\Pb}$ on $\Pb(\Rb^{d})$ induced by a Riemannian metric, fix $x_1, x_2 \in \Lambda_\Omega(\Gamma_0)$ distinct and let $\epsilon := \frac{1}{6} \d_{\Pb}(x_1, x_2)$.
Lemma~\ref{lem: shadows get smaller}  implies that there exists a finite set $S \subset \Gamma_0$ such that 
 $$
 \mathrm{diam} \left(\Oc_r(b_0, \gamma(b_0)) \right) \le \epsilon
 $$
 for all $\gamma \in \Gamma_0 - S$. Hence, for each $\gamma \in \Gamma_0 - S$, there is some $i \in \{1,2\}$  so that
 $$
B_i \times  \Oc_r(b_0, \gamma(b_0)) \subset \left\{ (x,y) \in \Lambda_\Omega(\Gamma_0)^2  : \d_{\Pb}(x,y) \ge \epsilon\right\}=:K,
$$
where $B_i := \{ y \in \Lambda_\Omega(\Gamma_0) : \d_{\Pb}(y,x_i) \le \epsilon\}$. From the definitions of $\d m_1$ and $\d m_2$, and the fact that $m_1 = cm_2$, we see that if we set 
\[C_0:=c\max_{(x,y)\in K}\frac{e^{\phi_1([\xi_1(x),\xi_1(y)]_{\theta_1})}}{e^{\phi_2([\xi_2(x),\xi_2(y)]_{\theta_2})}},\] 
then
 \begin{equation*}
\frac{1}{C_0} (\nu_{\bar{\phi}_2} \otimes\nu_{\phi_2} )(A) \le (\nu_{\bar{\phi}_1} \otimes \nu_{\phi_1})(A) \le C_0 (\nu_{\bar{\phi}_2} \otimes \nu_{\phi_2})(A)
 \end{equation*}
 for all Borel measurable sets $A \subset K$. Hence, if we set
\[C_1:=C_0\max\left\{\frac{\nu_{\bar{\phi}_2}(B_1)}{\nu_{\bar{\phi}_1}(B_1)},\frac{\nu_{\bar{\phi}_2}(B_2)}{\nu_{\bar{\phi}_1}(B_2)}\right\},\]
then 
\begin{align}\label{eqn: ratio bound}
\frac{1}{C_1} \le \frac{\nu_{\phi_1}(\Oc_r(b_0, \gamma(b_0)))}{\nu_{\phi_2}(\Oc_r(b_0, \gamma(b_0)))} \le C_1
\end{align}
for all $\gamma\in\Gamma_0-S$. 

Since $S$ is finite, \eqref{eqn: shadow lemma application 2} and \eqref{eqn: ratio bound} imply that
$$
\sup_{\gamma \in \Gamma_0} \abs{\delta_1 \phi_1(\kappa_{\theta_1}(\rho_1(\gamma))) -\delta_2 \phi_2(\kappa_{\theta_2}(\rho_2(\gamma)))}<+\infty.
$$

To prove the last claim of the corollary, we use the following argument of Dal'bo and Kim~\cite{dalbo-kim}. Consider the product representation $\rho_1\times\rho_2:\Gamma\to\GG_1\times\GG_2$, let $\Delta$ denote the set of simple roots of $\GG_1\times\GG_2$, and let $\mfa$ denote the Cartan subspace of $\GG_1\times\GG_2$.
Corollary~\ref{cor:non-singular BM measures}\eqref{length rigidity} implies that the $\Delta$-Benoist limit cone $\mathcal B(\rho_1\times\rho_2)$ lies in a hyperplane in
$\mfa$. A theorem of Benoist \cite{benoist-limit-cone} then  implies that the Zariski closure $Z$ of $(\rho_1\times\rho_2)(\Gamma)$ is properly contained in $\GG_1\times\GG_2$.

Let $\pi_j:\GG_1\times\GG_2\to\GG_j$ be the projection map.
Then  the kernel $\pi_{3-j}|_Z$ is a normal subgroup of $\GG_j$, which is not all of $\GG_j$. Since $\GG_j$ is simple and $Z(\GG_j)$ is trivial, we conclude
that $\pi_{3-j}|_Z$ is injective. Since $\rho_j$ has Zariski dense image, $\pi_{3-j}|_Z$ is also surjective. Hence, $\Psi:=\pi_2|_Z\circ \pi_1|_Z^{-1}$ is an isomorphism
such that  $\rho_2 = \Psi \circ \rho_1$.
 \end{proof}

\section{A Manhattan curve theorem}
\label{manhattan}

Sambarino \cite{sambarino-dichotomy} showed that when $\Gamma$ is Anosov, the entropy functional is concave and characterizes
when it is not strictly concave (see also Potrie-Sambarino  \cite{potrie-sambarino}). One may view this as an analogue of Burger's Manhattan Curve Theorem \cite{burger}, since in
this setting both are consequences of the convexity of the pressure function and rigidity results for equilibrium measures. However, in our setting
we do not have access to thermodynamic formalism, so we must adapt other methods.
 
\begin{theorem} 
\label{manhattan curve}
Suppose $\theta \subset \Delta$ is symmetric, $\Gamma$ is a non-elementary $\mathsf{P}_{\theta}$-transverse subgroup of $\GG$ and $\phi_1,\phi_2 \in \mfa_\theta^*$ satisfy $\delta^{\phi_1}(\Gamma) = \delta^{\phi_2}(\Gamma)=1$. If $\phi = \lambda \phi_1 + (1-\lambda) \phi_2$ for some $\lambda \in (0,1)$, then  
$$
\delta:=\delta^\phi(\Gamma) \le 1.
$$
Moreover, if  $\delta^{\phi}(\Gamma) = 1$ and $Q_\Gamma^\phi$ diverges at its critical exponent, then $\ell^{\phi_1}(\gamma) = \ell^{\phi_2}(\gamma)$ 
for all $\gamma \in \Gamma$.
\end{theorem}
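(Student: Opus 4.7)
The first statement $\delta^\phi(\Gamma)\le 1$ is a direct application of H\"older's inequality to the Poincar\'e series. Writing
$$
e^{-s\phi(\kappa_\theta(\gamma))}=\bigl(e^{-s\phi_1(\kappa_\theta(\gamma))}\bigr)^\lambda\bigl(e^{-s\phi_2(\kappa_\theta(\gamma))}\bigr)^{1-\lambda}
$$
and applying H\"older with conjugate exponents $1/\lambda$ and $1/(1-\lambda)$ yields $Q_\Gamma^\phi(s)\le Q_\Gamma^{\phi_1}(s)^\lambda Q_\Gamma^{\phi_2}(s)^{1-\lambda}$; for any $s>1$ both factors on the right are finite (as $\delta^{\phi_j}(\Gamma)=1$ for $j=1,2$), and so $\delta^\phi(\Gamma)\le1$.

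For the ``moreover'' part, suppose $\delta^\phi(\Gamma)=1$ and $Q_\Gamma^\phi(1)=+\infty$. My plan is to apply Corollary \ref{cor:non-singular BM measures} to each of the pairs $(\phi,\phi_1)$ and $(\phi,\phi_2)$, with $\rho_1=\rho_2$ a single transverse representation realizing $\Gamma$. Using Section \ref{sec: a helpful reduction} together with Theorem \ref{thm:transverse image of visible general}, first write $\Gamma=\rho(\Gamma_0)$ for a $\Psf_\theta$-transverse representation of a projectively visible group $\Gamma_0\subset\Aut(\Omega)$. By Corollary \ref{cor:uniqueness} there is a unique $\phi$-Patterson--Sullivan measure $\mu_\phi$ of dimension $1$, and by Proposition \ref{prop: Patterson-Sullivan} I fix $\phi_j$-Patterson--Sullivan measures $\mu_{\phi_j}$ of dimension $1$; analogously fix $\bar\mu_\phi$ and $\bar\mu_{\phi_j}$ on the $\bar\phi$ side. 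Let $m_\phi$ and $m_{\phi_j}$ denote the corresponding Bowen--Margulis--Sullivan measures on $\widehat\Usf(\Gamma_0)$.

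The crux is to establish $\mu_\phi\ll\mu_{\phi_j}$ (and similarly $\bar\mu_\phi\ll\bar\mu_{\phi_j}$) for $j=1,2$. To this end, form the geometric-mean measure $d\nu:=f_1^\lambda f_2^{1-\lambda}\,d\sigma$, where $\sigma=\mu_{\phi_1}+\mu_{\phi_2}$ and $f_j=d\mu_{\phi_j}/d\sigma$. A direct computation using the quasi-invariance of the $\mu_{\phi_j}$ shows that $\nu$ is a $\phi$-conformal measure of dimension $1$ supported in $\Lambda_\theta(\Gamma)$, while H\"older gives $\nu(\Lambda_\theta(\Gamma))\le 1$. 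Provided $\nu\ne 0$, uniqueness of $\mu_\phi$ forces $\mu_\phi=\nu/\nu(\Lambda_\theta(\Gamma))$, which yields
$$
\mu_\phi(A)\le \nu(\Lambda_\theta(\Gamma))^{-1}\mu_{\phi_1}(A)^\lambda\mu_{\phi_2}(A)^{1-\lambda}
$$
for every Borel $A\subset\Lambda_\theta(\Gamma)$, and hence $\mu_\phi\ll\mu_{\phi_j}$. The hard part is showing $\nu\ne 0$, equivalently that $\mu_{\phi_1}$ and $\mu_{\phi_2}$ are not mutually singular; this is handled by combining the shadow-lemma identity
$$
\mu_\phi(\Oc_r(b_0,\gamma b_0))\asymp \mu_{\phi_1}(\Oc_r(b_0,\gamma b_0))^\lambda\mu_{\phi_2}(\Oc_r(b_0,\gamma b_0))^{1-\lambda}
$$
(immediate from Proposition \ref{prop:shadow estimates} and $\phi=\lambda\phi_1+(1-\lambda)\phi_2$) with a Vitali-type covering of the conical limit set by shadows, since $\mu_{\phi_1}\perp\mu_{\phi_2}$ would then force $\mu_\phi(\Lambda_\theta^{\mathrm{con}}(\Gamma))=0$, contradicting Proposition \ref{prop: support on conical limit set}.

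Once these absolute continuity relations are in place, the Hopf-parametrization formula for the Bowen--Margulis--Sullivan measures yields $m_\phi\ll m_{\phi_j}$. Since $m_\phi$ is conservative by Theorem \ref{thm: ergodicity}, so is $m_{\phi_j}$, and Proposition \ref{prop: dissipative} then forces $Q_\Gamma^{\phi_j}(1)=+\infty$ for $j=1,2$. Applying Corollary \ref{cor:non-singular BM measures} to each pair of functionals $(\phi,\phi_j)$ with $\rho_1=\rho_2=\rho$ gives $\ell^\phi(\gamma)=\ell^{\phi_j}(\gamma)$ for every $\gamma\in\Gamma$, and combining these two identities yields $\ell^{\phi_1}=\ell^{\phi_2}$ on $\Gamma$.
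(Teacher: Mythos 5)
Your argument is correct and reaches the right conclusion, but it takes a genuinely different intermediate route from the paper, and one step is stated too strongly.

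\textbf{Comparison with the paper's proof.} Both proofs reduce to showing that a non-singularity relation holds between the Bowen--Margulis--Sullivan measure for $\phi$ and one for $\phi_j$, and both invoke Corollary~\ref{cor:non-singular BM measures} at the end. The paper works directly with the product measures $\nu_{\bar\phi}\otimes\nu_\phi$ and $\nu_{\bar\phi_j}\otimes\nu_{\phi_j}$ on $\Lambda_\Omega(\Gamma_0)^{(2)}$, establishing via the shadow lemma, an AM--GM inequality, and a Vitali covering the pointwise domination $(\nu_{\bar\phi}\otimes\nu_\phi)(A)\le C(\nu_{\bar\phi_1}\otimes\nu_{\phi_1}+\nu_{\bar\phi_2}\otimes\nu_{\phi_2})(A)$ for all Borel $A$, and then concludes non-singularity with (after relabelling) the $j=1$ product measure. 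Your proposal instead constructs the geometric-mean measure $\nu = f_1^\lambda f_2^{1-\lambda}\,d\sigma$ on $\Lambda_\theta(\Gamma)$ itself, shows by a direct cocycle computation that it is a $\phi$-conformal measure of dimension $1$, invokes the uniqueness from Corollary~\ref{cor:uniqueness} to identify it (after normalizing) with $\mu_\phi$, and thereby obtains $\mu_\phi\ll\mu_{\phi_j}$ for \emph{both} $j$. Showing that $\nu\neq 0$, i.e.\ that $\mu_{\phi_1}\not\perp\mu_{\phi_2}$, again uses the shadow lemma plus a Vitali covering and H\"older. Both constructions succeed; your version buys absolute continuity with both $\phi_j$ and a somewhat more transparent role for the uniqueness of the Patterson--Sullivan measure, while the paper's version avoids the Radon--Nikodym bookkeeping and only needs non-singularity with one of the $\phi_j$. (That suffices because $\ell^\phi = \lambda\ell^{\phi_1}+(1-\lambda)\ell^{\phi_2}$, so $\ell^\phi=\ell^{\phi_1}$ alone already forces $\ell^{\phi_1}=\ell^{\phi_2}$.) Note also that you need both $\mu_\phi\ll\mu_{\phi_j}$ and $\bar\mu_\phi\ll\bar\mu_{\phi_j}$ before the Hopf-parametrization argument gives $m_\phi\ll m_{\phi_j}$; you flag this but it should be spelled out.

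\textbf{A step that is stated too strongly.} The assertion ``Since $m_\phi$ is conservative by Theorem~\ref{thm: ergodicity}, so is $m_{\phi_j}$'' does not follow from $m_\phi\ll m_{\phi_j}$. Absolute continuity in this direction transports wandering sets of $m_{\phi_j}$ to wandering sets of $m_\phi$, so if $m_{\phi_j}$ were (fully) dissipative then $m_\phi$ would also be dissipative; since $m_\phi$ is conservative and nonzero, $m_{\phi_j}$ is \emph{not dissipative}, which is precisely the contrapositive hypothesis of Proposition~\ref{prop: dissipative} and therefore gives $Q^{\phi_j}_\Gamma(1)=+\infty$. But full conservativity of $m_{\phi_j}$ is not implied --- $m_{\phi_j}$ could a priori have a nontrivial dissipative part not charged by $m_\phi$. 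The cleaner argument (the one the paper uses) is even more elementary: by Proposition~\ref{prop: support on conical limit set} we have $\mu_\phi(\Lambda^{\rm con}_\theta(\Gamma))=1$, and since $\mu_\phi\ll\mu_{\phi_j}$ this forces $\mu_{\phi_j}(\Lambda^{\rm con}_\theta(\Gamma))>0$, which by the contrapositive of Proposition~\ref{prop: consequences of shadow lemma}(3) gives $Q^{\phi_j}_\Gamma(1)=+\infty$. Also note that $Q^{\phi_j}_\Gamma(1)=+\infty$ must be established \emph{before} invoking Corollary~\ref{cor:non-singular BM measures}, since that corollary implicitly uses the uniqueness of the $\phi_j$-Patterson--Sullivan measure (Corollary~\ref{cor:uniqueness}), which is available only in the divergent case.
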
 

As a consequence of Theorem \ref{manhattan curve}, we use  a result of Benoist \cite{benoist-limit-cone} to show that equality never occurs when $\Gamma$ is Zariski dense.

\begin{corollary}\label{cor:manhattan curve Z dense case}
Suppose $\theta \subset \Delta$ is symmetric, $\Gamma$ is a Zariski dense $\Psf_\theta$-transverse subgroup of $\GG$, and $\phi_1, \phi_2 \in \mfa^*_\theta$ are distinct and satisfy $\delta^{\phi_1}(\Gamma)=\delta^{\phi_2}(\Gamma) =1$.
If $\phi = \lambda \phi_1+(1-\lambda)\phi_2$ for some $\lambda \in (0,1)$ and $Q_\Gamma^\phi$ diverges at its critical exponent, then $\delta^{\phi}(\Gamma) <1$.
\end{corollary}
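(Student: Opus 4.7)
The plan is to argue by contradiction, using the ``moreover'' half of Theorem~\ref{manhattan curve} to force a linear relation on Jordan projections, and then to invoke Benoist's limit cone theorem to rule that relation out when $\Gamma$ is Zariski dense. First I would apply Theorem~\ref{manhattan curve} directly, which already yields the non-strict inequality $\delta^\phi(\Gamma) \le 1$ with no Zariski-density hypothesis. The content of the corollary is therefore to promote this to a strict inequality under the additional assumptions.

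Next, I would suppose for contradiction that $\delta^\phi(\Gamma) = 1$. Since by hypothesis $Q_\Gamma^\phi$ diverges at its critical exponent, the ``moreover'' clause of Theorem~\ref{manhattan curve} gives
\[
\ell^{\phi_1}(\gamma) = \ell^{\phi_2}(\gamma) \quad \text{for all } \gamma \in \Gamma.
\]
Recall that for each $\gamma \in \GG$ the Jordan projection $\lambda_\theta(\gamma) := \lim_{n \to \infty} \tfrac{1}{n}\kappa_\theta(\gamma^n) \in \mfa_\theta$ exists, and by the very definition of the $\phi$-length one has $\ell^{\phi_i}(\gamma) = \phi_i\bigl(\lambda_\theta(\gamma)\bigr)$. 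Consequently the linear functional $\phi_1 - \phi_2 \in \mfa_\theta^*$ vanishes on the entire set $\{\lambda_\theta(\gamma) : \gamma \in \Gamma\}$.

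Finally, I would invoke Benoist's theorem~\cite{benoist-limit-cone}: when $\Gamma$ is Zariski dense in $\GG$, the closed cone generated by the full Jordan projections $\{\lambda(\gamma) : \gamma \in \Gamma\} \subset \mfa$ has non-empty interior in $\mfa$. Pushing this cone forward through the surjective linear projection $p_\theta : \mfa \to \mfa_\theta$, and noting that $p_\theta \circ \lambda = \lambda_\theta$, one obtains that the cone generated by $\{\lambda_\theta(\gamma) : \gamma \in \Gamma\}$ has non-empty interior in $\mfa_\theta$, and in particular spans $\mfa_\theta$ as a real vector space. The vanishing of $\phi_1 - \phi_2$ on this spanning set forces $\phi_1 = \phi_2$, contradicting the hypothesis $\phi_1 \ne \phi_2$.

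Given that the argument reduces entirely to the already established Theorem~\ref{manhattan curve} together with a standard consequence of Benoist's cone theorem, I do not anticipate any serious obstacle. The only minor bookkeeping point is the identity $\ell^\phi(\gamma) = \phi(\lambda_\theta(\gamma))$, which follows immediately from Fekete's subadditive lemma combined with $\kappa_\theta = p_\theta \circ \kappa$ and the existence of the Jordan projection limit.
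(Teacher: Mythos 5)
Your proof is correct and is essentially the same argument the paper gives: apply Theorem~\ref{manhattan curve} to reduce to contradicting $\phi_1 = \phi_2$, use the ``moreover'' clause together with the identity $\ell^{\phi_j}(\gamma) = \phi_j(\lambda_\theta(\gamma))$, and then invoke Benoist's theorem that the limit cone of a Zariski dense subgroup has non-empty interior. The only cosmetic difference is that you push the Benoist cone forward through $p_\theta$ and argue in $\mfa_\theta$, whereas the paper identifies $\mfa_\theta^*$ as a subspace of $\mfa^*$ and works directly with the cone in $\mfa$; the two are equivalent since $p_\theta$ is a surjective (hence open) linear map.
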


\medskip\noindent
{\em Proof of Corollary \ref{cor:manhattan curve Z dense case}.}
For $g \in \GG$ define 
$$
\nu(g) : = \lim_{n \rightarrow \infty} \frac{1}{n} \kappa(g^n) \in \mfa^+\,\,\text{ and }\,\,\nu_\theta(g) : = \lim_{n \rightarrow \infty} \frac{1}{n} \kappa_\theta(g^n) \in \mfa_\theta^+
$$
(these limit exists by Fekete's Subadditive Lemma). Note that via the identification of $\mfa_\theta^*$ as a subspace of $\mfa^*$ described in Section~\ref{sec:background}, we have
\[\ell^{\phi_j}(\gamma)=\phi_j(\nu_\theta(\gamma))=\phi_j(\nu(\gamma))\]
for both $j=1,2$ and all $\gamma\in\Gamma$. 

Suppose for a contradiction that $\delta^{\phi}(\Gamma)=1$. By Theorem \ref{manhattan curve},
$\phi_1(\nu(\gamma))=\phi_2(\nu(\gamma))$ for all $\gamma \in \Gamma$, which implies that $\phi_1 = \phi_2$ on $$
\Cc : = \overline{\cup_{\gamma \in \Gamma} \Rb_{>0} \nu(\gamma)}
$$
Since $\Gamma$ is Zariski dense,
a result of Benoist \cite{benoist-limit-cone} implies that $\Cc$ is a convex subset of $\mfa$ with non-empty interior, so $\phi_1 = \phi_2$, and we obtain a contradiction. 
\qed

\begin{proof}[Proof of Theorem \ref{manhattan curve}] The general strategy of our proof is inspired by the proof of Theorem 1(a) in~\cite{burger}.

The first part follows immediately from the definition and H\"older's inequality which gives that, for all $s$,
\begin{equation*}
Q^\phi_\Gamma(s)\le Q^{\phi_1}_\Gamma(s)^{\lambda}  Q^{\phi_2}_\Gamma(s)^{1-\lambda}.
\end{equation*} 
So our main work is to establish the ``moreover'' part of the theorem.

Suppose that $\delta^\phi(\Gamma) =1$ and $Q_{\Gamma}^\phi(1) = +\infty$. For $\psi \in \left\{ \phi_1, \phi_2,  \phi, \bar{\phi}_1, \bar{\phi}_2,\bar{\phi}\right\}$, let $\mu_\psi$ denote a $\psi$-Patterson-Sullivan measure for $\Gamma$ of dimension $1$.

Using the discussion in Section~\ref{sec: a helpful reduction} we may assume that $\GG$ has trivial center and that $\Psf_\theta$ does not contain any simple factors of $\GG$. Then by Theorem~\ref{thm:transverse image of visible general}, there is a properly convex domain $\Omega\subset\Pb(\Rb^{d})$, a projectively visible subgroup $\Gamma_0\subset\Aut(\Omega)$ and a faithful $\Psf_\theta$-transverse representation $\rho:\Gamma_0\to\GG$ with limit map $\xi:\Lambda_\Omega(\Gamma_0)\to\Fc_\theta$ so that $\rho(\Gamma_0)=\Gamma$ and $\xi(\Lambda_\Omega(\Gamma_0))=\Lambda_\theta(\Gamma)$. 

For $\psi \in \left\{ \phi_1, \phi_2,  \phi, \bar{\phi}_1, \bar{\phi}_2,\bar{\phi}\right\}$, define a measure $\nu_\psi$ on $\partial \Omega$ by 
$$
\nu_\psi(A) = \mu_\psi\left( \xi(A \cap \Lambda_\Omega(\Gamma_0))\right).
$$
Fix $b_0 \in \Omega$. Recall, from  Section~\ref{sec:consequences of ergodicity},  that $\Lambda_{\Omega, b_0,R}^{\rm con}(\Gamma_0) \subset \Lambda_\Omega(\Gamma_0)$ denotes the set of limit points which are $R$-conical. By Corollary~\ref{cor:uniformly conical set has full measure} we can fix $R > 0$ sufficiently large so that 
$$
\nu_\phi\left( \Lambda_{\Omega, b_0,R}^{\rm con}(\Gamma_0)\right) = 1\quad\text{and}\quad \nu_{\bar\phi}\left( \Lambda_{\Omega, b_0,R}^{\rm con}(\Gamma_0)\right) = 1.
$$
Using the Shadow Lemma (Proposition~\ref{prop:shadow estimates}) and possibly increasing $R$, 
we can also assume that for every $r \ge R$ there exists a constant $C_r \ge  1$ such that 
\begin{align}\label{eqn: shadow lemma application}
C_r^{-1} e^{- \psi(\kappa_\theta(\rho(\gamma)))} \le \nu_\psi\big(\Oc_r(b_0,\gamma(b_0))\big)  \le C_re^{- \psi(\kappa_\theta(\rho(\gamma)))}
\end{align}
for all $\gamma \in \Gamma$ and $\psi \in \left\{ \phi_1, \phi_2,  \phi, \bar{\phi}_1, \bar{\phi}_2,\bar{\phi}\right\}$.

For all $\alpha, \beta \in \Gamma_0$ and $r > 0$, let 
$$
\Rc_r(\alpha, \beta) := \Oc_r(b_0, \alpha(b_0)) \times \Oc_r(b_0, \beta(b_0)).
$$

The following lemma is the crucial place where we use the fact that $\delta^\phi(\Gamma)=\delta^{\phi_1}(\Gamma)=\delta^{\phi_2}(\Gamma)$.

\begin{lemma} If $r\ge R$ and $\alpha, \beta \in \Gamma_0$, then 
\begin{align}
\label{eqn:inequality on product of shadows}
(\nu_{\bar{\phi}} \otimes \nu_{\phi})& \big(\Rc_r(\alpha, \beta) \big)  \le C_r^4\left(\nu_{\bar{\phi}_1} \otimes \nu_{\phi_1} + \nu_{\bar{\phi}_2} \otimes \nu_{\phi_2}\right)\big(\Rc_r(\alpha, \beta) \big).
\end{align}
\end{lemma}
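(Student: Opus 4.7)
The plan is to derive the inequality directly from the Shadow Lemma bounds in Equation~\eqref{eqn: shadow lemma application}, combined with the elementary observation that a convex combination is bounded below by the minimum of its terms. The hypothesis $\delta^{\phi_1}(\Gamma)=\delta^{\phi_2}(\Gamma)=\delta^\phi(\Gamma)=1$ is critical here because it means that all six Patterson-Sullivan measures $\mu_{\phi_1},\mu_{\phi_2},\mu_{\phi},\mu_{\bar\phi_1},\mu_{\bar\phi_2},\mu_{\bar\phi}$ share the same dimension $1$, so the shadow estimates for $\nu_\psi$ all have the same exponent $e^{-\psi(\kappa_\theta(\rho(\cdot)))}$ without stray dimensional factors.

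First, I would apply the upper bound in Equation~\eqref{eqn: shadow lemma application} with $\psi = \bar\phi$ and $\psi = \phi$ to estimate
\[(\nu_{\bar\phi}\otimes\nu_\phi)\big(\Rc_r(\alpha,\beta)\big)\le C_r^{2}\, e^{-\bar\phi(\kappa_\theta(\rho(\alpha)))-\phi(\kappa_\theta(\rho(\beta)))}.\]
Next, I would apply the lower bound in Equation~\eqref{eqn: shadow lemma application} with $\psi \in \{\bar\phi_i,\phi_i\}$ for $i=1,2$ to get
\[(\nu_{\bar\phi_i}\otimes\nu_{\phi_i})\big(\Rc_r(\alpha,\beta)\big)\ge C_r^{-2}\, e^{-\bar\phi_i(\kappa_\theta(\rho(\alpha)))-\phi_i(\kappa_\theta(\rho(\beta)))}.\]

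Set $a_i := \bar\phi_i(\kappa_\theta(\rho(\alpha)))+\phi_i(\kappa_\theta(\rho(\beta)))$ for $i=1,2$. Since $\phi=\lambda\phi_1+(1-\lambda)\phi_2$ and $\bar\phi=\lambda\bar\phi_1+(1-\lambda)\bar\phi_2$, the exponent in the upper bound is exactly $\lambda a_1+(1-\lambda)a_2$. Because $\lambda\in(0,1)$, we have $\lambda a_1+(1-\lambda)a_2\ge\min(a_1,a_2)$, and hence
\[e^{-\lambda a_1-(1-\lambda)a_2}\le e^{-\min(a_1,a_2)}\le e^{-a_1}+e^{-a_2}.\]

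Combining the three displays yields
\begin{align*}
(\nu_{\bar\phi}\otimes\nu_\phi)\big(\Rc_r(\alpha,\beta)\big)
&\le C_r^{2}\big(e^{-a_1}+e^{-a_2}\big)\\
&\le C_r^{2}\cdot C_r^{2}\Big((\nu_{\bar\phi_1}\otimes\nu_{\phi_1})\big(\Rc_r(\alpha,\beta)\big)+(\nu_{\bar\phi_2}\otimes\nu_{\phi_2})\big(\Rc_r(\alpha,\beta)\big)\Big),
\end{align*}
which is \eqref{eqn:inequality on product of shadows}. There is no serious obstacle in this step; the content is entirely packaged in the Shadow Lemma together with the scalar inequality $e^{-\lambda a_1-(1-\lambda)a_2}\le e^{-a_1}+e^{-a_2}$, and the hypothesis that all three critical exponents equal $1$ is what makes the exponents in the shadow estimates for $\phi,\phi_1,\phi_2$ (and their duals) align so that this scalar inequality suffices.
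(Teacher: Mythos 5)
Your proposal is correct and follows essentially the same route as the paper: apply the shadow lemma upper bound to $\nu_{\bar\phi}\otimes\nu_\phi$, use $\phi=\lambda\phi_1+(1-\lambda)\phi_2$ to split the exponent, pass to the sum via an elementary scalar inequality, and then apply the shadow lemma lower bound. The only cosmetic difference is that you bound the convex combination below by $\min(a_1,a_2)$ whereas the paper writes the exponential as a geometric mean, converts to measures first, and then invokes weighted AM-GM; the two scalar inequalities are equivalent and yield the same constant $C_r^4$.
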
 

\begin{proof} By repeated applications of the Shadow Lemma \eqref{eqn: shadow lemma application}, we see that if $\alpha, \beta \in \Gamma_0$, then
\begin{align*}
(\nu_{\bar{\phi}} \otimes \nu_{\phi}) & \big(\Rc_r(\alpha, \beta) \big)  \le C_r^2 e^{-\bar\phi(\kappa_\theta(\rho(\alpha)))}e^{-\phi(\kappa_\theta(\rho(\beta)))} \\
 & =  C_r^2 e^{-\big(\lambda\bar\phi_1(\kappa_\theta(\rho(\alpha)))+(1-\lambda)\bar\phi_2(\kappa_\theta(\rho(\alpha)))+\lambda\phi_1(\kappa_\theta(\rho(\beta)))+(1-\lambda)\phi_2(\kappa_\theta(\rho(\beta)))\big)}\\
 &\le C_r^4 \nu_{\bar{\phi}_1}\left(\Oc_r(b_0, \alpha(b_0))\right)^\lambda \nu_{\bar{\phi}_2}\left(\Oc_r(b_0, \alpha(b_0))\right)^{1-\lambda} \nu_{\phi_1}\left(\Oc_r(b_0, \beta(b_0))\right)^\lambda \nu_{\phi_2}\left(\Oc_r(b_0, \beta(b_0))\right)^{1-\lambda}\\
 & = C_r^4(\nu_{\bar{\phi}_1} \otimes \nu_{\phi_1})\big(\Rc_r(\alpha, \beta) \big)^{\lambda}(\nu_{\bar{\phi}_2} \otimes \nu_{\phi_2})\big(\Rc_r(\alpha, \beta) \big)^{1-\lambda}.
\end{align*}
We may then apply the weighted Arithmetic Mean-Geometric Mean Inequality to see that 
\begin{align*}
(\nu_{\bar{\phi}} \otimes \nu_{\phi}) \big(\Rc_r(\alpha, \beta) \big)  \le C_r^4 \left(\nu_{\bar\phi_1} \otimes \nu_{\phi_1} + \nu_{\bar\phi_2} \otimes \nu_{\phi_2}\right)\big(\Rc_r(\alpha, \beta) \big)
\end{align*}
for all  $\alpha, \beta \in \Gamma_0$. 
\end{proof} 

Our goal is to upgrade the inequality in Equation~\eqref{eqn:inequality on product of shadows} to all Borel measurable sets in $\Lambda_\Omega(\Gamma_0)^{2}$. 
We first show that shadows form a neighborhood basis of every point in $\Lambda_{\Omega, b_0, R}(\Gamma_0)$. 

\begin{lemma}\label{lem:nbhd basis} If $x \in \Lambda_{\Omega, b_0, R}(\Gamma_0)$ and $U$ is a neighborhood of $x$ in $\partial \Omega$, then there exists $\gamma \in \Gamma$ such that 
$$
x \in \Oc_R(b_0, \gamma(b_0)) \subset U.
$$
\end{lemma}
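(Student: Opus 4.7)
The plan is to combine the definition of $R$-conicality with the diameter-shrinking estimate from Lemma~\ref{lem: shadows get smaller}. Since $x \in \Lambda_{\Omega, b_0, R}(\Gamma_0)$, unwinding the definition gives a sequence $\{\gamma_n\}$ in $\Gamma_0$ with $\gamma_n(b_0) \to x$ and $\d_\Omega(\gamma_n(b_0), [b_0, x)_\Omega) < R$ for every $n$. Because $\Gamma_0$ acts properly discontinuously on $\Omega$ and $x \in \partial\Omega$, after passing to a subsequence we may assume that the elements $\gamma_n$ are pairwise distinct.

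The first observation is that $x \in \Oc_R(b_0, \gamma_n(b_0))$ for every $n$. This is immediate: the inequality $\d_\Omega(\gamma_n(b_0),[b_0,x)_\Omega) < R$ is literally the statement that $x$ belongs to the shadow $\Oc_R(b_0,\gamma_n(b_0))$, by the definition of shadows given earlier in Section~\ref{visibility}.

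The second ingredient is diameter control. Since the $\gamma_n$ are pairwise distinct, Lemma~\ref{lem: shadows get smaller} applies and yields
\[
\lim_{n \to \infty} \mathrm{diam}\bigl(\Oc_R(b_0, \gamma_n(b_0))\bigr) = 0,
\]
where the diameter is computed with respect to a distance $\d_\Pb$ on $\Pb(\Rb^d)$ induced by a Riemannian metric. Because each shadow contains the point $x$ and their diameters tend to zero, for all sufficiently large $n$ we have $\Oc_R(b_0, \gamma_n(b_0)) \subset U$. Choosing $\gamma := \gamma_n$ for any such $n$ gives the desired element. There is no genuine obstacle here; the statement is essentially a combination of the definition of an $R$-conical limit point with the already-proved diameter-shrinking lemma.
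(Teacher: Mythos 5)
Your proof is correct and takes essentially the same route as the paper: extract a sequence witnessing $R$-conicality, observe that each shadow $\Oc_R(b_0,\gamma_n(b_0))$ contains $x$ by definition, and then invoke Lemma~\ref{lem: shadows get smaller} to shrink the shadows into $U$. The only addition is your explicit remark about passing to a distinct subsequence so that Lemma~\ref{lem: shadows get smaller} applies, which the paper leaves implicit; that is a harmless (and correct) elaboration, not a different argument.
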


\begin{proof} Fix a sequence $\{\gamma_n\}$ in $\Gamma$ such that $\gamma_n(b_0) \rightarrow x$ and $\dist_\Omega( \gamma_n(b_0), [b_0, x)) < R$ for all $n \ge 1$. 
Then $x \in \Oc_R(b_0, \gamma_n(b_0))$ for all $n \ge 1$ and Lemma~\ref{lem: shadows get smaller} implies that $\Oc_R(b_0, \gamma_n(b_0)) \subset U$ when $n$ is sufficiently large.
\end{proof}

Next, by the argument in~\cite[pg. 23]{roblin}, we observe that the shadows satisfy a version of the Vitali covering lemma. 

\begin{lemma}\label{lem:1->5covering} If $I \subset \Gamma_0$ and $r>0$,  then there exists $J \subset I$ such that the sets 
\hbox{$\{\Oc_r(b_0, \gamma(b_0)) : \gamma \in J\}$} are pairwise disjoint and 
$$
\bigcup_{\gamma \in I} \Oc_r(b_0, \gamma(b_0)) \subset \bigcup_{\gamma \in J} \Oc_{5r}(b_0, \gamma(b_0)).
$$
\end{lemma}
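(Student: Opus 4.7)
The plan is to combine a Vitali-style greedy selection with a geometric claim that compares two intersecting shadows. For $\gamma \in \Gamma_0$ write $|\gamma| := \d_\Omega(b_0, \gamma(b_0))$. The key claim I would first establish is: \emph{if $\alpha, \beta \in \Gamma_0$ satisfy $|\alpha| \le |\beta|$ and $\Oc_r(b_0, \alpha(b_0)) \cap \Oc_r(b_0, \beta(b_0)) \ne \emptyset$, then $\Oc_r(b_0, \beta(b_0)) \subset \Oc_{5r}(b_0, \alpha(b_0))$.} This is the geometric content of the argument; the rest is bookkeeping.

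To prove this claim, pick $x$ in the intersection. By definition there exist $a, b \in [b_0, x)_\Omega$ with $\d_\Omega(a, \alpha(b_0)) < r$ and $\d_\Omega(b, \beta(b_0)) < r$, so that $|\alpha| - r < \d_\Omega(b_0, a) < |\alpha| + r$ and $|\beta| - r < \d_\Omega(b_0, b) < |\beta| + r$. Now fix $y \in \Oc_r(b_0, \beta(b_0))$ and pick $q \in [b_0, y)_\Omega$ with $\d_\Omega(q, \beta(b_0)) < r$, so that $\d_\Omega(b, q) < 2r$. I would split into two cases depending on the relative position of $a$ and $b$ on $[b_0, x)_\Omega$. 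If $\d_\Omega(b_0, a) \le \d_\Omega(b_0, b)$, then $a \in [b, b_0)_\Omega$, and Proposition~\ref{hausdorff-convexity} (with the role of ``$x$'' played by $b_0$, and $q_1 = b$, $q_2 = q$) produces $p' \in [q, b_0)_\Omega \subset [b_0, y)_\Omega$ with $\d_\Omega(a, p') < 2r$; the triangle inequality then gives $\d_\Omega(p', \alpha(b_0)) < 3r$. In the opposite case $\d_\Omega(b_0, a) > \d_\Omega(b_0, b)$, the hypothesis $|\alpha| \le |\beta|$ forces $\d_\Omega(a, b) = \d_\Omega(b_0, a) - \d_\Omega(b_0, b) < 2r$, so $\d_\Omega(q, \alpha(b_0)) < 2r + 2r + r = 5r$. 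In either case $y \in \Oc_{5r}(b_0, \alpha(b_0))$.

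Once the claim is available the covering argument is routine. Since $\Gamma_0$ is a discrete subgroup of a second-countable Lie group, it is countable, so I may enumerate $I = \{\gamma_1, \gamma_2, \ldots\}$ with $|\gamma_k|$ nondecreasing in $k$. Define $J$ inductively: place $\gamma_k$ in $J$ if and only if $\Oc_r(b_0, \gamma_k(b_0))$ is disjoint from $\Oc_r(b_0, \gamma(b_0))$ for every $\gamma \in J$ of smaller index. By construction $\{\Oc_r(b_0, \gamma(b_0)) : \gamma \in J\}$ is pairwise disjoint. For any $\gamma_k \in I \setminus J$ the defining condition fails, producing $\gamma_j \in J$ with $j < k$ (hence $|\gamma_j| \le |\gamma_k|$) whose shadow meets $\Oc_r(b_0, \gamma_k(b_0))$; the claim then yields $\Oc_r(b_0, \gamma_k(b_0)) \subset \Oc_{5r}(b_0, \gamma_j(b_0))$. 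Combined with the trivial inclusion $\Oc_r(b_0, \gamma_k(b_0)) \subset \Oc_{5r}(b_0, \gamma_k(b_0))$ when $\gamma_k \in J$, this produces the required covering.

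The main obstacle will be the two-case analysis in the geometric claim: Proposition~\ref{hausdorff-convexity} handles the ``inner'' case cleanly (in fact with the sharper bound $3r$), while the ``outer'' case requires the ordering hypothesis $|\alpha| \le |\beta|$ to force $a$ and $b$ to be close, producing the full factor $5r$. Everything else---the countability of $\Gamma_0$ and the greedy Vitali selection---is standard.
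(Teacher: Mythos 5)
Your argument is correct, and it is essentially the Vitali covering argument that the paper outsources to Roblin: a greedy selection ordered by $\d_\Omega(b_0,\gamma(b_0))$, powered by the geometric claim that when two $r$-shadows of group orbit points meet and one is ``closer'' to the basepoint, its $5r$-shadow swallows the other's $r$-shadow. I checked the two cases of your claim and the arithmetic is right; in particular the second case is exactly where the ordering hypothesis $|\alpha|\le|\beta|$ is needed, since it gives $\d_\Omega(a,b)<(|\alpha|+r)-(|\beta|-r)\le 2r$ and hence $\d_\Omega(q,\alpha(b_0))<2r+2r+r=5r$.

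Two small points worth tidying in a careful write-up. First, in Case~1 your application of Proposition~\ref{hausdorff-convexity} requires $a\in[b,b_0)_\Omega$, which implicitly assumes $a\neq b_0$; if $a=b_0$ the proposition doesn't apply, but then $\d_\Omega(\alpha(b_0),b_0)<r$ directly and $b_0\in[b_0,y)_\Omega$, so the conclusion is trivial. A similar remark applies when the infimum in Proposition~\ref{hausdorff-convexity} is attained at $b_0$: since $b_0\in[b_0,y)_\Omega$ this causes no trouble, but you should either say so or note that the distance to the closed segment $[q,b_0]_\Omega$ equals the distance to $[q,b_0)_\Omega$. Second, the nondecreasing enumeration of $I$ relies on $\{\gamma\in\Gamma_0 : \d_\Omega(b_0,\gamma(b_0))\le T\}$ being finite for every $T$; this follows from proper discontinuity of the $\Gamma_0$-action on $\Omega$ and is worth stating explicitly. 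With those small remarks the proof is complete and matches the intended approach.
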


We now leverage our covering lemma to upgrade Equation \eqref{eqn:inequality on product of shadows} to all measurable subsets of $\Lambda(\Gamma_0)^2$.

\begin{lemma}\label{lem:inequality for general Borel sets} 
There exists $C>0$ such that: if $A \subset \Lambda_\Omega(\Gamma_0)^2$ is a Borel measurable set, then
$$
(\nu_{\bar\phi} \otimes \nu_{\phi})(A) \le C \left(\nu_{\bar\phi_1} \otimes \nu_{\phi_1} +\nu_{\bar\phi_2} \otimes \nu_{\phi_2}\right)(A).
$$
\end{lemma}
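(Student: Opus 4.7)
The plan is to prove the inequality first for open rectangles via a one-dimensional Vitali argument, then to upgrade to arbitrary Borel sets by a Carath\'eodory outer-measure construction.

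For the rectangle step, fix open $U,V\subset \Lambda_\Omega(\Gamma_0)$. By Corollary~\ref{cor:uniformly conical set has full measure} both $\nu_\phi$ and $\nu_{\bar\phi}$ are concentrated on $\Lambda^{\mathrm{con}}_{\Omega, b_0, R}(\Gamma_0)$, and Lemma~\ref{lem:nbhd basis} shows that the shadows $\{\Oc_R(b_0,\alpha(b_0)) : \alpha\in I_U\}$ with $I_U := \{\alpha\in\Gamma_0 : \Oc_R(b_0,\alpha(b_0)) \subset U\}$ cover $U$ up to a $(\nu_\phi+\nu_{\bar\phi})$-null set. Apply Lemma~\ref{lem:1->5covering} to obtain a disjoint sub-family $J_U \subset I_U$ whose $5R$-enlargements still cover the union; combined with \eqref{eqn: shadow lemma application}, which compares $\nu_\psi(\Oc_{5R})$ with $\nu_\psi(\Oc_R)$, this gives
\[
\nu_\psi(U) \;\le\; C_{5R}C_R\sum_{\alpha\in J_U}\nu_\psi(\Oc_R(b_0,\alpha(b_0)))\qquad\text{for }\psi\in\{\phi,\bar\phi\}.
\]
Produce an analogous $J_V\subset I_V$. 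Multiplying the two one-variable bounds, applying \eqref{eqn:inequality on product of shadows} term by term, and then inserting the trivial inequality $\sum_{\alpha\in J_U}\nu_{\bar\phi_i}(\Oc_R(b_0,\alpha(b_0))) \le \nu_{\bar\phi_i}(U)$ (which follows from disjointness and the inclusion $\Oc_R(b_0,\alpha(b_0))\subset U$, and needs no support hypothesis on $\nu_{\phi_i},\nu_{\bar\phi_i}$) together with its second-coordinate analogue, one obtains
\[
(\nu_{\bar\phi}\otimes\nu_\phi)(U\times V) \;\le\; C'\bigl(\nu_{\bar\phi_1}\otimes\nu_{\phi_1}+\nu_{\bar\phi_2}\otimes\nu_{\phi_2}\bigr)(U\times V)
\]
for a constant $C'$ depending only on $R$. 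Outer regularity of the six finite measures then extends the inequality to all Borel rectangles.

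For the upgrade to arbitrary Borel sets, introduce the Carath\'eodory outer measures
\[
\lambda_i^*(A) := \inf\Bigl\{\textstyle\sum_n \lambda_i(U_n \times V_n) : A \subset \bigcup_n U_n \times V_n,\ U_n,V_n \text{ Borel}\Bigr\},
\]
where $\lambda_1 := \nu_{\bar\phi}\otimes\nu_\phi$ and $\lambda_2 := \nu_{\bar\phi_1}\otimes\nu_{\phi_1}+\nu_{\bar\phi_2}\otimes\nu_{\phi_2}$. The rectangle inequality from the previous paragraph directly yields $\lambda_1^* \le C'\lambda_2^*$ on every subset of $\Lambda_\Omega(\Gamma_0)^2$. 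By Carath\'eodory's theorem every Borel set is $\lambda_i^*$-measurable, and by the $\pi$-$\lambda$ theorem, applied to the $\pi$-system of Borel rectangles, the restriction of $\lambda_i^*$ to the Borel $\sigma$-algebra coincides with $\lambda_i$. Hence $\lambda_1(A) \le C'\lambda_2(A)$ for every Borel $A$.

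The principal technical obstacle is the rectangle step. The ``doubling'' comparability $\nu_\psi(\Oc_{5R}) \asymp \nu_\psi(\Oc_R)$ supplied by the Shadow Lemma, together with the support of $\nu_\phi,\nu_{\bar\phi}$ on the conical limit set, is exactly what allows the one-dimensional Vitali selection to capture the measures $\nu_\phi(U)$ and $\nu_{\bar\phi}(U)$ efficiently and transfer \eqref{eqn:inequality on product of shadows} to arbitrary open rectangles without losing control of $\nu_{\phi_i},\nu_{\bar\phi_i}$ on the right-hand side; a direct two-dimensional Vitali argument on products of shadows runs into the problem that overlapping shadow-rectangles need not have comparable sizes in either coordinate, and factoring the Vitali step through the two coordinates separately is precisely how one bypasses that difficulty. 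Once the rectangle inequality is established, the Carath\'eodory extension is entirely soft measure theory.
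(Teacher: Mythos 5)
Your proof is correct and follows the same essential route as the paper: the core of the argument (cover the rectangle by shadows using Lemma~\ref{lem:nbhd basis}, apply the Vitali Lemma~\ref{lem:1->5covering} in each coordinate separately, pass from $\Oc_{5R}$ to $\Oc_R$ via the Shadow Lemma, insert Equation~\eqref{eqn:inequality on product of shadows} termwise, and sum using disjointness and the inclusion $\Oc_R \subset U$) is the same as the paper's proof. Your observation that the one-dimensional Vitali selections sidestep the incomparability of overlapping two-dimensional shadow rectangles is exactly what the paper's use of $J_1\subset I_1$ and $J_2\subset I_2$ accomplishes, so you are not taking a genuinely different path there, just describing the same maneuver more explicitly.

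The one place you go beyond the paper is the upgrade from rectangles to general Borel sets. The paper simply asserts ``it suffices to prove the lemma in the case when $A = A_1\times A_2$,'' whereas you supply the Carath\'eodory outer-measure argument that justifies this: comparability on the $\pi$-system of rectangles does not by itself transfer to $\sigma(\text{rectangles})$ via the $\pi$-$\lambda$ theorem (the class $\{A:\lambda_1(A)\le C'\lambda_2(A)\}$ is not a $\lambda$-system), so one genuinely needs the outer-measure construction (or a differentiation theorem) to close this gap. Your version, which compares $\lambda_1^*$ and $\lambda_2^*$ directly from the rectangle bound and then invokes uniqueness of the Carath\'eodory extension, is the standard clean way to do it. The only trade-off is a longer exposition; the paper buys brevity by leaving the reader to fill in this soft measure-theoretic step.
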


\begin{proof} 
It suffices to prove the lemma in the case when $A=A_1\times A_2$ for some $A_1,A_2\subset\Lambda_\Omega(\Gamma_0)$. Fix $\epsilon > 0$. By the outer regularity of the measures, for both $j=1,2$, there exists an open set $U_j \supset A_j$ with 
\begin{align*}
\left(\nu_{\bar\phi_1} \otimes \nu_{\phi_1}+\nu_{\bar\phi_2} \otimes \nu_{\phi_2}\right) \big(U_1 \times U_2\big)  \le \left(\nu_{\bar{\phi}_1} \otimes \nu_{\phi_1}+\nu_{\bar\phi_2} \otimes \nu_{\phi_2}\right)\big(A_1 \times A_2\big)  +\epsilon.
\end{align*}
If we let $I_j := \{ \alpha \in \Gamma_0 : \Oc_R(b_0, \alpha(b_0)) \subset U_j\}$, then by Lemma~\ref{lem:nbhd basis}
$$
(A_1\times A_2) \cap \Lambda_{\Omega, b_0, R}^{\rm con}(\Gamma_0)^2\subset \bigcup_{(\alpha,\beta) \in I_1\times I_2}   \Rc_{R}(\alpha,\beta)\subset U_1\times U_2.
$$
By Lemma~\ref{lem:1->5covering}, we can find a subset $J_j \subset I_j$ such that the sets $\{ \Oc_R(b_0, \alpha(b_0)): \alpha \in J_j\}$ are pairwise disjoint and 
$$
\bigcup_{\alpha \in I_j}  \Oc_R(b_0, \alpha(b_0)) \subset \bigcup_{\alpha \in J_j}  \Oc_{5R}(b_0, \alpha(b_0)).
$$

Since we chose $R>0$ such that $\nu_{\phi}\big(\Lambda_{\Omega,b_0,R}^{\rm con}(\Gamma_0)\big)=\nu_{\bar\phi}\big(\Lambda_{\Omega,b_0,R}^{\rm con}(\Gamma_0)\big)=1$, it follows that
\begin{align*}
(\nu_{\bar\phi} \otimes \nu_{\phi})  (A_1 \times A_2)& =(\nu_{\bar\phi} \otimes \nu_{\phi})  \big((A_1 \times A_2)\cap \Lambda_{\Omega, b_0, R}(\Gamma_0)^2\big) \\
&   \le  \sum_{(\alpha,\beta) \in J_1\times J_2}(\nu_{\bar\phi} \otimes \nu_{\phi})\left( \Rc_{5R}(\alpha,\beta) \right).
\end{align*}
Then by repeated applications of Equations \eqref{eqn: shadow lemma application} and \eqref{eqn:inequality on product of shadows},
\begin{align*}
 \sum_{(\alpha,\beta) \in J_1\times J_2} (\nu_{\bar\phi} \otimes \nu_{\phi})\big( \Rc_{5R}(\alpha,\beta) \big) &  \le C_R^4C_{5R}^4  \sum_{(\alpha,\beta) \in J_1\times J_2}  (\nu_{\bar\phi} \otimes \nu_{\phi})\big( \Rc_{R}(\alpha,\beta) \big) \\
& \le C_R^8C_{5R}^4 \sum_{(\alpha,\beta) \in J_1\times J_2}  \left(\nu_{\bar\phi_1} \otimes \nu_{\phi_1}+\nu_{\bar\phi_2} \otimes \nu_{\phi_2}\right)\big(\Rc_R(\alpha, \beta) \big) \\
& \le C_R^8C_{5R}^4\left(\nu_{\bar\phi_1} \otimes \nu_{\phi_1}+\nu_{\bar\phi_2} \otimes \nu_{\phi_2}\right)\big(U_1 \times U_2\big)\\
&  \le C_R^8C_{5R}^4\left(\nu_{\bar\phi_1} \otimes \nu_{\phi_1}+\nu_{\bar\phi_2} \otimes \nu_{\phi_2}\right)\big(A_1 \times A_2\big)  +C_R^6C_{5R}^2\epsilon.
\end{align*}
Since $\epsilon > 0$ was arbitrary, it follows that
\[
\left(\nu_{\bar\phi} \otimes \nu_{\phi}\right)(A_1 \times A_2) \le C_R^6C_{5R}^2 \left(\nu_{\bar\phi_1} \otimes \nu_{\phi_1}+
\nu_{\bar\phi_2} \otimes \nu_{\phi_2}\right)(A_1 \times A_2).\qedhere
\]
\end{proof}

Lemma~\ref{lem:inequality for general Borel sets} implies that $\nu_{\bar\phi} \otimes \nu_{\phi}$ is absolutely continuous with respect to 
$\nu_{\bar\phi_1} \otimes \nu_{\phi_1}+\nu_{\bar\phi_2} \otimes \nu_{\phi_2}$. Therefore,  after possibly relabelling, we can assume that $\nu_{\bar\phi} \otimes \nu_{\phi}$ is non-singular with respect to $\nu_{\bar\phi_1} \otimes \nu_{\phi_1}$. 

We claim that $Q^{\phi_1}_\Gamma(1)=+\infty$. Otherwise, Proposition~\ref{prop: consequences of shadow lemma} would imply that 
$$
\nu_{\phi_1}(\Lambda^{\rm con}_\Omega(\Gamma_0))=\nu_{\bar\phi_1}(\Lambda^{\rm con}_\Omega(\Gamma_0))=0,
$$
which is impossible since 
$$
\nu_{\phi}(\Lambda^{\rm con}_\Omega(\Gamma_0))=\nu_{\bar\phi}(\Lambda^{\rm con}_\Omega(\Gamma_0))=1
$$
by Proposition~\ref{prop: support on conical limit set}.

Since $\nu_{\bar\phi} \otimes \nu_{\phi}$ is non-singular with respect to $\nu_{\bar\phi_1} \otimes \nu_{\phi_1}$, the associated Bowen-Margulis measures are non-singular. Hence by Corollary~\ref{cor:non-singular BM measures} we have $\ell^{\phi}(\gamma) = \ell^{\phi_1}(\gamma)$ 
for all $\gamma \in \Gamma$. Thus 
$\ell^{\phi_1}(\gamma) = \ell^{\phi_2}(\gamma)$ 
for all $\gamma \in \Gamma$. 
\end{proof}

Notice that the H\"older inequality similarly proves a statement which is of the same form as Burger's Manhattan Curve Theorem. However, we are not
able to give an analogous characterization of when equality occurs.

\begin{theorem} Suppose $\theta \subset \Delta$ is symmetric, $\Gamma_1, \Gamma_2 \subset \GG$ are $\Psf_\theta$-transverse subgroups and
there exists an isomorphism $\rho:\Gamma_1\to \Gamma_2$. If $\phi\in\mfa_\theta^*$ and $\delta^\phi(\Gamma_1)=\delta^\phi(\Gamma_2)=1$,
then for any $\lambda\in (0,1)$ the weighted Poincar\'e series
$$\sum_{\gamma\in\Gamma_1}e^{-s\big(\lambda\phi(\kappa_\theta(\gamma))+(1-\lambda)\phi(\kappa_\theta(\rho(\gamma)))\big)}$$
has critical exponent $\delta\le 1$.
\end{theorem}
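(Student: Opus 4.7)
The plan is to apply H\"older's inequality directly to the weighted Poincar\'e series, exactly as in the first part of Theorem~\ref{manhattan curve}. First I would rewrite the summand as a product:
\[
e^{-s\big(\lambda\phi(\kappa_\theta(\gamma))+(1-\lambda)\phi(\kappa_\theta(\rho(\gamma)))\big)}=\Big(e^{-s\phi(\kappa_\theta(\gamma))}\Big)^{\lambda}\Big(e^{-s\phi(\kappa_\theta(\rho(\gamma)))}\Big)^{1-\lambda}.
\]
Then H\"older's inequality with exponents $1/\lambda$ and $1/(1-\lambda)$ gives the bound
\[
\sum_{\gamma\in\Gamma_1}e^{-s\big(\lambda\phi(\kappa_\theta(\gamma))+(1-\lambda)\phi(\kappa_\theta(\rho(\gamma)))\big)}\le\Bigg(\sum_{\gamma\in\Gamma_1}e^{-s\phi(\kappa_\theta(\gamma))}\Bigg)^{\!\lambda}\Bigg(\sum_{\gamma\in\Gamma_1}e^{-s\phi(\kappa_\theta(\rho(\gamma)))}\Bigg)^{\!1-\lambda}.
\]

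Next I would use that $\rho:\Gamma_1\to\Gamma_2$ is a bijection to reindex the second factor by $\eta:=\rho(\gamma)\in\Gamma_2$, which identifies it with $Q_{\Gamma_2}^\phi(s)^{1-\lambda}$. The first factor is $Q_{\Gamma_1}^\phi(s)^{\lambda}$ by definition. Hence the weighted series is bounded by $Q_{\Gamma_1}^\phi(s)^{\lambda}\,Q_{\Gamma_2}^\phi(s)^{1-\lambda}$. Since $\delta^\phi(\Gamma_1)=\delta^\phi(\Gamma_2)=1$, both Poincar\'e series converge for every $s>1$, so the weighted series converges for every such $s$, which forces its critical exponent $\delta$ to satisfy $\delta\le 1$.

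Since no appeal to Patterson--Sullivan theory is needed, there is really no obstacle: the argument is a one-line H\"older estimate plus a change of index. The only minor point to mention in the write-up is that H\"older is being applied to the counting measure on $\Gamma_1$, and that the reindexing step uses nothing about $\rho$ except that it is a bijection (neither $\Psf_\theta$-transversality of $\Gamma_2$ nor any dynamical hypothesis enters). Strict concavity, i.e. the analogue of the ``moreover'' part of Theorem~\ref{manhattan curve}, would require identifying when equality holds in H\"older, and as noted in the statement such a characterization is not being claimed here.
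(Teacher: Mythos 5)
Your proof is correct and is essentially the argument the paper has in mind: the paper introduces this theorem with the remark that ``the H\"older inequality similarly proves a statement which is of the same form as Burger's Manhattan Curve Theorem,'' pointing precisely to the one-line H\"older estimate you give, together with the reindexing $\gamma \mapsto \rho(\gamma)$ to identify the second factor with $Q_{\Gamma_2}^\phi(s)$. Your observation that no dynamical or transversality hypothesis is actually used is also accurate; those hypotheses are only in the statement because they would be needed to characterize equality, which neither you nor the paper attempts here.
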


\appendix

\section{Proof of Proposition~\ref{prop:characterizing convergence in general}}\label{appendix: proof of result about contraction} 

In this section we prove Proposition~\ref{prop:characterizing convergence in general} which we restate here. 

\begin{proposition}\label{prop:characterizing convergence in general in appendix} Suppose $F^\pm \in \Fc_\theta^\pm$, $\{g_n\}$ is a sequence in $\GG$ and $g_n = m_n e^{\kappa(g_n)} \ell_n$ is a $\mathsf{KAK}$-decomposition for each $n \ge 1$. The following are equivalent:  
\begin{enumerate}
\item $m_n\Psf_\theta \rightarrow F^+$, $\ell_n^{-1}\Psf_\theta^- \rightarrow F^-$ and $\alpha(\kappa(g_n)) \rightarrow +\infty$ for every $\alpha \in \theta$,
\item $g_n(F) \to F^+$ for all $F \in \Fc_\theta^+ \setminus \Zc_{F^-}$, and this convergence is uniform on compact subsets of $\Fc_\theta^+ \setminus \Zc_{F^-}$. 
\item $g_n^{-1}(F) \to F^-$ for all $F \in \Fc_\theta^- \setminus \Zc_{F^+}$, and this convergence is uniform on compact subsets of $\Fc_\theta^- \setminus \Zc_{F^+}$. 
\item There are open sets $\mathcal{U}^\pm\subset\Fc_\theta^\pm$ such that $g_n(F) \to F^+$ for all $F \in \mathcal U^+$ and $g_n^{-1}(F) \to F^-$ for all $F \in \mathcal U^-$.
\end{enumerate}
\end{proposition}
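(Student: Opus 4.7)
The plan is to prove the cycle of implications (1) $\Rightarrow$ (2), (1) $\Rightarrow$ (3), (2) $\Rightarrow$ (4), (3) $\Rightarrow$ (4), and (4) $\Rightarrow$ (1). The implications (2) $\Rightarrow$ (4) and (3) $\Rightarrow$ (4) are immediate by taking $\mathcal{U}^\pm := \Fc_\theta^\pm \setminus \Zc_{F^\mp}$, which are open and non-empty.

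For (1) $\Rightarrow$ (2), the central technical step is a Cartan contraction lemma: if $\{a_n\} \subset \exp(\mfa^+)$ satisfies $\alpha(\log a_n) \to +\infty$ for every $\alpha \in \theta$, then $a_n \cdot F \to \Psf_\theta^+$ uniformly on compact subsets of $\Fc_\theta^+ \setminus \Zc_{\Psf_\theta^-}$. I would prove this by parametrizing the open cell by $\Usf_\theta^- := \exp(\mfu_\theta^-)$ (writing flags as $u\Psf_\theta^+$), observing that $a_n(u\Psf_\theta^+) = (a_n u a_n^{-1})\Psf_\theta^+$, and reducing to the statement that $\operatorname{Ad}(a_n)$ contracts $\mfu_\theta^-$ to $0$ uniformly on compact sets. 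Since $\mfu_\theta^-$ is a sum of root spaces $\mfg_{-\iota^*(\alpha)}$ for $\alpha \in \Sigma^+_\theta$, the corresponding eigenvalues of $\operatorname{Ad}(a_n)$ are $e^{-\iota^*(\alpha)(\log a_n)}$; writing each $\iota^*(\alpha)$ as a non-negative integral combination of simple roots (at least one of which lies in $\iota^*(\theta)$, using Observation~\ref{obs: opposition involution}) forces these eigenvalues to $0$. Given the lemma, for any compact $K \subset \Fc_\theta^+ \setminus \Zc_{F^-}$ the convergence $\ell_n^{-1}\Psf_\theta^- \to F^-$ and the openness of transversality guarantee that $\ell_n(K)$ lies in a fixed compact subset of $\Fc_\theta^+ \setminus \Zc_{\Psf_\theta^-}$ for all large $n$, so $e^{\kappa(g_n)}\ell_n(K) \to \Psf_\theta^+$ uniformly, and then $m_n\Psf_\theta^+ \to F^+$ yields (2). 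The implication (1) $\Rightarrow$ (3) follows by an analogous argument applied to the $\mathsf{KAK}$-decomposition $g_n^{-1} = (\ell_n^{-1}k_0^{-1})e^{\kappa(g_n^{-1})}(k_0 m_n^{-1})$, combined with Observation~\ref{obs: opposition involution 2} ($\kappa(g_n^{-1}) = \iota(\kappa(g_n))$) and Equation~\eqref{eqn:opposite conjugate form}.

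The hard part will be (4) $\Rightarrow$ (1), which I would handle by a subsequence-plus-contradiction argument. By compactness of $\Ksf$ and a diagonal extraction, pass to a subsequence with $m_n \to m$, $\ell_n \to \ell$, and each $\alpha(\kappa(g_n))$ for $\alpha \in \Delta$ either convergent or tending to $+\infty$; set $\theta_1 := \{\alpha \in \Delta : \alpha(\kappa(g_n)) \to +\infty\}$. If $\|\kappa(g_n)\|$ remained bounded, then $\{g_n\}$ would be relatively compact in $\GG$ and converge to some $g \in \GG$ along a further subsequence, forcing the homeomorphism $g$ to collapse the open set $\mathcal{U}^+$ to $\{F^+\}$—impossible. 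Hence $\theta_1 \neq \emptyset$, and applying the already-proven (1) $\Rightarrow$ (2) with $\theta_1$ in place of $\theta$ yields $g_n$-convergence to $m\Psf_{\theta_1}^+$ on an open dense subset of $\Fc_{\theta_1}^+$. The delicate step is to force $\theta \subset \theta_1$: if some $\alpha_0 \in \theta \setminus \theta_1$ existed, then $\operatorname{Ad}(a_n)$ would fail to contract the root space $\mfg_{-\iota^*(\alpha_0)} \subset \mfu_\theta^-$, and one could exhibit two distinct flags in $\mathcal{U}^+$ whose $g_n$-images accumulate at different points of $\Fc_\theta^+$, contradicting the single-point convergence asserted by (4). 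Once $\theta \subset \theta_1$ is established, reading off the KAK limits gives $F^+ = m\Psf_\theta^+$; the analogous comparison on $\mathcal{U}^-$ yields $F^- = \ell^{-1}\Psf_\theta^-$. Since every subsequence admits a further subsequence satisfying (1), the original sequence satisfies (1).
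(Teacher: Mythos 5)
The proposed chain of implications is logically broken at a specific point. You claim that $(2) \Rightarrow (4)$ and $(3) \Rightarrow (4)$ are ``immediate'' by taking $\mathcal{U}^\pm := \Fc_\theta^\pm \setminus \Zc_{F^\mp}$. But condition (4) is a conjunction: it requires \emph{both} that $g_n$ converges to $F^+$ on some open $\mathcal{U}^+$ \emph{and} that $g_n^{-1}$ converges to $F^-$ on some open $\mathcal{U}^-$. Condition (2) only asserts the forward convergence of $g_n$ and says nothing about $g_n^{-1}$, so (2) alone does not give (4); symmetrically, (3) alone does not give (4). The paper's proof is careful about exactly this: it notes that (2) \emph{and} (3) together imply (4), establishes $(4) \Rightarrow (1)$ via Lemma~\ref{weak version}, and then still has to prove $(2) \Leftrightarrow (1)$ and $(3) \Leftrightarrow (1)$ separately. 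Your argument, as written, never closes the loop starting from (2) alone (or from (3) alone).

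Concretely, what is missing is the implication $(2) \Rightarrow (1)$, and in particular the step showing that $\ell_n^{-1}\Psf_\theta^- \to F^-$ can be extracted from (2) without already knowing backward convergence. Your subsequence argument for $(4) \Rightarrow (1)$ uses $\mathcal{U}^+$ to pin down $m\Psf_\theta = F^+$ and $\theta \subset \theta_1$, and uses $\mathcal{U}^-$ to pin down $\ell^{-1}\Psf_\theta^- = F^-$; running it from (2) alone leaves the $\ell$-limit undetermined. The paper handles this with a separate contradiction argument: if $\ell F^- \neq \Psf_\theta^-$, pick $F \in \Zc_{\Psf_\theta^-} \setminus \Zc_{\ell F^-}$, observe $\ell_n^{-1}(F)$ eventually lies in a compact subset of $\Fc_\theta \setminus \Zc_{F^-}$, apply the uniform convergence in (2) to conclude $e^{\kappa(g_n)}F \to \Psf_\theta$, and then contradict the fact that $e^{\kappa(g_n)} \in \Psf_\theta^-$ preserves the closed set $\Zc_{\Psf_\theta^-}$ which cannot contain $\Psf_\theta$. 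Something of this nature is genuinely required and absent from your proposal. The rest of your outline --- the contraction lemma for $(1) \Rightarrow (2)$ and $(1) \Rightarrow (3)$, and the subsequence-with-two-test-flags argument for $(4) \Rightarrow (1)$ --- is sound and mirrors the paper's Lemma~\ref{uniform convergence} and Lemma~\ref{weak version}.
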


 It is well-known that 
\[
\exp:\mathfrak u_\theta^-\to \mathsf{U}_\theta^-:=\exp(\mathfrak{u}_\theta^-)
\] 
is a diffeomorphism. Furthermore, the \emph{Langlands decomposition} (see for instance \cite[Thm.\ 1.2.4.8]{Warner}) of parabolic subgroups states that the map
\[(u,\ell)\in\mathsf \Usf_\theta^-\times L_\theta\mapsto u\ell \in \Psf_\theta^-\]
is a diffeomorphism, where $\mathsf L_\theta:=\Psf_\theta\cap\Psf_\theta^-$. It follows that $\mathsf{U}_\theta^-$ acts simply transitively on $\Fc_\theta \setminus \Zc_{\Psf_\theta^-}$. Thus, the map
\[
T:\mathfrak{u}_\theta^-\to\Fc_\theta \setminus \Zc_{\Psf_\theta^-}
\]
given by $T(X)=e^X\mathsf{P}_\theta$ is a diffeomorphism. 

Note that if $H\in\mathfrak{a}$ and $X\in \mathfrak u_\theta^-$, then
\begin{align}\label{eqn: shrink ball Lie group}
e^HT(X)=e^He^X\mathsf{P}_\theta=e^He^Xe^{-H}\mathsf{P}_\theta=e^{{\rm Ad}(e^H)(X)}\mathsf{P}_\theta=T\left({\rm Ad}(e^H)(X)\right).
\end{align}
Furthermore, if we decompose 
\[
X=\sum_{\alpha\in\Sigma_\theta^+}X_{-\alpha}\in \mathfrak u_\theta^-,
\] 
where $X_{-\alpha}\in\mathfrak g_{-\alpha}$ for all $\alpha\in\Sigma_\theta^+$, then
\begin{align}\label{eqn: shrink ball Lie algebra}
{\rm Ad}(e^H)(X)=\sum_{\alpha\in\Sigma_\theta^+}{\rm Ad}(e^H)(X_{-\alpha})=\sum_{\alpha\in\Sigma_\theta^+}e^{-\alpha(H)}X_{-\alpha}.
\end{align}

Together, Equations~\eqref{eqn: shrink ball Lie group} and~\eqref{eqn: shrink ball Lie algebra} imply the following observation.

\begin{lemma}\label{uniform convergence}
Let $\{H_n\}$ be a sequence in $\mathfrak a^+$. If $\alpha(H_n)\to+\infty$ for all $\alpha\in\theta$, then $e^{H_n} F \to \mathsf{P}_\theta$ for all $F \in \Fc_\theta \setminus \Zc_{\Psf_\theta^-}$, and this convergence is uniform on compact subsets of $\Fc_\theta \setminus \Zc_{\Psf_\theta^-}$.
\end{lemma}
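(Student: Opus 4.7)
My plan is to reduce the claim to the coordinate-wise convergence of the root-space components of $\mathrm{Ad}(e^{H_n})(X)$ via the diffeomorphism $T$ that was established right before the statement. Concretely, fix $F \in \Fc_\theta \setminus \Zc_{\Psf_\theta^-}$ and use the diffeomorphism $T : \mathfrak{u}_\theta^- \to \Fc_\theta \setminus \Zc_{\Psf_\theta^-}$ to write $F = T(X)$ for a unique $X \in \mathfrak{u}_\theta^-$. The identities \eqref{eqn: shrink ball Lie group} and \eqref{eqn: shrink ball Lie algebra} displayed just above the statement give
\[
e^{H_n} F \;=\; T\bigl(\mathrm{Ad}(e^{H_n})(X)\bigr) \;=\; T\!\left(\sum_{\alpha \in \Sigma_\theta^+} e^{-\alpha(H_n)} X_{-\alpha}\right),
\]
so, since $T$ is continuous and $T(0) = \Psf_\theta$, it suffices to show that $\mathrm{Ad}(e^{H_n})(X) \to 0$ in $\mathfrak{u}_\theta^-$, and that this convergence is uniform as $X$ varies over compact subsets of $\mathfrak{u}_\theta^-$.

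The central observation is then that $\alpha(H_n) \to +\infty$ for every $\alpha \in \Sigma_\theta^+$, not merely for $\alpha \in \theta$. Indeed, any $\alpha \in \Sigma_\theta^+ = \Sigma^+ \setminus \mathrm{Span}(\Delta \setminus \theta)$ is a non-negative integer combination $\alpha = \sum_{\beta \in \Delta} c_\beta \beta$ of simple roots, and by the definition of $\Sigma_\theta^+$ at least one coefficient $c_{\beta_0}$ with $\beta_0 \in \theta$ must be positive (otherwise $\alpha$ would lie in $\mathrm{Span}(\Delta\setminus\theta)$). Since $H_n \in \mathfrak{a}^+$, each $\beta(H_n) \ge 0$ for $\beta \in \Delta$, and by hypothesis $\beta_0(H_n) \to +\infty$, whence $\alpha(H_n) \ge c_{\beta_0}\beta_0(H_n) \to +\infty$. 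Consequently $e^{-\alpha(H_n)} \to 0$ for every $\alpha \in \Sigma_\theta^+$.

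Combining these two ingredients, $\mathrm{Ad}(e^{H_n})(X) = \sum_{\alpha \in \Sigma_\theta^+} e^{-\alpha(H_n)} X_{-\alpha} \to 0$ for each fixed $X \in \mathfrak{u}_\theta^-$, which gives the pointwise assertion. For uniform convergence on a compact set $K \subset \Fc_\theta \setminus \Zc_{\Psf_\theta^-}$, I note that $T^{-1}(K)$ is compact in $\mathfrak{u}_\theta^-$, so the finitely many root-space coordinates $X_{-\alpha}$ ($\alpha \in \Sigma_\theta^+$) are uniformly bounded on $T^{-1}(K)$; since the scalars $e^{-\alpha(H_n)}$ are independent of $X$ and tend to $0$, we obtain $\sup_{X \in T^{-1}(K)} \|\mathrm{Ad}(e^{H_n})(X)\| \to 0$, and then the continuity of $T$ at $0$ (applied to a compact neighborhood of $0$) yields $e^{H_n} F \to \Psf_\theta$ uniformly for $F \in K$.

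I do not anticipate any serious obstacle: the only point that requires attention is the combinatorial step showing that positivity of $\alpha(H_n)$ on all of $\Sigma_\theta^+$ follows from positivity on $\theta$ together with the Weyl chamber condition $H_n \in \mathfrak{a}^+$, and this is immediate from the standard description of $\Sigma_\theta^+$ recorded earlier in the paper.
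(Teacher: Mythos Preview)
Your proof is correct and follows exactly the approach the paper indicates: the paper simply states that Equations~\eqref{eqn: shrink ball Lie group} and~\eqref{eqn: shrink ball Lie algebra} imply the lemma, and you have spelled out the details of this implication. In particular, your combinatorial step showing that $\alpha(H_n)\to+\infty$ for every $\alpha\in\Sigma_\theta^+$ (using that $H_n\in\mfa^+$ and that each such $\alpha$ has a positive coefficient on some simple root in $\theta$) is precisely the point the paper leaves implicit.
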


Using Equations~\eqref{eqn: shrink ball Lie group} and~\eqref{eqn: shrink ball Lie algebra}, we can also prove the following lemma.

\begin{lemma}\label{weak version}
Let $g_n = m_n e^{\kappa(g_n)} \ell_n$ be as in the statement of Proposition~\ref{prop:characterizing convergence in general in appendix}. 
\begin{enumerate}
\item If there is an open set $\mathcal{U}\subset\Fc_\theta^+$ such that $g_n(F) \to F^+$ for all $F \in \mathcal U$, then $m_n\Psf_\theta \rightarrow F^+$ and $\alpha(\kappa(g_n)) \to +\infty$ for every $\alpha \in \theta$.
\item If there is an open set $\mathcal{U}\subset\Fc_\theta^-$ such that $g_n^{-1}(F) \to F^-$ for all $F \in \mathcal U$, then $\ell_n^{-1}\Psf_\theta^- \rightarrow F^-$ and $\alpha(\kappa(g_n)) \to +\infty$ for every $\alpha \in \theta$.
\end{enumerate} 
\end{lemma}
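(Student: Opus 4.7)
The plan is to prove part~(1) directly, then to deduce part~(2) from part~(1) applied to $\{g_n^{-1}\}$ via the $\mathsf{KAK}$-decomposition $g_n^{-1}=(\ell_n^{-1}k_0^{-1})\,e^{\iota\kappa(g_n)}\,(k_0 m_n^{-1})$ together with the standard conjugation-by-$k_0$ identification relating the action of $g_n^{-1}$ on $\Fc_\theta^-$ to an action on a flag variety where part~(1) applies.

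For part~(1), suppose $g_n(F)\to F^+$ for all $F\in\mathcal U$ with $\mathcal U\subset\Fc_\theta^+$ open. First pass to a subsequence on which $m_n\to m$ and $\ell_n\to\ell$ in $\Ksf$ (by compactness), and on which, for each $\alpha\in\Delta$, the quantity $\alpha(\kappa(g_n))$ either converges in $\Rb$ or diverges to $+\infty$. Define $\theta_\infty:=\{\alpha\in\Delta:\alpha(\kappa(g_n))\to+\infty\}$. Once we show $\theta\subseteq\theta_\infty$, Lemma~\ref{uniform convergence} combined with $m_n\to m$ and $\ell_n\to\ell$ gives $g_n(F)\to m\Psf_\theta$ uniformly on compact subsets of $\Fc_\theta\setminus\Zc_{\Psf_\theta^-}$; since this limit equals $F^+$ on an open set, we conclude $F^+=m\Psf_\theta$ and $m_n\Psf_\theta\to F^+$ (by running this argument along every subsequence).

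To prove $\theta\subseteq\theta_\infty$, write $\kappa(g_n)=A_n+B_n$ where $\alpha(A_n)=\alpha(\kappa(g_n))$ for $\alpha\in\theta_\infty$ and $\alpha(A_n)=0$ otherwise, so that $A_n\in\mfa^+$ eventually and $B_n\to B\in\mfa$; then $e^{\kappa(g_n)}=e^{A_n}e^{B_n}$ since $\mfa$ is abelian. In the chart $T:\mfu_\theta^-\to\Fc_\theta\setminus\Zc_{\Psf_\theta^-}$, Equations~\eqref{eqn: shrink ball Lie group}--\eqref{eqn: shrink ball Lie algebra} identify $\mathrm{Ad}(e^{A_n})$ with the diagonal scaling $X_{-\iota^*(\alpha)}\mapsto e^{-\iota^*(\alpha)(A_n)}X_{-\iota^*(\alpha)}$ on $\mfu_\theta^-=\bigoplus_{\alpha\in\Sigma_\theta^+}\mathfrak g_{-\iota^*(\alpha)}$. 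This yields a decomposition $\mfu_\theta^-=U_1\oplus U_0$ in which $U_1$ collects the root spaces with $\iota^*(\alpha)(A_n)\to+\infty$ (contracted to zero) and $U_0$ collects those with $\iota^*(\alpha)(A_n)=0$ (fixed pointwise), so $e^{A_n}$ converges uniformly on compact subsets of the big cell to the linear projection $\pi_0$ along $U_1$. Combining with $e^{B_n}\to e^B$, $m_n\to m$, and $\ell_n\to\ell$ yields $g_n(F)\to m\,\pi_0(e^B\ell(F))$ for $F$ in an open subset of $\mathcal U$. By hypothesis this limit equals $F^+$, so $F\mapsto\pi_0(e^B\ell(F))$ is constant on a nonempty open set; since $F\mapsto e^B\ell(F)$ is a diffeomorphism and $\pi_0$ is a linear projection along $U_1$ in the chart, the only way an open set can lie in a single fiber of $\pi_0$ is $U_0=0$. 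Using symmetry of $\theta$ (so that $\iota^*$ permutes $\Delta$ and preserves $\Sigma_\theta^+$), the condition $U_0=0$ translates into $\theta\subseteq\theta_\infty$.

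The main obstacle is the partial-contraction analysis: one must carefully justify that $\pi_0$ fails to be locally constant whenever $U_0\neq 0$ (so that an open set of values $F^+$ really does force $U_0=0$), and then pin down that the combinatorial condition $U_0=0$ coincides with $\theta\subseteq\theta_\infty$ via the root-space decomposition of $\mfu_\theta^-$ and the permutation action of $\iota^*$ on $\Delta$.
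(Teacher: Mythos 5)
Your proof is correct and follows the same essential route as the paper: after passing to a subsequence where $m_n,\ell_n$ converge in $\Ksf$ and each $\alpha(\kappa(g_n))$ converges or diverges, work in the unipotent chart $T:\mfu_\theta^-\to\Fc_\theta\setminus\Zc_{\Psf_\theta^-}$, use the scaling identities \eqref{eqn: shrink ball Lie group}--\eqref{eqn: shrink ball Lie algebra} to pin down the limiting behaviour of $g_n|_{\mathcal U}$ in the chart, deduce $F^+=m\Psf_\theta$, and obtain part~(2) by conjugating by $k_0$. The only difference is one of packaging: you compute the subsequential limit of the action explicitly as $m\,T\circ\pi_0\circ T^{-1}\circ e^B\ell$ and argue $\pi_0=0$, whereas the paper reaches the same conclusion by picking two test flags $F,F'\in\mathcal U$ whose chart coordinates differ in an undilated root direction $\alpha_0$ and noting their images cannot share a limit; and the two ``obstacles'' you flag at the end are immediate (a nonzero linear projection has fibers of empty interior, so cannot be constant on an open set, and each $\alpha_0\in\theta\setminus\theta_\infty$ lies in $\Sigma_\theta^+$ with $\alpha_0(A_n)=0$, contributing $\mfg_{-\alpha_0}$ to $U_0$, so $U_0=0$ forces $\theta\subseteq\theta_\infty$), so your argument is complete as written.
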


\begin{proof} By compactness, it suffices to consider the case where  $m_n\to m\in \mathsf{K}$ and $\ell_n\to\ell\in\mathsf{K}$. 

(1): We first prove that $\alpha(\kappa(g_n))\to+\infty$ for all $\alpha\in\theta$. If this is not the case, then by taking a subsequence, we may assume that there is some $\alpha_0\in\theta$ such that $\alpha_0(\kappa(g_n))\to c\in [0,\infty)$. Choose $F,F'\in\mathcal U$ such that $\ell(F),\ell(F')\in\Fc_\theta\setminus\mathcal Z_{\mathsf{P}_\theta^-}$, and if we decompose 
\[
T^{-1}(\ell(F))=\sum_{\alpha\in\Sigma_\theta^+}X_{-\alpha} \quad \text{and} \quad T^{-1}(\ell(F'))=\sum_{\alpha\in\Sigma_\theta^+}X'_{-\alpha},
\] 
where $X_{-\alpha},X'_{-\alpha}\in\mathfrak g_{-\alpha}$ for all $\alpha\in \Sigma_\theta^+$, then $X_{-\alpha_0}\neq X'_{-\alpha_0}$. Then by \eqref{eqn: shrink ball Lie group} and \eqref{eqn: shrink ball Lie algebra},
\[
\lim_{n\to\infty}T^{-1}(e^{\kappa(g_n)}\ell_n(F))=\lim_{n\to\infty}{\rm Ad}(e^{\kappa(g_n)})T^{-1}(\ell_n(F))=e^{-c}X_{-\alpha_0}+\lim_{n\to\infty}\sum_{\alpha\in\Sigma_\theta^+-\{\alpha_0\}}e^{-\alpha(\kappa(g_n))}X_{-\alpha}.
\]
Similarly, 
\[\lim_{n\to\infty}T^{-1}(e^{\kappa(g_n)}\ell_n(F'))=e^{-c}X'_{-\alpha_0}+\lim_{n\to\infty}\sum_{\alpha\in\Sigma_\theta^+-\{\alpha_0\}}e^{-\alpha(\kappa(g_n))}X'_{-\alpha},\]
so $\lim_{n\to\infty}me^{\kappa(g_n)}\ell_n(F)\neq \lim_{n\to\infty}me^{\kappa(g_n)}\ell_n(F')$, which implies that
\[\lim_{n\to\infty}g_n(F)\neq \lim_{n\to\infty}g_n(F').\]
This is a contradiction because $F,F'\in\mathcal U$.

Next, we prove that $m_n\Psf_\theta \rightarrow F^+$, or equivalently, $m\mathsf{P}_\theta=F^+$. Let $F\in\Fc_\theta$ such that $F$ is transverse to $F^-$ and $\ell(F)$ is transverse to $\mathsf P_\theta^-$. Then there is some compact subset $K\subset\Fc_\theta\setminus\mathcal Z_{\mathsf{P}_\theta^-}$ such that $\ell_n(F)\in K$ for all sufficiently large $n$. Since $\alpha(\kappa(g_n))\to+\infty$ for all $\alpha\in\theta$, Lemma~\ref{uniform convergence} implies that  
\[e^{\kappa(g_n)}\ell_n(F)\to \mathsf{P}_\theta,\]
which implies that
\[g_n(F)=m_ne^{\kappa(g_n)}\ell_n(F)\to m\mathsf{P}_\theta.\]
It follows that $m\mathsf{P}_\theta=F^+$.

(2): As in Section~\ref{sec:background}, let $k_0\in \mathsf{N}_{\mathsf{K}}(\mfa)$ be a representative of the longest element $w_0 \in W$. Observation~\ref{obs: opposition involution 2} implies that ${\rm Ad}(k_0)(-\kappa(g)) = \kappa(g^{-1})$ for all $g\in\GG$, and so 
$$
g_n^{-1} = \left(\ell_n^{-1} k_0^{-1} \right) e^{\kappa(g_n^{-1})} \left(k_0 m_n^{-1}\right)
$$
is a $\mathsf{KAK}$-decomposition of $g_n^{-1}$. 

Further, $\Psf_{\iota^*(\theta)} = k_0\Psf_\theta^-k_0^{-1}$, see Equation~\eqref{eqn:opposite conjugate form}, so we can define a $\GG$-equivariant diffeomorphism 
\[\Phi_\theta:\Fc_\theta^-\to \Fc_{\iota^*(\theta)}\] 
by $\Phi_\theta(g \Psf_\theta^-) = g k_0 \Psf_{\iota^*(\theta)}$. Then $g_n^{-1}(F) \rightarrow \Phi_\theta(F^-)$ for all $F \in \Phi_\theta(\mathcal{U})$. So by part (1), we see that $\ell_n^{-1} k_0^{-1}  \Psf_{\iota^*(\theta)} \rightarrow \Phi_\theta(F^-)$ and $\alpha(\kappa(g_n^{-1}))\to+\infty$ for all $\alpha\in\iota^*(\theta)$. Since $\Phi_\theta(\ell_n^{-1}\Psf_\theta^-) = \ell_n^{-1} k_0^{-1} \Psf_{\iota^*(\theta)}$ this implies that $\ell_n^{-1}\Psf_\theta^- \rightarrow F^-$. Further, by Observation \ref{obs: opposition involution 2}, 
\[\alpha( \kappa(g)) = \iota^*(\alpha)(\kappa(g^{-1}))\] 
for all $g \in \GG$ and all $\alpha\in\theta$. So we see that $\alpha(\kappa(g_n))\to+\infty$ for all $\alpha\in\theta$.
\end{proof}

\begin{proof}[Proof of Proposition~\ref{prop:characterizing convergence in general in appendix}]
It follows immediately from Lemma \ref{weak version} that (4) implies (1), and it is obvious that (2) and (3) together imply (4). It thus suffices to show that (2) and (3) are both individually equivalent to (1). By compactness, it suffices to consider the case where  $m_n\to m\in \mathsf{K}$ and $\ell_n\to\ell\in\mathsf{K}$. 

We first prove that (1) implies (2).  Since $\alpha(\kappa(g_n))\to+\infty$ for all $\alpha\in\theta$, Lemma~\ref{uniform convergence} implies
$$
\lim_{n \rightarrow \infty} e^{\kappa(g_n)} F = \Psf_\theta
$$ 
for all $F \in \Fc_\theta \setminus \Zc_{\Psf_\theta^-}$, and this convergence is uniform on compact subsets of $\Fc_\theta \setminus \Zc_{\Psf_\theta^-}$. Since, $m \Psf_\theta =F^+$ and $\ell^{-1}\Psf_\theta^- = F^-$, it follows that
$$
\lim_{n \rightarrow \infty} g_n(F) = F^+
$$ 
for all $F \in \Fc_\theta \setminus \Zc_{F^-}$, and this convergence is uniform on compact subsets of $\Fc_\theta \setminus \Zc_{F^-}$. 

Next, we prove (2) implies (1). By Lemma \ref{weak version}, $m_n \Psf_\theta \rightarrow F^+$ and $\alpha(\kappa(g_n)) \to +\infty$ for every $\alpha \in \theta$, so it suffices to show that $\ell_n^{-1}\Psf_\theta^-  \rightarrow F^-$, or equivalently, that $\ell F^- = \Psf_\theta^-$. If this were not the case, then there exists some $F \in  \Zc_{\mathsf P_\theta^-}\setminus \Zc_{\ell F^-}$. Then there is a compact set $K\subset\Fc_\theta\setminus\Zc_{F^-}$ such that $\ell_n^{-1}(F)\in K$ for all sufficiently large $n$. Then by assumption,
$$
m\lim_{n\to\infty}e^{\kappa(g_n)} F=\lim_{n\to\infty}g_n\ell_n^{-1} F=F^+=m\Psf_\theta,
$$
so $e^{\kappa(g_n)} F\to \Psf_\theta$. However, $\{e^{\kappa(g_n)} \} \subset \Psf_\theta^-$, so each $e^{\kappa(g_n)}$ preserves the closed set $\Zc_{\mathsf P_\theta^-}$, which implies that
$$
\Psf_\theta =  \lim_{n \rightarrow \infty} e^{\kappa(g_n)} F \in \Zc_{\mathsf P_\theta^-}.
$$
Since $\Psf_\theta$ and $\Psf_\theta^-$ are transverse, we have a contradiction.

Finally, we prove that (1) and (3) are equivalent. Let $k_0 \in N_{\Ksf}(\mathfrak a)$ be a representative of the longest element $w_0\in W$, and let
\[\Phi_\theta:\Fc_\theta^-\to \Fc_{\iota^*(\theta)}\] 
be the $\GG$-equivariant homeomorphism given by $\Phi_\theta(g \Psf_\theta^-) = g k_0\Psf_{\iota^*(\theta)}$. Observe that 
\[\Phi_\theta(\Fc_\theta^-\setminus\Zc_{F^+})=\Fc_{\iota^*(\theta)} \setminus \Zc_{\Phi_{\iota^*(\theta)}^{-1}(F^+)},\]
so (3) can be rewritten as:
\begin{itemize}
\item[(3')] $g_n^{-1}(F) \to \Phi_\theta(F^-)$ for all $F \in \Fc_{\iota^*(\theta)} \setminus \Zc_{\Phi_{\iota^*(\theta)}^{-1}(F^+)}$, and this convergence is uniform on compact subsets of $\Fc_{\iota^*(\theta)} \setminus \Zc_{\Phi_{\iota^*(\theta)}^{-1}(F^+)}$. 
\end{itemize}
By Observation \ref{obs: opposition involution 2}, $\alpha( \kappa(g_n)) = \iota^*(\alpha)(\kappa(g_n^{-1}))$ for all $n\in\Nb$ and all $\alpha\in\Delta$. Thus, (1) can be rewritten as:
\begin{itemize}
\item[(1')] $m_nk_0^{-1}\Psf_{\iota^*(\theta)}^- \rightarrow \Phi_{\iota^*(\theta)}^{-1}(F^+)$, $\ell_n^{-1}k_0^{-1}\Psf_{\iota^*(\theta)} \rightarrow \Phi_\theta(F^-)$ and $\alpha(\kappa(g_n^{-1})) \rightarrow +\infty$ for every $\alpha \in \iota^*(\theta)$. 
\end{itemize}
We also saw in the proof of Lemma \ref{weak version}(2) that if $g_n = m_n e^{\kappa(g_n)} \ell_n$ is a $\mathsf{KAK}$-decomposition of $g \in \GG$, then 
$$
g_n^{-1} = (\ell_n^{-1} k_0^{-1}) e^{  \kappa(g_n^{-1})} (k_0 m_n^{-1})
$$
is a $\mathsf{KAK}$-decomposition of $g_n^{-1}$. Thus, the equivalence between (1) and (2) implies the equivalence between (1') and (3').
\end{proof}

\section{Proofs of Theorem~\ref{thm:transverse image of visible general} and Proposition~\ref{prop:multiplicative along geodesics}}\label{app:proving transverse repn theorems}

In this appendix we prove Theorem~\ref{thm:transverse image of visible general} and Proposition~\ref{prop:multiplicative along geodesics}. 

When $\GG = \PSL(d,\Kb)$, where $\Kb$ is either the real numbers $\Rb$ or the complex numbers $\Cb$, recall from the introduction that $\Delta :=\{ \alpha_1,\dots, \alpha_{d-1}\} \subset \mathfrak{a}^*$ denotes the standard system of simple restricted roots, i.e.
$$
\alpha_j( {\rm diag}(a_1,\ldots,a_d)) = a_j-a_{j+1}
$$
for all ${\rm diag}(a_1,\ldots,a_d) \in \mathfrak{a}$. To simplify notation, we replace subscripts of the form $\{\alpha_{i_1}, \dots, \alpha_{i_k}\}$ with $i_1,\dots, i_k$. For instance, 
\[\Fc_{1,d-1}=\Fc_{\{\alpha_1,\alpha_{d-1}\}}\quad\text{and}\quad U_{1,d-1}(g)=U_{\{\alpha_1,\alpha_{d-1}\}}(g).\]

As mentioned before, in the case when $\GG = \PSL(d,\Kb)$, Theorem~\ref{thm:transverse image of visible general} and Proposition~\ref{prop:multiplicative along geodesics} were proven in ~\cite{CZZ2}. We will use results from~\cite{GGKW} to prove the following proposition, which allows us to generalize these results in \cite{CZZ2} to general $\GG$.

\begin{proposition}\label{prop:reduction to the linear case} For any symmetric $\theta \subset \Delta$ and $\chi \in \sum_{\alpha \in \theta} \Nb \cdot \omega_\alpha$ there exist $d \in \Nb$,  an irreducible linear representation $\Phi : \GG \rightarrow \mathsf{SL}(d,\Rb)$ and a $\Phi$-equivariant smooth embedding
$$
\xi : \Fc_\theta \rightarrow \Fc_{1,d-1}(\Rb^d)
$$
such that: 
\begin{enumerate}
 \item $F_1, F_2 \in \Fc_\theta$ are transverse if and only if $\xi(F_1)$ and $\xi(F_2)$ are transverse. 
 \item There exists $N \in \Nb$ such that 
$$
\log\sigma_1(\Phi(g)) =N\chi(\kappa(g))
$$
for all $g \in \GG$. 
\item $\alpha_1(\kappa\Phi(g))) = \min_{\alpha \in \theta} \alpha( \kappa(g))$ for all $g \in \GG$.
\item If $\min_{\alpha \in \theta} \alpha(\kappa(g)) > 0$, then 
$$
\xi( U_\theta(g)) = U_{1,d-1}(\Phi(g)). 
$$
\item  $\Gamma \subset \GG$ is $\Psf_{\theta}$-divergent (respectively $\Psf_{\theta}$-transverse) if and only if $\Phi(\Gamma)$ is $\Psf_{1,d-1}$-divergent (respectively $\Psf_{1,d-1}$-transverse). Moreover, in this case 
$$
\xi( \Lambda_{\theta}(\Gamma)) = \Lambda_{1,d-1}(\Phi(\Gamma)).
$$
\item If $\rho : \Gamma_0 \rightarrow \GG$ is a $\Psf_{\theta}$-transverse representation with boundary map $\xi_\rho : \Lambda_\Omega(\Gamma_0) \rightarrow \Fc_\theta$, then $\Phi \circ \rho$ is a $\Psf_{1,d-1}$-transverse representation with boundary map $\xi \circ \xi_\rho$.

\end{enumerate}
\end{proposition}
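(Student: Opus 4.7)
The plan is to apply the standard theory of irreducible proximal representations of real semisimple Lie groups (compare \cite{GGKW}, especially the construction in their Section~4) to realize the partial flag variety $\Fc_\theta$ as a subvariety of a projective flag variety of $\Rb^d$ in a way that is compatible with the Cartan projection. First choose $N \in \Nb$ large enough that $N\chi$ integrates to a character of the maximal torus, and lift $N\chi$ to the restricted highest weight of an irreducible real linear representation $\Phi : \GG \to \SL(V)$, where $V = \Rb^d$. Since $\chi$ lies in the interior of the face of the dominant Weyl chamber dual to $\theta$ (i.e.\ $\langle \chi, \alpha^\vee\rangle > 0$ for every $\alpha \in \theta$, and $\langle \chi, \alpha^\vee\rangle = 0$ for $\alpha \notin \theta$), standard highest-weight theory yields: the highest weight line $L^+ \subset V$ is one-dimensional with stabilizer exactly $\Psf_\theta$ in $\GG$; every other restricted weight $\mu$ of $\Phi$ lies in $N\chi - \sum_{\alpha \in \Delta} \Zb_{\ge 0} \alpha$; and the weights of $\Phi$ at height one below $N\chi$ are exactly $\{N\chi - \alpha : \alpha \in \theta\}$. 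The symmetry hypothesis $\iota^*(\theta) = \theta$ ensures that the lowest weight of $\Phi$ is the negative of a dominant weight of the same form, so that the hyperplane $H^- \subset V$ complementary to the lowest weight line (namely, the sum of all other weight spaces) has stabilizer $\Psf_\theta^-$ in $\GG$.

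With $\Phi$ in hand, define $\xi : \Fc_\theta \to \Fc_{1, d-1}$ by $\xi(g\Psf_\theta) := (\Phi(g) L^+, \Phi(g) H^-)$. This is well-defined by the stabilizer descriptions above, $\Phi$-equivariant by construction, and a smooth embedding (as an orbit map of a transitive $\GG$-action). Property~(1) is immediate: two flags are transverse in $\Fc_\theta$ if and only if they are in the $\GG$-orbit of the standard transverse pair $(\Psf_\theta, \Psf_\theta^-)$, which $\xi$ sends to $(L^+, H^-)$, a transverse pair in $\Fc_{1, d-1}$. Properties~(2) and~(3) reduce to direct computations using the $\mathsf{KAK}$ decomposition and the weight structure of $\Phi$: since $N\chi$ is the highest weight we have $\log \sigma_1(\Phi(g)) = N\chi(\kappa(g))$, and since $\kappa(g) \in \mfa^+$ the maximum over subordinate weights is attained at a height-one weight $N\chi - \alpha$ for some $\alpha \in \theta$, so
\[
\alpha_1(\kappa(\Phi(g))) = \log\sigma_1(\Phi(g)) - \log\sigma_2(\Phi(g)) = N\chi(\kappa(g)) - \max_{\mu \ne N\chi} \mu(\kappa(g)) = \min_{\alpha \in \theta} \alpha(\kappa(g)).
\]

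For property~(4), fix $g \in \GG$ with $\min_{\alpha \in \theta} \alpha(\kappa(g)) > 0$ and a $\mathsf{KAK}$-decomposition $g = m_g e^{\kappa(g)} \ell_g$, so that $U_\theta(g) = m_g \Psf_\theta$. By $\GG$-equivariance of $\xi$, we have $\xi(U_\theta(g)) = \Phi(m_g)(L^+, H^-)$; on the other hand, $\Phi(g) = \Phi(m_g) \Phi(e^{\kappa(g)}) \Phi(\ell_g)$ is a $\mathsf{KAK}$-decomposition of $\Phi(g)$ in $\SL(d, \Rb)$, and the unique top singular direction / complementary hyperplane of $\Phi(e^{\kappa(g)})$ are $L^+$ and $H^-$ (thanks to the positivity $\alpha_1(\kappa(\Phi(g))) > 0$ just established), so $U_{1, d-1}(\Phi(g)) = \Phi(m_g)(L^+, H^-) = \xi(U_\theta(g))$. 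Properties~(5) and~(6) are then immediate consequences of~(3), (4), and~(1), combined with the characterization of $\Psf_\theta$-divergence, $\Psf_\theta$-transversality, and transverse representations via the Cartan projection and $U_\theta$ (Proposition~\ref{prop:characterizing convergence in general symmetric case} and Observation~\ref{obs: basic properties of transverse repns}). The main technical input, and the only step that uses more than formal manipulations, is the structural fact about the weight diagram of $\Phi$ (that the second-highest weights are exactly $\{N\chi - \alpha : \alpha \in \theta\}$), which is where the interior-of-the-face hypothesis on $\chi$ is crucial; everything else is a routine unwinding of definitions.
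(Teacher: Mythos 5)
Your proposal follows essentially the same strategy as the paper: cite the \cite{GGKW} construction of an irreducible proximal representation $\Phi$ with highest restricted weight $N\chi$, note that properties (1)--(3) are encoded in the weight diagram and the singular-value decomposition, and deduce (5), (6) formally from (1), (3), (4) together with the convergence-group characterization (Proposition~\ref{prop:characterizing convergence in general symmetric case}). The only real work is part~(4), and there your idea matches the paper's.

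One minor imprecision worth noting: you say that $\Phi(g)=\Phi(m_g)\Phi(e^{\kappa(g)})\Phi(\ell_g)$ is a $\mathsf{KAK}$-decomposition of $\Phi(g)$ in $\SL(d,\Rb)$. It is not literally one, because although $\Phi(e^{\kappa(g)})$ is diagonal (after a suitable choice of basis), its entries need not be arranged in decreasing order. The paper handles this by conjugating by a permutation matrix $k$ and observing that $k$ fixes $e_1$ and $e_d$ since $N\chi$ and its opposite are the extreme weights. Your argument works for exactly the same reason -- the spectral gap $\alpha_1(\kappa(\Phi(g)))>0$ forces the top singular direction of $\Phi(e^{\kappa(g)})$ to be $L^+$ regardless of how the middle entries are ordered -- but the phrase ``$\mathsf{KAK}$-decomposition'' should be replaced by something like ``decomposition into $\Phi(m_g)\in\SO(d)$, a positive diagonal matrix, and $\Phi(\ell_g)\in\SO(d)$,'' with the reordering remark spelled out. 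The other difference is cosmetic: you construct $\xi$ directly as $g\Psf_\theta\mapsto(\Phi(g)L^+,\Phi(g)H^-)$, which makes the identification of $\xi(\Psf_\theta)$ tautological, whereas the paper takes $\xi$ as given from \cite{GGKW} and proves $\xi(\Psf_\theta)=(\langle e_1\rangle,\langle e_1,\dots,e_{d-1}\rangle)$ as a separate small lemma; both routes are fine.
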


Delaying the proof of Proposition~\ref{prop:reduction to the linear case} for a moment, we prove Theorem~\ref{thm:transverse image of visible general} and Proposition~\ref{prop:multiplicative along geodesics}.

\subsection{Proof of Theorem~\ref{thm:transverse image of visible general}} Let $\Phi : \GG \rightarrow \PSL(d,\Rb)$ and $\xi_{\Phi} : \Fc_\theta \rightarrow \Fc_{1,d-1}(\Rb^d)$ satisfy Proposition~\ref{prop:reduction to the linear case} for some $\chi \in \sum_{\alpha \in \theta} \Nb \cdot \omega_\alpha$. 

Then $\Phi(\Gamma)$ is $\Psf_{1,d-1}$-transverse and so by~\cite[Thm.\ 4.2]{CZZ2} there exist $d_0\in \Nb$, a properly convex domain $\Omega \subset \Pb(\Rb^{d_0})$, 
a projectively visible subgroup $\Gamma_0 \subset \mathrm{Aut}(\Omega)$ and a faithful $\Psf_{1,d-1}$-transverse representation
$\rho_0: \Gamma_0 \rightarrow \PSL(d,\Rb)$ with limit map $\xi_0:\Lambda_{\Omega}(\Gamma_0)\to\mathcal F_{1,d-1}(\Rb^d)$ so that $\rho_0(\Gamma_0)=\Phi(\Gamma)$ and 
$$
\xi_0( \Lambda_{\Omega}(\Gamma_0)) = \Lambda_{1,d-1}(\Phi(\Gamma)) = \xi_{\Phi}( \Lambda_\theta(\Gamma)). 
$$

We claim that $\Phi$ is injective. Since $\GG$ is semisimple, $\ker \Phi$ is either discrete or contains a simple factor of $\GG$. Since $\xi : \Fc_\theta \rightarrow \Fc_{1,d-1}(\Rb^d)$ is a $\Phi$-equivariant embedding, $\ker \Phi$ must act trivially on $\Fc_\theta$. So $\ker \Phi \subset \Psf_\theta$. By assumption $\Psf_\theta$ contains no simple factors of $\GG$, so $\ker \Phi$ is discrete. However then, since $\ker \Phi$ is also normal, we see that $\ker \Phi$ is contained in the center of $\GG$ which by assumption is trivial. Hence $\Phi$ is injective.

Then $\rho := \Phi^{-1} \circ \rho_0$ and $\xi := \xi^{-1}_{\Phi} \circ \xi_{0}$ are well defined and have the desired properties.

\subsection{Proof of Proposition~\ref{prop:multiplicative along geodesics}} 
We start by recalling a result in~\cite{CZZ2} about transverse representations into $\PSL(d,\Kb)$. Let $\d_{\Pb(\Rb^{d_0})}$ be a distance on $\Pb(\Rb^{d_0})$ induced by a Riemannian metric. Given a properly convex domain $\Omega \subset \Pb(\Rb^{d_0})$ and $b_0 \in \Omega$ let 
$$\iota_{b_0}:\Omega\setminus\{b_0\}\to \partial\Omega$$ 
denote the {\em radial projection} map obtained by letting $\iota_{b_0}(z)\in\partial\Omega$ be the unique point so that $z\in (b_0,\iota_{b_0}(z))_\Omega$. The following lemma was proven as Lemma 6.2 and Observation 6.3 in~\cite{CZZ2}.

\begin{lemma}\label{lem:multiplicative along geodesics in linear case 1} 
Suppose $\theta \subset \{\alpha_1,\dots, \alpha_{d-1}\}$ is symmetric. Let $\rho : \Gamma_0 \rightarrow \PSL(d,\Kb)$ be a $\Psf_\theta$-transverse representation, where $\Gamma_0$ is a projectively visible subgroup of $\Aut(\Omega)$ for some properly convex domain $\Omega\subset\Pb(\Rb^{d_0})$.
For any $b_0\in\Omega$ and $\epsilon>0$, there exist $C>0$ such that  if $\gamma, \eta \in \Gamma_0$ and 
\begin{align*}
\d_{\Pb(\Rb^{d_0})}\left( \iota_{b_0}(\gamma^{-1}(b_0)), \iota_{b_0}(\eta(b_0)) \right) \ge\epsilon, 
\end{align*}
then 
\begin{align*}
\abs{\omega_{\alpha_k}\Big(\kappa(\rho(\gamma\eta))-\kappa(\rho(\gamma))-\kappa(\rho(\eta))\Big)} \le C
\end{align*}
for all $\alpha_k \in \theta$. 
\end{lemma}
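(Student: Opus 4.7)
The plan is to split the lemma into two halves: a trivial upper bound and a substantive lower bound. For the upper bound $\omega_{\alpha_k}(\kappa(\rho(\gamma\eta)) - \kappa(\rho(\gamma)) - \kappa(\rho(\eta))) \le 0$, I will use the identity $\omega_{\alpha_k}(\kappa(g)) = \log\|{\textstyle\bigwedge^k} g\|_{\mathrm{op}}$ for $g \in \SL(d,\Kb)$ (with respect to the standard norm on ${\bigwedge^k}\Kb^d$), so that submultiplicativity of the operator norm applied to ${\bigwedge^k}(\rho(\gamma))\cdot{\bigwedge^k}(\rho(\eta))$ gives the inequality with constant $0$.

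For the matching lower bound I will argue by contradiction. Suppose $\{\gamma_n\}, \{\eta_n\}$ are sequences in $\Gamma_0$ satisfying the separation hypothesis for which the deficit $D_n := \omega_{\alpha_k}(\kappa(\rho(\gamma_n))) + \omega_{\alpha_k}(\kappa(\rho(\eta_n))) - \omega_{\alpha_k}(\kappa(\rho(\gamma_n\eta_n)))$ tends to $+\infty$. Since $\kappa$ is coarsely Lipschitz, we may pass to subsequences to assume both $\gamma_n$ and $\eta_n$ leave every finite subset of $\Gamma_0$ (otherwise a bounded subsequence forces $D_n$ bounded). Properness of the $\Aut(\Omega)$-action on $\Omega$ forces $\gamma_n^{-1}(b_0), \eta_n(b_0) \to \partial\Omega$, and after a subsequence we arrange $\gamma_n^{-1}(b_0) \to x$ and $\eta_n(b_0) \to y$ with $x, y \in \Lambda_\Omega(\Gamma_0)$. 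The separation hypothesis forces $x \ne y$, so the transversality of $\xi(\Lambda_\Omega(\Gamma_0))$ makes $\xi(x), \xi(y) \in \Fc_\theta$ transverse, while Observation~\ref{obs: basic properties of transverse repns}(2) gives $U_\theta(\rho(\gamma_n^{-1})) \to \xi(x)$ and $U_\theta(\rho(\eta_n)) \to \xi(y)$. A short argument shows $\gamma_n\eta_n$ cannot stay bounded in $\Gamma_0$ (else $\gamma_n^{-1}(b_0)$ would converge to the same boundary point as $\eta_n(b_0)$, contradicting separation), so after a further subsequence we also have $\eta_n^{-1}(b_0) \to y^-$ and $(\gamma_n\eta_n)^{-1}(b_0) \to z$ in $\Lambda_\Omega(\Gamma_0)$.

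To derive the contradiction I combine three applications of Lemma~\ref{quintlemma} with the cocycle identity for the partial Iwasawa cocycle. Choose any $F_0 \in \Fc_\theta$ simultaneously transverse to $\xi(x)$, $\xi(y^-)$, and $\xi(z)$; such $F_0$ exists since each $\Zc_{(\cdot)}$ has empty interior in $\Fc_\theta$. By Observation~\ref{obs: basic properties of transverse repns}(2) and the defining property of transverse representations, for all large $n$ the flag $F_0$ is uniformly bounded away (in a fixed Riemannian distance on $\Fc_\theta$) from both $\Zc_{U_\theta(\rho(\eta_n^{-1}))}$ and $\Zc_{U_\theta(\rho((\gamma_n\eta_n)^{-1}))}$, while $\rho(\eta_n)F_0 \to \xi(y)$ stays uniformly bounded away from $\Zc_{U_\theta(\rho(\gamma_n^{-1}))}$ (using $F_0 \not\in \Zc_{\xi(y^-)}$ for the former convergence and that $\xi(y)$ is transverse to $\xi(x)$ for the uniform separation). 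Lemma~\ref{quintlemma} applied to the pairs $(\rho(\eta_n), F_0)$, $(\rho(\gamma_n\eta_n), F_0)$, and $(\rho(\gamma_n), \rho(\eta_n)F_0)$ then yields a single constant $C$ such that each $B_\theta$ is within $C$ of the corresponding $\kappa_\theta$. Combining with the cocycle identity $B_\theta(\rho(\gamma_n\eta_n), F_0) = B_\theta(\rho(\gamma_n), \rho(\eta_n)F_0) + B_\theta(\rho(\eta_n), F_0)$ gives $\|\kappa_\theta(\rho(\gamma_n\eta_n)) - \kappa_\theta(\rho(\gamma_n)) - \kappa_\theta(\rho(\eta_n))\| \le 3C$, and applying $\omega_{\alpha_k}$ bounds $D_n$ uniformly, a contradiction.

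The main obstacle will be the variable dependence on $n$ of the repelling flags $U_\theta(\rho(\gamma_n^{-1}))$, $U_\theta(\rho(\eta_n^{-1}))$, $U_\theta(\rho((\gamma_n\eta_n)^{-1}))$ against which Lemma~\ref{quintlemma} must be applied; the constants it produces depend only on the distance of the test flag from these varying Schubert sets. The critical point that makes a uniform choice of $F_0$ possible is precisely the transverse representation hypothesis, which converts the projective-distance separation in $\overline{\Omega}$ into uniform transversality of the relevant flags in $\Fc_\theta$.
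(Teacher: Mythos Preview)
Your argument is correct. The paper does not prove this lemma; it cites it as Lemma~6.2 and Observation~6.3 of~\cite{CZZ2}, so there is no in-paper argument to compare against directly. Your route---the trivial upper bound via submultiplicativity of $\norm{\bigwedge^k(\cdot)}$, and the lower bound by contradiction using Quint's Lemma~\ref{quintlemma} together with the cocycle identity for $B_\theta$---is a valid, self-contained proof. All the compactness and convergence steps are sound: the passage from the $\epsilon$-separation of radial projections to $x\ne y$, the exclusion of bounded subsequences of $\gamma_n$, $\eta_n$, $\gamma_n\eta_n$, and the use of Observation~\ref{obs: basic properties of transverse repns} and the defining dynamics of a transverse representation to get $\rho(\eta_n)F_0\to\xi(y)$ are all justified.

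Two small remarks. First, the separate upper bound is unnecessary: the cocycle step already yields the two-sided estimate $\norm{\kappa_\theta(\rho(\gamma_n\eta_n))-\kappa_\theta(\rho(\gamma_n))-\kappa_\theta(\rho(\eta_n))}\le 3C$, so run as a direct argument (rather than by contradiction) it bounds $\abs{\omega_{\alpha_k}(\cdots)}$ on both sides at once. Second, when you assert that $\rho(\eta_n)F_0$ stays uniformly away from $\Zc_{U_\theta(\rho(\gamma_n^{-1}))}$, you are implicitly using that the set of non-transverse pairs in $\Fc_\theta\times\Fc_\theta$ is closed (so that convergent sequences with transverse limits are eventually uniformly transverse); this is true but worth making explicit. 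The proof in~\cite{CZZ2} is formulated in the linear-algebra language specific to $\PSL(d,\Kb)$ (singular value gaps and exterior powers), whereas your argument is more Lie-theoretic and would work verbatim for any semisimple $\GG$; both approaches ultimately hinge on the same transversality of $U_\theta(\rho(\gamma_n^{-1}))$ and $U_\theta(\rho(\eta_n))$.
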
 

Lemma \ref{lem:multiplicative along geodesics in linear case 1} can be restated as follows.

\begin{lemma}\label{lem:multiplicative along geodesics in linear case 2}  
Suppose $\theta \subset \{\alpha_1,\dots, \alpha_{d-1}\}$ is symmetric. Let $\rho : \Gamma_0 \rightarrow \PSL(d,\Kb)$ be a $\Psf_\theta$-transverse representation where $\Gamma_0$ is a projectively visible subgroup of $\Aut(\Omega)$ for some properly convex domain $\Omega\subset\Pb(\Rb^{d_0})$.
For any $b_0\in\Omega$ and $r > 0$, there exist $C>0$ such that  if $\gamma, \eta \in \Gamma_0$ and 
\begin{align*}
\d_{\Omega}\left(\gamma(b_0),[b_0, \eta(b_0)]_\Omega \right) \le r, 
\end{align*}
then 
\begin{align*}
\abs{ \omega_{\alpha_k}\Big(\kappa(\rho(\eta))-\kappa(\rho(\gamma))-\kappa(\rho(\gamma^{-1}\eta))\Big)} \le C
\end{align*}
for all $\alpha_k \in \theta$. 
\end{lemma}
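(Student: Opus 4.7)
The plan is to derive Lemma~\ref{lem:multiplicative along geodesics in linear case 2} directly from Lemma~\ref{lem:multiplicative along geodesics in linear case 1} via the substitution $(\gamma,\eta) \mapsto (\gamma, \gamma^{-1}\eta)$. Under this substitution the product $\gamma \cdot (\gamma^{-1}\eta) = \eta$, so the conclusion of Lemma~\ref{lem:multiplicative along geodesics in linear case 1} applied to the pair $(\gamma, \gamma^{-1}\eta)$ reads $|\omega_{\alpha_k}(\kappa(\rho(\eta)) - \kappa(\rho(\gamma)) - \kappa(\rho(\gamma^{-1}\eta)))| \le C$, which is exactly what Lemma~\ref{lem:multiplicative along geodesics in linear case 2} requires. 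All that remains is to translate the hypotheses: I need to show that the Hilbert bound $\d_\Omega(\gamma(b_0), [b_0, \eta(b_0)]_\Omega) \le r$ implies that the radial projections $\iota_{b_0}(\gamma^{-1}(b_0))$ and $\iota_{b_0}(\gamma^{-1}\eta(b_0))$ are bounded apart in $\Pb(\Rb^{d_0})$ by a constant $\epsilon$ depending only on $r$ and $b_0$, perhaps after absorbing finitely many exceptional pairs into the final constant $C$.

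Since $\gamma \in \Aut(\Omega)$ is a Hilbert isometry, applying $\gamma^{-1}$ rewrites the hypothesis as $\d_\Omega(b_0, [x,y]_\Omega) \le r$, where $x := \gamma^{-1}(b_0)$ and $y := \gamma^{-1}\eta(b_0)$. I first dispose of the case $\min\{\d_\Omega(b_0,x), \d_\Omega(b_0,y)\} \le M$ for a large fixed $M$: by proper discontinuity of the $\Gamma_0$-action on $\Omega$, this forces $(\gamma,\eta)$ to range over a finite set, and the desired estimate on this finite set holds trivially after enlarging $C$. The substantive case is therefore when both $x$ and $y$ are Hilbert-far from $b_0$.

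The main technical ingredient, which is the only step requiring real work, is the following geometric claim: for any $r>0$ there exist $M>0$ and $\epsilon > 0$ such that if $x,y \in \Omega$ satisfy $\d_\Omega(b_0,x), \d_\Omega(b_0,y) \ge M$ and $\d_\Omega(b_0, [x,y]_\Omega) \le r$, then $\d_{\Pb(\Rb^{d_0})}(\iota_{b_0}(x), \iota_{b_0}(y)) \ge \epsilon$. I plan to prove this by contradiction using compactness of $\overline{\Omega}$. Suppose sequences $x_n, y_n$ violate the claim with $\d_\Omega(b_0,x_n), \d_\Omega(b_0,y_n) \to \infty$ and $\d_{\Pb(\Rb^{d_0})}(\iota_{b_0}(x_n),\iota_{b_0}(y_n)) \to 0$. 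Subsequentially $x_n \to z_x$ and $y_n \to z_y$ with $z_x, z_y \in \partial \Omega$ (since the Hilbert distances to $b_0$ diverge), and continuity of the radial projection on $\overline{\Omega}\setminus\{b_0\}$ together with the fact that $\iota_{b_0}$ fixes $\partial\Omega$ forces $z_x = z_y =: z$. Choosing $p_n \in [x_n,y_n]_\Omega$ with $\d_\Omega(b_0,p_n) \le r$ and passing to a limit, $p_n$ must converge to a point $p_\infty \in \overline{\Omega}$ with $\d_\Omega(b_0,p_\infty)\le r$; but $p_n \in [x_n,y_n]_\Omega$ and both endpoints tend to $z$, forcing $p_\infty = z \in \partial\Omega$, a contradiction. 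With this geometric fact in hand, applying Lemma~\ref{lem:multiplicative along geodesics in linear case 1} to $(\gamma,\gamma^{-1}\eta)$ with the constant $\epsilon$ just produced completes the proof.
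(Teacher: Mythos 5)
Your reduction to Lemma~\ref{lem:multiplicative along geodesics in linear case 1} via the substitution $(\gamma,\eta)\mapsto(\gamma,\gamma^{-1}\eta)$ and the Hilbert isometry $\gamma^{-1}$ is exactly what the paper does, and your compactness argument for the geometric claim (if $x_n,y_n$ exit every Hilbert ball about $b_0$ while $[x_n,y_n]_\Omega$ meets a fixed ball, the radial projections $\iota_{b_0}(x_n),\iota_{b_0}(y_n)$ stay uniformly separated) is correct and in fact supplies a detail the paper only asserts. So the substantive case is handled soundly.

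The gap is in your treatment of the exceptional case. You claim that $\min\{\d_\Omega(b_0,\gamma^{-1}(b_0)),\d_\Omega(b_0,\gamma^{-1}\eta(b_0))\}\le M$ forces $(\gamma,\eta)$ to range over a finite set. It does not: the condition is a disjunction, so it constrains either $\gamma$ to a finite set or $\gamma^{-1}\eta$ to a finite set, but the other coordinate remains unrestricted (e.g.\ $\gamma=\mathrm{id}$ and $\eta$ arbitrary satisfies the condition). The desired bound on $\abs{\omega_{\alpha_k}(\kappa(\rho(\eta))-\kappa(\rho(\gamma))-\kappa(\rho(\gamma^{-1}\eta)))}$ is still true in this case, but it requires the subadditivity $\omega_{\alpha_k}(\kappa(gh))\le\omega_{\alpha_k}(\kappa(g))+\omega_{\alpha_k}(\kappa(h))$ (coming from $e^{\omega_{\alpha_k}(\kappa(g))}$ being the operator norm of $\bigwedge^k g$), which shows that when $\kappa(\rho(\gamma))$ (resp.\ $\kappa(\rho(\gamma^{-1}\eta))$) ranges over a bounded set, so does $\kappa(\rho(\eta))-\kappa(\rho(\gamma^{-1}\eta))$ (resp.\ $\kappa(\rho(\eta))-\kappa(\rho(\gamma))$). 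The paper avoids the need to treat this case separately by arguing by contradiction: it uses that same subadditivity to show the contradiction hypothesis (error term at least $n$) already forces both $\{\gamma_n\}$ and $\{\gamma_n^{-1}\eta_n\}$ to diverge, so the substantive case is the only one. You should either adopt that contradiction framing or add the subadditivity argument to close your exceptional case.
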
 

\begin{proof} 
Suppose not. Then there exist $\alpha_k \in \theta$ and sequences $\{\gamma_n\}$, $\{\eta_n\}$ in $\Gamma$ such that 
\[
\d_\Omega\left( \gamma_n(b_0), [b_0, \eta_n(b_0)]_\Omega \right) \le r\quad\text{but}\quad\abs{ \omega_{\alpha_k}\Big(\kappa(\rho(\eta_n))-\kappa(\rho(\gamma_n))-\kappa(\rho(\gamma_n^{-1}\eta_n))\Big)} \ge n.
\]
Since 
$$
\norm{\kappa(\rho(\eta_n))-\kappa(\rho(\gamma_n^{-1}\eta_n))}\le \sqrt{d}\max\left\{\log\sigma_1(\rho(\gamma_n^{-1})),\log\sigma_1(\rho(\gamma_n))\right\},
$$
$\{\gamma_n\}$ is a diverging sequence.
A similar argument also shows that $\{\gamma_n^{-1}\eta_n\}$ is diverging.

Since both $\{\gamma_n\}$ and $\{\gamma_n^{-1}\eta_n\}$ are diverging and 
\[
\d_\Omega\left( b_0, [\gamma_n^{-1}(b_0), \gamma_n^{-1}\eta_n(b_0)]_\Omega \right)=\d_\Omega\left( \gamma_n(b_0), [b_0, \eta_n(b_0)]_\Omega \right) \le r,
\]
it follows that there is some $\epsilon>0$ so that
\[\d_{\Pb(\Rb^{d_0})}\left( \iota_{b_0}(\gamma_n^{-1}(b_0)), \iota_{b_0}(\gamma_n^{-1}\eta_n(b_0)) \right) \ge\epsilon\]
for all $n$. Thus, Lemma \ref{lem:multiplicative along geodesics in linear case 1} implies that $\abs{ \omega_{\alpha_k}\Big(\kappa(\rho(\eta_n))-\kappa(\rho(\gamma_n))-\kappa(\rho(\gamma_n^{-1}\eta_n))\Big)}$  has a uniform upper bound, which is a contradiction.
\end{proof}

\begin{proof}[Proof of Proposition~\ref{prop:multiplicative along geodesics}]

Since $\left\{\omega_\alpha|_{\mfa_\theta}\right\}_{\alpha \in \theta}$ is a basis for $\mfa_\theta^*$, it suffices to fix $\beta \in \theta$ and find $C > 0$ such that: if $\gamma, \eta \in \Gamma_0$ and 
\begin{align*}
\d_{\Omega}\left(\gamma(b_0),[b_0, \eta(b_0)]_\Omega \right) \le r, 
\end{align*}
then 
\begin{align*}
\abs{\omega_\beta\left( \kappa_\theta(\rho(\gamma\eta))-\kappa_\theta(\rho(\gamma))-\kappa_\theta(\rho(\eta))\right)}\le C.
\end{align*}

Let $\chi_1 := \sum_{\alpha \in \theta} \omega_\alpha$ and $\chi_2 := \omega_\beta + \sum_{\alpha \in \theta} \omega_\alpha$. For $j =1,2$, let $\Phi_j : \GG \rightarrow \PSL(d_j, \Rb)$ satisfy Proposition~\ref{prop:reduction to the linear case} for $\chi_j$, and let $\rho_j: = \Phi_j \circ \rho$. Then $\rho_j$ is a $\Psf_{1,d_j-1}$-transverse representation and there exists $N_j \in \Nb$ such that
\begin{align*}
\abs{\chi_j\left( \kappa_\theta(\rho(\gamma\eta))-\kappa_\theta(\rho(\gamma))-\kappa_\theta(\rho(\eta))\right)}=\frac{1}{N_j}\abs{\omega_{\alpha_1}\left( \kappa(\rho_j(\gamma\eta))-\kappa(\rho_j(\gamma))-\kappa(\rho_j(\eta))\right)}
\end{align*}
for all $\gamma, \eta \in \Gamma$. Applying Lemma~\ref{lem:multiplicative along geodesics in linear case 2} to $\rho_j$,  there exists $C_j > 0$ such that: if $\gamma, \eta \in \Gamma_0$ and 
\begin{align*}
\d_{\Omega}\left(\gamma(b_0),[b_0, \eta(b_0)]_\Omega \right) \le r, 
\end{align*}
then 
\begin{align*}
\abs{\omega_{\alpha_1}\left(\kappa(\rho_j(\gamma\eta))-\kappa(\rho_j(\gamma))-\kappa(\rho_j(\eta))\right)}\le C_j.
\end{align*}
Since $\chi_2 - \chi_1 = \omega_\beta$, we then have: if $\gamma, \eta \in \Gamma_0$ and 
\begin{align*}
\d_{\Omega}\left(\gamma(b_0),[b_0, \eta(b_0)]_\Omega \right) \le r, 
\end{align*}
then 
\begin{equation*}
\abs{\omega_\beta\left( \kappa_\theta(\rho(\gamma\eta))-\kappa_\theta(\rho(\gamma))-\kappa_\theta(\rho(\eta))\right)}\le \frac{C_1}{N_1}+\frac{C_2}{N_2}. \qedhere
\end{equation*}

\end{proof}

\subsection{The proof of Proposition~\ref{prop:reduction to the linear case}} Fix a symmetric set $\theta \subset \Delta$ and $\chi \in \sum_{\alpha\in \theta} \Nb \omega_\alpha$. By~\cite[Lem.\ 3.2, Prop.\ 3.3, Rem.\ 3.6 and Lem.\ 3.7]{GGKW} there exist $N,d \in \Nb$, an irreducible linear representation $\Phi : \GG \rightarrow \mathsf{SL}(d,\Rb)$ and a $\Phi$-equivariant smooth embedding 
$$
\xi : \Fc_\theta \rightarrow \Fc_{1,d-1}:=\Fc_{1,d-1}(\Rb^d)
$$
 such that: 
\begin{enumerate}[label=(\alph*)]
\item\label{item: highest weight of Phi} $\Phi$ is proximal and has highest weight $N \chi$, that is: if $H \in {\rm int}(\mfa^+)$, then $\Phi(e^H)$ is proximal and the eigenvalue with largest modulus is $e^{N \chi(H)}$. 
\item\label{item: SVD of Phi} $\Phi(\Ksf) \subset \mathsf{SO}(d,\Rb)$ and $\Phi(e^{\mfa})$ is a subgroup of the diagonal matrices in $\SL(d,\Rb)$. 
\item\label{item:alpha1 formula} $\alpha_1(\kappa(\Phi(g))) = \min_{\alpha \in \theta} \alpha( \kappa(g))$ for all $g \in \GG$. 
\item\label{item:transverse is preserved under phi} $F_1, F_2 \in \Fc_\theta$ are transverse if and only if $\xi(F_1), \xi(F_2) \in \Fc_{1,d-1}$ are transverse. 
\end{enumerate} 

In the statement of Proposition~\ref{prop:reduction to the linear case}, parts (1) and (3) are restatements of properties \ref{item:transverse is preserved under phi} and \ref{item:alpha1 formula} of $\Phi$ respectively, while part (2) is a consequence of properties~\ref{item: highest weight of Phi} and~\ref{item: SVD of Phi} of $\Phi$. Part (5) follows immediately from parts (1), (3), and (4), while part (6) follows immediately from part (5) and Proposition 2.5. Thus, it suffices to prove part (4). 

Let $e_1,\dots, e_d$ be the standard basis of $\Rb^d$. Using properties~\ref{item: highest weight of Phi} and~\ref{item: SVD of Phi}, we can conjugate $\Phi$ by a permutation matrix and assume that 
\begin{equation}\label{eqn:biggest and smallest eigenvalues}
\Phi(e^H)e_1 = e^{N \chi(H)}e_1 \quad \text{and} \quad \Phi(e^H)e_d = e^{N \bar{\chi}(H)}e_d
\end{equation} 
when $H \in \mfa$ (where as usual $\bar{\chi}=\chi \circ \iota$). We first observe that the value of $\xi(\Psf_\theta)$ is determined.

\begin{lemma}\label{lem:image of Ptheta} $\xi(\Psf_\theta) = \big( \ip{e_1}, \ip{e_1,\dots, e_{d-1}} \big)$. \end{lemma}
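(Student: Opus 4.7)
My strategy is to identify $\xi(\Psf_\theta)$ as the unique flag in $\Fc_{1,d-1}$ stabilized by $\Phi(\mathsf B)$, where $\mathsf B := \exp(\mfa)\exp(\mfu^+)$ is the standard Borel subgroup. The flag $\Psf_\theta \in \Fc_\theta$ is fixed by $\Psf_\theta$ acting on itself, so by the $\Phi$-equivariance of $\xi$, the image $\xi(\Psf_\theta)$ is fixed by $\Phi(\Psf_\theta)$, and in particular by $\Phi(\mathsf B)$ since $\mathsf B \subset \Psf_\theta$.

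First I will show that the only $\Phi(\mathsf B)$-stable line in $\Rb^d$ is $\langle e_1\rangle$. By property \ref{item: SVD of Phi}, $\Phi(\exp\mfa)$ is diagonal in the given basis, so any such line $L$ must lie in a single weight space $V_\mu$. For any root vector $X \in \mfg_\alpha$ with $\alpha \in \Sigma^+$ and any $0 \ne v \in L$, expanding $\Phi(\exp X) v = v + \Phi(X) v + \tfrac{1}{2}\Phi(X)^2 v + \cdots$ and noting that $\Phi(X)^k v \in V_{\mu + k\alpha}$ for distinct weight spaces as $k$ varies, the requirement that $\Phi(\exp X)v$ be a scalar multiple of $v$ forces $\Phi(X)^k v = 0$ for every $k \ge 1$. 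Thus $v$ is annihilated by all of $\mfu^+$, hence lies in the highest weight space of $\Phi$. Irreducibility and the convention that $e_1$ spans this space (by property \ref{item: highest weight of Phi} together with \eqref{eqn:biggest and smallest eigenvalues}) then give $L = \langle e_1\rangle$.

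For the hyperplane component I will apply the same argument to the dual representation $\Phi^*$. A hyperplane $H' \subset \Rb^d$ is $\Phi(\mathsf B)$-stable if and only if its annihilator $(H')^\perp \subset (\Rb^d)^*$ is a $\Phi^*(\mathsf B)$-stable line. Since $\Phi^*$ is irreducible with highest weight $-w_0(N\chi) = N\bar\chi$, the weight-theoretic argument of the previous paragraph, applied to $\Phi^*$, shows that the only $\Phi^*(\mathsf B)$-stable line is its highest weight line, which by \eqref{eqn:biggest and smallest eigenvalues} is $\langle e_d^*\rangle$. Consequently $H' = \ker(e_d^*) = \langle e_1,\ldots,e_{d-1}\rangle$, and combining the two computations yields $\xi(\Psf_\theta) = (\langle e_1\rangle,\langle e_1,\ldots,e_{d-1}\rangle)$.

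The only delicate point is the weight-space argument in the first paragraph, and even there the delicacy is cosmetic: one must note that for $\alpha \in \Sigma^+$ the weights $\mu, \mu+\alpha, \mu+2\alpha, \ldots$ are distinct, so the weight-space decomposition uniquely separates the terms in the series for $\Phi(\exp X)v$. Everything else is a direct consequence of the $\Phi$-equivariance of $\xi$ and the basic fact that every parabolic subgroup contains a Borel subgroup.
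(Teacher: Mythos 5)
Your proof is correct but takes a genuinely different route from the paper's. The paper argues dynamically: fixing $H \in \interior(\mfa^+)$, it uses the singular value gap furnished by properties (a), (b) and \eqref{eqn:biggest and smallest eigenvalues} to see that $\Phi(e^{nH})$ attracts a dense open subset of $\Fc_{1,d-1}$ to $\big(\ip{e_1},\ip{e_1,\dots,e_{d-1}}\big)$, picks $F\in\Fc_\theta$ with $\xi(F)$ in generic position (using irreducibility of $\Phi$) and with $e^{nH}F \to \Psf_\theta$ (using Lemma~\ref{uniform convergence}), and concludes from $\xi(e^{nH}F) = \Phi(e^{nH})\xi(F)$ by passing to the limit. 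You instead characterize $\xi(\Psf_\theta)$ as the unique flag in $\Fc_{1,d-1}$ fixed by $\Phi(\mathsf{B})$, i.e.\ the Borel-fixed-point description of the distinguished flag, carried out by hand at the level of $\mfa$-weight spaces, with the hyperplane component obtained by dualizing. Both are valid; yours is self-contained, isolates exactly what is used (proximality gives a one-dimensional highest restricted weight line, and $\mfu^+$-invariance of a weight line forces it into that space), and generalizes readily to other parabolics, while the paper's argument reuses the dynamical machinery already assembled for Proposition~\ref{prop:characterizing convergence in general}. One point worth making explicit: when you cite \eqref{eqn:biggest and smallest eigenvalues} to identify $\ip{e_d^*}$ as the highest weight line of $\Phi^*$, you are implicitly using $\Phi(e^H)e_d = e^{-N\bar\chi(H)}e_d$, so that $e_d$ spans the lowest weight line of $\Phi$ (weight $-N\bar\chi$) and dually $e_d^*$ has highest weight $+N\bar\chi$ for $\Phi^*$. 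The display in the paper appears to have a sign slip (taken literally it would assign $e_d$ a positive weight, which contradicts the assertion in the paper's own proof of this lemma that $\abs{a_d}$ is the smallest modulus eigenvalue of $\Phi(e^H)$), and your computation correctly reflects the intended meaning; it is worth stating this reading explicitly to avoid confusion.
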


\begin{proof} Let $\hat{F}_0^+ :=\big( \ip{e_1}, \ip{e_1,\dots, e_{d-1}} \big)$ and $\hat{F}_0^- :=\big( \ip{e_d}, \ip{e_2,\dots, e_{d}} \big)$. Fix $H \in {\rm int}(\mfa^+)$. Then by property~\ref{item: SVD of Phi} and Equation~\eqref{eqn:biggest and smallest eigenvalues}, $\Phi(e^H)={\rm diag}(a_1,\dots, a_d)$ is a diagonal matrix with 
$$
\abs{a_1} > \max\{ \abs{a_j} : 2 \le j \le d\} \quad \text{and} \quad \abs{a_d} < \min\{ \abs{a_j} : 1 \le j \le d-1\}.
$$
So
$$
\Phi(e^{nH})\hat F \rightarrow \hat{F}_0^+
$$
for all $\hat{F} \in \Fc_{1,d-1}$ transverse to $\hat{F}_0^-$. Since $\Phi$ is irreducible, there exists some $F \in \Fc_\theta$ such that $\xi(F)$ is transverse to $\hat{F}^-_0$. Using Lemma~\ref{uniform convergence} and perturbing $F$ we may also assume that $e^{nH}(F) \rightarrow \Psf_\theta$. Then 
\begin{equation*}
\xi(\Psf_\theta)= \lim_{n \rightarrow \infty} \xi( e^{nH} F) = \lim_{n \rightarrow \infty} \Phi(e^{nH}) \xi(F) = \hat{F}_0^+. \qedhere
\end{equation*}

\end{proof} 

Now we prove (4). 

\begin{lemma}  If $\min_{\alpha \in \theta} \alpha(\kappa(g)) > 0$, then $\xi( U_\theta(g)) = U_{1,d-1}(\Phi(g)). $
\end{lemma}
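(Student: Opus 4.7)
My plan is to analyze both sides of the claimed identity using a KAK decomposition $g = m_g e^{\kappa(g)} \ell_g$, and to read off the flag $U_{1,d-1}(\Phi(g))$ from an explicit singular value decomposition of $\Phi(g)$ furnished by properties \ref{item: highest weight of Phi} and \ref{item: SVD of Phi}.

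First, I would check that $U_{1,d-1}(\Phi(g))$ is well-defined, i.e.\ that both $\alpha_1$ and $\alpha_{d-1}$ are strictly positive on $\kappa(\Phi(g))$. The bound on $\alpha_1$ is immediate from property \ref{item:alpha1 formula}. For $\alpha_{d-1}$, I would use the identity $\alpha_{d-1}(\kappa(A)) = \alpha_1(\kappa(A^{-1}))$, valid in $\SL(d,\Rb)$ as a direct consequence of $\sigma_i(A^{-1}) = 1/\sigma_{d-i+1}(A)$, together with $\kappa(g^{-1}) = \iota\kappa(g)$ (Observation~\ref{obs: opposition involution 2}) and the symmetry $\iota^*(\theta) = \theta$, to deduce $\alpha_{d-1}(\kappa(\Phi(g))) = \min_{\alpha\in\theta}\alpha(\kappa(g)) > 0$.

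Next, by $\Phi$-equivariance of $\xi$ and Lemma~\ref{lem:image of Ptheta},
$$
\xi(U_\theta(g)) = \xi(m_g \Psf_\theta) = \Phi(m_g)\,\hat F_0^+,
$$
where $\hat F_0^+ := (\langle e_1\rangle, \langle e_1,\dots,e_{d-1}\rangle)$, so it remains to show $U_{1,d-1}(\Phi(g)) = \Phi(m_g)\,\hat F_0^+$. Property \ref{item: SVD of Phi} yields
$$
\Phi(g) = \Phi(m_g)\,D\,\Phi(\ell_g),
$$
with $\Phi(m_g),\Phi(\ell_g) \in \mathsf{SO}(d,\Rb)$ and $D := \Phi(e^{\kappa(g)})$ diagonal with positive entries. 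As observed in the proof of Lemma~\ref{lem:image of Ptheta}, the hypothesis $\min_{\alpha\in\theta}\alpha(\kappa(g)) > 0$, combined with Equation~\eqref{eqn:biggest and smallest eigenvalues} and the highest-weight property \ref{item: highest weight of Phi}, forces the entry $d_1$ to be the strict maximum and $d_d$ the strict minimum among the diagonal entries of $D$.

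Letting $P$ be the permutation matrix that sorts the diagonal of $D$ into decreasing order, we then have $Pe_1 = e_1$ and $Pe_d = e_d$. The decomposition
$$
\Phi(g) = (\Phi(m_g)P)(P^{-1}DP)(P^{-1}\Phi(\ell_g))
$$
is then genuinely in KAK form, with $P^{-1}DP$ in the positive Weyl chamber of $\SL(d,\Rb)$, and the outer factors lying in $\mathsf{SO}(d,\Rb)$ after possibly absorbing a diagonal sign matrix $J = \mathrm{diag}(\pm 1,1,\ldots,1,\pm 1)$, which stabilizes $\hat F_0^+$. Reading off the first column and the span of the first $d-1$ columns of the left factor $\Phi(m_g)P$, and using $Pe_1 = e_1$ together with $P\langle e_1,\dots,e_{d-1}\rangle = \langle e_1,\dots,e_{d-1}\rangle$, would then give $U_{1,d-1}(\Phi(g)) = \Phi(m_g)\,\hat F_0^+$. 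The main subtle step is the identification of $e_d$ as the direction of the unique smallest singular value, which is what pins down the hyperplane component of the flag; this is precisely where the symmetry of $\theta$ and the lowest-weight analysis built into the proof of Lemma~\ref{lem:image of Ptheta} are needed.
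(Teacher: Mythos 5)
Your proof is correct and follows essentially the same route as the paper's: fix a $\mathsf{KAK}$-decomposition $g = m_g e^{\kappa(g)}\ell_g$, apply $\Phi$ to obtain a diagonal (but not necessarily sorted) middle factor, conjugate by the sorting permutation $P$ (the paper's $k$) which fixes $e_1$ and $e_d$ since $d_1$ is the strict maximum and $d_d$ the strict minimum, and then read off $U_{1,d-1}(\Phi(g)) = \Phi(m_g)\,\hat F_0^+ = \xi(m_g\Psf_\theta) = \xi(U_\theta(g))$ via $\Phi$-equivariance and Lemma~\ref{lem:image of Ptheta}. You are somewhat more explicit than the paper on one useful detail: verifying $\alpha_{d-1}(\kappa(\Phi(g))) > 0$ (the paper cites property~\ref{item:alpha1 formula}, which directly controls only $\alpha_1$; your derivation via $\kappa(g^{-1}) = \iota\kappa(g)$, $\sigma_i(A^{-1}) = 1/\sigma_{d-i+1}(A)$, and $\iota^*(\theta) = \theta$ is exactly the gap-filling computation).
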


\begin{proof} Fix a $\mathsf{KAK}$-decomposition $g = m e^H \ell$. By properties~\ref{item: highest weight of Phi} and~\ref{item: SVD of Phi}, there exists a permutation matrix $k \in \mathsf{O}(d)$ such that 
$$
\Phi(g) =\big( \Phi(m)k^{-1} \big) \big( k\Phi(e^H) k^{-1}\big) \big(k\Phi(\ell)\big)
$$
is a singular value decomposition of $\Phi(g)$. By Equation~\eqref{eqn:biggest and smallest eigenvalues}, $k(e_1) = e_1$ and $k(e_d) = e_d$. Further, by property~\ref{item:alpha1 formula}, we have
$$
\alpha_j(\Phi(g)) > 0 \quad \text{for $j=1,d-1$}, 
$$
so by Lemma~\ref{lem:image of Ptheta},
\begin{align*}
U_{1,d-1}(\Phi(g)) & = \big( \Phi(m)k^{-1} \big)\big( \ip{e_1}, \ip{e_1,\dots, e_{d-1}} \big)=\Phi(m)\big( \ip{e_1}, \ip{e_1,\dots, e_{d-1}} \big) \\
& = \Phi(m) \xi(\Psf_\theta) = \xi(m \Psf_\theta) = \xi( U_\theta(g)). \qedhere
\end{align*}
\end{proof}

\end{document}